\newcommand*{\Sb}{\boldsymbol{S}}
\newcommand*{\supp}{\mathrm{supp}}
\begin{document}

\title{\LARGE A Unified Theory of Confidence Regions and Testing for High Dimensional Estimating Equations}

\author{Matey Neykov\thanks{Department of Operations Research and Financial Engineering, Princeton University, Princeton, NJ 08544; e-mail: {\tt\{mneykov,yning,hanliu\}@princeton.edu}} \and Yang Ning\protect\footnotemark[1] \and Jun S. Liu\thanks{Department of Statistics, Harvard University, Cambridge, MA 02138, USA; e-mail: {\tt jliu@stat.harvard.edu}} \and Han Liu\protect\footnotemark[1]}

\date{}

\maketitle

\vspace{-0.5in}

\begin{abstract}
	We propose a new inferential framework for constructing confidence regions  and testing hypotheses in statistical models specified by a system of high dimensional estimating equations. We construct an influence function by projecting the fitted estimating equations to a sparse direction obtained by solving a large-scale linear program. Our main theoretical contribution is to establish a unified Z-estimation theory of confidence regions for high dimensional problems. 
	Different from existing methods, all of which require the specification of the likelihood or pseudo-likelihood, our framework is likelihood-free. As a result, our approach provides valid inference for a broad class of high dimensional constrained estimating equation problems, which are not covered by existing methods. 
	Such examples include, noisy compressed sensing, instrumental variable regression, undirected graphical models, discriminant analysis and vector autoregressive models. We present detailed theoretical results for all these examples. Finally, we conduct thorough numerical simulations, and a real dataset analysis to back up the developed theoretical results.
\end{abstract}

\noindent {\bf Keywords:} Post-regularization inference, Estimating equations, Confidence regions, Hypothesis tests, Dantzig selector, Instrumental variables, Graphical models, Discriminant analysis, Vector autoregressive models. 

\section{Introduction}

Let us observe a sample of $n$, $q$-dimensional random vectors $\{\bZ_i\}_{i = 1}^n$. Denote with $\Zb$ the $n \times q$ data matrix obtained by stacking all the vectors $\bZ_i$. 
Let $\tb(\Zb, \bbeta): \RR^{n\times q} \times \RR^d \mapsto \RR^d$ be an estimating equation \citep{godambe1991estimating} for  a $d$-dimensional unknown parameter $\bbeta$, and further let $E_{\tb}(\bbeta) = \lim_{n \rightarrow \infty} \EE \tb(\Zb, \bbeta)$ denote the limiting expected value of the estimating equation $\tb(\Zb, \bbeta)$ as $n \rightarrow \infty$. 
As an example given $n$ i.i.d. observations $\bZ_i$ and an equation $\hb$, this reduces to the classical Z-estimation setup $\tb(\Zb, \bbeta) = n^{-1} \sum_{ i = 1}^n \hb(\bZ_i, \bbeta)$ and $E_{\tb}(\bbeta) = \EE \hb(\Zb, \bbeta)$.
For the purpose of parameter estimation, it is usually assumed that the estimating equation is unbiased in the sense that the true value $\bbeta^*$ is the unique solution to $E_{\tb}(\bbeta) = 0$. When the dimension $d$ is fixed and much smaller than the sample size $n$, inference on $\bbeta^*$ can be obtained by solving the estimating equation $\tb(\Zb, \bbeta)=0$, and the asymptotic properties follow from the classical Z-estimation theory \citep{van2000asymptotic}.  
However, when $d > n$, directly solving $\tb(\Zb, \bbeta)=0$ is an ill-posed problem. To avoid this problem, a popular approach is to impose the sparsity assumption on  $\bbeta^*$, which motivates constrained Z-estimators in the following generic form \citep{cai2014geometrizing}:
\begin{align}
	\hat \bbeta = \argmin\|\bbeta\|_1, \mbox{ subject to } \|\tb(\Zb, \bbeta)\|_{\infty} \leq \lambda, \label{mainoptimization}
\end{align}
where $\lambda$ is a regularization parameter. 

Assume that we can partition $\bbeta$ as $(\theta, \bgamma^T)^T$, where $\theta$ is a univariate parameter of interest and $\bgamma$ is a $(d-1)$-dimensional nuisance parameter. Similarly, we denote $\hat \bbeta = (\hat \theta, \hat \bgamma^T)^T$ and $\bbeta^* = (\theta^*, \bgamma^{*T})^T$. The goal of this paper is to develop a general estimating equation based framework to obtain valid confidence regions for $\theta^*$ under the regime that $d$ is much larger than $n$. 
The proposed framework has a large number of applications. For instance, given a convex and smooth loss function (or negative log-likelihood) $\ell: \RR^q\times \RR^d \mapsto \RR$, with i.i.d. data $\bZ_i$, the inference on $\bbeta$ can be conducted based on the estimating equation (or score function) $\tb(\Zb, \bbeta)= n^{-1}\sum_{i=1}^n\frac{\partial \ell(\bZ_i, \bbeta)}{\partial \bbeta}$. Hence, inference on many high dimensional problems with specifications of the loss function or the likelihood can be addressed through our framework. More importantly, the estimating equation method has an advantage over likelihood methods in that it usually only requires the specification of a few moment conditions rather than the entire probability distribution \citep{godambe1991estimating}. To see the advantage of our framework,  we consider the following examples, which are naturally handled by estimating equations.


\subsection{Examples}\label{special:cases:EE}


\vspace{0.3cm}
\noindent\textbf{Linear Regression via Dantzig Selector \citep{candes2007dantzig}.} Assume that a linear model (also referred to as noisy compressed sensing) is specified by the following moment condition $\EE(Y\mid\bX)= \bX^T\bbeta^*$. Let $\Xb \in \RR^{n \times d}$ be the design matrix stacking the i.i.d. covariates $\{\bX_i\}_{i = 1}^n$ and $\bY\in \RR^n$ be the response vector with independent entries $Y_i$.  Given the moment condition, we can easily construct the estimating equation as $\tb((\bY,\Xb), \bbeta) = n^{-1}\Xb^T(\Xb \bbeta - \bY)$ and $E_{\tb}(\bbeta) = \EE \tb((\bY,\Xb), \bbeta)$. In addition, $E_{\tb}(\bbeta)$ has the true value $\bbeta^*$ as its unique root, provided that the second moment matrix $\bSigma_{\bX} := n^{-1} \EE \Xb^T \Xb$ is positive definite. In the high dimensional setting, \cite{candes2007dantzig} estimated $\bbeta$ by the following Dantzig selector, 
$$
\hat \bbeta = \argmin \|\bbeta\|_1, \mbox{ such that } \|n^{-1}\Xb^T(\Xb\bbeta - \bY)\|_{\infty} \leq \lambda.
$$

\vspace{0.3cm}
\noindent\textbf{Instrumental Variables Regression (IVR).} Similarly to the previous example, consider the linear model $Y = \bX^T\bbeta^* + \varepsilon$. In many economic applications, it is not always reasonable to believe that the error and the design variables are uncorrelated, i.e., $\EE[\bX \varepsilon] = 0$, which is a key condition ensuring the unbiasedness of the estimating equation and consequently the consistency of the Dantzig selector $\hat \bbeta$. In such cases one may use a set of \textit{instrumental variables} $\bW \in \RR^d$ which are correlated with $\bX$ but satisfy $\EE[\bW \varepsilon] = 0$ and $\EE[\varepsilon^2 \mid \bW] = \sigma^2$. Let $\Xb, \Wb \in \RR^{n \times d}$ be the design matrix and instrumental variable matrix stacking the i.i.d. covariates $\{\bX_i\}_{i = 1}^n$ and instrumental variables $\{\bW_i\}_{i = 1}^n$ respectively, and $\bY\in \RR^n$ be the response vector with independent entries $Y_i$.  Using the instrumental variables, one can construct unbiased estimating equations $\tb((\bY,\Xb, \Wb), \bbeta) = n^{-1}\Wb^T(\Xb \bbeta - \bY)$ and $E_{\tb}(\bbeta) = \EE \tb((\bY,\Xb,\Wb), \bbeta)$. In addition, $E_{\tb}(\bbeta)$ has $\bbeta^*$ as its unique root, provided that the second moment matrix $\bSigma_{\bW\bX} := n^{-1} \EE \Wb^T \Xb$ is of full rank. Inspired by \cite{gautier2011high}, we consider the following estimator $\hat \bbeta$:
$$
\hat \bbeta = \argmin \|\bbeta\|_1, \mbox{ such that } \|n^{-1}\Wb^T(\Xb\bbeta - \bY)\|_{\infty} \leq \lambda.
$$

\vspace{0.3cm}
\noindent\textbf{Graphical Models via CLIME/SKEPTIC \citep{cai2011constrained,liu2012transelliptical}.} Let $\bX_1,...,\bX_n$ be i.i.d. copies of $\bX\in\RR^d$ with $\EE(\bX)=0$ and $\Cov(\bX)=\bSigma_{\bX}$. It is well known that in the case when $\bX$ are Gaussian, the precision matrix $\bOmega^*=(\bSigma_{\bX})^{-1}$ induces a graph, encoding conditional independencies of the variables $\bX$. More generally, this observation can be extended to transelliptical distributions \citep{liu2012transelliptical}. 


Let $\bSigma_n = n^{-1} \sum_{i = 1}\bX_i^{\otimes 2}$ be the sample covariance of $\bX_1,...,\bX_n$. Based on the second moment condition $\bSigma_{\bX}\bOmega^*=\Ib_d$, \cite{cai2011constrained} proposed the CLIME estimator of $\bOmega^*$:
\begin{align} \label{generalCLIMEopt}
\hat{\bOmega}=\argmin\|\bOmega\|_1,~~\textrm{subject to}~~ \|\bSigma_n\bOmega-\Ib_d\|_{\max}\leq\lambda.
\end{align}
In this case we have $\tb(\Xb, \bOmega) = \bSigma_n \bOmega - \Ib_d$, and $E_{\tb}(\bOmega) = \bSigma_{\bX} \bOmega - \Ib_d$. Under the more general setting of transelliptical graphical models, \cite{liu2012transelliptical} substituted the sample covariance $\bSigma_n$ with a non-parametric estimate based on Kendall's tau (see Remark \ref{trans:ell:graph:model}). Doing so breaks down the i.i.d. decomposition of the estimating equation described above, but continues to belong to our formulation (\ref{mainoptimization}). 

\vspace{0.3cm}
\noindent\textbf{Discriminant Analysis \citep{cai2011direct}.} Let $\bX$ and $\bY$ be $d$-dimensional random vectors, coming from two populations with different means $\bmu_1=\EE(\bX)$, $\bmu_2=\EE(\bY)$, and a common covariance matrix $\bSigma=\Cov(\bX)=\Cov(\bY)$. Given some training samples, we are interested in classifying a new observation $\bO$ into population $1$ or population $2$. It is well known (e.g. see \cite{mardia1979multivariate} Theorem 11.2.1) that, under certain conditions, the Bayes classification rule takes the form:
$$
\psi(\bO) = I((\bO - \bmu)^T\bOmega \bdelta > 0),
$$
where $I(\cdot)$ is an indicator function, $\bmu = (\bmu_1 + \bmu_2)/2$, $\bdelta = (\bmu_1 - \bmu_2)$ and $\bOmega = \bSigma^{-1}$. Specifically, the observation $\bO$ is classified into population 1 if and only if $\psi(\bO) = 1$.

To implement $\psi(\bO)$ in practice, one has to estimate the unknown parameters $\bmu_1, \bmu_2$ and $\bOmega$. Assume we observe $n_1$ and $n_2$ training samples from population 1 and population 2 denoted by $\bX_1,\ldots, \bX_{n_1}\in\RR^d$ and $\bY_1,\ldots, \bY_{n_2}\in\RR^d$. We  assume that 
\begin{align}
\bX_i = \bmu_1 + \bU_i, i = 1,\ldots, n_1, ~~~\textrm{and}~~~\bY_i = \bmu_2 + \bU_{i + n_1}, i = 1,\ldots,n_2,\label{sub-gauss:assump:LDP} 
\end{align}
where $\bU_i$ are i.i.d. copies of $\bU =  (U_1, \ldots, U_d)^T$, which satisfies $\EE(\bU)=0$ and $\Cov(\bU)=\bSigma$. Define the sample means as $\bar \bX = \frac{1}{n_1} \sum_{i = 1}^{n_1}  \bX_{i}$ and $\bar \bY = \frac{1}{n_2} \sum_{i = 1}^{n_2}  \bY_{i}$, and the sample covariances as $\hat \bSigma_{\bX} = \frac{1}{n_1} \sum_{i = 1}^{n_1}  (\bX_{i} - \bar \bX)^{\otimes 2}$ and $\hat \bSigma_{\bY} = \frac{1}{n_2} \sum_{i = 1}^{n_2}  (\bY_{i} - \bar \bY)^{\otimes 2}$. Furthermore let $\hat \bSigma_n = \frac{n_1}{n}\hat  \bSigma_{\bX} + \frac{n_2}{n}\hat \bSigma_{\bY}$ be the weighted average of $\hat  \bSigma_{\bX}$ and $\hat \bSigma_{\bY}$. 

In the high dimensional setting with $d \gg n$, we cannot directly estimate $\bOmega$ by $\hat \bSigma_n^{-1}$, since the sample covariance is not invertible. Noting that the classification rule solely depends on $\bbeta^* = \bOmega \bdelta$, \cite{cai2011direct} proposed a direct approach to estimate $\bbeta^*$, rather than estimating $\bOmega$ and $\bdelta$ separately. Their estimated classification rule is as follows:
\begin{align}
\hat \psi(\bO) & =  I((\bO - (\bar \bX + \bar \bY)/2)^T \hat \bbeta  > 0), ~~ \mbox{ where } \nonumber \\
\hat \bbeta & = \argmin\|\bbeta\|_1,~~~\textrm{subject to}~~~ \|\hat \bSigma_n \bbeta - (\bar \bX - \bar \bY )\|_{\infty} \leq \lambda. \label{sparseLDAeq}
\end{align}
Clearly the latter formulation constitutes a high dimensional estimating equation as in (\ref{mainoptimization}), with $\tb((\{\bX_i\}_{i = 1}^{n_1}, \{\bY_i\}_{i = 1}^{n_2}), \bbeta) = \hat \bSigma_n \bbeta - (\bar \bX - \bar \bY)$ and $E_\tb(\bbeta) = \bSigma \bbeta - (\bmu_1 - \bmu_2)$.

\vspace{0.3cm}
\noindent\textbf{Vector Autoregressive Models \citep{han2014direct}.} Let $\{\bX_t\}_{t = -\infty}^{\infty}$ be a stationary sequence of mean 0 random vectors in $\RR^d$ with covariance matrix $\bSigma$. The sequence $\{\bX_t\}_{t = -\infty}^{\infty}$ is said to follow a lag-1 autoregressive model if
$$
\bX_t = \Ab^T \bX_{t - 1} + \bW_t, ~~~ t \in \ZZ:=\{...,-1,0,1,....\},
$$
where $\Ab$ is a $d\times d$ transition matrix, and the noise vectors $\bW_t$ are i.i.d. with $\bW_t \sim N(0, \bPsi)$ and independent of the history $\{\bX_s\}_{s < t}$. Under the additional assumption that $\det(\Ib_d - \Ab^T z) \neq 0$ for all $z \in \mathcal{\CC}$ with $|z| \leq 1$, it can be shown that $\bPsi$ can be selected so that the process is stationary, i.e. for all $t$: $\bX_t \sim N(0,\bSigma)$. Let $\bSigma_i := \Cov(\bX_0, \bX_i)$, where $\bSigma_0 := \bSigma$. A simple calculation under the lag-1 autoregressive model leads to the following Yule-Walker Equation:
$\bSigma_i := \bSigma_0 \Ab^i$, for any $i\in\NN$.
A special case of the above equation with $i=1$ yields that:
\begin{equation}\label{eqA}
\Ab = \bSigma_0^{-1} \bSigma_1.
\end{equation}
Assume that the data $(\bX_1,...,\bX_T)$ follow the lag-1 autoregressive model. By the equation (\ref{eqA}), \cite{han2014direct} proposed the following estimator of $\Ab$ in the high dimensional setting:
\begin{align}
\hat \Ab = \argmin_{\Mb \in \RR^{d \times d}} \sum_{1\leq j,k\leq d} |M_{jk}|, \mbox{ subject to } \|\Sb_0 \Mb - \Sb_1\|_{\max} \leq \lambda, \label{optproblemvecauto}
\end{align}
where $\lambda > 0$ is a tunning parameter, $\Sb_0 = 1/T\sum_{t = 1}^T \bX_t^{\otimes 2}$ and $\Sb_1 = \frac{1}{T - 1} \sum_{t = 1}^{T-1} \bX_t \bX_{t+1}^T$  are estimators of $\bSigma_0$ and $\bSigma_1$ respectively, and $T$ is the number of observations. In this case we have that $\tb(\{\bX_t\}_{t = 1}^T, \Mb) = \Sb_0 \Mb - \Sb_1$, and $E_\tb(\Mb) = \bSigma_0 \Mb - \bSigma_1$.

\subsection{Related Methods}\label{related:methods:sec}

Having explored a few  examples falling into the estimating equation framework (\ref{mainoptimization}), we move on to outline some related works on high dimensional inference.
Recently, significant progress has been made towards understanding the post-regularization inference for the LASSO estimator in the linear and generalized linear models. For instance, \cite{lockhart2014significance, taylor2014post, lee2013exact} suggested conditional tests based on covariates which have been selected by the LASSO. We stress the fact that this type of tests are of fundamentally different nature compared to our work.  
Another important class of methods is based on the bias correction of $L_1$ or nonconvex regularized estimators.
In particular, \cite{belloni2012sparse, belloni2013honest,zhang2014confidence, javanmard2013confidence, van2013asymptotically} proposed a double selection estimator, low dimension projection estimator, debiasing and desparsifying correction methods, respectively, for constructing confidence intervals in high dimensional models with the $L_1$ penalty. Recently, \cite{sparc2014ning,ning2014general} proposed a decorrelated score test in a likelihood based framework. The difference between our method and this class of methods will be discussed in more details in the next section.
A different score related approach is considered by \cite{voorman2014inference}, which is testing a null hypothesis depending on the tuning parameter, and hence differs from our work. 
For the nonconvex penalty, under the oracle properties, the asymptotic normality property of the estimators is established by \cite{fan2011nonconcave}, which requires strong conditions, such as the minimal signal condition. In contrast, our work does not rely on oracle properties or variable selection consistency. 
P-values and confidence intervals based on sample splitting and subsampling are suggested by \cite{meinshausen2009p, meinshausen2010stability, shah2013variable,wasserman2009high}. However, the sample splitting procedures may lead to certain efficiency loss. In a recent paper by \cite{lu2015confidence}, the authors developed a new inferential method based on a variational inequality technique for the LASSO procedure which provably produces valid confidence regions. In contrast to our work their method needs the dimension $d$ to be fixed, and it may not be applicable to the inference problem based on the formulation (\ref{mainoptimization}).

\subsection{Contributions}


Our first contribution is to propose a new procedure for high dimensional inference in the estimating equation framework. In order  to construct confidence regions, our method is to project the general estimating equation onto a certain sparse direction, which can be easily estimated by solving a large-scale linear program. Thus, the proposed inferential procedure is a general methodology and can be directly applied to many inference problems, including all aforementioned examples. Although some of the works discussed in Section \ref{related:methods:sec} also used a projection idea, our method is  different from previous works in that it directly targets the influence function of the estimating equation. For instance, in \cite{sparc2014ning,ning2014general}, the decorrelated score function is defined as $n^{-1} \sum_{i = 1}^n [\partial \ell(\bZ_i, \bbeta)/\partial \theta-\wb^T\partial \ell(\bZ_i, \bbeta)/\partial \bgamma]$, where $\ell(\bZ_i, \bbeta)$ is the log-likelihood for data $\bZ_i$, and $\wb^T\partial \ell(\bZ_i, \bbeta)/\partial \bgamma$ is the sparse projection of the $\theta$-score function $\partial \ell(\bZ_i, \bbeta)/\partial \theta$ to the $(d-1)$-dimensional nuisance score space $\textrm{span}\{\partial \ell(\bZ_i, \bbeta)/\partial \bgamma\}$. While the score functions can be treated as a special case of estimating equation, such a construction cannot be directly extended to general estimating equations. The reason is that it remains unclear how to disentangle the estimating equation for the parameter of interest and the nuisance estimating equation space, and therefore the projection conducted by \cite{sparc2014ning,ning2014general} is not well defined. To address this challenge, motivated from the classical influence function representation, we propose a different projection approach, which directly estimates the influence function of the equation. 

Our second contribution is to establish a unified Z-estimation theory of confidence intervals. 
In particular, we construct a Z-estimator $\tilde \theta$ that is consistent and asymptotically normal, and its asymptotic variance can be consistently estimated. Furthermore,  
the pointwise asymptotic normality results can be strengthened by showing that $\tilde \theta$ is uniformly asymptotically normal for $\bbeta^*$ belonging to a certain parameter space (deferred to the Supplementary Material). Moreover, owing to the flexibility of the estimating equations framework, we are able to push the theory through for non i.i.d. data, relaxing the assumptions made in most existing work. In terms of relative efficiency, when the estimating equation corresponds to the score function, our estimator $\tilde \theta$ is semiparametrically efficient. The theoretical properties of hypothesis tests have also been established, but for space limitations the proofs will be omitted and can be provided by the authors upon request. 


Our third contribution is to apply the proposed framework to establish theoretical results for the previous motivating examples including the noisy compressed sensing with moment condition, instrumental variable regression, graphical models, transelliptical graphical models, linear discriminant analysis and vector autoregressive models. To the best of our knowledge, many of the aforementioned problems (e.g. instrumental variables regression, linear discriminant analysis and vector autoregressive models) have not been equipped with any inferential procedures. 

Finally, we further emphasize the difference between our method and the class of methods based on the bias correction of regularized estimators. 
Compared to these methods in \cite{zhang2014confidence, javanmard2013confidence, van2013asymptotically,ning2014general}, our framework has the following three advantages. First, all of the above propositions rely on the existence of a likelihood, or more generally a loss function. In contrast, our framework directly handles the estimating equations and is likelihood-free, enabling us to perform inference in many examples (e.g., the motivating examples discussed in Section \ref{special:cases:EE}) where the likelihood or the loss function is unavailable or difficult to formulate. For instance in the instrumental variable regression it is not clear how to devise a loss function, while the problem naturally falls into the realm of estimating equations. This leads to different methodological development from the previous work, which will be explained later in details. Second, some of the existing work such as \cite{zhang2014confidence, javanmard2013confidence, van2013asymptotically} is only tailored for the linear and generalized linear models. In contrast, our framework covers  much broader class of statistical models specified by estimating equations, such as linear discriminant analysis and vector autoregressive models whose inferential properties have not been studied before. Third, our framework can handle dependent data, which is more flexible than these existing works all of which require the data to be independent.

\subsection{Organization of the Paper}

The paper is organized as follows. In Section \ref{LZEHDSsection}, we  propose our generic inferential procedure for high dimensional estimating equations, and layout the foundations of the general theoretical framework. In Section \ref{secexample}, we apply the general theory to study the motivating examples including the Dantzig Selector, instrumental variables regression, graphical models, discriminant analysis and  autoregressive models. Numerical studies and a real data analysis are presented in Section \ref{numericalStudySec}, and a discussion is provided in Section \ref{discussionSec}.

\subsection{Notation} The following notations are used throughout the paper. For a vector $\vb = (v_1, \ldots, v_d)^T\in \RR^d$, let $\|\vb\|_q = (\sum_{i = 1}^d v_i^q)^{1/q},  1 \leq q < \infty$, $\|\vb\|_0 = | \supp(\vb)|$, where $\supp(\vb) = \{j: v_j \neq 0\}$, and $|A|$ denotes the cardinality of a set $A$. Furthermore let $\|\vb\|_{\infty} = \max_{i} |v_i|$ and $\vb^{\otimes 2} = \vb \vb^T$. For a matrix $\Mb$ denote with $\Mb_{*j}$ and $\Mb_{j*}$ the $j$\textsuperscript{th} column and row of $\Mb$ correspondingly. Moreover, let $\|\Mb\|_{\max} = \max_{ij} |M_{ij}|$, $\|\Mb\|_p = \max_{\|\vb\|_p = 1} \|\Mb \vb\|_p$ for $p \geq 1$. If $\Mb$ is positive semidefinite let  $\lambda_{\max}(\Mb)$ and $\lambda_{\min}(\Mb)$ denote the largest and smallest eigenvalues correspondingly. For a set $S \subset \{1,\ldots d\}$ let $\vb_S = \{v_j : j \in S\}$ and $S^c$ be the complement of $S$.  We denote with $\phi, \Phi, \overline \Phi$ the pdf, cdf and tail probability of a standard normal random variable correspondingly. Furthermore, we will use $\rightsquigarrow$ to denote weak convergence.

For a random variable $X$, we define its $\psi_{\ell}$ norm for any $\ell \geq 1$ as:
\begin{align} \label{psi1norm}
	\|X\|_{\psi_\ell} = \sup_{p \geq 1} p^{-1/\ell} (\EE|X|^p)^{1/p}.
\end{align} 
In the present paper we mainly use the $\psi_1$ and $\psi_2$ norms. Random variables with bounded $\psi_1$ and $\psi_2$ norms are called \textit{sub-exponential} and \textit{sub-Gaussian} correspondingly \citep{vershynin2010introduction} . It can be shown that  a random variable is sub-exponential if there exists a constant $K_1 > 0$ such that $\PP(|\bX| > t) \leq \exp(1 - t/K_1)$ for all $t \geq 0$. Similarly, a random variable is sub-Gaussian, if there exists a $K_2 > 0$ such that $\PP(|\bX| > t) \leq \exp(1 - t^2/K^2_2)$ for all $t \geq 0$. Finally, for two sequences of positive numbers $\{a_n\}$ and $\{b_n\}$ we will write $a_n \asymp b_n$ if there exist positive constants $c, C > 0$ such that $\limsup_n a_n/b_n \leq C$ and $\liminf_n a_n/b_n \geq c$. 
%

\section{High Dimensional Estimating Equations} \label{LZEHDSsection}


In this section we present the intuition behind the construction of our projection, and formulate the main results of our theory. Recall that $\bbeta = (\theta, \bgamma^T)^T \in \RR^{d}$, where $\theta$ is a univariate parameter of interest and $\bgamma$ is a $(d-1)$-dimensional nuisance parameter. We are interested in constructing a confidence interval for $\theta$. In fact our results can be extended in a simple manner to cases with $\btheta$ being a finite and fixed-dimensional vector, but we do not pursue this development in the present manuscript. Throughout the paper, we assume without loss of generality that $\theta$ is the first component of $\bbeta$. 
 
In the conventional framework, where the dimension $d$ is fixed and less than the sample size $n$, one can estimate the $d$-dimensional parameter $\bbeta$ by the Z-estimator, which is the root (assumed to exist) of the following system of $d$ equations \citep{godambe1991estimating},
\begin{align} \label{EE}
	\tb(\Zb, \bbeta) = 0.
\end{align}
Under certain regularity conditions, the Z-estimator is consistent, and one has the following influence function expansion of the parameter $\hat \theta$, where $\hat \bbeta = (\hat \theta, \hat \bgamma^T)^T$ is the solution to (\ref{EE}) \citep{newey1994large,van2000asymptotic}:
\begin{align}\label{low:d:intuition}
\sqrt{n}(\hat \theta - \theta^*) = -\sqrt{n} [E_{\Tb}(\bbeta^*)]^{-1}_{1*} \tb(\Zb, \bbeta^*) + o_p(1).
\end{align}
In the preceding display, we assume $E_{\Tb}(\bbeta) := \lim_{n \rightarrow \infty} \EE \Tb(\Zb, \bbeta)$ is invertible, where $\Tb(\Zb, \bbeta) := \frac{\partial}{\partial \bbeta}\tb(\Zb, \bbeta)$.
It is noteworthy to observe that in contrast to the Hessian matrix of the log-likelihood (or more generally any smooth loss function), the Jacobian matrix $\Tb(\Zb, \bbeta)$ need not be symmetric in general (refer to the IVR model for an example). Under further conditions, the right hand side of (\ref{low:d:intuition}) converges to a normal distribution, hence guaranteeing the asymptotic normality of the estimator $\hat \theta$.

In the case when $d > n$, the estimating equation (\ref{EE}) is ill-posed as one has more parameters than samples, resulting in multiple solutions for $\bbeta$. To deal with such situations, under the sparsity assumption on $\bbeta^*$, we solve the constrained optimization program (\ref{mainoptimization}). Due to the constraint in (\ref{mainoptimization}), the limiting distribution of the estimator $\hat\bbeta$, and $\hat \theta$ in particular, becomes intractable as expansion (\ref{low:d:intuition}) is no longer valid. Hence, instead of focusing on the left hand side of (\ref{low:d:intuition}), in order to construct a theoretically tractable estimator of $\theta$ we consider a direct approach by estimating the influence function on the right hand side. Emulating expression (\ref{low:d:intuition}), we propose the following projected estimating equation along the direction $\hat\vb$,
$$
\hat S(\bbeta)= \hat\vb^T\tb(\Zb, \bbeta),
$$
where $\hat \vb$ is defined as the solution to the optimization problem:
\begin{align} \label{vdef}
	\hat\vb=\argmin \|\vb\|_1~~\textrm{such that}~~ \Big\|\vb^T \Tb(\Zb,\hat \bbeta) -\eb_1\Big\|_\infty\leq \lambda'.
\end{align}
In (\ref{vdef}) $\lambda'$ is an additional tuning parameter, and $\eb_1$ is a $d$-dimensional {row} vector $(1,0,\ldots,0)$, where the position of $1$ corresponds to that of $\theta$ among $\bbeta$. It is easily seen that $\hat \vb^T$ is a natural estimator of $\vb^{*T} := [E_{\Tb}(\bbeta^*)]^{-1}_{1*}$ in the high dimensional setting, which is an essential term in the right hand of (\ref{low:d:intuition}). Thus, $\hat S(\bbeta)$ can be viewed as an estimate of the influence function for estimating $\theta$ in high dimensions. To better understand our method, consider the linear model example. In this case, we have
$$
\hat S(\bbeta)=n^{-1}\hat\vb^T\Xb^T(\Xb\bbeta - \bY),
$$
where 
\begin{align*}
\hat\vb=\argmin \|\vb\|_1,~~\textrm{such that}~~ \|\vb^T\bSigma_{n} -\eb_1\|_{\infty}\leq \lambda'.
\end{align*}
We can see that $\hat\vb$ corresponds to the first column of the CLIME estimator for the inverse covariance matrix of $\bX_i$. 

We emphasize that the construction of $\hat\vb$ does not depend on knowing which is the estimating equation for $\theta$ and which is the nuisance estimating equation space, and thus the projection is different from the decorrelated score method in \cite{ning2014general}. In fact, the lack of a valid loss (or likelihood) function corresponding to the general estimating equations is the main difficulty for applying the existing likelihood based inference methods. 



Recall that $\hat \bbeta = (\hat \theta, \hat \bgamma^T)^T$. By plugging in the estimator $\hat{\bgamma}$, we obtain the projected  estimating equation $\hat S(\theta,\hat{\bgamma})$ for the parameter of interest $\theta$. Similar to the classical estimating equation approach, we propose to estimate $\theta$ by a Z-estimator $\tilde{\theta}$, which is the root of $\hat S(\theta,\hat{\bgamma})=0$. In the following section we lay out the foundations of a unified theory guaranteeing that the estimator $\tilde \theta$ is asymptotically normal.

\subsection{A General Theoretical Framework} \label{masterthm:sec}

In this section we provide generic sufficient conditions which guarantee the existence and asymptotic normality of $\tilde \theta$, which is the root of $\hat S(\theta,\hat{\bgamma})=0$. Due to space limitations, we only present results on the confidence intervals, and the results on uniformly valid confidence intervals are deferred to the Supplementary Material. The results and proofs on hypothesis testing can be obtained from the authors upon request.

We assume that $\tb(\Zb, \bbeta)$ is twice differentiable in $\bbeta$. Recall that we further require $\bbeta^*$ to be the unique solution to $E_{\tb}(\bbeta) = 0$, where $E_{\tb}(\bbeta) = \lim_{n \rightarrow \infty} \EE \tb(\Zb, \bbeta)$. 
For any $\bbeta$ we let $S(\bbeta) := \vb^{*T}\tb(\Zb, \bbeta)$, where $\vb^{*T} := [E_{\Tb}(\bbeta^*)]^{-1}_{1*}$. 
Let $\PP_{\bbeta}$ be the probability measure under the parameter $\bbeta$. We use the shorthand notation $\PP^* = \PP_{\bbeta^*}$, to indicate the measure under the true parameter $\bbeta^*$. 
For any vector $\bbeta = (\theta, \bgamma^T)^T$, we use the following shorthand notation $\bbeta_{\check \theta} = (\check \theta, \bgamma^T)^T$ to indicate that $\theta$ is replaced by $\check \theta$. Before we proceed to define our abstract assumptions and present the results, we first motivate them and give an informal description below.

\subsubsection{Motivation and Informal Description}

Throughout this section we build our theory based on the premisses that the estimators $\hat \bbeta$ and $\hat \vb$ can be shown to be $L_1$ consistent, i.e. $\|\hat \bbeta - \bbeta^*\|_1 = o_p(1)$ and  $\|\hat \vb - \vb^*\|_1 = o_p(1)$. This is expected to hold for estimators solving programs (\ref{mainoptimization}) and (\ref{vdef}) owing to the fact that both programs aim to minimize the $L_1$ norm of the parameters. The $L_1$ consistency (see (\ref{consistencyassumpweakcn})) is central in what follows. Under this presumption, the key idea in our theory is the successful control of the deviations of the ``plug-in'' equation $\hat S(\theta, \hat \bgamma) = \hat S(\hat \bbeta_{\theta})$ about the equation $S(\theta, \bgamma^*) = S(\bbeta^*_{\theta})$ (recall $S(\bbeta^*_{\theta}) := \vb^{*T}\tb(\Zb, \bbeta^*_{\theta})$), i.e. we aim to establish $\hat S(\hat \bbeta_{\theta}) = S(\bbeta^*_{\theta}) + o_p(1)$. By the mean value theorem
\begin{align}\label{mean:val:thm:ass}
\hat S(\hat \bbeta_{\theta}) & = S(\bbeta^*_{\theta}) + \hat \vb^{T} \Tb(\Zb, \tilde \bbeta_\nu) (\hat \bbeta_{\theta} - \bbeta^{*}_{\theta}) + (\hat \vb - \vb^*)^T \tb(\Zb, \bbeta^*_{\theta}),
\end{align}
where $\tilde \bbeta_\nu$ is a point on the line segment joining $\hat \bbeta_{\theta}$ with $\bbeta^*_{\theta}$. Owing to the $L_1$ consistency of $\hat \vb$ and $\hat \bbeta$, (\ref{mean:val:thm:ass}) can indeed be rewritten in the form  $\hat S(\hat \bbeta_{\theta}) = S(\bbeta^*_{\theta}) + o_p(1)$, provided that $\|\hat \vb^{T} \Tb(\Zb, \tilde \bbeta_\nu)\|_{\infty} = O_p(1)$ and $\|\tb(\Zb, \bbeta^*_{\theta})\|_{\infty} = O_p(1)$. A sufficient and also sensible condition for these bounds, is to desire $\|\tb(\Zb, \bbeta^*_{\theta}) - E_{\tb}(\bbeta^*_{\theta})\|_{\infty} = o_p(1)$, and $\|\hat \vb^{T} \Tb(\Zb, \tilde \bbeta_\nu) - \vb^{*T} E_{\Tb} (\bbeta^*_\theta)\|_{\infty} = o_p(1)$, where $E_{\tb}(\bbeta^*_{\theta})$ and $E_{\Tb} (\bbeta^*_\theta)$ are the limiting expected values of $\tb(\Zb, \bbeta^*_{\theta})$ and $\hat \vb^{T} \Tb(\Zb, \tilde \bbeta_\nu)$, respectively. 
It is therefore rational to believe that the latter $L_{\infty}$-norms converge to $0$; see Assumption \ref{noiseassumpCI}. Furthermore, to show $\sqrt{n}$ consistency of the equations one needs to require an additional scaling condition on the latter convergence rates; see (\ref{assumpone}).  

\subsubsection{Main Results}
We now formalize our intuition above by requiring the following assumption.

\begin{assumption}[Concentration]\label{noiseassumpCI} There exists a neighborhood $\mathcal{N}_{\theta^*}$ of $\theta^*$, such that for all $\theta \in \mathcal{N}_{\theta^*}$:
\begin{align}
\lim_{n \rightarrow \infty} \PP^*(\| \tb(\Zb, \bbeta_\theta^*) - E_{\tb}(\bbeta_\theta^*) \|_{\infty} \leq r_{1}(n, \theta)) & = 1, \label{betastartassumpCI}\\
\lim_{n \rightarrow \infty} \PP^*(| \vb^{*T} \tb(\Zb, \bbeta_\theta^*) - \vb^{*T} E_{\tb}(\bbeta_\theta^*)| \leq r_{2}(n, \theta)) & = 1, \label{betastartassumpCIvstar}\\
\lim_{n \rightarrow \infty} \PP^*\bigg( \sup_{\nu \in [0,1]} \| \hat \vb^{T} \Tb(\Zb, \tilde \bbeta_\nu) -  \vb^{*T} E_{\Tb} (\bbeta^*_\theta) \|_{\infty} \leq  r_3(n, \theta) \bigg)& = 1, \label{lambdaprimeasumpCI} 
\end{align}
where $\tilde \bbeta_\nu = \nu \hat \bbeta_{\theta} + (1 - \nu)\bbeta_{\theta}^*$, $\sup_{\theta \in \mathcal{N}_{\theta^*}} \max(r_{1}(n, \theta), r_{2}(n, \theta), r_{3}(n, \theta)) = o(1)$, and the following condition holds:
$$
\sup_{\theta \in \mathcal{N}_{\theta^*}}  \|E_{\tb}(\bbeta^*_\theta)\|_{\infty} < \infty
, ~~~~ \sup_{\theta \in \mathcal{N}_{\theta^*}} \| \vb^{*T}\left[E_{\Tb} (\bbeta^*_\theta) \right]_{-1}\|_{\infty} < \infty,
$$
where $[\Ab]_{-1}$ represents a submatrix of $\Ab$ with the $1$\textsuperscript{st} column removed.
\end{assumption}

Condition (\ref{betastartassumpCI}) means that the equation $\tb(\Zb, \bbeta_\theta^*)$ concentrates on its limiting value $E_{\tb}(\bbeta_\theta^*)$ for any $\theta$ in a small neighborhood of $\theta^*$. Similarly, condition (\ref{betastartassumpCIvstar}) implies that the projection of the estimating equation on $\vb^{*}$ also concentrates on its limiting value locally around $\theta^*$, and is automatically implied by (\ref{betastartassumpCI}) when $\|\vb^*\|_1 = O(1)$. Finally, condition (\ref{lambdaprimeasumpCI}) means that the projection of the Jacobian matrix $\Tb(\Zb, \tilde \bbeta_\nu)$ on $\hat \vb$ concentrates on its limiting value $\vb^{*T} E_{\Tb} (\bbeta^*_\theta)$ in a neighborhood of $\theta^*$. These conditions are mild, and can be validated for all examples we consider.

\begin{assumption}[$L_1$ Consistency]\label{ass:consistencyassumpweakcn} Let the estimators $\hat \bbeta$ and $\hat \vb$ satisfy
\begin{align}
\lim_{n \rightarrow \infty} \PP^*(\|\hat \bbeta - \bbeta^*\|_1  \leq r_{4}(n)) = 1, ~~~~ \lim_{n \rightarrow \infty} \PP^*(\|\hat \vb - \vb^*\|_1 \leq r_{5}(n)) = 1, \label{consistencyassumpweakcn}
\end{align}
where $\max(r_4(n), r_5(n)) = o(1)$. 
\end{assumption}

As mentioned previously, (\ref{consistencyassumpweakcn}) is expected to hold due to the formulations of  (\ref{mainoptimization}) and (\ref{vdef}). In particular, (\ref{consistencyassumpweakcn})  has been verified for all examples we consider. Assumptions \ref{noiseassumpCI} and \ref{ass:consistencyassumpweakcn} suffice to show the following consistency result.

\begin{theorem}[Consistency] \label{consistency:sol} Let the (stochastic) map $\theta \mapsto \hat S(\hat \bbeta_\theta)$ be continuous with a single root $\tilde \theta$ or non-decreasing. 
Furthermore, suppose that for any $\epsilon > 0$,  
\begin{align}\label{opposing:signs}
\vb^{*T} [E_{\tb}( \bbeta^*_{\theta^* - \epsilon})]  \vb^{*T} [E_{\tb}( \bbeta^*_{\theta^* + \epsilon})] < 0.
\end{align}
 Under Assumptions \ref{noiseassumpCI} and \ref{ass:consistencyassumpweakcn} we have that
$$
\lim_{n \rightarrow \infty}\PP^*(|\tilde \theta - \theta^*| > \epsilon) = 0.
$$
\end{theorem}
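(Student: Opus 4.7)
The plan is straightforward: approximate $\hat S(\hat\bbeta_\theta)$ by its population analogue $\vb^{*T}E_{\tb}(\bbeta^*_\theta)$ at the two perturbed points $\theta^*\pm\epsilon$, then use the sign-flip condition \eqref{opposing:signs} together with either the continuity-plus-uniqueness or the monotonicity hypothesis on $\theta\mapsto\hat S(\hat\bbeta_\theta)$ to sandwich the root $\tilde\theta$ in the interval $(\theta^*-\epsilon,\theta^*+\epsilon)$. Fix $\epsilon>0$; since $\{|\tilde\theta-\theta^*|\leq\epsilon'\}\subset\{|\tilde\theta-\theta^*|\leq\epsilon\}$ for any $\epsilon'<\epsilon$, I may shrink $\epsilon$ if necessary so that $\theta^*\pm\epsilon\in\mathcal{N}_{\theta^*}$.

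The first step is the key identity. Repeating the mean-value expansion from the informal discussion (the one labeled \eqref{mean:val:thm:ass}) and using $\hat\bbeta_\theta-\bbeta^*_\theta=(0,(\hat\bgamma-\bgamma^*)^T)^T$, I would write, for each fixed $\theta$,
\begin{align*}
\hat S(\hat\bbeta_\theta)=\vb^{*T}E_{\tb}(\bbeta^*_\theta)+\underbrace{\vb^{*T}\!\big[\tb(\Zb,\bbeta^*_\theta)-E_{\tb}(\bbeta^*_\theta)\big]}_{R_1}+\underbrace{(\hat\vb-\vb^*)^T\tb(\Zb,\bbeta^*_\theta)}_{R_2}+\underbrace{\hat\vb^T[\Tb(\Zb,\tilde\bbeta_\nu)]_{-1}(\hat\bgamma-\bgamma^*)}_{R_3}.
\end{align*}
Term $R_1$ is $o_p(1)$ directly by \eqref{betastartassumpCIvstar}. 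For $R_2$, Hölder gives $|R_2|\leq\|\hat\vb-\vb^*\|_1\|\tb(\Zb,\bbeta^*_\theta)\|_\infty$; by \eqref{betastartassumpCI} and the boundedness $\|E_{\tb}(\bbeta^*_\theta)\|_\infty<\infty$, the second factor is $O_p(1)$, while \eqref{consistencyassumpweakcn} makes the first factor $o_p(1)$. For $R_3$, Hölder gives $|R_3|\leq\|\hat\vb^T[\Tb(\Zb,\tilde\bbeta_\nu)]_{-1}\|_\infty\|\hat\bgamma-\bgamma^*\|_1$; the uniform-in-$\nu$ bound \eqref{lambdaprimeasumpCI} together with the boundedness of $\|\vb^{*T}[E_{\Tb}(\bbeta^*_\theta)]_{-1}\|_\infty$ makes the first factor $O_p(1)$, while $\|\hat\bgamma-\bgamma^*\|_1\leq\|\hat\bbeta-\bbeta^*\|_1=o_p(1)$ by \eqref{consistencyassumpweakcn}. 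Thus $\hat S(\hat\bbeta_\theta)=\vb^{*T}E_{\tb}(\bbeta^*_\theta)+o_p(1)$ at each of the two points $\theta^*\pm\epsilon$.

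Second, by \eqref{opposing:signs}, the deterministic quantities $a^{-}:=\vb^{*T}E_{\tb}(\bbeta^*_{\theta^*-\epsilon})$ and $a^{+}:=\vb^{*T}E_{\tb}(\bbeta^*_{\theta^*+\epsilon})$ are both nonzero with strictly opposite signs; assume WLOG $a^-<0<a^+$. Setting $\delta=\tfrac12\min(|a^-|,|a^+|)>0$, Step~1 shows that the event $\{\hat S(\hat\bbeta_{\theta^*-\epsilon})<-\delta\}\cap\{\hat S(\hat\bbeta_{\theta^*+\epsilon})>\delta\}$ has probability tending to one. On this event, if $\theta\mapsto\hat S(\hat\bbeta_\theta)$ is continuous with the unique root $\tilde\theta$, the intermediate value theorem produces a root in $(\theta^*-\epsilon,\theta^*+\epsilon)$ and uniqueness forces it to be $\tilde\theta$; if instead the map is non-decreasing, then every root must lie in $(\theta^*-\epsilon,\theta^*+\epsilon)$, so in particular $\tilde\theta$ does. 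Either way $\PP^*(|\tilde\theta-\theta^*|>\epsilon)\to 0$.

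The main obstacle is not conceptual but the bookkeeping in Step~1: correctly turning the $L_\infty$ concentration statements in Assumption~\ref{noiseassumpCI} into $O_p(1)$ control of $\|\tb(\Zb,\bbeta^*_\theta)\|_\infty$ and $\|\hat\vb^T[\Tb(\Zb,\tilde\bbeta_\nu)]_{-1}\|_\infty$ requires the auxiliary boundedness conditions on $\|E_{\tb}(\bbeta^*_\theta)\|_\infty$ and $\|\vb^{*T}[E_{\Tb}(\bbeta^*_\theta)]_{-1}\|_\infty$, and the handling of the mean-value point $\tilde\bbeta_\nu$ requires the $\sup_{\nu\in[0,1]}$ in \eqref{lambdaprimeasumpCI}. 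Once these two points are tracked carefully, the remaining argument is a one-line sign-change/IVT conclusion.
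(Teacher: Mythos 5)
Your proposal is correct and follows essentially the same route as the paper: the paper's Lemma \ref{arxivsupp:main:mastertheorem} establishes exactly your Step 1 expansion $\hat S(\hat\bbeta_\theta)=\vb^{*T}E_{\tb}(\bbeta^*_\theta)+o_p(1)$ on $\mathcal{N}_{\theta^*}$ via the same mean-value decomposition and the same H\"older bounds (your $R_2,R_3$ are the paper's $I_2,I_1$), and the paper then concludes with the identical sign-change/IVT (or monotonicity) argument at $\theta^*\pm\epsilon$. Your explicit $\delta$-margin and the observation that only the $[\cdot]_{-1}$ block of the Jacobian matters (since the first coordinate of $\hat\bbeta_\theta-\bbeta^*_\theta$ vanishes) are both present in the paper's argument as well.
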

Condition (\ref{opposing:signs}) implies that the scalars $\vb^{*T} [E_{\tb}( \bbeta^*_{\theta^* - \epsilon})] $ and $\vb^{*T} [E_{\tb}( \bbeta^*_{\theta^* + \epsilon})]$ have opposite signs for all $\epsilon > 0$, which in turn guarantees that $\theta^*$ is a unique root of the map $\vb^{*T} [E_{\tb}( \bbeta^*_{\theta})]$. Hence under (\ref{opposing:signs}) the population equation $\hat S (\hat \bbeta_{\theta})$ is unbiased. The condition (\ref{opposing:signs}) holds for numerous examples and is also commonly used in the classical asymptotic theory; see Section 5 of \cite{van2000asymptotic}. In fact, the conclusion of Theorem \ref{consistency:sol} remains valid if one solves the equation approximately in the sense that $\hat S(\hat \bbeta_{\tilde \theta}) = o_p(1)$.
To establish the asymptotic normality of $\tilde \theta$, we require the following assumptions.

%
%

\begin{assumption}[CLT]\label{CLTcond} Assume that for $\sigma^{2} =  \vb^{*T} \bSigma \vb^*$, it holds that
\begin{align*}
\sigma^{-1} n^{1/2} S( \bbeta^*) \rightsquigarrow N(0,1),
\end{align*}
where $\bSigma = \lim_{n\rightarrow \infty} n \Cov \tb(\Zb, \bbeta^*)$, and assume that $\sigma^2 \geq C > 0$ for some constant $C$.
\end{assumption}


Assumption \ref{CLTcond} ensures that the right hand side of expansion (\ref{mean:val:thm:ass}) converges to a normal distribution when scaled appropriately. This CLT condition is mild, and can typically be verified via Lyapunov or Lindeberg conditions in cases when $\tb(\Zb, \bbeta)$ can be decomposed into a sum of i.i.d. observations. For some types of dependent data, Assumption \ref{CLTcond} holds by applying the martingale central limit theorem (e.g. autoregressive models). Thus, one of the advantages of our framework is that we can handle dependent data, which are not covered by the existing methods. We show such an example in Section \ref{SVAsec}. 

\begin{assumption}[Bounded Jacobian Derivative]\label{bounded:Jac:ass} Suppose there exists a constant $\gamma > 0$ such that:
\begin{align}
\bigg|  \vb^T \frac{\partial}{\partial \theta} \left[\Tb(\Zb, (\theta, \bgamma^T)^T)\right]_{*1}\bigg| \leq \psi(\Zb) , \label{stabtwo}
\end{align}
for any $\vb$ and $\bbeta$ satisfying $\|\vb - \vb^*\|_1<\gamma$ and $\|\bbeta - \bbeta^*\|_1< \gamma$,  where $\psi : \RR^{n\times q} \mapsto \RR$ is an integrable function with $\EE^*\psi(\Zb) < \infty$.
\end{assumption}
Inequality (\ref{stabtwo}) is a technical condition ensuring that $\vb^T\frac{\partial}{\partial \theta} \left[\Tb(\Zb, \bbeta)\right]_{*1}$  is bounded by an integrable function in a small neighborhood and hence does not behave too erratically, so that the dominated convergence theorem can be applied. This is a standard condition,  which is also assumed in Theorem 5.41 of \cite{van2000asymptotic} to establish the asymptotic normality of Z-estimator in the classical low dimensional regime. It is easily seen that this condition is mild and holds for linear estimating equations. 

\begin{assumption}[Scaling]\label{clt:scaling:ass} Assume the convergence rates  in Assumptions \ref{noiseassumpCI} and \ref{ass:consistencyassumpweakcn} satisfy
\begin{align}
n^{1/2}(r_{4}(n)r_{3}(n, \theta^*) + r_{5}(n)r_{1}(n, \theta^*) ) & = o(1). \label{assumpone}
\end{align}
\end{assumption}
Assumption (\ref{assumpone}) ensures that the small terms on the right hand side of (\ref{mean:val:thm:ass}) continue to be negligible, even after normalizing by $\sqrt{n}$. This assumption requires that the consistency and concentration rates in (\ref{consistencyassumpweakcn}) and Assumption \ref{consistency:sol} are sufficiently sharp if one hopes to establish the asymptotic normality of $\tilde\theta$. This condition can be verified in all of our examples. We are now in a position to state the main result of this section. 

\begin{theorem}[Asymptotic Normality] \label{weakconvver2:CI} Assume the conditions from Theorem \ref{consistency:sol} and Assumptions \ref{CLTcond}, \ref{bounded:Jac:ass} and \ref{clt:scaling:ass} hold. 
If $\hat \sigma^2$ is a consistent estimator of $\sigma^2$, then for any $t\in\RR$, we have:
$$
\lim_{n \rightarrow \infty} |\PP^*(\hat U_n \leq t) - \Phi(t)| = 0, ~~\textrm{where}~~\hat U_n = \frac{n^{1/2}}{\hat \sigma} (\tilde \theta - \theta^*). 
$$
\end{theorem}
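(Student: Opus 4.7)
The plan is to chain two mean-value expansions: one in $\bbeta$ (already displayed in \eqref{mean:val:thm:ass}) to control the residual at the truth, and one in $\theta$ to invert the projected equation $\hat S(\hat \bbeta_\theta)=0$ that defines $\tilde\theta$. The overall goal is to deduce
\begin{equation*}
n^{1/2}(\tilde \theta - \theta^*) = -\,\frac{n^{1/2} S(\bbeta^*)}{\vb^{*T}[E_{\Tb}(\bbeta^*)]_{*1}} + o_p(1),
\end{equation*}
whereupon Assumption \ref{CLTcond}, Slutsky's lemma, and consistency of $\hat \sigma^2$ yield the claim.

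First I would establish that $n^{1/2}\hat S(\hat \bbeta_{\theta^*})=n^{1/2}S(\bbeta^*)+o_p(1)$. Apply \eqref{mean:val:thm:ass} at $\theta=\theta^*$; because $\hat\bbeta_{\theta^*}-\bbeta^*=(0,(\hat\bgamma-\bgamma^*)^T)^T$ has zero first entry, the Jacobian product reduces to $\hat\vb^{T}[\Tb(\Zb,\tilde\bbeta_\nu)]_{-1}(\hat\bgamma-\bgamma^*)$. By definition $\vb^{*T}E_{\Tb}(\bbeta^*)=\eb_1$, so $\vb^{*T}[E_{\Tb}(\bbeta^*)]_{-1}=0$; combining this with \eqref{lambdaprimeasumpCI} gives $\|\hat\vb^{T}[\Tb(\Zb,\tilde\bbeta_\nu)]_{-1}\|_\infty \le r_3(n,\theta^*)$ on a high-probability event. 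Together with Hölder's inequality and \eqref{consistencyassumpweakcn} this bounds that summand by $r_3(n,\theta^*)r_4(n)$. For the other summand $(\hat\vb-\vb^*)^T\tb(\Zb,\bbeta^*)$, use $E_{\tb}(\bbeta^*)=0$, \eqref{betastartassumpCI}, and \eqref{consistencyassumpweakcn} to obtain the bound $r_5(n)r_1(n,\theta^*)$. Assumption \ref{clt:scaling:ass} makes both $o(n^{-1/2})$.

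Next I would expand $\hat S(\hat \bbeta_\theta)$ in $\theta$. Since $\tilde\theta\to_p\theta^*$ by Theorem \ref{consistency:sol}, write
\begin{equation*}
0=\hat S(\hat\bbeta_{\tilde\theta})=\hat S(\hat\bbeta_{\theta^*}) + \hat\vb^{T}[\Tb(\Zb,\hat\bbeta_{\bar\theta})]_{*1}\,(\tilde\theta-\theta^*)
\end{equation*}
for some intermediate $\bar\theta$ between $\tilde\theta$ and $\theta^*$. A second mean-value expansion of $[\Tb(\Zb,\hat\bbeta_\theta)]_{*1}$ in $\theta$ around $\theta^*$, premultiplied by $\hat\vb^T$, controls the difference between $\hat\vb^{T}[\Tb(\Zb,\hat\bbeta_{\bar\theta})]_{*1}$ and $\hat\vb^{T}[\Tb(\Zb,\hat\bbeta_{\theta^*})]_{*1}$: on the high-probability event $\{\|\hat\vb-\vb^*\|_1<\gamma,\|\hat\bbeta-\bbeta^*\|_1<\gamma\}$ Assumption \ref{bounded:Jac:ass} bounds the derivative by the integrable function $\psi(\Zb)$, which combined with $|\bar\theta-\theta^*|=o_p(1)$ gives an $o_p(1)$ contribution. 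The value at $\theta^*$ itself is handled by \eqref{lambdaprimeasumpCI} (taking $\nu=1$ and reading off the first coordinate): $\hat\vb^{T}[\Tb(\Zb,\hat\bbeta_{\theta^*})]_{*1}=\vb^{*T}[E_{\Tb}(\bbeta^*)]_{*1}+o_p(1)=1+o_p(1)$.

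Combining these pieces yields $n^{1/2}(\tilde\theta-\theta^*) = -n^{1/2}S(\bbeta^*) + o_p(1)$, so $\sigma^{-1}n^{1/2}(\tilde\theta-\theta^*)\rightsquigarrow N(0,1)$ by Assumption \ref{CLTcond}; Slutsky's lemma with the consistent $\hat\sigma$ (and the assumed uniform lower bound $\sigma^2\ge C$) delivers the stated conclusion. I expect the most delicate step to be the joint treatment of the two mean-value expansions: verifying that the intermediate point $\bar\theta$ can be handled by the local Lipschitz-type bound in Assumption \ref{bounded:Jac:ass} simultaneously with the uniform-in-$\nu$ concentration \eqref{lambdaprimeasumpCI}, while keeping careful track of which coordinate ($\eb_1$ versus $[\cdot]_{-1}$) is active in each occurrence of the Jacobian.
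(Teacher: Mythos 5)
Your proposal is correct and follows essentially the same route as the paper: the $\bbeta$-expansion at $\theta^*$ is exactly the paper's Lemma \ref{arxivsupp:main:mastertheorem} (with the same $r_3 r_4 + r_5 r_1$ bookkeeping exploiting $E_{\tb}(\bbeta^*)=0$ and $\vb^{*T}[E_{\Tb}(\bbeta^*)]_{-1}=0$), and your first-order mean-value step in $\theta$ followed by a second expansion of the Jacobian controlled via Assumption \ref{bounded:Jac:ass} is just a repackaging of the paper's single second-order Taylor expansion in (\ref{arxivsupp:main:meanvaluethm}). The final inversion, CLT, and Slutsky steps coincide with the paper's argument.
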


Some generic sufficient conditions for the consistency of  $\hat\sigma$ are shown in Proposition \ref{arxivsupp:weakconv} in Section \ref{arxivsupp:main:var:consistency} of the Supplementary Material. In our examples, we will develop consistent estimates of the variance $\sigma^2$ case by case. Given a consistent estimator $\hat \sigma^2$, Theorem \ref{weakconvver2:CI} implies that we can construct a $(1-\alpha)$\% confidence interval of $\theta^*$ in the following way
\begin{equation}\label{eqvarconfid}
\lim_{n\rightarrow\infty}\PP^*\Big(\theta^*\in\big[\tilde \theta - \Phi^{-1}(1 - \alpha/2) \hat \sigma/\sqrt{n}, \tilde \theta + \Phi^{-1}(1 - \alpha/2) \hat \sigma/\sqrt{n}\big]\Big)=1-\alpha.
\end{equation}

We conclude this section by noting a property of our estimator $\tilde \theta$ in cases when the estimating equation comes from a log-likelihood, i.e. $\tb(\Zb, \bbeta) = n^{-1}\sum_{i = 1}^n \hb(\bZ_i, \bbeta)$ with $\hb(\bZ_i, \bbeta)$ being the gradient of the log-likelihood for $\bZ_i$. Denote $\Hb(\bZ, \bbeta) = \frac{\partial}{\partial \bbeta} \hb(\bZ,\bbeta)$. According to the information identity $- \EE \Hb(\bZ, \bbeta^*) = \Cov \hb(\bZ, \bbeta^*)$, we have $\vb^{*T} \bSigma \vb^* = (\bSigma^{-1})_{11}$. In this case, the Z-estimator $\tilde \theta$ is efficient \citep{van2000asymptotic}, because the variance $ (\bSigma^{-1})_{11}$ coincides with the inverse of the information bound for $\theta$.

\section{Implications of the General Theoretical Framework}\label{secexample}

In this section, we apply the general theory of Section \ref{masterthm:sec} to the motivating examples we listed in the introduction.

\subsection{Linear model and Instrumental Variables Regression} \label{dantzigselectorsection}
In this section we consider the linear model via Dantzig Selector and the instrumental variables regression. As seen in the introduction, the instrumental variables regression can be viewed as a generalization of the linear regression, by substituting $\bW \equiv \bX$. For simplicity, we only present the results for the linear regression and defer the development of the inference theory for instrumental variables regression to Appendix \ref{dantzigselector:IVR} of Supplementary Material.

Recall that $\bbeta := (\theta, \bgamma)$, and let $\bSigma_n = n^{-1} \Xb^T \Xb$ be the empirical estimator of $\bSigma_{\bX}$. Our goal is to construct confidence intervals for the parameter $\theta$. In the linear regression case, we can easily show that $\hat S(\bbeta)$ reduces to
$$
\hat S(\bbeta)=n^{-1}\hat\vb^T\Xb^T(\Xb\bbeta - \bY),
$$
where 
\begin{align}
\hat\vb=\argmin \|\vb\|_1,~~\textrm{subject to}~~ \|\vb^T\bSigma_{n} -\eb_1\|_{\infty}\leq \lambda', \label{lambda:prime:dantz}
\end{align}
is an estimator of $\vb^* = \bSigma_{\bX}^{-1} \eb_1^T$. We impose the following assumption.

\begin{assumption}\label{subGausserr:design:bounded:mom:matr} Assume that the error $\varepsilon:=Y-\bX^T\bbeta^*$ and the predictor $\bX$ are both coordinate-wise sub-Gaussian, i.e.,
$$\|\varepsilon\|_{\psi_2} := K < \infty,  ~~~ \sup_{j \in \{1,\ldots,d\}} \|X_j\|_{\psi_2} := K_{\bX} < \infty,$$ 
for some fixed constants $K, K_{\bX} > 0$. Furthermore assume that the variance $\Var(\varepsilon) \geq C_{\varepsilon} > 0$, the random variables $\varepsilon$ and $\bX$ are independent, and the second moment matrix $\bSigma_{\bX}$ satisfies $\lambda_{\min}(\bSigma_{\bX}) \geq \delta > 0,$ where $\delta$ is some fixed constant.
\end{assumption}

To construct confidence intervals for $\theta$, we consider $\hat U_n = \hat \Delta^{-1} n^{1/2}(\tilde{\theta} - \theta^*)$, where $\tilde \theta$ is defined as the solution to $\hat S(\theta, \hat \bgamma) = 0$, and  
\begin{align}\label{delta:hat:dantzig}
\hat \Delta := \hat \vb^T \bSigma_n \hat \vb n^{-1} \sum_{i = 1}^n (Y_i - \bX_i^T \hat \bbeta)^2,
\end{align}
is an estimator of the asymptotic variance $\Delta := \vb^{*T}\bSigma_{\bX}\vb^* \Var(\varepsilon)$. 
In high dimensional models, it is often reasonable to assume that the vectors $\vb^*$ and $\bbeta^*$ are sparse. Let $s$ and $s_{\vb}$ denote the sparsity of $\bbeta^*$ and $\vb^*$ correspondingly, i.e., $\|\bbeta^*\|_0 = s$ and $\|\vb^*\|_0 = s_{\vb}$.
The next corollary of the general Theorem \ref{weakconvver2:CI} shows the asymptotic normality of $\hat U_n$ in linear models. To simplify the presentation of our result we will assume that $\|\vb^*\|_1$ is bounded, although this is not needed in our proofs.

\begin{corollary} \label{normdantzigfinal} 

Assume that Condition \ref{subGausserr:design:bounded:mom:matr} holds, and 
\begin{align*}
\max(s_{\vb}, s)\log d/\sqrt{n} = o(1), ~~~ \sqrt{\log d/n} = o(1).
\end{align*}
Then with $\lambda \asymp \sqrt{\log d/n}$ and $\lambda' \asymp \sqrt{\log d/n}$, $\hat U_n$ satisfies for any $t\in\RR$:
$$
\lim_{n \rightarrow \infty} |\PP^*(\hat U_n \leq t) - \Phi(t))| = 0.
$$
\end{corollary}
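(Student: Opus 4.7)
The plan is to verify, one by one, the abstract assumptions of Theorem~\ref{weakconvver2:CI} in the linear-model/Dantzig setting and then invoke that theorem, with the additional work of showing $\hat \Delta$ is a consistent estimator of $\Delta = \vb^{*T}\bSigma_{\bX}\vb^*\Var(\varepsilon)$. The key objects simplify drastically here because the estimating equation is linear: $\tb(\Zb,\bbeta) = n^{-1}\Xb^{T}(\Xb\bbeta-\bY)$, $\Tb(\Zb,\bbeta)=\bSigma_n$ (independent of $\bbeta$), $E_{\tb}(\bbeta)=\bSigma_{\bX}(\bbeta-\bbeta^*)$, $E_{\Tb}(\bbeta)=\bSigma_{\bX}$, and $\vb^*=\bSigma_{\bX}^{-1}\eb_1^T$. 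Note that Assumption~\ref{bounded:Jac:ass} is immediate because $\Tb$ is constant in $\bbeta$, and the map $\theta\mapsto \hat S(\hat\bbeta_\theta)$ is affine in $\theta$ (slope $\hat\vb^{T}\bSigma_n \eb_1^{T}$, which concentrates at $1$), giving continuity plus a unique root.

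First I would collect the two basic concentration bounds implied by sub-Gaussianity: a standard covering/Bernstein argument under Assumption~\ref{subGausserr:design:bounded:mom:matr} yields $\|\bSigma_n-\bSigma_{\bX}\|_{\max}=O_p(\sqrt{\log d/n})$ and $\|n^{-1}\Xb^{T}\bvarepsilon\|_\infty=O_p(\sqrt{\log d/n})$. From these, Assumption~\ref{noiseassumpCI} follows by direct algebra:
\begin{align*}
\tb(\Zb,\bbeta_\theta^*) - E_{\tb}(\bbeta_\theta^*) = (\bSigma_n-\bSigma_{\bX})(\bbeta_\theta^*-\bbeta^*) - n^{-1}\Xb^{T}\bvarepsilon,
\end{align*}
whose $\ell_\infty$ norm is $O_p(\sqrt{\log d/n})$ uniformly on a shrinking neighbourhood of $\theta^*$, so $r_1(n,\theta)\asymp\sqrt{\log d/n}$; condition (\ref{betastartassumpCIvstar}) follows with the same rate since $\|\vb^*\|_1=O(1)$; and for (\ref{lambdaprimeasumpCI}) one uses $\vb^{*T}\bSigma_{\bX}=\eb_1$ and the feasibility inequality $\|\hat\vb^{T}\bSigma_n-\eb_1\|_\infty\le\lambda'$ to get $r_3(n,\theta)\lesssim\lambda'\asymp\sqrt{\log d/n}$. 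The uniform boundedness of $\|E_{\tb}(\bbeta_\theta^*)\|_\infty$ and $\|\vb^{*T}[E_{\Tb}(\bbeta_\theta^*)]_{-1}\|_\infty$ follows from $\|\bSigma_{\bX}\|_{\max}<\infty$ and $\|\vb^*\|_1=O(1)$.

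Next I would invoke the standard Dantzig/CLIME analysis: under sub-Gaussian design with $\lambda_{\min}(\bSigma_{\bX})\ge\delta$, a restricted-eigenvalue/compatibility condition holds for $\bSigma_n$ on a high probability event, and the choices $\lambda\asymp\sqrt{\log d/n}$ and $\lambda'\asymp\sqrt{\log d/n}$ yield $\|\hat\bbeta-\bbeta^*\|_1=O_p(s\sqrt{\log d/n})$ and $\|\hat\vb-\vb^*\|_1=O_p(s_\vb\sqrt{\log d/n})$, verifying Assumption~\ref{ass:consistencyassumpweakcn} with $r_4\asymp s\sqrt{\log d/n}$ and $r_5\asymp s_\vb\sqrt{\log d/n}$. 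Then the scaling condition (\ref{assumpone}) reduces to $\max(s,s_\vb)\log d/\sqrt{n}=o(1)$, which is exactly the hypothesis. The CLT assumption is routine: $\sqrt{n}\,S(\bbeta^*)=-n^{-1/2}\vb^{*T}\Xb^T\bvarepsilon$ is a normalised sum of i.i.d.\ mean-zero sub-Gaussian terms with variance $\sigma^2=\vb^{*T}\bSigma_{\bX}\vb^*\Var(\varepsilon)\ge C_\varepsilon/\lambda_{\max}(\bSigma_{\bX})>0$, and Lindeberg's condition is trivial. The opposing-signs requirement (\ref{opposing:signs}) reduces to $\vb^{*T}E_{\tb}(\bbeta_{\theta^*\pm\epsilon}^*)=\pm\epsilon$, which trivially has opposite signs.

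The one remaining and, frankly, the most tedious step is showing $\hat\Delta\xrightarrow{p}\Delta$. I would split $\hat\Delta$ as a product and argue each factor separately: (i) $\hat\vb^T\bSigma_n\hat\vb\to\vb^{*T}\bSigma_{\bX}\vb^*$ via the decomposition $\hat\vb^T\bSigma_n\hat\vb-\vb^{*T}\bSigma_{\bX}\vb^*=(\hat\vb-\vb^*)^T\bSigma_n\hat\vb+\vb^{*T}(\bSigma_n-\bSigma_{\bX})\hat\vb+\vb^{*T}\bSigma_{\bX}(\hat\vb-\vb^*)$, bounding each piece by $\|\hat\vb-\vb^*\|_1$ and $\|\bSigma_n-\bSigma_{\bX}\|_{\max}$ (times $\|\vb^*\|_1$ or $\|\hat\vb\|_1$, the latter $O_p(1)$ by feasibility); (ii) $n^{-1}\sum_i(Y_i-\bX_i^T\hat\bbeta)^2\to\Var(\varepsilon)$ by expanding $Y_i-\bX_i^T\hat\bbeta=\varepsilon_i-\bX_i^T(\hat\bbeta-\bbeta^*)$ and controlling the cross-term $|n^{-1}\sum_i\varepsilon_i\bX_i^T(\hat\bbeta-\bbeta^*)|\le\|n^{-1}\Xb^T\bvarepsilon\|_\infty\|\hat\bbeta-\bbeta^*\|_1$ and the quadratic term $(\hat\bbeta-\bbeta^*)^T\bSigma_n(\hat\bbeta-\bbeta^*)\le\|\bSigma_n\|_{\max}\|\hat\bbeta-\bbeta^*\|_1^2$, both $o_p(1)$ under the scaling. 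Given all pieces, Theorem~\ref{weakconvver2:CI} gives $\hat U_n\rightsquigarrow N(0,1)$, which is the conclusion. The main technical obstacle is not any single step but collecting the Dantzig-style rates under sub-Gaussian design together with the variance-consistency argument; each step is standard but the bookkeeping to get the scaling $\max(s,s_\vb)\log d/\sqrt{n}=o(1)$ to close up $(\ref{assumpone})$ is what determines the sparsity requirement in the corollary.
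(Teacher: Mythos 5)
Your proposal is correct and follows essentially the same route as the paper's own proof: verify the concentration, $L_1$-consistency, scaling, CLT and bounded-Jacobian conditions of Theorem~\ref{weakconvver2:CI} via $\|\bSigma_n-\bSigma_{\bX}\|_{\max}=O_p(\sqrt{\log d/n})$, $\|n^{-1}\Xb^T\bvarepsilon\|_{\infty}=O_p(\sqrt{\log d/n})$, the feasibility inequality for $\hat\vb$, and the standard Dantzig $L_1$ rates, and then establish $\hat\Delta\rightarrow_p\Delta$ by the same factor-by-factor decomposition used in the paper's Proposition on variance consistency. The one small imprecision is the lower bound $\sigma^2\geq C_{\varepsilon}/\lambda_{\max}(\bSigma_{\bX})$, since $\lambda_{\max}(\bSigma_{\bX})$ need not be bounded under Assumption~\ref{subGausserr:design:bounded:mom:matr}; replace it with the elementary bound $\vb^{*T}\bSigma_{\bX}\vb^*=(\bSigma_{\bX}^{-1})_{11}\geq(\bSigma_{\bX,11})^{-1}\geq(2K_{\bX}^{2})^{-1}$, as the paper does.
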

The proof of Corollary \ref{normdantzigfinal} can be found in Appendix \ref{arxivsupp:main:DantzigProofs} of the Supplementary Material.
The conditions in Corollary \ref{normdantzigfinal} agree with the existing conditions in  \cite{zhang2014confidence,van2013asymptotically}. In fact, under the additional assumption $s_{\vb}^3/n = o(1)$, we can show that  $\hat U_n$ is uniformly asymptotically normal; see Remark \ref{arxivsupp:main:unif:CI:danztig} of the Supplementary Material. Finally, we comment that a similar asymptotic normality result under the instrumental variables regression is shown in Corollary \ref{normdantzigfinal:IVR} of the Supplementary Material.

\subsection{Graphical Models} \label{edgetestingsec}

We begin with introducing several assumptions which we need throughout the development. First let $\bSigma_{\bX}$ satisfy $\lambda_{\min}(\bSigma_{\bX}) \geq \delta > 0$, where $\delta$ is some fixed constant. Similarly to Section \ref{dantzigselectorsection} we assume that  $\bX$ is coordinate-wise sub-Gaussian, i.e.,
\begin{align}K_{\bX} := \max_{j \in \{1,\ldots,d\}} \|X_j\|_{\psi_2} < \infty,\label{subGausserr:CLIME}\end{align}
for some fixed constant $K_{\bX} > 0$. Our goal is to construct confidence intervals for a component of $\bOmega^*$, where $\bOmega^*=(\bSigma_{\bX})^{-1}$. Without loss of generality, we focus on the parameter $\Omega^*_{1m}$ for some $m \in \{1,\ldots, d\}$. When $\bX$ are coming from a Gaussian distribution, the confidence intervals for $\Omega^*_{1m}$ provide uncertainty assessment on whether $X_1$ is independent of $X_m$ given the rest of the variables. 

There are a number of recent works considering the inferential problems for  Gaussian graphical models \citep{jankova2014confidence,chen2015asymptotically,ren2015asymptotic,liu2013} and Gaussian copula graphical models \citep{gu2015local,barber2015rocket}. Our framework differs from these existing procedures in the following two aspects. First, our method is based on the estimating equations rather than the likelihood and (node-wise) pseudo-likelihood. Second, we only require each component of $\bX$ is sub-Gaussian, whereas the majority of the existing methods require the data to be sampled from Gaussian or Gaussian copula distributions. 

Let $\bbeta^*:=\bOmega^*_{* m}$, be the $m$\textsuperscript{th} column of $\bOmega^*$. Then, the CLIME estimator of $\bbeta^*$ given by (\ref{generalCLIMEopt}) reduces to 
$$
\hat{\bbeta}=\argmin\|\bbeta\|_1,~~\textrm{subject to}~~ \|\bSigma_n\bbeta-\eb_m^T\|_{\infty}\leq\lambda,
$$
where $\eb_m^T$ is a unit column vector with $1$ in the $m$\textsuperscript{th} position and $0$ otherwise. Phrasing this problem in the terminology of Section \ref{masterthm:sec}, we can construct  $d$ estimating equations:  $\tb(\Xb, \bbeta) = \bSigma_n \bbeta - \eb_m^T$. Let us decompose the vector $\bbeta$ as $\bbeta := (\theta, \bgamma^T)^T$. Then the projected estimating equation for $\theta$ is given by
$$
\hat S(\bbeta)=\hat{\vb}^T(\bSigma_n\bbeta-\eb_m^T),
$$
where
\begin{align}\label{lambda:prime:clime}
\hat{\vb}=\argmin\|\vb\|_1,~~\textrm{such that}~~ \|\vb^T\bSigma_n-\eb_1\|_{\infty}\leq\lambda'.
\end{align}
Here $\hat \vb$ is an estimate of $\vb^* := (\bSigma_{\bX})^{-1}_{*1} = \bOmega^*_{*1}$. Notice that, due to the symmetry of $\hat{\bbeta}$ and $\hat{\vb}$, if we take $\lambda=\lambda'$, it suffices to simply solve the CLIME optimization (\ref{generalCLIMEopt}) once in order to evaluate $\hat S(\hat\bbeta)$, as $\hat \bbeta = \hat \bOmega_{*m}$ and $\hat \vb = \hat \bOmega_{*1}$. This pleasant consequence for CLIME shows that in this special case the number of tuning parameters in the generic procedure described in Section \ref{LZEHDSsection} can be reduced to $1$, and hence the computation is simplified.

The solution $\tilde \theta$ to the equation $\hat S(\theta, \hat \bgamma) = 0$ has the following closed form expression:
\begin{align} \label{onestepCLIME}
\tilde \theta = \hat \theta - \frac{\hat \vb^T (\bSigma_n\hat \bbeta - \eb_m^T)}{\hat \vb^T \bSigma_{n,*1}}.
\end{align}
To establish the asymptotic normality of $\tilde \theta$, we impose the following assumption.
\begin{assumption} \label{varXXTassump} There exists a constant $\alpha_{\min} > 0$ such that:
 \begin{align*}
\Delta  \geq \alpha_{\min}  \|\bbeta^*\|_2^{2}\|{\vb}^{*}\|_2^{2}, ~~\textrm{where}~~\Delta = \Var(\vb^{*T} \bX^{\otimes 2} \bbeta^{*}). 
 \end{align*}
\end{assumption}
We note that Assumption \ref{varXXTassump} is natural. For example when $\bX \sim N(0,\bSigma_{\bX})$, Isserlis' theorem yields that for any two vectors $\bxi$ and $\btheta$,
\begin{align*}
\Var(\bxi^T \bX^{\otimes 2}\btheta) &=  (\bxi^T\bSigma_{\bX}\bxi)(\btheta^T\bSigma_{\bX}\btheta) + (\bxi^T \bSigma_{\bX} \btheta)^2 \geq  \lambda^2_{\min}(\bSigma_{\bX}) \|\bxi\|_2^2 \|\btheta\|_2^2,
\end{align*}
which clearly implies Assumption \ref{varXXTassump}, if $\lambda^2_{\min}(\bSigma_{\bX})$ is lower bounded by a constant. 

Denote $\|\bbeta^*\|_0 = s$ and $\|\vb^*\|_0 = s_{\vb}$. To simplify the presentation of our result we will assume that $\|\vb^*\|_1$ and $\|\bbeta^*\|_1$ are bounded quantities, although this is not needed in our proofs. The following corollary yields the asymptotic normality of $\hat U_n = \hat \Delta^{-1/2} n^{-1/2}(\tilde \theta - \theta^*)$, where $\hat \Delta := n^{-1}\sum_{i = 1}^n (\hat \vb^T (\bX_i^{\otimes 2} - \bSigma_n)\hat \bbeta)^2$ is an estimator of $\Delta$. 
\begin{corollary} \label{IFCLIME} Let Assumption \ref{varXXTassump} and (\ref{subGausserr:CLIME}) hold. Furthermore, assume that
\begin{align}\label{CLIMEratecond}
\max(s^2_{\vb}, s^2) & \log d\log(nd)/n = o(1), ~~~~ \exists ~ k > 2: (s_{\vb}s)^{k}/n^{k - 1} = o(1),
\end{align}
and $\Var((\vb^{*T}\bX^{\otimes 2}\bbeta^{*})^2) = o({n})$, $\EE( \vb^{*T} \bX^{\otimes 2} \bbeta^* )^2 = O(1)$. Let the tuning parameters be $\lambda \asymp \sqrt{\log d/n}$ and $\lambda' \asymp \sqrt{\log d/n}$. Then for all $t\in\RR$: 
$$
\lim_{n \rightarrow \infty} |\PP^*(\hat U_n \leq t) - \Phi(t)| = 0.
$$
\end{corollary}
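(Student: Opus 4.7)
The plan is to verify each of the hypotheses of the master Theorem \ref{weakconvver2:CI}, together with a variance consistency statement of the form envisaged in the remark following that theorem. The CLIME setting is especially favorable because $\tb(\Xb,\bbeta) = \bSigma_n\bbeta - \eb_m^T$ is linear in $\bbeta$, so the Jacobian $\Tb(\Xb,\bbeta) = \bSigma_n$ does not depend on $\bbeta$ at all. Consequently Assumption \ref{bounded:Jac:ass} is trivial (the derivative vanishes), and the ``opposing signs'' condition (\ref{opposing:signs}) follows immediately from $\vb^{*T}E_{\tb}(\bbeta^*_\theta) = \vb^{*T}\bSigma_{\bX}(\bbeta^*_\theta - \bbeta^*) = (\theta-\theta^*)\vb^{*T}\bSigma_{\bX,*1} = \theta - \theta^*$, where the last equality uses $\vb^{*T}\bSigma_{\bX} = \eb_1^T$ since $\vb^* = \bOmega^*_{*1}$.

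Next I would verify the concentration Assumption \ref{noiseassumpCI}. The coordinate-wise sub-Gaussianity of $\bX$ (\ref{subGausserr:CLIME}) yields the standard bound $\|\bSigma_n - \bSigma_{\bX}\|_{\max} = O_P(\sqrt{\log d/n})$, and hence $\|(\bSigma_n-\bSigma_{\bX})\bbeta^*_\theta\|_\infty \leq \|\bSigma_n-\bSigma_{\bX}\|_{\max}\|\bbeta^*_\theta\|_1$ gives $r_1(n,\theta) \asymp \sqrt{\log d/n}$. For $r_2$, I would use sub-exponential concentration of $\vb^{*T}(\bX^{\otimes 2} - \bSigma_{\bX})\bbeta^*_\theta$, obtaining the same rate. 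For $r_3$ I would split $\hat\vb^T\bSigma_n - \vb^{*T}\bSigma_{\bX} = (\hat\vb-\vb^*)^T\bSigma_n + \vb^{*T}(\bSigma_n - \bSigma_{\bX})$ and invoke the feasibility constraint (\ref{lambda:prime:clime}), namely $\|\hat\vb^T\bSigma_n - \eb_1\|_\infty \leq \lambda' \asymp \sqrt{\log d/n}$, which yields $r_3(n,\theta) \asymp \sqrt{\log d/n}$. The $L_1$ consistency required in Assumption \ref{ass:consistencyassumpweakcn} follows from the existing CLIME analysis: since $\bbeta^*$ and $\vb^*$ are feasible with high probability and the CLIME objective minimizes $\|\cdot\|_1$, standard arguments give $r_4(n), r_5(n) \asymp \max(s,s_{\vb})\sqrt{\log d/n}$. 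Plugging these rates into the scaling Assumption \ref{clt:scaling:ass} reduces to $\max(s,s_{\vb})\log d/\sqrt{n} = o(1)$, which follows from the first part of (\ref{CLIMEratecond}).

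For the CLT Assumption \ref{CLTcond}, observe that $S(\bbeta^*) = n^{-1}\sum_{i=1}^n \vb^{*T}(\bX_i^{\otimes 2} - \bSigma_{\bX})\bbeta^*$ is a centered i.i.d. sum of sub-exponential summands, with per-term variance $\sigma^2 = \Delta$ which is bounded below by Assumption \ref{varXXTassump}. The condition $\Var((\vb^{*T}\bX^{\otimes 2}\bbeta^*)^2) = o(n)$ (equivalently bounded fourth moments that do not grow too fast) lets me verify Lyapunov's condition and conclude $\sigma^{-1}\sqrt{n}S(\bbeta^*) \rightsquigarrow N(0,1)$. Theorem \ref{weakconvver2:CI} then produces asymptotic normality of $\sqrt{n}(\tilde\theta - \theta^*)/\sigma$.

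The last and most delicate step is to establish consistency $\hat\Delta/\Delta \to_P 1$, since the normalization in the statement involves $\hat\Delta$. I would decompose $\hat\Delta - \Delta$ as the sum of (i) the oracle difference $n^{-1}\sum_i(\vb^{*T}(\bX_i^{\otimes 2}-\bSigma_{\bX})\bbeta^*)^2 - \Delta$, handled by Markov's inequality using $\Var((\vb^{*T}\bX^{\otimes 2}\bbeta^*)^2) = o(n)$, plus (ii) cross terms produced by replacing $(\vb^*,\bbeta^*,\bSigma_{\bX})$ by $(\hat\vb,\hat\bbeta,\bSigma_n)$. The main obstacle is controlling (ii): each cross term is a quadrilinear form in $\hat\vb-\vb^*$, $\hat\bbeta-\bbeta^*$, and the empirical tensor $n^{-1}\sum_i\bX_i^{\otimes 4}$. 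I would bound these by H\"older's inequality, pushing the perturbations into $L_1$ norms (controlled by the $L_1$ consistency rates) and the tensor into an $L_\infty$-max norm (controlled by sub-Gaussian maximal inequalities). This is precisely where the higher-order sparsity condition $(s_{\vb}s)^k/n^{k-1} = o(1)$ and the assumption $\EE(\vb^{*T}\bX^{\otimes 2}\bbeta^*)^2 = O(1)$ enter, ensuring that these multilinear remainders are $o_P(1)$. Combining (i), (ii), and Slutsky's theorem delivers the stated convergence of $\hat U_n$ to $\Phi$.
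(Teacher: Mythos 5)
Your proposal is correct and follows the paper's proof almost step for step: both verify the consistency and asymptotic-normality theorems of Section \ref{masterthm:sec} by checking Assumptions \ref{noiseassumpCI}--\ref{clt:scaling:ass} using $\|\bSigma_n-\bSigma_{\bX}\|_{\max}=O_p(\sqrt{\log d/n})$, the feasibility/minimality argument for the $L_1$ rates of $\hat\bbeta$ and $\hat\vb$, the triviality of the Jacobian-derivative condition, and then a separate consistency argument for $\hat\Delta$ via an oracle-plus-remainder decomposition. The one place where you diverge is in the bookkeeping of which hypothesis powers which sub-step. The paper verifies Lyapunov's condition (Lemma \ref{arxivsupp:main:normalityCLIME}) through sub-exponential $k$\textsuperscript{th}-moment bounds on $\|(\bX^{\otimes 2}-\bSigma_{\bX})_{S_{\vb},S}\|_F$, which is exactly where $\exists\, k>2:(s_{\vb}s)^k/n^{k-1}=o(1)$ enters, and it reserves $\Var((\vb^{*T}\bX^{\otimes 2}\bbeta^*)^2)=o(n)$ for the Chebyshev step on the oracle term of $\hat\Delta$ (Lemma \ref{arxivsupp:main:pluginCLIME}); the cross terms there are controlled by $\max\|\bX_i^{\otimes2}\|_{\max}=O_p(\log(nd))$ together with the \emph{first} part of (\ref{CLIMEratecond}), not the $(s_{\vb}s)^k$ condition. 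You do the reverse: you feed the fourth-moment condition into Lyapunov and attribute $(s_{\vb}s)^k/n^{k-1}$ to the multilinear remainders in $\hat\Delta$. Your route for the CLT is legitimate --- since $\Delta$ is bounded below (by Assumption \ref{varXXTassump} together with $\|\vb^*\|_2,\|\bbeta^*\|_2\geq(2K_{\bX}^2)^{-1}$), the bound $\EE|\vb^{*T}(\bX^{\otimes2}-\bSigma_{\bX})\bbeta^*|^4\lesssim\Var((\vb^{*T}\bX^{\otimes2}\bbeta^*)^2)+O(1)=o(n)$ gives Lyapunov with $\delta=2$ directly --- and it arguably makes the $(s_{\vb}s)^k$ hypothesis redundant for the CLT, though you should note that the $\hat\Delta$ cross terms are already handled by the first part of (\ref{CLIMEratecond}) alone, so your placement of that hypothesis is not where it is actually needed.
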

The proof of Corollary \ref{IFCLIME} can be found in Appendix \ref{arxivsupp:main:edgetestproofsSEC} of the Supplementary Material. In addition, we provide a stronger result on uniform confidence intervals for $\theta$ in Corollary \ref{arxivsupp:main:unifconvCLIME} of the Supplementary Material. 
Once again, the first part of condition (\ref{CLIMEratecond}) agrees with \cite{ren2015asymptotic, liu2013}. In addition, when the data are known to be Gaussian one could use the alternative estimator $\tilde{\Delta}:=\hat v_1\hat \beta_m + \hat v_m\hat \beta_1$ of $\Delta$, which can also be shown to be consistent under the assumption $\max(s_{\vb}, s)\sqrt{\log d/n} = o(1)$. The second part of condition (\ref{CLIMEratecond}) is mild, since it is only slightly stronger than $n^{-1}s_{\vb} s = o(1)$. Unlike \cite{jankova2014confidence}, we do not assume irrepresentable conditions. 

\begin{remark}(Transelliptical Graphical Models) \label{trans:ell:graph:model}
Our estimating equation based methods for constructing confidence intervals can be extended to transelliptical graphical models \citep{liu2012transelliptical}. The key idea is to replace the same covariance matrix $\bSigma_n$ in (\ref{generalCLIMEopt}) and (\ref{lambda:prime:clime})  by 
$$
\hat S^{\tau}_{jk} = \begin{cases}\sin\left(\frac{\pi}{2} \hat \tau_{jk}\right), &j \neq k; \\ 1, &j = k, \end{cases}
$$
where
$$
\hat \tau_{jk} = \frac{2}{n(n-1)} \sum_{1 \leq i < i' \leq n} \sign \left((X_{ij} - X_{i'j})(X_{ik} - X_{i'k})\right).
$$
Similar to Corollary \ref{IFCLIME}, the asymptotic normality of the estimator $\tilde{\theta}$ is established. The details are shown in Appendix \ref{arxivsupp:main:SKEPTICCLIME} of the Supplementary Material.
\end{remark}

\subsection{Sparse Linear Discriminant Analysis} \label{sparseLDA}

In this section, we consider an application of the general theory to the sparse linear discriminant analysis problem.
The consistency and rates of convergence of the classification rule $\hat \psi(\bO)$ (\ref{sparseLDAeq}) have been established by \cite{cai2011direct} in the high dimensional setting. In the following, we apply the theory of Section \ref{masterthm:sec} to construct confidence intervals for $\theta$, where $\theta$ is the first component of $\bbeta$, i.e., $\bbeta = (\theta, \bgamma)$. Note that if $\theta=0$, then it implies that the first feature of $\bO$ is not needed in the Bayes rule $\psi(\bO)$. Hence, our procedure can be used to assess whether a certain feature is significant in the classification. 

By the identity $\bbeta^* = \bOmega \bdelta$, we can construct the  $d$-dimensional estimating equations  $\tb((\Xb, \Yb), \bbeta) = \hat\bSigma_n \bbeta - (\bar \bX - \bar \bY )$. Then the projected estimating equation for $\theta$ is given by
$$
\hat S(\bbeta)=\hat{\vb}^T(\hat \bSigma_n\bbeta - (\bar \bX - \bar \bY)),
$$
where
$$
\hat{\vb}=\argmin\|\vb\|_1,~~\textrm{such that}~~ \|\vb^T\hat \bSigma_n-\eb_1\|_{\infty}\leq\lambda',
$$
is an estimator of $\vb^{*} = (\bSigma^{-1})_{*1}$. Solving the equation $\hat S(\theta, \hat \bgamma) = 0$ gives us the Z-estimator $\tilde \theta$.
To establish the asymptotic normality of $\tilde \theta$, we impose the following assumption.

\begin{assumption} \label{var:bound:LDA} Assume that $\bU$ satisfies the following moment assumption:
\begin{align*}
\Var(\vb^{*T} \bU^{\otimes 2} \bbeta^*) \geq V_{\min} \|\vb^{*}\|_2^2 \|\bbeta^*\|_2^2,
\end{align*}
where $V_{\min}$ is a positive constant. In addition let $K_{\bU} = \max_{j \in \{1,\ldots, d\}} \|U_j\|_{\psi_2} < \infty$.
\end{assumption}
As seen in the comments on Assumption \ref{varXXTassump}, we can similarly show that Assumption \ref{var:bound:LDA} holds if $\bU\sim N(0,\bSigma)$ and $\lambda^2_{\min}(\bSigma)$ is lower bounded by a positive constant.
We define $V_1 := \Var(\vb^{*T}\bU^{\otimes 2}\bbeta^* + \alpha^{-1} \vb^{*T}\bU)$, $V_2 := \Var(\vb^{*T}\bU^{\otimes 2}\bbeta^* - (1 - \alpha)^{-1} \vb^{*T}\bU)$, where $\frac{n_1}{n}=\alpha+o(1)$ for some $0 < \alpha < 1$. Denote 
\begin{align}
\Delta := \alpha V_1 + (1-\alpha)V_2, \label{deltasparseLDAdef}
\end{align} 
and $\hat U_n := \hat \Delta^{-1/2}n^{1/2} (\tilde \theta - \theta^*)$, where $\hat \Delta$ is some consistent estimator of $\Delta$. The explicit form of $\hat \Delta$ is complicated, and we defer its expression to Appendix \ref{arxivsupp:main:proofsforLDPapp} of the Supplementary Material. 
Denote $\|\bbeta^*\|_0 = s$ and $\|\vb^*\|_0 = s_{\vb}$. Once again for simplicity of the presentation we assume that $\|\vb^*\|_1$ and $\|\bbeta^*\|_1$ are bounded. We obtain the following asymptotic normality result.

\begin{corollary} \label{IFsparseLDA} Assume that $\lambda_{\min}(\bSigma) > \delta$ for some constant $\delta>  0$, and let Assumption \ref{var:bound:LDA} hold. If 
\begin{equation}\label{eqconLDA}
\max(s_{\vb},s)\log d/\sqrt{n} = o(1), ~~~~ \exists~ k > 2: (s_{\vb}s)^{k}/n^{k - 1} = o(1),
\end{equation}
holds and $\lambda \asymp \sqrt{\log d/n}$ and $\lambda' \asymp \sqrt{\log d/n}$, then for each $t\in \RR$:
$$
\lim_{n \rightarrow \infty} |\PP^*(\hat U_n < t) - \Phi(t)| = 0.
$$
\end{corollary}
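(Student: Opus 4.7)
My plan is to deduce Corollary \ref{IFsparseLDA} from the master asymptotic normality result in Theorem \ref{weakconvver2:CI}, so the bulk of the work is verifying the four structural assumptions (plus the consistency of $\hat\Delta$) in the LDA setup. The Jacobian here is the constant matrix $\Tb(\Zb,\bbeta)=\hat\bSigma_n$, so Assumption \ref{bounded:Jac:ass} is immediate: the derivative $\partial_\theta[\Tb]_{*1}$ vanishes identically. Moreover, because $E_{\tb}(\bbeta^*_\theta)=\bSigma(\bbeta^*_\theta-\bbeta^*)$ and $\vb^{*T}\bSigma=\eb_1$ by construction of $\vb^*=\bOmega_{*1}$, we have $\vb^{*T}E_{\tb}(\bbeta^*_\theta)=\theta-\theta^*$, which instantly gives the opposing-signs condition \eqref{opposing:signs} and the boundedness conditions in the tail of Assumption \ref{noiseassumpCI}.

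Next I would verify the concentration conditions \eqref{betastartassumpCI}--\eqref{lambdaprimeasumpCI}. Writing $\bX_i=\bmu_1+\bU_i$, $\bY_j=\bmu_2+\bU_{n_1+j}$, one has $\tb(\Zb,\bbeta^*_\theta)-E_{\tb}(\bbeta^*_\theta)=(\hat\bSigma_n-\bSigma)\bbeta^*_\theta-(\bar\bU_1-\bar\bU_2)$, and the usual sub-Gaussian Bernstein-type bounds (\ref{subGausserr:CLIME}-style, applied to $U_jU_k$ and to $U_j$ separately) give $\|\hat\bSigma_n-\bSigma\|_{\max}=O_p(\sqrt{\log d/n})$ and $\|\bar\bU_1-\bar\bU_2\|_\infty=O_p(\sqrt{\log d/n})$, from which $r_1(n,\theta)\asymp\sqrt{\log d/n}$ locally in $\theta$; $r_2(n,\theta)$ inherits the same order using $\|\vb^*\|_1=O(1)$. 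For \eqref{lambdaprimeasumpCI}, since $\Tb\equiv\hat\bSigma_n$ does not depend on $\bbeta$, we write $\hat\vb^T\hat\bSigma_n-\vb^{*T}\bSigma=(\hat\vb-\vb^*)^T\hat\bSigma_n+\vb^{*T}(\hat\bSigma_n-\bSigma)$, whose $\ell_\infty$ norm is $O_p(\lambda')+O_p(\sqrt{\log d/n})$ by the CLIME defining constraint, yielding $r_3(n,\theta)\asymp\sqrt{\log d/n}$. Assumption \ref{ass:consistencyassumpweakcn} then follows from the standard Dantzig/CLIME $\ell_1$ rates under $\lambda_{\min}(\bSigma)>\delta$ and the sub-Gaussian assumption: $r_4(n)\asymp s\sqrt{\log d/n}$ and $r_5(n)\asymp s_{\vb}\sqrt{\log d/n}$. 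With these rates, the scaling condition \eqref{assumpone} reduces to $n^{1/2}\max(s,s_\vb)^2\log d/n=o(1)$, which is exactly the first part of \eqref{eqconLDA}.

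The CLT Assumption \ref{CLTcond} is the main analytical work. Substituting the decompositions above, the score reduces to
\begin{align*}
n^{1/2}S(\bbeta^*)=n^{-1/2}\!\!\sum_{i=1}^{n_1}\!\bigl[\vb^{*T}(\bU_i-\bar\bU_1)^{\otimes 2}\bbeta^*-\tfrac{n}{n_1}\vb^{*T}\bU_i\bigr]+n^{-1/2}\!\!\sum_{i=n_1+1}^{n}\!\bigl[\vb^{*T}(\bU_i-\bar\bU_2)^{\otimes 2}\bbeta^*+\tfrac{n}{n_2}\vb^{*T}\bU_i\bigr].
\end{align*}
Centering the quadratic pieces (which introduces only an $O_p(n^{-1/2})$ correction) produces two sums of i.i.d., zero-mean, sub-exponential summands with variances $V_1$ and $V_2$ respectively, so the Lindeberg/Lyapunov CLT applies (the $\psi_1$ bound on $\vb^{*T}\bU^{\otimes 2}\bbeta^*$ plus $\psi_2$ on $\vb^{*T}\bU$ control the third moment), yielding $\sigma^{-1}n^{1/2}S(\bbeta^*)\rightsquigarrow N(0,1)$ with $\sigma^2=\alpha V_1+(1-\alpha)V_2=\Delta$. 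The lower bound $\sigma^2\geq C>0$ is supplied by Assumption \ref{var:bound:LDA} together with $\lambda_{\min}(\bSigma)>\delta$.

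Finally I would construct $\hat\Delta$ explicitly by plugging $\hat\vb,\hat\bbeta,\bar\bX,\bar\bY,\hat n_1/n$ into the empirical analogues of $V_1,V_2$, i.e. sample variances of $\hat\vb^T(\bX_i-\bar\bX)^{\otimes 2}\hat\bbeta-(n/n_1)\hat\vb^T(\bX_i-\bar\bX)$ on population 1 and the analogous quantity on population 2, then form $\hat\Delta=(n_1/n)\hat V_1+(n_2/n)\hat V_2$. Consistency of $\hat\Delta$ follows by standard replacement arguments: the $\ell_1$ error in $(\hat\vb,\hat\bbeta)$ multiplied by uniform $\ell_\infty$ bounds on centered quadratic/linear moments gives $\hat\Delta-\Delta=o_p(1)$ under the second part of \eqref{eqconLDA} (the $(s_\vb s)^k/n^{k-1}=o(1)$ condition for some $k>2$ is what lets a Markov-type truncation argument kill the fourth-moment terms arising from squaring $\vb^{*T}\bU^{\otimes 2}\bbeta^*$). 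The hardest step is this last one: pushing the $\ell_\infty$-by-$\ell_1$ bounds through a product of two estimated sparse vectors and a quadratic form in sub-Gaussian variables requires careful decomposition so that no term accumulates a factor worse than $s_\vb s\log d/n$, and this is precisely where the second rate condition in \eqref{eqconLDA} is needed. Once $\hat\Delta\to_p\Delta$, Theorem \ref{weakconvver2:CI} delivers the conclusion.
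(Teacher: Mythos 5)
Your proposal is correct and follows essentially the same route as the paper: both verify the hypotheses of Theorem \ref{weakconvver2:CI} term by term, prove the CLT by reducing $\hat\bSigma_n$ to its population-mean-centered version and applying Lyapunov's condition to the two i.i.d.\ population sums, and establish $\hat\Delta\rightarrow_p\Delta$ by $\ell_1$-times-$\ell_\infty$ replacement arguments (the paper's $\hat\Delta$ in Remark \ref{consistent:est:sLDA} exploits the cancellation of the $\Cov(\vb^{*T}\bU^{\otimes 2}\bbeta^*,\vb^{*T}\bU)$ cross terms in $\alpha V_1+(1-\alpha)V_2$, whereas yours estimates $V_1,V_2$ directly; both work). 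The one bookkeeping slip is where the second rate condition in \eqref{eqconLDA} is consumed: the paper uses it in the Lyapunov step of Lemma \ref{arxivsupp:main:normalityLDA}, where the $k$-th moment bound of order $(s_{\vb}s)^{k/2}$ from Lemma \ref{arxivsupp:main:sparseLDAineq} makes the Lyapunov ratio of order $(s_{\vb}s)^{k/2}/n^{k/2-1}$, so your bare appeal to ``third moments controlled by $\psi_1$/$\psi_2$ bounds'' would be short exactly this rate condition if it were reserved solely for the variance-estimator step.
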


The second part of (\ref{eqconLDA}) is similar to that in Corollary \ref{IFCLIME},  which is used to establish the Lyapunov's condition for central limit theorem.
The proof of Corollary \ref{IFsparseLDA} can be found in Appendix \ref{arxivsupp:main:proofsforLDPapp} of the Supplementary Material. 

\subsection{Stationary Vector Autoregressive Models}\label{SVAsec}

In this section we develop inferential methods for the lag-1 vector autoregressive models considered in the introduction. Let $\bbeta^* = \Ab_{*m}$ be the $m$\textsuperscript{th} column of $\Ab$. If one is only interested in  estimating $\bbeta^*$, instead of solving the whole problem one can only solve the corresponding subproblem:
\begin{align}
\hat \bbeta = \argmin_{\bbeta \in \RR^{d}} \|\bbeta\|_1, \mbox{ subject to } \|\Sb_0 \bbeta - \Sb_{1,*m}\|_{\infty} \leq \lambda. \label{optproblemvecautosubprob}
\end{align}

\cite{han2014direct} showed that procedure (\ref{optproblemvecauto}) consistently estimates $\Ab$ under certain sparsity assumptions. In the following, we apply our method to  construct confidence intervals for $\theta$, where $\theta$ is the first component of $\bbeta$, i.e., $\bbeta = (\theta, \bgamma^T)^T$. By the equation (\ref{eqA}), we can construct the  $d$-dimensional estimating equations  $\tb(\Xb, \bbeta) = \Sb_0 \bbeta - \Sb_{1,*m}$. Then the projected estimating equation for $\theta$ is given by
$$
\hat S(\bbeta) = \hat \vb^T(\Sb_0 \bbeta - \Sb_{1,*m}),
$$
where
$$
\hat \vb = \min_{\vb \in \RR^d} \|\vb\|_1, \mbox{ subject to } \|\vb^T \Sb_0 - \eb_1\|_{\infty} \leq \lambda',
$$
is an estimator of $\vb^{*T} = (\bSigma_{0}^{-1})_{1*}$. Define $\tilde{\theta}$ to be the solution to $\hat S(\theta, \hat \bgamma) = 0$.
Note that in this framework the estimating equation  $\tb(\Xb, \bbeta)=\Sb_0 \bbeta - \Sb_{1,*m}$ decomposes into a sum of dependent random variables. To handle this challenge, our main technical tool is the martingale central limit theorem and concentration inequalities for dependent random variables. To establish the asymptotic normality of $\tilde \theta$, we define the following classes of matrices:
\begin{align*}
\mathcal{M}(s) &:= \left\{\Mb \in \RR^{d \times d}: \max_{1 \leq j \leq d} \|\Mb_{*j}\|_0 \leq s, \|\Mb\|_1 \leq M, \|\Mb\|_2 \leq 1 - \epsilon \right\}, \\ \cL &:= \left\{\Mb \in \RR^{d \times d}: \|\Mb^{-1}\|_1 \leq M, \|\Mb\|_2 \leq M\right\},
\end{align*}
where $M$ and $1 > \epsilon > 0$ are some fixed constants. In the following, we can show that $T^{1/2}(\tilde \theta - \theta^*)$ converges weakly to  $N(0,\Delta)$, where
\begin{align}
	\Delta := \Psi_{mm} \vb^{*T}\bSigma_0\vb^*. \label{Delta:SVA}
\end{align}
Let $\hat \Delta = (\Sb_{0,mm} - \hat \bbeta^{T} \Sb_0 \hat \bbeta)(\hat \vb^{T}\Sb_0 \hat \vb)$ be an estimator of the asymptotic variance $\Delta$, and define $\hat U_n := \hat \Delta^{-1/2}T^{1/2}(\tilde \theta - \theta^*)$. 
We have the following asymptotic normality result.

\begin{corollary}\label{IFexpansionSVA} Let $\bSigma_0 \in \cL, \Ab \in \cM(s)$,  $\min_j \Psi_{jj} \geq C > 0$ and $\|\vb^*\|_0 = s_{\vb}$. Then there exist $\lambda \asymp \sqrt{\log d/T}$ and $\lambda' \asymp \sqrt{\log d/T}$ such that if
$\max(s_{\vb}, s)\log d  = o(\sqrt{T}),$
we have for all $t\in\RR$:
$$
\lim_{T \rightarrow \infty}|\PP^*(\hat U_n \leq t) - \Phi(t)| \rightarrow 0.
$$
\end{corollary}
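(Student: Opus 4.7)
The plan is to verify the hypotheses of Theorem \ref{weakconvver2:CI} in the VAR(1) setting, exploiting two structural simplifications. First, the estimating equation $\tb(\Xb,\bbeta)=\Sb_0\bbeta-\Sb_{1,*m}$ is affine in $\bbeta$, so the Jacobian $\Tb(\Xb,\bbeta)=\Sb_0$ is free of $\bbeta$ and $\partial[\Tb]_{*1}/\partial\theta\equiv 0$; thus Assumption \ref{bounded:Jac:ass} holds with $\psi\equiv 0$, and (\ref{lambdaprimeasumpCI}) in Assumption \ref{noiseassumpCI} reduces to $\|\hat\vb^T\Sb_0-\vb^{*T}\bSigma_0\|_{\infty}=o_p(1)$. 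Second, the map $\theta\mapsto\hat S(\hat\bbeta_{\theta})$ is affine in $\theta$ with slope $\hat\vb^T\Sb_{0,*1}\to 1$, so it has a unique root with probability tending to one. Using $\vb^{*T}\bSigma_0=\eb_1$ and $\bSigma_0\bbeta^*=\bSigma_{1,*m}$, one computes $\vb^{*T}E_{\tb}(\bbeta^*_\theta)=\theta-\theta^*$, which verifies the sign condition (\ref{opposing:signs}). The hypotheses $\bSigma_0\in\cL$, $\Ab\in\cM(s)$ further supply $\|\vb^*\|_1\le M$ and $\|\bbeta^*\|_1\le M(1-\epsilon)$, which makes the supremum conditions on $\|E_\tb(\bbeta^*_\theta)\|_\infty$ and $\|\vb^{*T}[E_\Tb(\bbeta^*_\theta)]_{-1}\|_\infty$ in Assumption \ref{noiseassumpCI} immediate.

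\textbf{Concentration and $L_1$ consistency (the main obstacle).} The hardest step is to establish, under dependence, the max-norm concentration
\begin{align*}
	\|\Sb_0-\bSigma_0\|_{\max}\vee\|\Sb_1-\bSigma_1\|_{\max}=O_p\bigl(\sqrt{\log d/T}\bigr).
\end{align*}
Because $\|\Ab\|_2\le 1-\epsilon$ and $\bPsi$ has bounded spectrum, $\{\bX_t\}$ is a strictly stationary Gaussian process with spectral density uniformly bounded in $M$ and $\epsilon$; I would invoke a Hanson--Wright-type deviation inequality for such processes (as developed by Basu--Michailidis) applied entry-wise with a union bound over the $d^2$ coordinates. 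Given these concentration bounds, a restricted eigenvalue property of $\Sb_0$ (inherited from $\lambda_{\min}(\bSigma_0)\ge M^{-1}$) together with the standard Dantzig--selector argument of \cite{han2014direct} yields $\|\hat\bbeta-\bbeta^*\|_1=O_p(s\sqrt{\log d/T})$ and, analogously applied to (\ref{vdef}), $\|\hat\vb-\vb^*\|_1=O_p(s_\vb\sqrt{\log d/T})$. Thus $r_1,r_2,r_3\asymp\sqrt{\log d/T}$ and $r_4\asymp s\sqrt{\log d/T}$, $r_5\asymp s_\vb\sqrt{\log d/T}$, so the scaling Assumption \ref{clt:scaling:ass} becomes $\max(s,s_\vb)\log d/\sqrt{T}=o(1)$, precisely the hypothesis of the corollary.

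\textbf{Martingale CLT.} Assumption \ref{CLTcond} is the second non-trivial step because the summands are not i.i.d. Using $\bbeta^*=\Ab_{*m}$ and the model identity $X_{t+1,m}=\bbeta^{*T}\bX_t+W_{t+1,m}$, direct substitution gives
\begin{align*}
	\Sb_0\bbeta^*-\Sb_{1,*m}=-\frac{1}{T-1}\sum_{t=1}^{T-1}\bX_t W_{t+1,m}+R_T,
\end{align*}
where $R_T$ collects the $O(1/T)$ mismatch between the $T$- and $(T-1)$-normalizations of $\bX_t\bX_t^T\bbeta^*$, and is negligible after multiplication by $\sqrt T$ under the concentration bounds above. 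Projecting on $\vb^*$ yields a triangular martingale-difference array $\xi_{T,t}:=\vb^{*T}\bX_t\cdot W_{t+1,m}$ with respect to $\cF_t=\sigma(\bX_s,\bW_s:s\le t)$, since $\bW_{t+1}$ is independent of $\cF_t$. Its conditional variance is $(\vb^{*T}\bX_t)^2\Psi_{mm}$, whose time average converges in probability to $\Psi_{mm}\vb^{*T}\bSigma_0\vb^*=\Delta$ by the ergodic theorem applied to the stationary Gaussian sequence. The Lindeberg condition follows from $\psi_2$-integrability of both factors (using $\|\vb^*\|_2\le M$, $\|\bSigma_0\|_2\le M$, and the boundedness of $\bPsi$). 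The Hall--Heyde martingale CLT then delivers $\sqrt T\,\vb^{*T}\tb(\Xb,\bbeta^*)\rightsquigarrow N(0,\Delta)$.

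\textbf{Variance consistency and conclusion.} It remains to show $\hat\Delta\overset{p}{\to}\Delta$. Taking variances in $X_{t+1,m}=\bbeta^{*T}\bX_t+W_{t+1,m}$ yields the one-step prediction identity $\bSigma_{0,mm}-\bbeta^{*T}\bSigma_0\bbeta^*=\Psi_{mm}$, so $\Sb_{0,mm}-\hat\bbeta^T\Sb_0\hat\bbeta\to\Psi_{mm}$ by the $L_1$-consistency of $\hat\bbeta$, boundedness of $\|\bbeta^*\|_1$, and $\|\Sb_0-\bSigma_0\|_{\max}=o_p(1)$; an entirely analogous argument using $\|\hat\vb-\vb^*\|_1=o_p(1)$ gives $\hat\vb^T\Sb_0\hat\vb\to\vb^{*T}\bSigma_0\vb^*$, and the product converges to $\Delta$. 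With Assumptions \ref{noiseassumpCI}--\ref{clt:scaling:ass} all verified and $\hat\Delta$ consistent, Theorem \ref{weakconvver2:CI} yields $\PP^*(\hat U_n\le t)\to\Phi(t)$ as required. The binding rate condition $\max(s,s_\vb)\log d=o(\sqrt T)$ is driven entirely by the concentration/scaling step, which is the principal technical difficulty throughout.
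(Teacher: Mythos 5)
Your proposal is correct and follows essentially the same route as the paper: verification of the hypotheses of Theorem \ref{weakconvver2:CI}, the identical martingale difference sequence $\vb^{*T}\bX_t W_{t+1,m}$ (the paper writes it as $\vb^{*T}\bX_t^{\otimes2}\bbeta^* - \vb^{*T}\bX_t X_{t+1,m}$, which equals $-\vb^{*T}\bX_t W_{t+1,m}$ by the model equation), and variance consistency via the prediction identity $\Psi_{mm}=\Sigma_{0,mm}-\bbeta^{*T}\bSigma_0\bbeta^*$. The only cosmetic differences are that the paper imports the max-norm concentration of $\Sb_0,\Sb_1$ and the $L_1$ rate for $\hat\bbeta$ directly from Theorem 4.1 of \cite{han2014direct} rather than re-deriving them via a Hanson--Wright argument, converts the Dantzig constraint into an $L_1$ bound through $\|\bSigma_0^{-1}\|_1\leq M$ rather than a restricted eigenvalue condition, and establishes the conditional-variance convergence and the Lindeberg condition by explicit Gaussian tail calculations rather than by the ergodic theorem and $\psi_2$-integrability.
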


Similarly to \cite{han2014direct}, we assume that the matrix $\Ab$ belongs to $\cM(s)$, for the estimation purpose.
 The proof of Corollary \ref{IFexpansionSVA} is given in Appendix \ref{SVAproofs} of the Supplementary material. In this section, we only discussed the lag-1 autoregressive model. As mentioned in \cite{han2014direct}, lag-$p$ models can be accommodated in the current lag-1 model framework. Thus, similar methods can be applied to construct confidence intervals under the lag-$p$ model. 

\section{Numerical Results}\label{numericalStudySec}

In this section we present numerical results to support our theoretical claims. Numerical studies on hypothesis testing are available from the authors upon request.

\subsection{Linear Model}

In this section, we compare our estimating equation (EE) based procedure with two existing methods: the desparsity \citep{van2013asymptotically} and the debias \citep{javanmard2013confidence} methods in linear models. Note that in their methods the LASSO estimator is used as an initial estimator. 

Our simulation setup is as follows. We first generate $n = 150$ observations $\bX \sim N(0, \bSigma_{\bX})$, where $\bSigma_{\bX}$ is a Toeplitz matrix with $\bSigma_{\bX,ij} = \rho^{|i-j|}, i,j = 1,\ldots, d$. We consider three scenarios for the correlation parameter $\rho =0.25, 0.4, 0.6$ and three  possible values of the dimension $d = 100, 200, 500$. We generate $\bbeta^*$ under two settings. In the first setting, $\bbeta^*$ is held fixed, i.e.,  $\bbeta^* = (1,1,1, 0,\ldots,0)^T$, and in the second setting we take $\bbeta^* = (U_1, U_2, U_3, 0,\ldots,0)^T$, where $U_i$ follows a uniform distribution on the interval $[0,2]$ for $i = 1,2,3$. The former setting is labeled as ``Dirac'' and the latter as ``Uniform'' in Table \ref{cov:table} below. Both settings have three nonzero values, i.e., $\|\bbeta^*\|_0=3$. The outcome is generated by $Y = \bX^T\bbeta^* + \varepsilon$, where $\varepsilon \sim N(0,1)$. The simulations are repeated 500 times. The tuning parameter $\lambda$ is selected by a 10-fold cross validation. The parameter $\lambda'$ is manually set to $\frac{1}{2}\sqrt{\log d/n}$. We discover that the result is robust with respect to the choice of $\lambda'$. Based on the selected $\lambda$ and $\lambda'$, we construct the confidence intervals for the first component of $\bbeta$.
\begin{table}[H] 
\caption{The empirical coverage probability of 95\% confidence intervals constructed by our estimating equation (EE) based method, desparsity  and debias methods under the linear model. The average length of confidence intervals is shown in parenthesis.} \label{cov:table}
\label{sizetable}
\begin{center}
\begin{tabular}{cccccccc}
\hline
\multicolumn {2}{c}{} & \multicolumn{3}{c}{Uniform} & \multicolumn{3}{c}{Dirac}  \\
\cline{3-8}
$d$ & method & $\rho =$ 0.25 &  $\rho =$ 0.4 &  $\rho =$ 0.6 & $\rho =$ 0.25 & $\rho =$ 0.4 & $\rho =$ 0.6\\
 \hline
\multirow{3}{*}{100} & EE & 0.94 (0.3) & 0.95 (0.4) & 0.95 (0.4)  & 0.95 (0.3) & 0.96 (0.4) & 0.95 (0.4)\\
& desparisty & 0.96 (0.4) & 0.96 (0.4) & 0.95 (0.4) & 0.96 (0.4) & 0.95 (0.4) & 0.94 (0.4)\\
& debias & 0.95 (0.3) & 0.95 (0.4) & 0.94 (0.4) & 0.95 (0.4) & 0.94 (0.4) & 0.95 (0.4)\\
\hline
\multirow{3}{*}{200} & EE & 0.95 (0.3) & 0.96 (0.4) & 0.95 (0.5) & 0.95 (0.4) & 0.94 (0.4) & 0.95 (0.5)\\
 & desparisty & 0.94 (0.3) & 0.95 (0.4) & 0.95 (0.4) & 0.95 (0.4) & 0.96 (0.4) & 0.96 (0.5)\\
& debias & 0.95 (0.4) & 0.95 (0.4) & 0.95 (0.4) & 0.96 (0.4) & 0.96 (0.4) & 0.95 (0.5)\\
\hline
\multirow{3}{*}{500} & EE &0.96 (0.4) & 0.96 (0.4) & 0.95 (0.4) & 0.95 (0.4) & 0.96 (0.4) &0.95 (0.4)\\
 & desparisty & 0.96 (0.4) & 0.95 (0.4) & 0.95 (0.4) & 0.96 (0.4)  & 0.96 (0.5) & 0.96 (0.5) \\
& debias & 0.95 (0.4) & 0.95 (0.4) & 0.94 (0.5)  & 0.95 (0.5) & 0.95 (0.5) & 0.94 (0.5)\\
\hline
\end{tabular}\label{sizeDantzig}
\end{center}
\end{table}

In Table \ref{cov:table}, we summarize the empirical coverage probability of 95\% confidence intervals and their average lengths of our estimating equation (EE) based method, desparsity  and debias methods. We find that the empirical coverage probability of our method is very close to the desired nominal level. In particular, our method tends to have shorter confidence intervals than the existing two methods, when the dimension is large (e.g. $d=500$).   



\subsection{Graphical Models} 

In this section we compare our estimating equation (EE) based procedure to the desparsity method proposed by \cite{jankova2014confidence} based on the graphical LASSO. We consider two scenarios. In the first scenario, our data generating process is similar to \cite{jankova2014confidence}. Specifically, we consider a tridiagonal precision matrix $\bOmega$ with $\Omega_{ii} = 1, i = 1,\ldots,d$ and $\Omega_{i,i+1} = \Omega_{i+1,i} = \rho \in \{0.3, 0.4\}$ for $i = 1,\ldots,d-1$. Then we generate data from the Gaussian graphical model $\bX\sim N(0,\bOmega^{-1})$. We have three settings for $d = 60, 70, 80$, and we fix the sample size at $n = 250$, which is comparable to \cite{jankova2014confidence}. In the second scenario, we generate data from the transelliptical graphical model. 
Specifically, the latent generalized concentration matrix $\bOmega$ is generated in the same way as in the previous scenario, and then is normalized so that $\bSigma = \bOmega^{-1}$, satisfies $\diag(\bSigma) = 1$. Next, a normally distributed random vector $\bZ$ is generated through $\bZ \sim N(0, \bSigma)$, and is transformed to a new random vector $\bX=(X_1,...,X_d)$, where 
$$
X_j = \frac{f(Z_j)}{\sqrt{\int f^2(t) \phi\left(t\right) dt}},
$$
and $f(t) := \sign(t)|t|^{\alpha}$ is a symmetric power transformation with $\alpha=5$ and $\phi(t)$ is the pdf of a standard normal distribution. Then $\bX$ follows from the transelliptical graphical model with the latent generalized concentration matrix $\bOmega$. Similarly, we consider $d = 60, 70, 80$, and fix the sample size at $n = 250$. The simulations are repeated 500 times. The tuning parameters $\lambda = \lambda'$ are set equal to $0.5\sqrt{\log d/n}$. In the following, we construct confidence intervals for the parameter $\Omega_{12}$.  

\begin{table}[H] 
\caption{The empirical coverage probability of 95\% confidence intervals constructed by our estimating equation (EE) based method and the desparsity  method under the Gaussian graphical model and transelliptical graphical model. The average length of confidence intervals is shown in parenthesis.}
\label{sizetable}
\begin{center}
\begin{tabular}{cccccc}
\hline
&  & \multicolumn{2}{c}{Gaussian} & \multicolumn{2}{c}{Transelliptical}\\ 
\cline{3-6}
$d$ & method & $\rho = 0.3$  & $\rho = 0.4$ & $\rho = 0.3$  & $\rho = 0.4$\\
\hline
\multirow{2}{*}{60} & EE & 0.95 (0.3) & 0.94 (0.2) & 0.93 (0.3) & 0.94 (0.3) \\
& desparsity & 0.95 (0.3) & 0.95 (0.3)& 0.80 (0.3) & 0.44 (0.3)\\
\hline
\multirow{2}{*}{70} & EE & 0.95 (0.3) & 0.94 (0.2) & 0.92 (0.3) & 0.94 (0.3) \\
& desparsity & 0.95 (0.3) & 0.96 (0.3) & 0.74 (0.3) & 0.47 (0.3) \\
\hline
\multirow{2}{*}{80} & EE & 0.95 (0.3) & 0.95 (0.2) & 0.93 (0.3) & 0.94 (0.4) \\
 & desparsity & 0.94 (0.3) & 0.94 (0.3) & 0.70 (0.3) & 0.44 (0.3) \\
  \hline
\end{tabular}\label{CLIME}
\end{center}
\end{table}

In Table \ref{sizetable}, we present the empirical coverage probability of 95\% confidence intervals and their average lengths of our estimating equation (EE) based method, and the desparsity method. As expected, under the Gaussian graphical model, the confidence intervals of both methods have accurate empirical coverage probability and similar lengths. However, the desparsity method which imposes the Gaussian assumption shows significant under-coverage  for the transelliptical graphical model. In contrast, the proposed method preserves the nominal coverage probability, 
which demonstrates the numerical advantage of our method.

\subsection{Real Data Analysis}

In this section, we construct confidence regions for the gene network from the atlas of gene expression in the mouse aging project dataset \citep{zahn2007agemap}. The same dataset has been previously analyzed in \cite{ning2013high}, where the authors focus on a subset of $d = 37$ genes belonging to the mouse vascular endothelial growth factor signaling pathway in $8$ tissues. The number of replicates within each tissue is $n = 40$. 

Our analysis proceeds conditionally on each of the $8$ tissue types -- Adrenal (A), Cerebrum (C), Hippocampus (H), Kidney (K), Lung (L), Muscle (M), Spinal (S), Thymus (T). Namely, for each type of tissue, we construct the confidence intervals of each edges in the gene network by using our method and the procedure proposed by \cite{jankova2014confidence}. In particular, our inference is based on the approach developed in Section \ref{edgetestingsec} with the sample covariance matrix replaced by the rank covariance matrix defined in Remark \ref{trans:ell:graph:model};  see also section \ref{arxivsupp:main:SKEPTICCLIME} in the Supplementary Material for details. The tuning parameter $\lambda$ is determined by the $5$-fold cross-validation, under the Gaussian likelihood function, for a grid of values in the interval $[0.3, 0.8]$,  which is selected based on the fact that $\sqrt{\log d/n} \approx 0.3$. The tuning parameter $\lambda'$ is set to be the same as $\lambda$. The tuning parameter in \cite{jankova2014confidence} is selected by the same cross-validation method. 

To perform the comparison we consider $2$ sets of genes which have been shown to be associated by biologists. The first set of genes -- Pla2g6, Ptk2 and Plcg2, comes from the group of PLC-$\gamma$ genes in the PKC-dependent pathway, and is crucial for ERK phosphorylation and proliferation \citep{holmes2007vascular}. The second set of genes is comprised of Mapk13, Mapk14 and Mapkapk2, which are related to the migration of endothelial cells. 
Instead of plotting confidence intervals for all the edges in the gene network, in Figure \ref{setofgenesone} we only plot confidence intervals for the $3$ edges connecting genes Pla2g6, Ptk2 and Plcg2, and genes Mapk13, Mapk14 and Mapkapk2, within each of the $8$ tissues. As we see from the plot, while most of the point estimates of our method and \cite{jankova2014confidence} are close, their variances differ drastically. The main reason is that in this dataset the gene expression values are highly non-Gaussian; see \cite{ning2013high} for demonstration. Thus, the inference procedure based on the Gaussian assumption \citep{jankova2014confidence} seems to provide inaccurate results with very wide confidence intervals. In contrast, the proposed method which relaxes the Gaussian assumption, produces confidence intervals with shorter length. In fact, most of the $95\%$ confidence intervals by the proposed method do not cover $0$, which concludes that these genes are statistically dependent. 
This result is consistent with the biological findings that genes Pla2g6, Ptk2 and Plcg2, and genes Mapk13, Mapk14 and Mapkapk2 are associated.



\begin{figure}[ht] 
\minipage{0.32\textwidth}
  \centering
  \includegraphics[width=\linewidth]{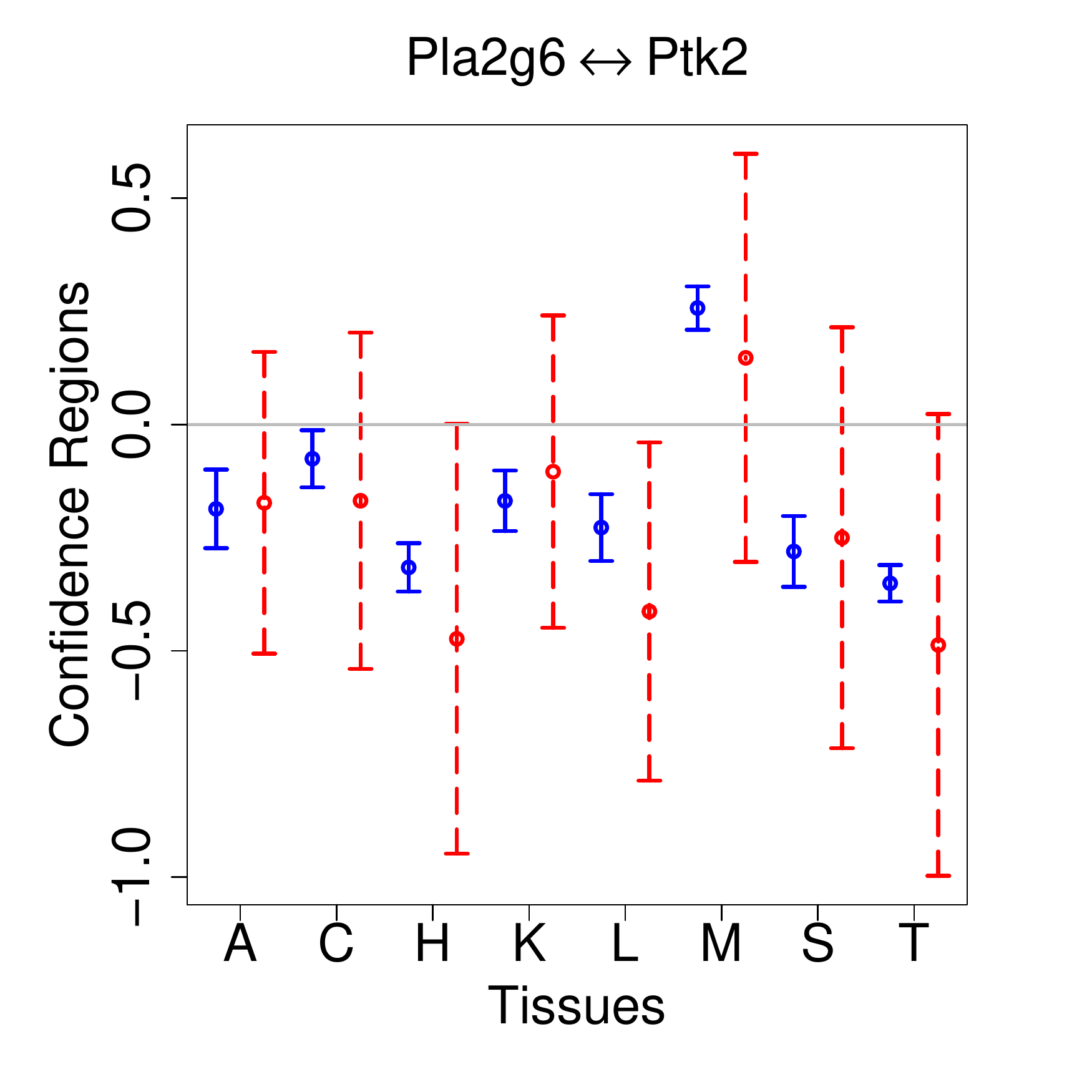}
  \label{arxivsupp:fig:sfig100025}
\endminipage  \hfill
\minipage{0.32\textwidth}
  \centering
  \includegraphics[width=\linewidth]{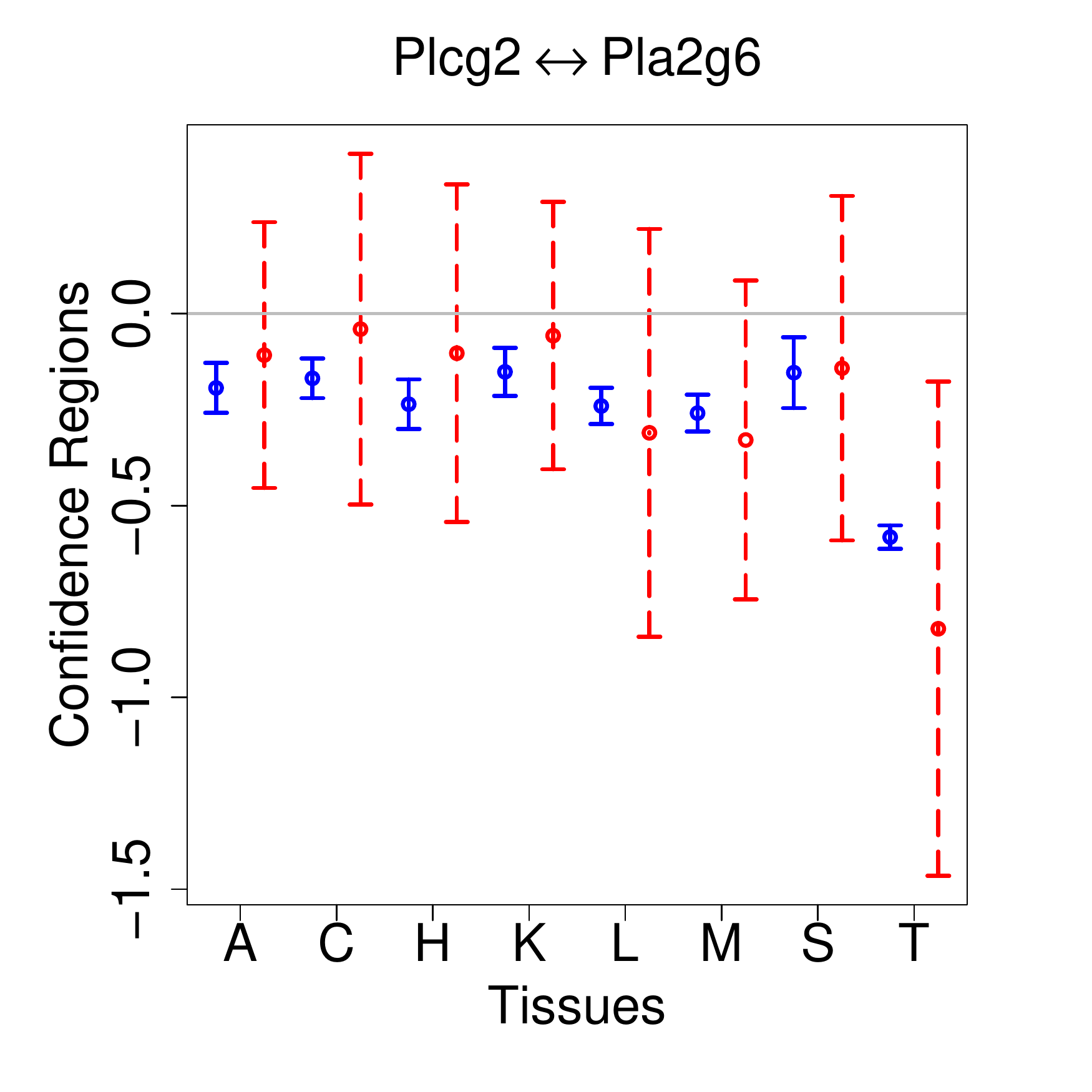}
  \label{arxivsupp:fig:sfig10004}
\endminipage\hfill
 \minipage{0.32\textwidth}
  \centering
  \includegraphics[width=\linewidth]{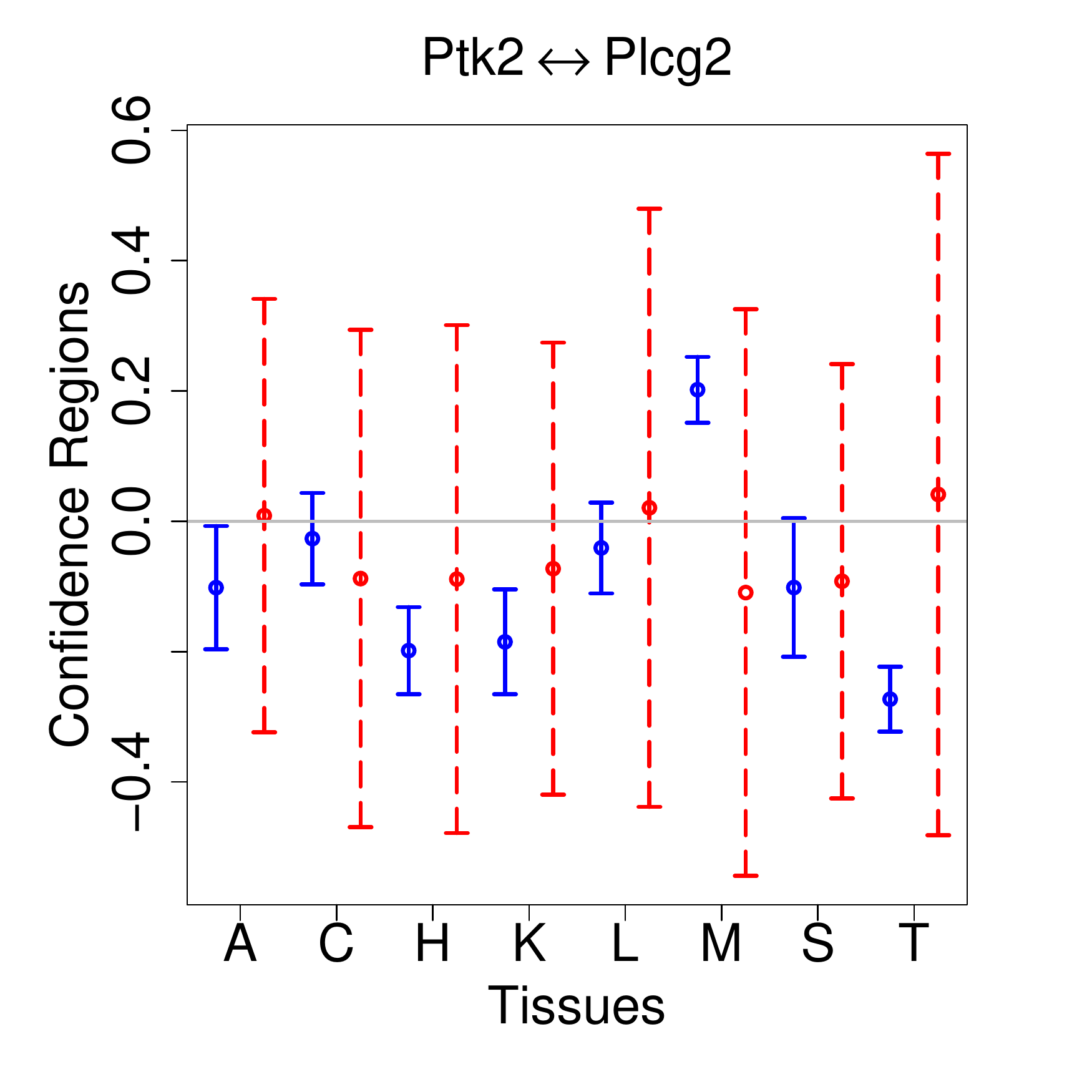}
  \label{arxivsupp:fig:sfig10006}
\endminipage

\minipage{0.32\textwidth}
  \centering
  \includegraphics[width=\linewidth]{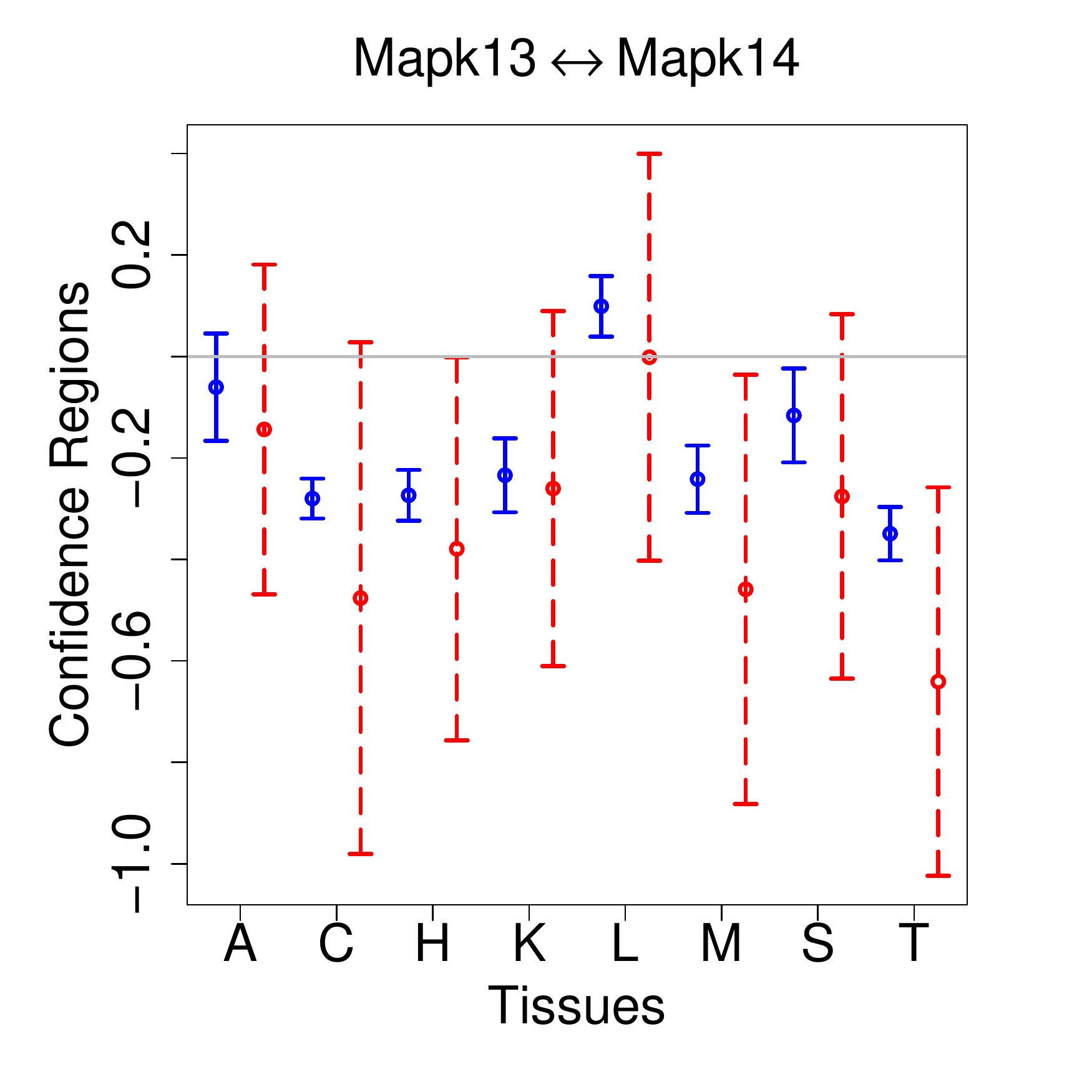}
  \label{arxivsupp:fig:sfig200025}
\endminipage  \hfill
\minipage{0.32\textwidth}
  \centering
  \includegraphics[width=\linewidth]{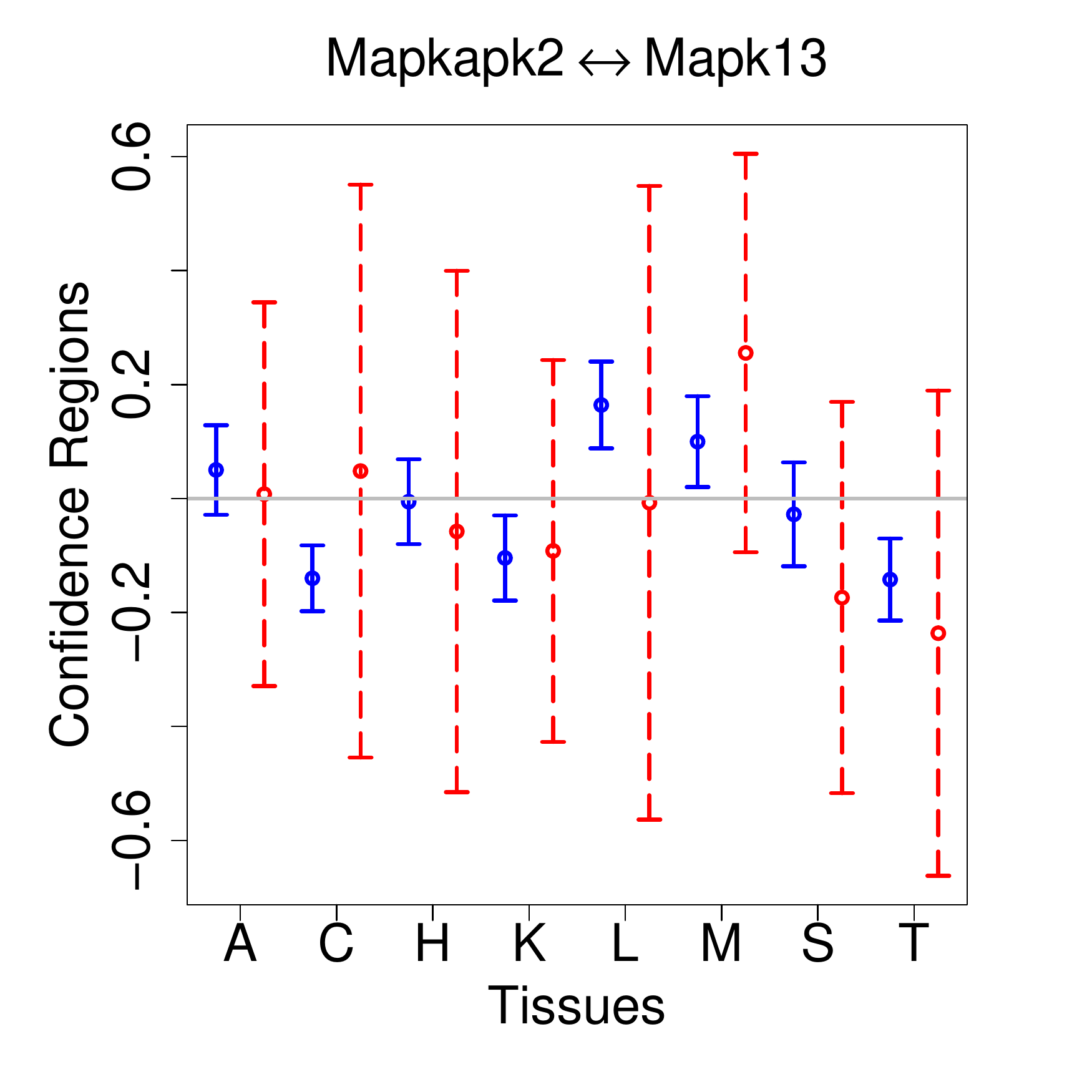}
  \label{arxivsupp:fig:sfig20004}
\endminipage\hfill
 \minipage{0.32\textwidth}
  \centering
  \includegraphics[width=\linewidth]{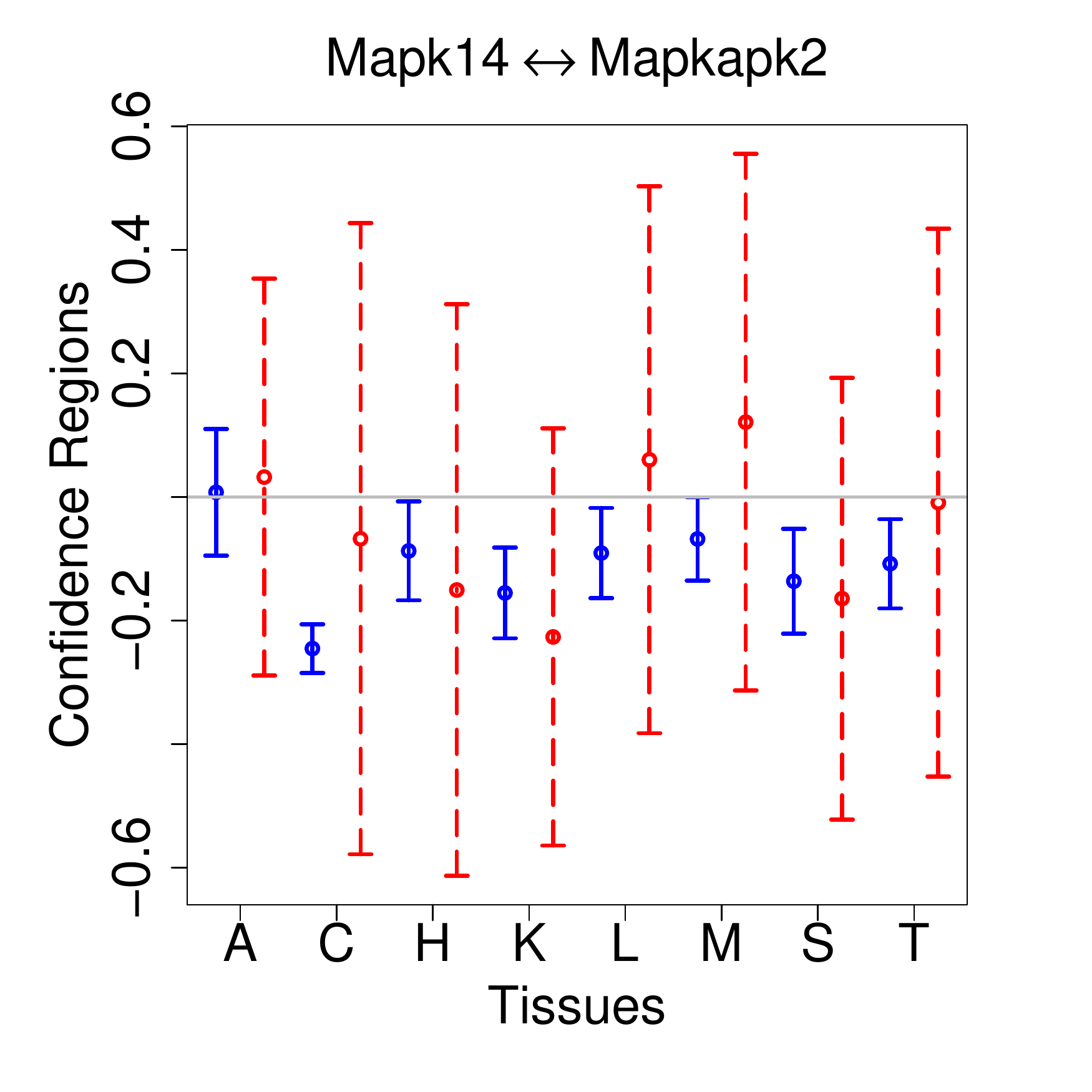}
  \label{arxivsupp:fig:sfig20006}
\endminipage
\caption{$95\%$ confidence intervals for the edges among the two sets of genes -- Plcg2, Pla2g6, and Pla2g6 (first row) and Mapk13, Mapk14 and Mapkapk2 (second row), within each of $8$ tissues indicated by their first letter. The confidence intervals based on our EE method are displayed in solid lines, while the intervals of \cite{jankova2014confidence} are displayed in dashed lines.}\label{setofgenesone}
\end{figure}

\section{Discussion} \label{discussionSec} In this paper we propose a generic procedure to construct confidence intervals for Z-estimators in a high dimensional setting. We establish a general theoretical framework, and illustrate it with several important applications including linear models, instrumental variables regression, graphical models, classification and time series models. Our framework has better numerical performance than previously suggested algorithms, and has the advantage of having a broader scope. In particular, it covers many applications (e.g. instrumental variables regression, linear discriminant analysis and vector autoregressive models) for which the inferential procedure is previously unexplored. 

Additionally our results can be easily extended to cases with multi-dimensional parameters of interest. We would like to mention that unlike approaches such as the ones developed by \cite{nickl2013confidence}, our methodology cannot be immediately extended to find a global \underline{honest} confidence region for the entire parameter $\bbeta$. 
It is an interesting problem to explore whether we can carry over certain results in the framework of honest confidence regions for $\bbeta$ under the linear regression considered by \cite{nickl2013confidence}, to the general estimating equations that we consider. We leave this question for future investigation.





\appendix

\setlength{\bibsep}{0.85pt}
{
\bibliographystyle{ims}
\bibliography{spglm}
}

\title{\huge Supplement: A Unified Theory of Confidence Regions and Testing for High Dimensional Estimating Equations}
\date{}
\emptythanks
\maketitle{}

\section{Theory on  Uniformly Valid Confidence Intervals} \label{arxivsupp:main:UWCUNsec}

In section \ref{masterthm:sec}, we showed that if $\bbeta^*$ is fixed, the solution to the equation $\hat S(\theta, \hat \bgamma) = 0$ can be used to construct  asymptotically valid confidence regions for the parameter $\theta$. In this Section we prove a stronger result which guarantees that the confidence interval is valid uniformly over the following parameter space:
$$
\bOmega = \{ \bbeta^* : \|\bbeta^*\|_0 \leq s^*\}.
$$

We restrict our attention to $\bOmega$ since we need the parameter $\bbeta^*$ to be sufficiently sparse in order for us to consistently estimate it. In the following, we introduce some assumptions guaranteeing uniform convergence. 

\begin{assumption}[Uniform Consistent Estimation] \label{arxivsupp:main:Uconsistencyassumpweakcn}
\begin{align*}
\lim_{n \rightarrow \infty} \sup_{\bbeta \in \bOmega} \PP_{\bbeta}(\|\hat \bbeta - \bbeta\|_1  \leq r_{1}(n)) = 1, ~~~~ \lim_{n \rightarrow \infty} \sup_{\bbeta \in \bOmega} \PP_{\bbeta}(\|\hat \vb - \vb\|_1 \leq r_{2}(n)) = 1,
\end{align*}
where $r_{1}(n), r_{2}(n) = o(1)$.
\end{assumption}

\begin{assumption} \label{arxivsupp:main:UnoiseassumpCI} Assume that there exists an $\eta > 0$ such that:
\begin{align}
\lim_{n \rightarrow \infty} \inf_{\bbeta \in \bOmega}\inf_{\check \theta \in \mathcal{N}_{\theta}} \PP_{\bbeta}(\| \tb(\Zb, \bbeta_{\check \theta}) - E_{\tb}(\bbeta_{\check \theta})\|_{\infty} \leq r_{3}(n)) & = 1, \label{arxivsupp:main:UbetastartassumpCI}\\
\lim_{n \rightarrow \infty} \inf_{\bbeta \in \bOmega} \inf_{\check \theta \in \mathcal{N}_\theta} \PP_{\bbeta}(|\vb^{T}\tb(\bZ, \bbeta_{\check \theta}) - \vb^{T}E_{\tb}(\bbeta_{\check \theta})| \leq r_{4}(n)) & = 1, \label{arxivsupp:main:UbetastartassumpCIvstar}\\
\lim_{n \rightarrow \infty} \inf_{\bbeta \in \bOmega} \inf_{\check \theta \in \mathcal{N}_\theta} \PP_{\bbeta}\bigg( \sup_{\nu \in [0,1]} \|\hat \vb^{T} \Tb(\Zb, \tilde \bbeta_{\nu}) - \vb^{T} E_{\Tb}(\bbeta_{\check \theta}) \|_{\infty} \leq  r_5(n) \bigg)& = 1, \label{arxivsupp:main:UlambdaprimeasumpCI} 
\end{align}
where $\mathcal{N}_{\theta} = (\theta - \eta, \theta + \eta)$ and $\max(r_{3}(n), r_{4}(n), r_{5}(n)) = o(1)$, $\tilde \bbeta_\nu = \nu \hat \bbeta_{\check \theta} + (1 - \nu)\bbeta_{\check \theta}$. We also assume 
$$
\sup_{\bbeta \in \bOmega} \sup_{\check \theta \in \mathcal{N}_{\theta}} \|E_{\tb}(\bbeta_{\check \theta})\|_{\infty} < \infty
, ~~~~ \sup_{\bbeta \in \bOmega} \sup_{\check \theta \in \mathcal{N}_{\theta}}\| \vb^{T} [E_{\Tb}(\bbeta_{\check \theta})]_{-1}\|_{\infty} < \infty.
$$
\end{assumption}

We next prove the uniform consistency of the Z-estimator $\tilde{\theta}$, which is a uniform analogue of Theorem \ref{consistency:sol}.

\begin{proposition} \label{arxivsupp:main:Uconsistency:sol} Assume that the (stochastic) map $\check \theta \mapsto \hat S(\hat \bbeta_{\check \theta})$ is continuous with a single root or nondecreasing. In addition, assume that $\vb^{T} E_{\tb}( \bbeta_{\theta - \epsilon}) \times \vb^{T} E_{\tb}( \bbeta_{\theta + \epsilon}) < 0$ for any $\epsilon > 0$. Under conditions \ref{arxivsupp:main:Uconsistencyassumpweakcn} and \ref{arxivsupp:main:UnoiseassumpCI} we have that for any $\epsilon > 0$: $
\sup_{\bbeta \in \bOmega} \PP_{\bbeta}(|\tilde \theta - \theta| > \epsilon) = o(1).$
\end{proposition}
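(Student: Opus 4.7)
The proof will be a uniform version of Theorem \ref{consistency:sol}. My strategy is a mean-value expansion of $\hat S(\hat\bbeta_{\check\theta})$ around the population counterpart $\vb^{T}E_{\tb}(\bbeta_{\check\theta})$, followed by a sign-crossing argument at $\check\theta = \theta\pm\epsilon$ combined with the assumed continuity-with-single-root or monotonicity of $\check\theta \mapsto \hat S(\hat\bbeta_{\check\theta})$.

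First, I would decompose, via a mean-value theorem applied to $\tb(\Zb,\cdot)$ between $\hat\bbeta_{\check\theta}$ and $\bbeta_{\check\theta}$ (which agree in their first coordinate $\check\theta$), as
$$
\hat S(\hat\bbeta_{\check\theta}) - \vb^{T}E_{\tb}(\bbeta_{\check\theta}) = (\hat\vb-\vb)^{T}\tb(\Zb,\bbeta_{\check\theta}) + \vb^{T}[\tb(\Zb,\bbeta_{\check\theta}) - E_{\tb}(\bbeta_{\check\theta})] + \hat\vb^{T}\Tb(\Zb,\tilde\bbeta_{\nu})(\hat\bbeta_{\check\theta}-\bbeta_{\check\theta}),
$$
for some $\tilde\bbeta_{\nu}$ on the segment joining $\hat\bbeta_{\check\theta}$ and $\bbeta_{\check\theta}$. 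The key structural observation is that $\hat\bbeta_{\check\theta}-\bbeta_{\check\theta} = (0,(\hat\bgamma-\bgamma)^{T})^{T}$, so the third term equals $[\hat\vb^{T}\Tb(\Zb,\tilde\bbeta_{\nu})]_{-1}(\hat\bgamma-\bgamma)$, and only the submatrix with the first column removed appears in the bound --- precisely matching the uniform bound on $\|[\vb^{T}E_{\Tb}(\bbeta_{\check\theta})]_{-1}\|_{\infty}$ furnished by Assumption \ref{arxivsupp:main:UnoiseassumpCI}.

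Second, I would apply H\"older's inequality term-by-term together with the uniform rates of Assumptions \ref{arxivsupp:main:Uconsistencyassumpweakcn} and \ref{arxivsupp:main:UnoiseassumpCI} and a union bound over the three high-probability events in that assumption. This yields $\sup_{\check\theta\in\mathcal{N}_{\theta}}|\hat S(\hat\bbeta_{\check\theta}) - \vb^{T}E_{\tb}(\bbeta_{\check\theta})| \leq r_{4}(n) + r_{2}(n)(r_{3}(n)+C_{1}) + r_{1}(n)(r_{5}(n)+C_{2})$, with probability tending to $1$ uniformly over $\bbeta\in\bOmega$, where $C_{1},C_{2}$ are the uniform upper bounds on $\|E_{\tb}(\bbeta_{\check\theta})\|_{\infty}$ and $\|[\vb^{T}E_{\Tb}(\bbeta_{\check\theta})]_{-1}\|_{\infty}$ stipulated in Assumption \ref{arxivsupp:main:UnoiseassumpCI}. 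This right-hand side is $o(1)$.

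Third, fix $\epsilon>0$ small enough so that $\theta\pm\epsilon\in\mathcal{N}_{\theta}$. The sign-change hypothesis, read uniformly over $\bOmega$ (so that $\inf_{\bbeta\in\bOmega}\min(|\vb^{T}E_{\tb}(\bbeta_{\theta-\epsilon})|,|\vb^{T}E_{\tb}(\bbeta_{\theta+\epsilon})|)>0$), combined with the $o(1)$ approximation error above, implies that $\hat S(\hat\bbeta_{\theta-\epsilon})$ and $\hat S(\hat\bbeta_{\theta+\epsilon})$ themselves have opposite signs on an event of uniform probability tending to $1$. By the intermediate value theorem in the continuous-single-root case, or directly by monotonicity in the nondecreasing case, the root $\tilde\theta$ of $\check\theta\mapsto\hat S(\hat\bbeta_{\check\theta})$ then lies in $(\theta-\epsilon,\theta+\epsilon)$ on that same event, which is the desired conclusion. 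The main difficulty is the quantitative reading of the sign-crossing condition uniformly in $\bbeta\in\bOmega$: absent a uniform lower bound on $|\vb^{T}E_{\tb}(\bbeta_{\theta\pm\epsilon})|$, the $o(1)$ noise from Step~2 could swallow the signal and no uniform conclusion would be possible. Once this interpretation is granted, the rest of the argument is a routine uniformization of the pointwise proof of Theorem \ref{consistency:sol}.
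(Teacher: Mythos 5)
Your proposal is correct and follows essentially the same route as the paper's proof: the same three-term mean-value decomposition of $\hat S(\hat\bbeta_{\check\theta}) - \vb^{T}E_{\tb}(\bbeta_{\check\theta})$ bounded via H\"older and the uniform rates of Assumptions \ref{arxivsupp:main:Uconsistencyassumpweakcn}--\ref{arxivsupp:main:UnoiseassumpCI}, followed by the same sign-crossing argument at $\theta\pm\epsilon$. Your closing caveat is well taken: the paper's step bounding $\sup_{\bbeta}\PP_{\bbeta}(\hat S(\hat\bbeta_{\theta\pm\epsilon})$ having the wrong sign$)$ does implicitly require the sign condition to be read with a uniform-in-$\bbeta$ lower bound on $|\vb^{T}E_{\tb}(\bbeta_{\theta\pm\epsilon})|$, exactly as you state.
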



\begin{assumption}[Uniform CLT]\label{arxivsupp:main:UCLTassump} For $\sigma^2 = \vb^{T} \bSigma \vb^{}$ we have:
\begin{align}
\lim_{n \rightarrow \infty} \sup_{\bbeta \in \bOmega} \sup_{t\in\RR} |\PP_{\bbeta}(\sigma^{-1}n^{1/2}S(\bbeta) \leq t ) - \Phi(t)| = 0 \label{arxivsupp:main:Unormalitycond},
\end{align}
where $\bSigma = \lim_{n\rightarrow \infty} n \Cov \tb(\Zb, \bbeta)$, and it is assumed that $\inf_{\bbeta \in \bOmega}\sigma^2 \geq C > 0$.
\end{assumption}

\begin{assumption}\label{arxivsupp:main:Ustabilityassump}  Assume that there exists a $\gamma > 0$ such that:
\begin{align}
\sup_{\bbeta \in \bOmega} \sup_{\max(\|\check \vb - \vb\|_1, \|\check \bbeta - \bbeta\|_1) < \gamma} \bigg|  \check \vb^T \frac{\partial}{\partial \theta} \left[\Tb(\Zb, \check \bbeta)\right]_{*1}\bigg| \leq \psi(\Zb) \label{arxivsupp:main:Ustabasumptwo},
\end{align}
where  $\psi$ is an integrable function such that $\sup_{\bbeta \in \bOmega} \EE\psi(\Zb) < \infty$.
\end{assumption}

\begin{assumption}[Uniform Consistency of Variance] \label{arxivsupp:main:UVCassump2} Assume there exists an estimator $\hat \sigma^2$ of $\sigma^2$, such that $
\lim_{n \rightarrow \infty} \inf_{\bbeta \in \bOmega}  \PP_{\bbeta}(|\hat \sigma^2 - \sigma^2| \leq r_6(n)) = 1, $
where $r_6(n) = o(1)$. 
\end{assumption}

In Section \ref{arxivsupp:main:unif:var:consistency} we provide sufficient conditions to obtain $\hat \sigma$ satisfying Assumption \ref{arxivsupp:main:UVCassump2}. Finally, we present a uniform weak convergence result for the Z-estimator which strengthens Theorem \ref{weakconvver2:CI}. Its proof can be found in Appendix \ref{arxivsupp:main:proofsofgeneraltheory}.

\begin{theorem}\label{arxivsupp:main:unifweakconv} Under Assumptions \ref{arxivsupp:main:Uconsistencyassumpweakcn} -- \ref{arxivsupp:main:UVCassump2}, the assumptions in Proposition \ref{arxivsupp:main:Uconsistency:sol} and $
n^{1/2}(r_{1}(n)r_{5}(n) + r_{2}(n)r_{3}(n)) = o(1),$ 
we have $
\lim_{n \rightarrow \infty}\sup_{\bbeta \in \bOmega} \sup_{t\in\RR} |\PP_{\bbeta}(\hat U_n \leq t) - \Phi(t)| = 0, ~~\textrm{where}~~\hat U_n = \frac{n^{1/2}}{\hat \sigma} (\tilde \theta - \theta). $
\end{theorem}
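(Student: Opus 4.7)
The plan is to mirror the proof of Theorem \ref{weakconvver2:CI} step by step, but replace every pointwise $o_p(1)$ statement by its uniform counterpart obtained from the uniform assumptions. I will work throughout on the high-probability event (with probability tending to $1$ uniformly in $\bbeta \in \bOmega$) on which Assumptions \ref{arxivsupp:main:Uconsistencyassumpweakcn}--\ref{arxivsupp:main:UVCassump2} and the consistency conclusion of Proposition \ref{arxivsupp:main:Uconsistency:sol} hold simultaneously, so that $\tilde \theta \in \mathcal{N}_\theta$ and we may invoke the local bounds with $\check\theta = \tilde\theta$.

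First, I would start from the identity $\hat S(\tilde \theta,\hat \bgamma)=0$ and perform a mean value expansion in the nuisance coordinate only, writing
\begin{align*}
0 \;=\; \hat S(\hat \bbeta_{\tilde \theta}) \;=\; \hat S(\bbeta_{\tilde \theta}) + \hat \vb^{T}\bigl[\Tb(\Zb,\tilde \bbeta_{\nu})\bigr]_{*,-1}(\hat\bgamma-\bgamma),
\end{align*}
where $\tilde \bbeta_\nu$ interpolates between $\bbeta_{\tilde\theta}$ and $\hat \bbeta_{\tilde\theta}$, so that assumption (\ref{arxivsupp:main:UlambdaprimeasumpCI}) with $\check\theta=\tilde\theta$ applies. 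Since $\vb^{T}E_{\Tb}(\bbeta_{\tilde\theta})=\eb_1$ (plus a vanishing correction as $\tilde\theta\to\theta$, controlled via Assumption \ref{arxivsupp:main:Ustabilityassump} and dominated convergence), the last term is uniformly bounded by $\|\hat\bgamma-\bgamma\|_1 \cdot r_5(n) = O(r_1(n)r_5(n))$. Next, I split $\hat S(\bbeta_{\tilde\theta}) = S(\bbeta_{\tilde\theta}) + (\hat\vb-\vb)^T\tb(\Zb,\bbeta_{\tilde\theta})$; the second term is uniformly $O(r_2(n)r_3(n))$ using (\ref{arxivsupp:main:UbetastartassumpCI}), $E_{\tb}(\bbeta)=0$, and Assumption \ref{arxivsupp:main:Uconsistencyassumpweakcn}.

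The second step is to linearise $S(\bbeta_{\tilde\theta})$ about the truth. By a one-dimensional mean value expansion in the first coordinate,
\begin{align*}
S(\bbeta_{\tilde\theta}) \;=\; S(\bbeta) + (\tilde\theta-\theta)\,\vb^{T}\bigl[\Tb(\Zb,\bbeta_{\theta''})\bigr]_{*1}
\end{align*}
for a random $\theta''$ between $\theta$ and $\tilde\theta$. The factor $\vb^{T}[\Tb(\Zb,\bbeta_{\theta''})]_{*1}$ is shown to be $1+o_p(1)$ uniformly by writing it as $\hat\vb^{T}[\Tb(\Zb,\bbeta_{\theta''})]_{*1}$ plus a $\|\hat\vb-\vb\|_1$-controlled error, invoking (\ref{arxivsupp:main:UlambdaprimeasumpCI}) at $\check\theta = \theta''\in\mathcal{N}_\theta$, and using that $\vb^{T}[E_{\Tb}(\bbeta_{\theta''})]_{*1}\to 1$ (via Assumption \ref{arxivsupp:main:Ustabilityassump} together with dominated convergence over the neighbourhood $\mathcal{N}_\theta$). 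Rearranging gives the uniform Bahadur-type representation
\begin{align*}
\sqrt{n}\,(\tilde\theta-\theta) \;=\; -\sqrt{n}\,S(\bbeta) + R_n,\qquad \sup_{\bbeta\in\bOmega}\PP_{\bbeta}(|R_n|>\varepsilon)\to 0,
\end{align*}
where $R_n$ absorbs the three error terms, each of size $\sqrt{n}\,o(r_1r_5+r_2r_3)=o(1)$ by the imposed scaling.

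Finally, the uniform CLT in Assumption \ref{arxivsupp:main:UCLTassump} gives $\sup_{\bbeta,t}|\PP_\bbeta(\sigma^{-1}\sqrt{n}\,S(\bbeta)\le t)-\Phi(t)|\to 0$, and Assumption \ref{arxivsupp:main:UVCassump2} yields $\hat\sigma/\sigma\to 1$ uniformly (using the uniform lower bound $\inf_{\bbeta\in\bOmega}\sigma^2\ge C>0$ from Assumption \ref{arxivsupp:main:UCLTassump}). A uniform Slutsky argument, combined with the Lipschitz-in-$t$ continuity of $\Phi$, converts these pieces into
\begin{align*}
\sup_{\bbeta\in\bOmega}\sup_{t\in\RR}\bigl|\PP_\bbeta(\hat U_n\le t)-\Phi(t)\bigr|\to 0,
\end{align*}
as required. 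The main obstacle I expect is ensuring that every intermediate step is genuinely \emph{uniform}: in particular, the linearisation factor $\vb^{T}[\Tb(\Zb,\bbeta_{\theta''})]_{*1}$ involves a random mean-value point $\theta''$, so one must combine the uniform consistency of $\tilde\theta$ (from Proposition \ref{arxivsupp:main:Uconsistency:sol}) with the neighbourhood-uniform versions of (\ref{arxivsupp:main:UlambdaprimeasumpCI}) and Assumption \ref{arxivsupp:main:Ustabilityassump}. A second subtlety is the uniform Slutsky step for the studentisation: here the uniform variance lower bound and Assumption \ref{arxivsupp:main:UVCassump2} must be combined carefully, since $\sigma$ itself varies with $\bbeta$.
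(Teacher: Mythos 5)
Your overall architecture (uniform Bahadur-type representation, then the uniform CLT of Assumption \ref{arxivsupp:main:UCLTassump}, then a uniform Slutsky step exploiting the Lipschitz continuity of $\Phi$) matches the paper's, and your last two steps coincide with the paper's Lemma \ref{arxivsupp:main:unifconvhelperlemma} and Lemma \ref{arxivsupp:main:normallemma}. The gap is in your first expansion step. You expand in the nuisance coordinate at $\check\theta=\tilde\theta$ and claim the resulting term $\hat\vb^{T}[\Tb(\Zb,\tilde\bbeta_\nu)]_{*,-1}(\hat\bgamma-\bgamma)$ is $O(r_1(n)r_5(n))$. But condition (\ref{arxivsupp:main:UlambdaprimeasumpCI}) only yields $\|[\hat\vb^{T}\Tb(\Zb,\tilde\bbeta_\nu)]_{-1}\|_\infty \le r_5(n)+\|\vb^{T}[E_{\Tb}(\bbeta_{\check\theta})]_{-1}\|_\infty$, and Assumption \ref{arxivsupp:main:UnoiseassumpCI} asserts only that the second summand is \emph{bounded} over $\check\theta\in\mathcal{N}_\theta$; it vanishes only at $\check\theta=\theta$. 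Hence your term is $O_p(r_1(n))$, not $O_p(r_1(n)r_5(n))$, and $\sqrt n\, r_1(n)$ is not $o(1)$ under the stated scaling (in the examples $r_1(n)\asymp s\sqrt{\log d/n}$, so $\sqrt n\, r_1(n)\asymp s\sqrt{\log d}$ diverges). Your parenthetical repair --- a ``vanishing correction as $\tilde\theta\to\theta$ controlled via Assumption \ref{arxivsupp:main:Ustabilityassump}'' --- does not apply: that assumption dominates only $\frac{\partial}{\partial\theta}[\Tb(\Zb,\cdot)]_{*1}$, the first column, and gives no control over the nuisance block $[\Tb]_{*,-1}$ or over how $[E_{\Tb}(\bbeta_{\check\theta})]_{-1}$ moves with $\check\theta$.

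The paper avoids this by reversing the order of expansion: it Taylor-expands $\hat S(\hat\bbeta_{\cdot})$ in $\theta$ alone, from $\theta$ to $\tilde\theta$ and to second order (the quadratic remainder being exactly what Assumption \ref{arxivsupp:main:Ustabilityassump} is designed to control), which produces the exact identity (\ref{arxivsupp:main:final:identity}), $n^{1/2}(\tilde\theta-\theta)/\hat\sigma = -n^{1/2}\hat S(\hat\bbeta_{\theta})/(\sigma\,\xi_n(\bbeta,\nu))$. The nuisance-direction Jacobian then appears only evaluated at the true $\theta$, inside Lemma \ref{arxivsupp:main:unifconvhelperlemma}, where $\|\vb^{T}[E_{\Tb}(\bbeta_{\theta})]_{-1}\|_\infty=0$ and the product rate $r_1(n)r_5(n)$ genuinely applies. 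A secondary, smaller issue: your conversion of $\vb^{T}[\Tb(\Zb,\bbeta_{\theta''})]_{*1}$ into $\hat\vb^{T}[\Tb(\Zb,\bbeta_{\theta''})]_{*1}$ plus a ``$\|\hat\vb-\vb\|_1$-controlled error'' requires a uniform bound on $\|[\Tb(\Zb,\bbeta_{\theta''})]_{*1}\|_\infty$, which is not among the assumptions; the paper never detaches $\hat\vb$ from $\Tb$, since the product $\hat\vb^{T}\Tb$ is what (\ref{arxivsupp:main:UlambdaprimeasumpCI}) controls directly. Reordering your expansion as the paper does repairs both points.
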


\begin{remark}\label{unifweakconv:CI} Notice that Theorem \ref{arxivsupp:main:unifweakconv} immediately implies that 
$$\lim_{n \rightarrow \infty} \sup_{\bbeta \in \bOmega} \sup_{t \in \RR_+} |\PP_{\bbeta}(|\hat U_n| \leq t) - \Phi(t) + \Phi(-t)| = 0.$$ 
The latter can be equivalently expressed as 
$$\lim_{n \rightarrow \infty} \sup_{\bbeta \in \bOmega} \sup_{t \in \RR_+} |\PP_{\bbeta}(\theta \in (\tilde \theta - \hat \sigma t/\sqrt{n}, \tilde \theta + \hat \sigma t/\sqrt{n})) - \Phi(t) + \Phi(-t)| = 0,$$
which implies that the confidence region $(\tilde \theta - \hat \sigma t/\sqrt{n}, \tilde \theta + \hat \sigma t/\sqrt{n})$ is uniformly valid over the parameter space $\bbeta \in \bOmega$ provided that the assumptions we discussed in this section hold. 
\end{remark}

\section{Sufficient Conditions for Variance Consistency}

In this brief Section we present sufficient conditions for having consistent variance estimators.

\subsection{Consistent Estimators of the Variance} \label{arxivsupp:main:var:consistency}

We now provide generic sufficient conditions for constructing a consistent estimate of the variance. Let $\hat \bSigma$ be a consistent estimator of $\Cov \tb(\Zb, \bbeta)$. In the case when $\tb(\Zb, \bbeta) = n^{-1} \sum_{i = 1}^n \hb(\bZ_i, \bbeta)$ one can use $\hat \bSigma := \frac{1}{n} \sum_{i = 1}^n \hb(\bZ_i, \hat \bbeta)^{\otimes 2}$. We consider the ``plugin'' estimator: $\hat \sigma^2 = \hat \vb^{T} \hat \bSigma \hat \vb$ of $\sigma^2$. Define: $\hat U_n = \hat \sigma^{-1} n^{1/2} (\tilde \theta - \theta^*)$. We are interested in showing that $\hat U_n$ converges  weakly to a standard normal distribution. To this end we define the following assumption:

\begin{assumption}[Variance Consistency]\label{arxivsupp:varconsistency} Assume that the following holds:
$$
\lim_{n \rightarrow \infty} \PP^*(\|\hat \bSigma - \bSigma\|_{\max} \leq r_7(n)) = 1,
$$
where $r_7(n) = o(1)$. 
\end{assumption}

\begin{proposition} \label{arxivsupp:weakconv} Assume the same assumptions as in Proposition \ref{weakconvver2:CI} plus Assumption \ref{arxivsupp:varconsistency}. Let furthermore $\|\bSigma\|_{\max} = O(1)$, $\|\vb^{*T} \bSigma\|_{\infty} r_2(n) = o(1)$ and $\|\vb^*\|_1^2 r_7(n) = o(1)$, then for any $t \in \RR$ we have:
$$
\lim_{n \rightarrow \infty} |\PP^*(\hat U_n \leq t) - \Phi(t))| = 0.
$$
\end{proposition}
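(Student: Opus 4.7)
}
The plan is to reduce the claim to Theorem~\ref{weakconvver2:CI} by verifying that the ``plug-in'' estimator $\hat\sigma^2 = \hat\vb^T \hat\bSigma \hat\vb$ is consistent for $\sigma^2 = \vb^{*T}\bSigma\vb^*$. All the other hypotheses for Theorem~\ref{weakconvver2:CI} are already imposed, so once the variance consistency is proved, the conclusion $\hat U_n \rightsquigarrow N(0,1)$ is immediate, and by a standard Slutsky-type argument the CDF convergence $\sup_t |\PP^*(\hat U_n \leq t) - \Phi(t)| \to 0$ follows from the continuity of $\Phi$.

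To establish consistency of $\hat\sigma^2$, I would write $\Delta_{\vb} := \hat\vb - \vb^*$ and $\Delta_{\bSigma} := \hat\bSigma - \bSigma$ and expand
\begin{align*}
\hat\vb^T \hat\bSigma \hat\vb - \vb^{*T}\bSigma\vb^* &= 2\,\vb^{*T}\bSigma\,\Delta_{\vb} \;+\; \Delta_{\vb}^T\bSigma\,\Delta_{\vb} \;+\; \hat\vb^T \Delta_{\bSigma}\,\hat\vb.
\end{align*}
Then I would control each of the three remainder terms by a Hölder-type inequality matching an $\ell_1$ factor on $\vb$ against an $\ell_\infty$ factor on $\bSigma$ or $\Delta_\bSigma$. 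Concretely,
\begin{align*}
|\vb^{*T}\bSigma\,\Delta_{\vb}| &\leq \|\vb^{*T}\bSigma\|_\infty \cdot \|\Delta_{\vb}\|_1, \\
|\Delta_{\vb}^T\bSigma\,\Delta_{\vb}| &\leq \|\bSigma\|_{\max} \cdot \|\Delta_{\vb}\|_1^2, \\
|\hat\vb^T \Delta_{\bSigma}\,\hat\vb| &\leq \|\Delta_{\bSigma}\|_{\max} \cdot \|\hat\vb\|_1^2.
\end{align*}
Using $\|\hat\vb\|_1 \leq \|\vb^*\|_1 + \|\Delta_{\vb}\|_1$ so that $\|\hat\vb\|_1^2 \leq 2\|\vb^*\|_1^2 + 2\|\Delta_{\vb}\|_1^2$, and plugging in the rates $\|\Delta_\vb\|_1 \leq r_2(n)$ from Assumption~\ref{arxivsupp:main:Uconsistencyassumpweakcn} (well, the non-uniform analogue in Assumption~\ref{ass:consistencyassumpweakcn}) and $\|\Delta_{\bSigma}\|_{\max} \leq r_7(n)$ from Assumption~\ref{arxivsupp:varconsistency}, each of the three terms is driven to $0$ in probability exactly by one of the three smallness hypotheses: $\|\vb^{*T}\bSigma\|_\infty r_2(n) = o(1)$ handles the linear term, $\|\bSigma\|_{\max} = O(1)$ combined with $r_2(n) = o(1)$ handles the quadratic term in $\Delta_\vb$, and $\|\vb^*\|_1^2 r_7(n) = o(1)$ (together with $r_2(n)^2 r_7(n) = o(1)$, which is automatic from the previous hypotheses) handles the $\Delta_\bSigma$ term. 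Thus $\hat\sigma^2 - \sigma^2 = o_{\PP^*}(1)$.

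Since $\sigma^2 \geq C > 0$ by Assumption~\ref{CLTcond}, we then get $\hat\sigma/\sigma \to 1$ in probability, and by Slutsky's lemma together with the conclusion of Theorem~\ref{weakconvver2:CI},
\begin{equation*}
\hat U_n = \frac{\sigma}{\hat\sigma} \cdot \sigma^{-1} n^{1/2}(\tilde\theta - \theta^*) \;\rightsquigarrow\; N(0,1),
\end{equation*}
which upgrades to the uniform-in-$t$ statement claimed because $\Phi$ is continuous. The main ``obstacle'' is really only algebraic bookkeeping: matching the correct $\ell_1/\ell_\infty$ duality to each cross-term so that the three scaling hypotheses appear in the expected way. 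There is no deep probabilistic step here once Theorem~\ref{weakconvver2:CI} is in hand.
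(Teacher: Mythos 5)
Your proposal is correct and follows essentially the same route as the paper: reduce to consistency of the plug-in variance $\hat\vb^T\hat\bSigma\hat\vb$, expand the difference into a linear term in $\hat\vb-\vb^*$, a quadratic term in $\hat\vb-\vb^*$, and a term carrying $\hat\bSigma-\bSigma$, bound each by H\"older ($\ell_1$ against $\ell_\infty/\max$), and finish with Theorem \ref{weakconvver2:CI} plus Slutsky. The only (immaterial) difference is which factors carry the hat in the decomposition — the paper keeps $\hat\bSigma$ in the cross and quadratic terms and isolates $\hat\bSigma-\bSigma$ on $\vb^{*T}(\cdot)\vb^*$, whereas you isolate it on $\hat\vb^T(\cdot)\hat\vb$ — and both choices consume the three smallness hypotheses in exactly the same way.
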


The proof of Proposition \ref{arxivsupp:weakconv} can be found in Section \ref{arxivsupp:main:proofsofgeneraltheory}.

\subsection{Uniformly Consistent Estimators of the Variance} \label{arxivsupp:main:unif:var:consistency}

\begin{assumption}[Plugin  Variance Consistency] \label{arxivsupp:UVCassump} Assume that the following holds:
$$
\lim_{n \rightarrow \infty} \inf_{\bbeta \in \bOmega} \PP_{\bbeta}(\|\hat \bSigma - \bSigma\|_{\max} \leq r_8(n)) = 1,
$$
where $r_8(n) = o(1)$. 
\end{assumption}  

We then have the following:

\begin{proposition}\label{arxivsupp:unifweakconv2} Assume that 
$$\sup_{\bbeta \in \bOmega}\|\bSigma\|_{\max} = O(1), ~~ \sup_{\bbeta \in \bOmega}\|\vb^{T} \bSigma\|_{\infty} r_2(n) = o(1), ~~ \sup_{\bbeta \in \bOmega}\|\vb\|_1^2 r_8(n) = o(1),$$ 
and $\hat \bSigma$ satisfies Assumption \ref{arxivsupp:UVCassump}. Then $\hat \sigma^2 = \hat \vb^T \hat \bSigma \hat \vb$ satisfies Assumption \ref{arxivsupp:main:UVCassump2}.
\end{proposition}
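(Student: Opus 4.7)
\textbf{Proof plan for Proposition \ref{arxivsupp:unifweakconv2}.} The goal is to control the quadratic form $\hat{\sigma}^2 - \sigma^2 = \hat{\vb}^T \hat{\bSigma} \hat{\vb} - \vb^T \bSigma \vb$ uniformly over $\bbeta \in \bOmega$. My plan is to split this difference so that $\hat{\bSigma} - \bSigma$ and $\hat{\vb} - \vb$ are perturbed one at a time, and then bound each piece via Hölder's inequality $|\ab^T \Mb \bb| \le \|\ab\|_1 \|\Mb\|_{\max} \|\bb\|_1$ together with the dual bound $|\ab^T \bb| \le \|\ab\|_1 \|\bb\|_\infty$. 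Crucially, each of the three hypotheses of the proposition is tailored to kill exactly one of the three leftover terms, so no further probabilistic work should be needed beyond Assumption \ref{arxivsupp:UVCassump} and Assumption \ref{arxivsupp:main:Uconsistencyassumpweakcn}.

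Concretely, I would write
\begin{equation*}
\hat{\vb}^T \hat{\bSigma} \hat{\vb} - \vb^T \bSigma \vb
= \hat{\vb}^T (\hat{\bSigma}-\bSigma) \hat{\vb}
+ (\hat{\vb}-\vb)^T \bSigma (\hat{\vb}-\vb)
+ 2\, \vb^T \bSigma (\hat{\vb} - \vb).
\end{equation*}
For the first summand, $|\hat{\vb}^T (\hat{\bSigma}-\bSigma) \hat{\vb}| \le \|\hat{\vb}\|_1^2 \|\hat{\bSigma}-\bSigma\|_{\max}$; using $\|\hat{\vb}\|_1 \le \|\vb\|_1 + \|\hat{\vb}-\vb\|_1$ with Assumption \ref{arxivsupp:main:Uconsistencyassumpweakcn} and Assumption \ref{arxivsupp:UVCassump}, this is bounded by $(2\|\vb\|_1^2 + 2 r_2(n)^2)\, r_8(n)$ on an event of probability tending to one uniformly in $\bbeta \in \bOmega$, which is $o(1)$ by the hypothesis $\sup_{\bbeta\in\bOmega}\|\vb\|_1^2 r_8(n) = o(1)$ (the $r_2(n)^2 r_8(n)$ term is automatically negligible). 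For the second summand, $|(\hat{\vb}-\vb)^T \bSigma (\hat{\vb}-\vb)| \le \|\bSigma\|_{\max} \|\hat{\vb}-\vb\|_1^2 \le O(1)\cdot r_2(n)^2 = o(1)$, using $\sup_{\bbeta\in\bOmega}\|\bSigma\|_{\max} = O(1)$. For the third, $|\vb^T \bSigma (\hat{\vb} - \vb)| \le \|\vb^T \bSigma\|_\infty \|\hat{\vb}-\vb\|_1 \le \|\vb^T \bSigma\|_\infty r_2(n)$, which is $o(1)$ by the final hypothesis.

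Putting these three bounds together on the intersection of the two good events from Assumptions \ref{arxivsupp:main:Uconsistencyassumpweakcn} and \ref{arxivsupp:UVCassump} yields $|\hat{\sigma}^2 - \sigma^2| \le r_6(n)$ for some deterministic sequence $r_6(n) = o(1)$ depending only on $r_2(n), r_8(n)$, $\sup_{\bbeta\in\bOmega}\|\vb\|_1^2$, $\sup_{\bbeta\in\bOmega}\|\vb^T\bSigma\|_\infty$, and $\sup_{\bbeta\in\bOmega}\|\bSigma\|_{\max}$, with probability tending to one uniformly in $\bbeta\in\bOmega$; this is exactly Assumption \ref{arxivsupp:main:UVCassump2}. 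I do not anticipate any substantive obstacle here: the argument is really just a triple triangle inequality, and the hypotheses are arranged so that each of the three resulting cross terms is controlled by exactly one of them. The only mild subtlety is ensuring that when we upper bound $\|\hat{\vb}\|_1$ by $\|\vb\|_1 + r_2(n)$ on the good event, the resulting $r_2(n)^2 r_8(n)$ remainder is indeed negligible, which follows trivially since $r_2(n), r_8(n) = o(1)$.
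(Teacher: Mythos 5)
Your proof is correct and takes essentially the same route as the paper's (which simply defers to the proof of Proposition \ref{arxivsupp:weakconv}): a three-term expansion of the quadratic form $\hat \vb^T \hat \bSigma \hat \vb - \vb^T \bSigma \vb$, bounded term by term via H\"older so that each of the three hypotheses kills exactly one term, on the intersection of the good events from Assumptions \ref{arxivsupp:main:Uconsistencyassumpweakcn} and \ref{arxivsupp:UVCassump}. The only difference is cosmetic: you attach the $\hat \bSigma - \bSigma$ perturbation to the $\hat \vb^T(\cdot)\hat \vb$ form and keep the population $\bSigma$ in the cross and remainder terms, whereas the paper does the reverse (putting $\hat\bSigma$ in the cross term and $\|\vb\|_1^2\|\hat\bSigma-\bSigma\|_{\max}$ last); your arrangement matches the stated hypothesis $\sup_{\bbeta\in\bOmega}\|\vb^T\bSigma\|_\infty r_2(n)=o(1)$ slightly more directly.
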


The proof of this proposition is omitted as it follows easily from the proof of Proposition \ref{arxivsupp:weakconv} and Lemma \ref{arxivsupp:main:normallemma} in the Appendix.

\section{Confidence Intervals for Instrumental Variables Regression}\label{dantzigselector:IVR}

Recall that $\bbeta := (\theta, \bgamma^T)^T$, and let $\bSigma_n = n^{-1} [\Xb; \Wb]^T [\Xb; \Wb]$ be the empirical estimator of $\bSigma_{} = \EE (\bX^T,\bW^T)^T (\bX^T,\bW^T)$. Conformally we decompose the matrices $\bSigma_{n}$ and $\bSigma_{}$ into four blocks corresponding to ordered pairings of $\bX$ and $\bW$ indicated by subscripting with the pair, e.g. $\bSigma_{\bW\bX,n}$ and $\bSigma_{\bW\bX}$ correspond to $n^{-1}\Wb^T\Xb$ and $\EE\bW\bX^T$ resectively. Our goal is to construct confidence intervals for the parameter $\theta$. It is easy to show that $\hat S(\bbeta)$ reduces to
$$
\hat S(\bbeta)=n^{-1}\hat\vb^T\Wb^T(\Xb\bbeta - \bY),
$$
where 
\begin{align}
\hat\vb=\argmin \|\vb\|_1,~~\textrm{subject to}~~ \|\vb^T\bSigma_{\bW\bX,n} -\eb_1\|_{\infty}\leq \lambda', \label{lambda:prime:dantz:IVR}
\end{align}
is an estimator of $\vb^* = \bSigma_{\bW\bX}^{-1} \eb_1^T$. We impose the following assumption.

\begin{assumption}\label{subGausserr:design:bounded:mom:matr:IVR} Let the error $\varepsilon:=Y-\bX^T\bbeta^*$,  the predictors $\bX$ and instruments $\bW$ be coordinate-wise sub-Gaussian, i.e.,
$$\|\varepsilon\|_{\psi_2} := K < \infty,  ~~~ \sup_{j \in \{1,\ldots,d\}} \max (\|X_j\|_{\psi_2},\|W_j\|_{\psi_2})  := K_{\bW\bX} < \infty,$$ 
for some fixed constants $K, K_{\bW\bX} > 0$ and $\Var(\varepsilon) \geq C_{\varepsilon} > 0$.  Furthermore, assume 
$\lambda_{\min}(\bSigma_{\bW\bX}\bSigma_{\bX\bW}) \geq \delta^2 > 0,$ where $\delta$ is some fixed constant. Additionally recall that $\EE[\varepsilon^2 | \bZ] = \sigma^2$, $\EE[\bZ \varepsilon] = 0$.
\end{assumption}

\begin{assumption}\label{subGausserr:variance:assump:IVR} We impose the following assumptions on the covariance $\bSigma_{}$. Assume:
$$
\|\bSigma_{\bX\bW}^{-1}\bSigma_{\bW\bW}\bSigma_{\bW\bX}^{-1}\|_{\max} \leq D_{\max}, ~~~ \inf_{s \in \{1,\ldots,d\}, s \log d < \sqrt{n} \omega}\operatorname{CS}_{\bSigma_{\bW\bX}}(s, 1) \geq \kappa^* > 0, 
$$
for some sufficiently large $D_{\max} > 0$, $\omega > 0$ is a sufficiently small fixed constant, and the quantity $\operatorname{CS}_{\bSigma_{\bW\bX}}(s, 1)$ is defined in Definition \ref{arxivsupp:coord:sens:IVR}.
\end{assumption}

Assumption \ref{subGausserr:design:bounded:mom:matr:IVR} is mild and ensures that the random variables $\varepsilon,\bX,\bW$ are not heavy-tailed, the matrix $\bSigma_{\bW\bX}$ is invertible, the instrumental variables $\bW$ are uncorrelated with $\varepsilon$, and that $\varepsilon$ is homoscedastic given the instrumental variables. Assumption \ref{subGausserr:variance:assump:IVR} is a technical assumption. The first condition ensures that the random variable $\vb^{*T}\bW$ has a finite second moment, i.e. $\EE(\vb^{*T}\bW)^2 \leq D_{\max} < \infty$. The second condition implies that the matrix $\bSigma_{\bW\bX}$ is ``coordinate-wise sensitive'' with respect to the $L_1$ norm. Such a condition is first proposed by \cite{gautier2011high}, and can be viewed as an extension of the commonly used restricted eigenvalue (RE) condition. It is not hard to show that this condition holds (for all $s \in \{1,\ldots, d\}$) if for example $\lambda_{\min}(\bSigma_{\bX\bW} + \bSigma_{\bW\bX}) \geq 4 \kappa^*$, where the inequality is in the sense of eigenvalues comparison.

To construct confidence intervals for $\theta$, we consider $\hat U_n = \hat \Delta^{-1} n^{1/2}(\tilde{\theta} - \theta^*)$, where $\tilde \theta$ is defined as the solution to $\hat S(\theta, \hat \bgamma) = 0$, and  
\begin{align}\label{delta:hat:dantzig:IVR}
\hat \Delta := n^{-1} \sum_{i = 1}^n  ((\hat \vb^T \bW_i)(Y_i - \bX_i^T \hat \bbeta))^2,
\end{align}
is an estimator of the asymptotic variance $\Delta := \Var[\vb^{*T}\bW \varepsilon]$. 
As in the linear model we will assume that $\vb^*$ and $\bbeta^*$ are sparse. Let $s$ and $s_{\vb}$ denote the sparsity of $\bbeta^*$ and $\vb^*$ correspondingly, i.e., $\|\bbeta^*\|_0 = s$ and $\|\vb^*\|_0 = s_{\vb}$.
The next corollary of the general Theorem \ref{weakconvver2:CI} shows the asymptotic normality of $\hat U_n$ in instrumental variable regressions. To simplify the presentation of our result we will assume that $\|\vb^*\|_1$ is bounded, although this is not needed in our proofs.

\begin{corollary} \label{normdantzigfinal:IVR} 

Assume that condition \ref{subGausserr:design:bounded:mom:matr:IVR} and \ref{subGausserr:variance:assump:IVR} hold, and 
\begin{align*}
\max(s_{\vb}, s)\log d/\sqrt{n} = o(1), ~~~ \sqrt{\log d/n} = o(1).
\end{align*}
Then with $\lambda \asymp \sqrt{\log d/n}$ and $\lambda' \asymp \sqrt{\log d/n}$, $\hat U_n$ satisfies for any $t\in\RR$:
$$
\lim_{n \rightarrow \infty} |\PP^*(\hat U_n \leq t) - \Phi(t))| = 0.
$$
\end{corollary}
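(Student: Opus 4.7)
\textbf{Proof proposal for Corollary \ref{normdantzigfinal:IVR}.}

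The plan is to verify the hypotheses of the abstract Theorem \ref{weakconvver2:CI}, and then check variance consistency for $\hat\Delta$. The instrumental variable setting simplifies several of the abstract assumptions because the estimating equation $\tb(\Zb,\bbeta) = n^{-1}\Wb^T(\Xb\bbeta-\bY)$ is linear in $\bbeta$: the Jacobian $\Tb(\Zb,\bbeta) = \bSigma_{\bW\bX,n}$ does not depend on $\bbeta$, and its derivative with respect to $\theta$ vanishes identically, so Assumption \ref{bounded:Jac:ass} is trivial. Also, $E_{\tb}(\bbeta^*_\theta) = \bSigma_{\bW\bX}(\bbeta^*_\theta-\bbeta^*)$, so the sign condition (\ref{opposing:signs}) reduces to $(\vb^{*T}\bSigma_{\bW\bX})_1 = 1$ being nonzero, which holds by definition of $\vb^*$. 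Invertibility of $\bSigma_{\bW\bX}$ is guaranteed by the lower bound $\lambda_{\min}(\bSigma_{\bW\bX}\bSigma_{\bX\bW}) \geq \delta^2$ in Assumption \ref{subGausserr:design:bounded:mom:matr:IVR}.

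\textbf{Step 1 ($L_1$ consistency).} I would establish $\|\hat\bbeta-\bbeta^*\|_1 = O_p(s\sqrt{\log d/n})$ and $\|\hat\vb-\vb^*\|_1 = O_p(s_\vb\sqrt{\log d/n})$. Using sub-Gaussianity of $\bW, \bX, \varepsilon$ and Bernstein-type tail bounds for products of sub-Gaussian variables, one obtains
$\|\bSigma_{\bW\bX,n} - \bSigma_{\bW\bX}\|_{\max} = O_p(\sqrt{\log d/n})$ and
$\|n^{-1}\Wb^T(\bY-\Xb\bbeta^*)\|_{\infty} = \|n^{-1}\Wb^T\bm\varepsilon\|_\infty = O_p(\sqrt{\log d/n})$.
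Hence $\bbeta^*$ is feasible for (\ref{mainoptimization}) with $\lambda \asymp \sqrt{\log d/n}$, and $\vb^*$ is feasible for (\ref{lambda:prime:dantz:IVR}) with $\lambda' \asymp \sqrt{\log d/n}$, both w.h.p. Then a standard dual-cone argument combined with the coordinate-wise sensitivity hypothesis $\operatorname{CS}_{\bSigma_{\bW\bX}}(s,1)\geq\kappa^*$ (as developed by Gautier--Tsybakov) delivers the desired $L_1$ rates. This gives $r_4(n) = s\sqrt{\log d/n}$ and $r_5(n) = s_\vb \sqrt{\log d/n}$.

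\textbf{Step 2 (Concentration, Assumption \ref{noiseassumpCI}).} The equation $\tb(\Zb,\bbeta^*_\theta) - E_\tb(\bbeta^*_\theta) = n^{-1}\Wb^T(\Xb-\bSigma_{\bW\bX})(\bbeta^*_\theta - \bbeta^*) - n^{-1}\Wb^T\bm\varepsilon$; combining the two sub-Gaussian/sub-exponential tail bounds from Step 1, one verifies (\ref{betastartassumpCI}) with rate $r_1(n,\theta) = O(\sqrt{\log d/n})$, uniformly for $\theta$ in a bounded neighborhood of $\theta^*$. The projection bound (\ref{betastartassumpCIvstar}) is obtained by the same argument combined with Hoeffding's inequality, noting $\|\vb^*\|_1 = O(1)$. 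For (\ref{lambdaprimeasumpCI}), since the Jacobian does not depend on the intermediate point, one has
\begin{equation*}
\|\hat\vb^T\bSigma_{\bW\bX,n} - \vb^{*T}\bSigma_{\bW\bX}\|_\infty
\leq \|(\hat\vb-\vb^*)^T\bSigma_{\bW\bX,n}\|_\infty + \|\vb^{*T}(\bSigma_{\bW\bX,n}-\bSigma_{\bW\bX})\|_\infty,
\end{equation*}
which is controlled by the feasibility constraint in (\ref{lambda:prime:dantz:IVR}) for the first term and by the max-norm concentration of $\bSigma_{\bW\bX,n}$ for the second. Thus $r_3(n,\theta) = O(\sqrt{\log d/n})$. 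The scaling condition (\ref{assumpone}) then reduces to $\sqrt{n}\max(s,s_\vb)\log d/n = o(1)$, which is the rate hypothesis of the corollary.

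\textbf{Step 3 (CLT and variance).} At $\bbeta^*$, $S(\bbeta^*) = \vb^{*T}\tb(\Zb,\bbeta^*) = -n^{-1}\sum_{i=1}^n \vb^{*T}\bW_i\varepsilon_i$, a sum of i.i.d. mean-zero random variables with variance $\sigma^2 \vb^{*T}\bSigma_{\bW\bW}\vb^* = \Delta$. By Assumption \ref{subGausserr:variance:assump:IVR}, $\Delta$ is bounded; by $\Var(\varepsilon)\geq C_\varepsilon > 0$ and $\lambda_{\min}(\bSigma_{\bW\bX}\bSigma_{\bX\bW})\geq\delta^2$, $\Delta$ is bounded below. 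The Lindeberg condition follows from the sub-Gaussianity of $\bW$ and $\varepsilon$, giving Assumption \ref{CLTcond}. Finally, consistency of $\hat\Delta$ is shown by splitting
\begin{equation*}
\hat\Delta - \Delta = n^{-1}\sum_{i=1}^n\bigl[(\hat\vb^T\bW_i)^2(\varepsilon_i + \bX_i^T(\bbeta^*-\hat\bbeta))^2\bigr] - \EE[(\vb^{*T}\bW)^2\varepsilon^2],
\end{equation*}
and bounding the cross terms via $\|\hat\vb-\vb^*\|_1$ and $\|\hat\bbeta-\bbeta^*\|_1$ together with sub-Gaussian concentration. The conclusion of Theorem \ref{weakconvver2:CI} then delivers the result.

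\textbf{Main obstacle.} The technically delicate step is Step 1: the matrix $\bSigma_{\bW\bX}$ is not symmetric, so one cannot invoke the usual RE condition directly; instead I rely on the coordinate-wise sensitivity $\operatorname{CS}_{\bSigma_{\bW\bX}}(s,1)\geq\kappa^*$, and the standard Dantzig-selector analysis needs to be adapted to this setting (as in Gautier--Tsybakov). The second slightly delicate point is that establishing (\ref{lambdaprimeasumpCI}) uniformly in $\theta\in\mathcal{N}_{\theta^*}$ requires care, but simplifies because the Jacobian is $\theta$-free. All other conditions of Theorem \ref{weakconvver2:CI} follow from elementary sub-Gaussian tail bounds and the assumed scaling $\max(s,s_\vb)\log d/\sqrt{n}=o(1)$.
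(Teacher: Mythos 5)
Your proposal is correct and follows essentially the same route as the paper: the paper's own proof of Corollary \ref{normdantzigfinal:IVR} simply mirrors the Dantzig-selector argument of Corollary \ref{normdantzigfinal}, invoking the IVR-specific lemmas (the max-norm concentration of $\bSigma_{\bW\bX,n}$, the coordinate-wise sensitivity argument for the $L_1$ rates of $\hat\bbeta$ and $\hat\vb$, the Lyapunov CLT for $\vb^{*T}\bW\varepsilon$, and the plug-in variance consistency), which are exactly the steps you outline. Your identification of the CS condition (in place of RE, since $\bSigma_{\bW\bX}$ is not symmetric) as the one genuinely delicate point matches the paper's treatment.
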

The proof of Corollary \ref{normdantzigfinal:IVR} can be found in Appendix \ref{arxivsupp:main:DantzigProofs:IVR}. The conditions in Corollary \ref{normdantzigfinal:IVR} agree with the existing conditions in  \cite{zhang2014confidence,van2013asymptotically} for the simple linear model. In fact, under the additional assumption $s_{\vb}^3/n = o(1)$, we can show that  $\hat U_n$ is uniformly asymptotically normal; see Remark \ref{arxivsupp:main:unif:CI:danztig:IVR}.

\section{Confidence Intervals for Transelliptical Models}\label{arxivsupp:main:SKEPTICCLIME}

In this subsection we consider the transelliptical graphical models (TGM), proposed by \cite{liu2012transelliptical}. We recall several definitions before we proceed.

\begin{definition}[Elliptical distribution \cite{fang1990symmetric}] Let $\bmu \in \RR^d$ and $\bSigma \in \RR^{d\times d}$. We say that the $d$-dimesnional vector $\bX$ has an elliptical distribution, and we denote it with $\bX \sim EC_d(\bmu,\bSigma, \xi)$ if $\bX \stackrel{d}{=}\bmu + \xi \Ab \bU$, where $\bU$  is a random vector uniformly distributed on the unit sphere in $\RR^q$, $\xi \geq 0$ is a scalar random variable independent of $\bU$, $\Ab \in \RR^{d \times q}$ is a deterministic matrix such that $\Ab \Ab^T = \bSigma$.
\end{definition}

\begin{definition}[Transelliptical distribution \cite{liu2012transelliptical}] 
	We call the continuous random vector $\bX= (X_1,\ldots, X_d)^T$ transelliptically distributed, and we denote it with $\bX \sim TE_d(\bSigma, \xi; f_1,\ldots,f_d)$, if there exists a set of monotone univariate functions $f_1,\ldots, f_d$ and a non-negative random variable $\xi$, with $\PP(\xi = 0) = 0$, such that:
	$$
	(f_1(X_1),\ldots, f_d(X_d))^T \sim EC_d(0,\bSigma, \xi),
	$$
	where $\bSigma$ is symmetric with $\diag(\bSigma) = 1$ and $\bSigma > 0$ in a positive-definite sense. Here $\bSigma$ is called the ``latent generalized correlation matrix''.
\end{definition}


The graphical structure in TGMs can then be defined through the notion of the ``latent generalized concentration matrix'' --- $\bOmega =  \bSigma^{-1}$, i.e. an edge is present between two variables $X_j,X_k$ if and only if  $\Omega_{jk} \neq 0$. To construct an estimate of $\bOmega$, \cite{liu2012transelliptical} suggested estimating the correlation matrix $\bSigma$ first. This can be done by using a non-parametric estimate of the correlation such as Kendall's tau, and transforming it back, to obtain an estimate of $\bSigma$. 

Assume that $\bX_1,\ldots, \bX_n$ are i.i.d. copies of $\bX$. Recall the definitions of $\hat \tau_{jk}$ and $\hat \bS^{\tau}_{jk}$ given in Remark \ref{trans:ell:graph:model}. 
Note that it is clear from the definition of $\hat \tau_{jk}$, that it is an unbiased estimator of:
$$
\tau_{jk} = \PP((Y_j - Y'_{j})(Y_k - Y'_{k}) > 0) - \PP((Y_j - Y'_{j})(Y_k - Y'_{k}) < 0),
$$
where $\bY, \bY' \sim \bX$ are i.i.d. random variables. 
It can be seen that $\hat \bS^{\tau}_{jk}$ consistently estimates $\bSigma$ (see e.g. \citep{liu2012high}). Let $\bOmega^* = \bSigma^{-1}$. The TGM estimator with CLIME is given by:
$$
\hat{\bOmega}=\argmin\|\bOmega\|_1,~~\textrm{such that}~~ \|\hat \bS^{\tau}\bOmega-\Ib_d\|_{\max}\leq\lambda.
$$
To derive confidence intervals for the parameter $\Omega^*_{1m}$, we can apply a similar approach to the graphical models. Denote with $\bbeta=\bOmega_{* m}$. Then the CLIME with TGM reduces to
$$
\hat{\bbeta}=\argmin\|\bbeta\|_1,~~\textrm{such that}~~ \|\hat \bS^{\tau}\bbeta-\eb_m^T\|_{\infty}\leq\lambda.
$$
According to our formulation of the test statistic we have $
\hat S(\bbeta)=\hat{\vb}^T(\hat \bS^{\tau} \bbeta-\eb_m^T),$
where
\begin{align}\label{arxivsupp:main:lambda:prime:tgm:clime}
\hat{\vb}=\argmin\|\vb\|_1,~~\textrm{such that}~~ \|\vb^T\hat \bS^{\tau}-\eb_1\|_{\infty}\leq\lambda'.
\end{align}
The solution $\tilde \theta$ to equation $\hat S(\theta, \hat \bgamma) = 0$, has a closed form expression in this example, and it is given below:
\begin{align} \label{arxivsupp:main:onestepSKEPTIC}
\tilde \theta = \hat \theta - \frac{\hat \vb^T (\hat \bS^\tau\hat \bbeta - \eb_m^T)}{\hat \vb^T \hat \bS^\tau_{* 1}}.
\end{align}
Next we argue that $\tilde \theta$ can be  used to construct confidence intervals for the parameter $\theta$. We note that the estimating equation in the TGM with CLIME is not a sum of i.i.d. statistics, due to the $U$-statistic structure of $\bS^{\tau}$ as compared to estimating equations we considered in our previous examples. Nevertheless, the assumptions in Section \ref{LZEHDSsection} are general enough to handle such a case. Let $\bTheta$ be a $d \times d$ random matrix with entries $\Theta_{jk} := \pi \cos\left(\frac{\pi}{2}\tau_{jk}\right)\tau^{\bY}_{jk}$, where:
$$
\tau^{\bY}_{jk} = [\PP((Y_j - Y'_{j})(Y_k - Y'_{k}) > 0 | \bY) - \PP((Y_j - Y'_{j})(Y_k - Y'_{k}) < 0 | \bY) - \tau_{jk}],
$$
with $\bY, \bY'$ are i.i.d. copies of $\bX$ (and all $\tau^{\bY}_{jk}$ being a random variable depending on $\bY$). Define
\begin{align}\label{arxivsupp:main:deltaSKEPTICdef}
\Delta :=  \Var(\vb^{*T} \bTheta \bbeta^*).
\end{align}
\begin{assumption} \label{arxivsupp:main:distr:assump:tgm:clime} Let $\bX$ satisfy the following distributional assumption --- there exists $\alpha_{\min} > 0$ such that $
	\Delta \geq \alpha_{\min} \|\vb^*\|^2_2 \|\bbeta^*\|_2^2.
	$
\end{assumption}
\begin{remark} As in the CLIME case, we can show that the condition $\Var(\vb^{*T} \bTheta \bbeta^*) \geq \alpha_{\min} \|\vb^*\|^2_2 \|\bbeta^*\|_2^2$ is equivalent to $\Var(\vb^{*T} \bTheta \bbeta^*) \geq V_{\min}$, assuming that the matrix $\bSigma \geq \delta > 0$. 
\end{remark}

Next, we proceed to define $\hat \Delta$ an estimate for $\Delta$. To this end define the following matrices --- $\hat \bTheta^i$:
\begin{align*}
\hat\tau^i_{jk} & := \frac{1}{n-1} \sum_{i' \neq i} \sign \left((X_{ij} - X_{i'j})(X_{ik} - X_{i'k})\right) - \hat \tau_{jk}, \quad \hat \Theta^i_{jk} :=  \pi \cos\left(\frac{\pi}{2}\hat \tau_{jk}\right)\hat\tau^i_{jk}.
\end{align*}
Note that $\hat \Theta^i_{jk}$ is symmetric by definition. Define the estimator $\hat \Delta := n^{-1}\sum_{i = 1}^n (\hat \vb^{T} \hat \bTheta^i \hat \bbeta)^2$. We have the following: 
\begin{corollary} \label{arxivsupp:main:skepticnormality} Assume the data follow from the transelliptical graphical models and Assumption \ref{arxivsupp:main:distr:assump:tgm:clime} holds. Furthermore, assume that the smallest eigenvalue of the correlation matrix satisfies $\lambda_{\min}(\bSigma) \geq \delta > 0$ is bounded away from 0, and $\diag(\bSigma) = 1$. Let $\|\bbeta^*\|_0 = s$ and $\|\vb^*\|_0= s_{\vb}$ and $\hat \Delta$ is a consistent estimate of $\Delta$. Under the following additional assumptions
	\begin{align}
	\max(s_{\vb},s)\|\vb^*\|_1\|\bbeta^*\|_1\log d/n^{1/2} & = o(1), ~~~~   \|\vb^*\|^2_1\|\bbeta^*\|^2_1 \max(s_{\vb},s) \sqrt{\log d/n} = o(1), \\
	& \exists~ k > 2: (s_{\vb} s)^{k}/n^{k - 1} = o(1),  
	\end{align}
	we can take $\lambda \asymp  \|\bbeta^*\|_1 \sqrt{\log d/n}$ and $\lambda' \asymp \|\vb^*\|_1 \sqrt{\log d/n}$ so that the statistic $\hat U_n = \hat \Delta^{-1/2} n^{1/2} (\tilde \theta - \theta)$ satisfies for all $t$: $
	\lim_{n\rightarrow\infty} |\PP(\hat U_n \leq t) - \Phi(t)| = 0.$
\end{corollary}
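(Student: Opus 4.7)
The plan is to verify the hypotheses of Theorem \ref{weakconvver2:CI} and show that $\hat\Delta$ consistently estimates $\Delta$. Two of those hypotheses come essentially for free here. First, $\Tb(\bX,\bbeta)=\hat\bS^\tau$ does not depend on $\bbeta$, so $\partial_\theta[\Tb]_{*1}\equiv 0$ and Assumption \ref{bounded:Jac:ass} is trivial. Second, the identity $\vb^{*T}\bSigma=\eb_1$ yields $\vb^{*T}E_\tb(\bbeta^*_\theta)=\vb^{*T}\bSigma(\bbeta^*_\theta-\bbeta^*)=\theta-\theta^*$, which is strictly monotone and vanishes only at $\theta^*$, immediately giving the opposing-signs condition \eqref{opposing:signs} and the uniqueness required in Theorem \ref{consistency:sol}. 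The remaining work is to verify Concentration, $L_1$-Consistency, the CLT, the scaling condition, and variance consistency, each of which must now be re-derived with $\hat\bS^\tau$ in place of the sample covariance used in Corollary \ref{IFCLIME}.

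For Concentration and $L_1$-Consistency, the starting point is the entrywise bound $\|\hat\bS^\tau-\bSigma\|_{\max}=O_p(\sqrt{\log d/n})$, which follows from Hoeffding's inequality for $U$-statistics applied to $\hat\tau_{jk}$ composed with the $1$-Lipschitz map $\sin(\pi\cdot/2)$ (see \cite{liu2012transelliptical}). Since $\tb(\bX,\bbeta^*_\theta)-E_\tb(\bbeta^*_\theta)=(\hat\bS^\tau-\bSigma)\bbeta^*_\theta$, H\"older gives the rates $r_1(n,\theta)\asymp\|\bbeta^*\|_1\sqrt{\log d/n}$ and $r_2(n,\theta)\asymp\|\vb^*\|_1\|\bbeta^*\|_1\sqrt{\log d/n}$, while the feasibility constraint $\|\hat\vb^T\hat\bS^\tau-\eb_1\|_\infty\leq\lambda'$ verifies \eqref{lambdaprimeasumpCI} with $r_3(n,\theta)\asymp\|\vb^*\|_1\sqrt{\log d/n}$. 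Standard CLIME-style primal analysis of \cite{cai2011constrained}, valid here because $\lambda_{\min}(\bSigma)\geq\delta>0$, then produces $\|\hat\bbeta-\bbeta^*\|_1=O_p(s\|\bbeta^*\|_1\sqrt{\log d/n})$ and $\|\hat\vb-\vb^*\|_1=O_p(s_\vb\|\vb^*\|_1\sqrt{\log d/n})$ once $\bbeta^*$ and $\vb^*$ are shown feasible with probability tending to $1$ under the chosen $\lambda,\lambda'$.

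The main obstacle is the CLT in Assumption \ref{CLTcond}, since $\hat\bS^\tau$ is a smooth functional of a matrix of Kendall-tau $U$-statistics rather than an i.i.d.\ average. The plan is a Hoeffding/H\'ajek decomposition. A first-order Taylor expansion gives $\hat\bS^\tau_{jk}-\bSigma_{jk}=\tfrac{\pi}{2}\cos(\pi\tau_{jk}/2)(\hat\tau_{jk}-\tau_{jk})+O_p((\hat\tau_{jk}-\tau_{jk})^2)$, and the H\'ajek projection of $\hat\tau_{jk}$ yields the representation
\[
\hat\bS^\tau_{jk}-\bSigma_{jk}=\frac{1}{n}\sum_{i=1}^n\Theta^i_{jk}+\widetilde R_{jk},
\]
where $\Theta^i_{jk}$ is the i.i.d.\ summand from $\bTheta$ evaluated at $\bX_i$, and $\widetilde R_{jk}$ collects the quadratic Taylor remainder together with the degenerate second-order kernel of $\hat\tau_{jk}$. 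Maximal inequalities for degenerate $U$-statistics combined with the entrywise rate $\|\hat\bS^\tau-\bSigma\|_{\max}^2=O_p(\log d/n)$ bound $\|\widetilde R\|_{\max}=O_p(\log d/n)$, so that $\sqrt{n}|\vb^{*T}\widetilde R\bbeta^*|\leq\sqrt{n}\|\widetilde R\|_{\max}\|\vb^*\|_1\|\bbeta^*\|_1=o_p(1)$ by the second displayed rate hypothesis of the corollary. The leading term $\tfrac{1}{\sqrt n}\sum_i\vb^{*T}\bTheta^i\bbeta^*$ is a zero-mean i.i.d.\ sum of variance $\Delta$, and Lyapunov's condition is verified under $(s_\vb s)^k/n^{k-1}\to 0$ for some $k>2$ together with the lower bound supplied by Assumption \ref{arxivsupp:main:distr:assump:tgm:clime}, yielding $\sigma^{-1}\sqrt n\,S(\bbeta^*)\rightsquigarrow N(0,1)$ with $\sigma^2=\Delta$.

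Finally, substituting the obtained rates into the scaling condition \eqref{assumpone} gives $\sqrt n(r_4r_3+r_5r_1)\asymp\max(s_\vb,s)\|\vb^*\|_1\|\bbeta^*\|_1\log d/\sqrt n=o(1)$, which is precisely the first displayed rate assumption. For $\hat\Delta$, I would expand
\[
\hat\vb^T\hat\bTheta^i\hat\bbeta-\vb^{*T}\bTheta^i\bbeta^*=(\hat\vb-\vb^*)^T\hat\bTheta^i\hat\bbeta+\vb^{*T}(\hat\bTheta^i-\bTheta^i)\hat\bbeta+\vb^{*T}\bTheta^i(\hat\bbeta-\bbeta^*),
\]
and control each term by H\"older, using an entrywise Hoeffding bound on $\hat\tau^i_{jk}-\tau^{\bX_i}_{jk}$ conditional on $\bX_i$ for the middle term and a weak law of large numbers for $n^{-1}\sum_i(\vb^{*T}\bTheta^i\bbeta^*)^2$ for the leading behavior. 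Assembling these pieces and invoking Theorem \ref{weakconvver2:CI} yields the stated asymptotic normality of $\hat U_n$.
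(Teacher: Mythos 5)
Your proposal is correct and follows essentially the same route as the paper: verify the hypotheses of Theorem \ref{weakconvver2:CI} with $\hat\bS^\tau$ replacing $\bSigma_n$ (using the Kendall's-tau concentration bound of Theorem \ref{arxivsupp:main:kendaltaucovthm} and the corresponding CLIME-type $L_1$ rates of Lemma \ref{arxivsupp:main:vdiffkendaltau}), with the one genuinely new step being the CLT for $\sqrt{n}\,\vb^{*T}(\hat\bS^\tau-\bSigma)\bbeta^*$ via a Taylor expansion of $\sin(\pi\hat\tau_{jk}/2)$ plus the H\'ajek projection and a Lyapunov condition, exactly as in Lemma \ref{arxivsupp:main:CLTcondTGM}. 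The only local divergence is your treatment of the degenerate second-order part: the paper computes its variance directly on the support-restricted bilinear form (giving $O(s_{\vb}s/n)$), whereas you bound it entrywise via a maximal inequality for degenerate $U$-statistics and then apply H\"older with $\|\vb^*\|_1\|\bbeta^*\|_1$; both suffice under the corollary's rate conditions.
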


Similarly, we can show the following result regarding uniformly valid confidence intervals. Before that we define a class of correlation matrices, in analogy to the CLIME case:
\begin{align*}
\tilde{\mathcal{S}}(L, s) = \{\bSigma : \bSigma = \bSigma^T, 0 < \delta \leq \lambda_{\min}(\bSigma),  \diag(\bSigma) = 1, \|\bSigma^{-1}\|_1 \leq L, \max_i \|\bSigma^{-1}_{*i}\|_0 \leq s\},
\end{align*}
\begin{corollary}\label{arxivsupp:main:unifconvTGMCLIME} Let $\bX \sim TE_d(\bmu, \bSigma,\xi)$ with $\bSigma \in \tilde{\mathcal{S}}(L, s)$. Assume furthermore that $\bX$ satisfies Assumption \ref{arxivsupp:main:distr:assump:tgm:clime}, and
	\begin{align} 
	s^3n^{-1/2} = o(1), ~~~ s L^2 \log(d) n^{-1/2} = o(1), ~~~ s L^4\sqrt{\log d/n} = o(1).
	\end{align}
	Then we have $
	\lim_{n \rightarrow \infty} \sup_{\bSigma \in \tilde{\mathcal{S}}(L, s)} \sup_{t} |\PP_{\bbeta}(\hat U_n \leq t ) - \Phi(t)| = 0.$
\end{corollary}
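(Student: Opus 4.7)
The plan is to verify the hypotheses of Theorem \ref{arxivsupp:main:unifweakconv} uniformly over $\bSigma \in \tilde{\mathcal{S}}(L,s)$, thereby upgrading the pointwise Corollary \ref{arxivsupp:main:skepticnormality} to a uniform statement. The whole argument pivots on one fortunate feature of Kendall's tau: since $\hat\tau_{jk}$ is a rank-based U-statistic with bounded kernel, Hoeffding's inequality yields $\PP(|\hat\tau_{jk}-\tau_{jk}|>t)\leq 2\exp(-Cnt^2)$ with a constant $C$ that does \emph{not} depend on the distribution of $\bX$. Combined with the global Lipschitz property of $\sin(\pi\cdot/2)$ and a union bound, this gives a fully uniform concentration bound $\sup_{\bSigma\in\tilde{\mathcal{S}}(L,s)}\PP(\|\hat\bS^\tau-\bSigma\|_{\max}>C\sqrt{\log d/n})\to 0$. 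All subsequent verifications will be downstream of this bound.

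First I would verify Assumption \ref{arxivsupp:main:Uconsistencyassumpweakcn}: a standard CLIME-type deterministic analysis of \eqref{generalCLIMEopt} and \eqref{arxivsupp:main:lambda:prime:tgm:clime}, using $\|\bSigma^{-1}\|_1\leq L$ and $\|\hat\bS^\tau-\bSigma\|_{\max}=O_P(\sqrt{\log d/n})$, yields $r_1(n)=r_2(n)\asymp sL^2\sqrt{\log d/n}$ uniformly. For Assumption \ref{arxivsupp:main:UnoiseassumpCI}, H\"older's inequality gives $\|(\hat\bS^\tau-\bSigma)\bbeta^*_{\check\theta}\|_\infty\leq\|\hat\bS^\tau-\bSigma\|_{\max}\|\bbeta^*_{\check\theta}\|_1\lesssim L\sqrt{\log d/n}$, so $r_3(n)\asymp L\sqrt{\log d/n}$; a similar triangle-inequality decomposition for $\hat\vb^T\hat\bS^\tau-\vb^{*T}\bSigma$ yields $r_5(n)\lesssim sL^2\sqrt{\log d/n}$. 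The bounds $\sup\|E_\tb(\bbeta_\theta^*)\|_\infty$ and $\sup\|\vb^{*T}[E_\Tb(\bbeta_\theta^*)]_{-1}\|_\infty$ follow from $\diag(\bSigma)=1$ together with $\|\vb^*\|_1\leq L$. Assumption \ref{arxivsupp:main:Ustabilityassump} is trivial since the estimating equation $\tb=\hat\bS^\tau\bbeta-\eb_m^T$ is linear in $\bbeta$, so the Jacobian derivative is identically zero.

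For the uniform CLT (Assumption \ref{arxivsupp:main:UCLTassump}) I would linearize. Taylor expanding $\sin(\pi\hat\tau_{jk}/2)-\sin(\pi\tau_{jk}/2)=(\pi/2)\cos(\pi\tau_{jk}/2)(\hat\tau_{jk}-\tau_{jk})+O((\hat\tau_{jk}-\tau_{jk})^2)$ and inserting Hoeffding's decomposition $\hat\tau_{jk}-\tau_{jk}=(2/n)\sum_i\tau^{\bX_i}_{jk}+R^{(2)}_{jk,n}$, where $R^{(2)}_{jk,n}$ is the centered degenerate second-order U-statistic remainder, I obtain
\begin{equation*}
\sqrt n\,\vb^{*T}(\hat\bS^\tau-\bSigma)\bbeta^* \;=\; n^{-1/2}\sum_{i=1}^n \vb^{*T}\bTheta_i\bbeta^* \;+\; \text{remainder},
\end{equation*}
with $\bTheta_i$ the matrix appearing in the definition \eqref{arxivsupp:main:deltaSKEPTICdef} of $\Delta$. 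The remainder has two pieces: the quadratic Taylor term, bounded by $\|\hat\bS^\tau-\bSigma\|_{\max}^2\|\vb^*\|_1\|\bbeta^*\|_1=O_P(L^2\log d/n)$, and the degenerate U-statistic remainder, of order $O_P(L^2/n)$ by a standard second-order U-statistic moment bound. Both are $o_P(n^{-1/2})$ under $sL^2\log d\,n^{-1/2}=o(1)$. For the leading linear term, uniform Lyapunov follows from $|\vb^{*T}\bTheta_i\bbeta^*|\leq CL^2$ (signs and cosines are bounded) and $\Delta\geq\alpha_{\min}\|\vb^*\|_2^2\|\bbeta^*\|_2^2\gtrsim 1/\lambda_{\max}^2(\bSigma)\gtrsim 1$ by Assumption \ref{arxivsupp:main:distr:assump:tgm:clime}, giving the ratio $L^{2(2+\delta)}/(n^{\delta/2})\to 0$ under the stated rates.

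To verify uniform variance consistency (Assumption \ref{arxivsupp:main:UVCassump2}), I would decompose $\hat\Delta-\Delta$ into (i) the Chebyshev-controlled fluctuation $n^{-1}\sum_i(\vb^{*T}\bTheta_i\bbeta^*)^2-\EE(\vb^{*T}\bTheta\bbeta^*)^2=o_P(1)$, which uses $\|\vb^{*T}\bTheta\bbeta^*\|_\infty\leq CL^2$, and (ii) a plug-in error handled by the uniform $\ell_1$ rates on $\hat\bbeta,\hat\vb$ together with $\sup_i\|\hat\bTheta^i\|_{\max}=O_P(1)$; the condition $sL^4\sqrt{\log d/n}=o(1)$ governs this second piece. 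Finally, the scaling condition $n^{1/2}(r_1 r_5+r_2 r_3)=o(1)$ reduces to $s^2L^4\log d=o(\sqrt n)$, which is implied by $sL^2\log d\,n^{-1/2}=o(1)$ combined with $s^3 n^{-1/2}=o(1)$. The main obstacle I anticipate is the CLT step: unlike the Gaussian CLIME case, the estimating equation here is a U-statistic rather than an i.i.d.\ sum, so the Hoeffding linearization and the uniform control of the degenerate second-order remainder must be executed carefully, and the Lyapunov bound must be kept uniform over the entire class $\tilde{\mathcal{S}}(L,s)$ as $L$ may grow with $n$.
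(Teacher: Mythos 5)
Your proposal is correct and follows essentially the same route as the paper: the paper's proof of this corollary is a one-line reduction to Corollary \ref{arxivsupp:main:skepticnormality}, observing that the Kendall's-tau concentration bound is distribution-free and that every bound in the pointwise argument (H\'ajek projection, degenerate remainder, Taylor remainder, variance consistency) holds uniformly over $\tilde{\mathcal{S}}(L,s)$ once $\|\vb^*\|_1,\|\bbeta^*\|_1\leq L$ and $s_{\vb},s\leq s$ are substituted. The only refinement worth noting is that the uniform CLT step is carried out in the paper via a Berry--Esseen bound of order $n^{-1/2}(s_{\vb}s)^{3/2}$ rather than a plain Lyapunov condition --- this is precisely what the hypothesis $s^3 n^{-1/2}=o(1)$ is for --- which makes precise your remark that the Lyapunov bound must be kept uniform over the class.
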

%
%
%


\section{Proofs of the General Theory}\label{arxivsupp:main:proofsofgeneraltheory}

Recall that $S(\bbeta) := \vb^{*T}\tb(\Zb, \bbeta)$, and $\hat S(\bbeta) := \hat \vb^T \tb(\Zb, \bbeta)$. We start by deriving an asymptotic expansion of the projected estimating equation $\hat S(\hat \bbeta_{\theta^*})$.

\begin{lemma} \label{arxivsupp:main:mastertheorem} Suppose Assumptions (\ref{consistencyassumpweakcn}), \ref{noiseassumpCI} and 
\begin{align}
n^{1/2}(r_{4}(n)r_{3}(n, \theta^*) + r_{5}(n)r_{1}(n, \theta^*) ) & = o(1), \label{arxivsupp:main:assumpone}
\end{align}
hold. Then we have the following asymptotic expansion:
$$
n^{1/2}\hat S(\hat \bbeta_{\theta^*}) = n^{1/2} S(\bbeta^*) + o_p(1). 
$$
\end{lemma}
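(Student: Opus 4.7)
The plan is to carry out exactly the expansion already advertised in equation (\ref{mean:val:thm:ass}) of the paper, specialized to $\theta = \theta^*$. Applying the mean value theorem to $\tb(\Zb, \cdot)$ coordinate-wise between $\hat\bbeta_{\theta^*} = (\theta^*, \hat\bgamma^T)^T$ and $\bbeta^* = (\theta^*, \bgamma^{*T})^T$, and adding and subtracting $\vb^{*T}\tb(\Zb, \bbeta^*)$, one obtains the decomposition
\begin{equation*}
\hat S(\hat\bbeta_{\theta^*}) \;=\; S(\bbeta^*) \;+\; \underbrace{\hat\vb^{T}\,\Tb(\Zb,\tilde\bbeta_\nu)\,(\hat\bbeta_{\theta^*}-\bbeta^*)}_{=:\,R_1} \;+\; \underbrace{(\hat\vb-\vb^*)^T\,\tb(\Zb,\bbeta^*)}_{=:\,R_2},
\end{equation*}
where $\tilde\bbeta_\nu$ lies on the segment between $\hat\bbeta_{\theta^*}$ and $\bbeta^*$ (row-dependent, but harmless since (\ref{lambdaprimeasumpCI}) gives a $\sup_{\nu\in[0,1]}$ bound). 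The goal reduces to showing $n^{1/2}R_1 = o_p(1)$ and $n^{1/2}R_2 = o_p(1)$.

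For $R_2$, Hölder's inequality gives $|R_2| \le \|\hat\vb-\vb^*\|_1 \,\|\tb(\Zb,\bbeta^*)\|_\infty$. The unbiasedness premise $E_{\tb}(\bbeta^*)=0$ stated at the beginning of the paper combined with (\ref{betastartassumpCI}) at $\theta=\theta^*$ yields $\|\tb(\Zb,\bbeta^*)\|_\infty \le r_1(n,\theta^*)$ with probability tending to $1$, while (\ref{consistencyassumpweakcn}) supplies $\|\hat\vb-\vb^*\|_1 \le r_5(n)$. Hence $n^{1/2}|R_2| \le n^{1/2}r_5(n)r_1(n,\theta^*) = o(1)$ by (\ref{arxivsupp:main:assumpone}).

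For $R_1$, the key observation is that $\hat\bbeta_{\theta^*}-\bbeta^*$ has vanishing first coordinate, so
\begin{equation*}
R_1 \;=\; \bigl[\hat\vb^{T}\Tb(\Zb,\tilde\bbeta_\nu)\bigr]_{-1}(\hat\bgamma-\bgamma^*),
\end{equation*}
and moreover $\vb^{*T}E_{\Tb}(\bbeta^*) = \bigl[E_{\Tb}(\bbeta^*)\bigr]^{-1}_{1*}E_{\Tb}(\bbeta^*) = \eb_1$ by the very definition of $\vb^*$, so $\bigl[\vb^{*T}E_{\Tb}(\bbeta^*)\bigr]_{-1}=0$. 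Consequently, by Hölder and (\ref{lambdaprimeasumpCI}),
\begin{equation*}
|R_1| \;\le\; \bigl\|\bigl[\hat\vb^{T}\Tb(\Zb,\tilde\bbeta_\nu)-\vb^{*T}E_{\Tb}(\bbeta^*)\bigr]_{-1}\bigr\|_\infty\,\|\hat\bgamma-\bgamma^*\|_1 \;\le\; r_3(n,\theta^*)\,r_4(n)
\end{equation*}
with probability tending to $1$, and $n^{1/2}|R_1|\to 0$ again by (\ref{arxivsupp:main:assumpone}). The main subtlety is this cancellation in $R_1$: without the identity $\vb^{*T}E_{\Tb}(\bbeta^*)=\eb_1$ and the vanishing of the first coordinate of $\hat\bbeta_{\theta^*}-\bbeta^*$, only $\|\hat\bbeta-\bbeta^*\|_1\,(1+r_3(n,\theta^*))$ would be available, which would not suffice; exploiting that the $\theta$-coordinate is fixed at $\theta^*$ and that $\vb^*$ is precisely the row that ``kills'' $E_{\Tb}(\bbeta^*)$ away from the first column is what makes the argument go through.
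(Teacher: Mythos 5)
Your proposal is correct and follows essentially the same route as the paper's proof: the identical mean-value-theorem decomposition into $S(\bbeta^*)$ plus the two remainder terms, with the same H\"older bounds, the same exploitation of the vanishing first coordinate of $\hat\bbeta_{\theta^*}-\bbeta^*$ together with $[\vb^{*T}E_{\Tb}(\bbeta^*)]_{-1}=0$ for the Jacobian term, and $E_{\tb}(\bbeta^*)=0$ for the other, concluding via the scaling condition (\ref{arxivsupp:main:assumpone}). No gaps.
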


\begin{proof}[Proof of Lemma \ref{arxivsupp:main:mastertheorem}] 

Let for brevity $r_1(n) = \sup_{\theta \in \cN_{\theta^*}} r_1(n,\theta)$ and $r_3(n) = \sup_{\theta \in \cN_{\theta^*}} r_3(n,\theta)$. We start by showing that for all $\theta \in \mathcal{N}_{\theta^*}$:
\begin{align}
\hat S(\hat \bbeta_{\theta}) = S(\bbeta^*_{\theta}) + o_p(1) = \vb^{*T}[E_{\tb}( \bbeta^*_{\theta})] + o_p(1). \label{arxivsupp:main:LLN}
\end{align}
This fact will be used in the proof of Theorem \ref{consistency:sol}. By the mean value theorem we have:
\begin{align*}
\hat S(\hat \bbeta_{\theta}) & = \vb^{*T}\tb(\Zb, \bbeta^*_{\theta}) + \underbrace{\hat \vb^{T} \Tb(\Zb, \tilde \bbeta_\nu) (\hat \bbeta_{\theta} - \bbeta^{*}_{\theta})}_{I_1} + \underbrace{(\hat \vb - \vb^*)^T \tb(\Zb, \bbeta^*_{\theta})}_{I_2} 
\end{align*}
Next we control $I_1$:
\begin{align}
|I_1| & \leq \| [\hat \vb^{T} \Tb(\Zb, \tilde \bbeta_\nu) ]_{-1} \|_{\infty} \|\hat \bbeta_{\theta} - \bbeta^{*}_{\theta} \|_{1} \leq O_p(r_3(n) + \underbrace{\|\vb^{*T}\left[E_{\Tb}(\bbeta^*_{\theta})\right]_{-1}\|_{\infty} }_{O(1)}) O_p(r_4(n)) \label{arxivsupp:main:masterI1bound}
\end{align}
where by $[\cdot]_{-1}$ we mean discarding the first entry (corresponding to $\theta$) of the vector. We proceed to bound $I_2$:
\begin{align}
| I_2| \leq \|\hat \vb - \vb^*\|_1 \| \tb(\Zb, \bbeta^*_{\theta})\|_{\infty}  =  O_p(r_5(n))O_p(r_1(n) + \underbrace{\|E_{\tb}(\bbeta^*_{\theta})\|_{\infty}}_{O(1)}). \label{arxivsupp:main:masterI2bound}
\end{align}
This combined with (\ref{betastartassumpCIvstar}) concludes the our initial statement. For the influence function expansion stated in Lemma \ref{arxivsupp:main:mastertheorem} observe that $E_{\tb}(\bbeta^*_{\theta^*}) = 0$ and $ \vb^{*T}\left[E_{\Tb}(\bbeta^*_{\theta^*})\right]_{-1} = 0$ and hence using (\ref{arxivsupp:main:assumpone}) we can modify bounds (\ref{arxivsupp:main:masterI1bound}) and (\ref{arxivsupp:main:masterI2bound}) to: 
\begin{align*}n^{1/2}(|I_1| + |I_2|) \leq n^{1/2}O_p(r_4(n)r_3(n, \theta^*) + r_5(n)r_1(n, \theta^*)) = o_p(1),\end{align*}
and we are done. 
\end{proof}

\begin{proof}[Proof of Theorem \ref{consistency:sol}] First assume that the map $\theta \mapsto \hat S(\hat \bbeta_\theta)$ is continuous and has a unique $0$. Take $\epsilon > 0$ so that both $\theta^* - \epsilon, \theta^* + \epsilon \in \mathcal{N}_{\theta^*}$. Without loss of generality let $\vb^{*T}E_{\tb}(\bbeta_{\theta^* - \epsilon}) < 0$ and $\vb^{*T}E_{\tb}(\bbeta_{\theta^* + \epsilon}) > 0$  for all $\epsilon > 0$. Note that by the continuity of $\theta \mapsto \hat S(\hat \bbeta_\theta)$, and the fact that $\tilde \theta$ is the unique root $
\PP^*(\hat S(\hat \bbeta_{\theta^* - \epsilon}) <  0, \hat S(\hat \bbeta_{\theta^* + \epsilon}) > 0) \leq \PP^*(\theta^* - \epsilon < \tilde \theta < \theta^* + \epsilon).$
Now by (\ref{arxivsupp:main:LLN}) from Lemma \ref{arxivsupp:main:mastertheorem} we have that the left hand side converges to $1$ and this concludes the proof in the first case. The same argument goes through in the second case.
\end{proof}

\begin{proof}[Proof of Theorem \ref{weakconvver2:CI}] Let $U_n = \frac{n^{1/2}}{\sqrt{\vb^{*T}\bSigma\vb^*}}(\tilde \theta - \theta^*)$. It suffices to show that the statement holds for $U_n$, as the statement for $\hat U_n$ is a corollary after an application of Slutsky's theorem. By the mean value theorem:
\begin{align}
\hat S(\hat \bbeta_{\tilde \theta}) = \hat S(\hat \bbeta_{\theta^*}) + \hat \vb^{T} [\Tb(\Zb, \hat \bbeta_{\theta^*})]_{*1} (\tilde \theta - \theta^*) + \frac{1}{2}\hat \vb^{T} \frac{\partial}{\partial \theta}[\Tb(\Zb, \tilde \bbeta_{\nu})]_{*1} (\tilde \theta - \theta^*)^2, \label{arxivsupp:main:meanvaluethm}
\end{align}
where $\tilde \bbeta_{\nu} = \nu \hat \bbeta_{\tilde \theta} + (1 - \nu) \hat \bbeta_{\theta^*}$ for some $\nu \in [0,1]$. By (\ref{stabtwo}) and the fact that $\hat \vb$ and $\tilde \bbeta_\nu$ are consistent (by (\ref{consistencyassumpweakcn}) and Theorem \ref{consistency:sol}) we have that:
$$
 \bigg | \frac{1}{2} \hat \vb^{T} \frac{\partial}{\partial \theta}[\Tb(\Zb, \tilde \bbeta_{\nu})]_{*1} (\tilde \theta - \theta^*)^2 \bigg| \leq (\tilde \theta - \theta^*)^2 O_p(1) = |\tilde \theta - \theta^*|o_p(1).
$$

Observe that by Lemma \ref{arxivsupp:main:mastertheorem} and Assumption \ref{CLTcond} we have $\frac{n^{1/2}}{\sqrt{\vb^{*T}\bSigma\vb^*}} \hat S(\hat \bbeta_{\theta^*}) = O_p(1)$, and more precisely $\frac{n^{1/2}}{\sqrt{\vb^{*T}\bSigma\vb^*}} \hat S(\hat \bbeta_{\theta^*}) \rightsquigarrow N(0,1)$. Next by Assumption \ref{noiseassumpCI},
$$n^{1/2} |\hat \theta - \theta^*|(\underbrace{|\hat \vb^{T} [\Tb(\Zb, \hat\bbeta_{\theta^*})]_{*1} |}_{1 + o_p(1)} + o_p(1)) = O_p(1),$$
and hence we conclude that $\hat \theta - \theta^* = O_p(n^{-1/2})$. Thus by Assumption \ref{noiseassumpCI} and Sltusky's:
$$
\frac{n^{1/2} (\tilde \theta - \theta^*)}{\sqrt{\vb^{*T}\bSigma\vb^*}} = - \frac{n^{1/2} \hat S(\hat \bbeta_{\theta^*})}{\sqrt{\vb^{*T}\bSigma\vb^*} \hat \vb^{T} [\Tb(\Zb, \hat \bbeta_{\theta^*})]_{*1}} + o_p(1) \rightsquigarrow N(0,1),
$$
which concludes the proof.
\end{proof}

\begin{proof}[Proof of Proposition \ref{arxivsupp:main:Uconsistency:sol}] We start by showing that for all $\epsilon > 0$:
$$
\sup_{\bbeta \in \bOmega} \sup_{\check \theta \in \mathcal{N}_\theta} \PP_{\bbeta}\left(|\hat S(\hat \bbeta_{\check \theta}) - \vb^{T}E_{\tb}(\bbeta_{\check \theta})| > \epsilon \right) = o(1).
$$
We have:
\begin{align}\label{arxivsupp:main:unif:conv:eq}
\MoveEqLeft \sup_{\bbeta \in \bOmega} \sup_{\check \theta \in \mathcal{N}_\theta} \PP_{\bbeta}\left(|\hat S(\hat \bbeta_{\check \theta}) - \vb^{T}E_{\tb}(\bbeta_{\check \theta})| > \epsilon \right) \nonumber \\
& \leq  \sup_{\bbeta \in \bOmega} \sup_{\check \theta \in \mathcal{N}_\theta} \PP_{\bbeta}\bigg(\bigg|\vb^{T}\tb(\Zb, \bbeta_{\check \theta}) - \vb^{T}E_{\tb}(\bbeta_{\check \theta})\bigg| > \frac{\epsilon}{3} \bigg) \nonumber \\
&  + \sup_{\bbeta \in \bOmega} \sup_{\check \theta \in \mathcal{N}_\theta}  \PP_{\bbeta}\bigg(\sup_{\nu \in [0,1]}\bigg\|\hat \vb^{T} [\Tb(\Zb, \tilde \bbeta_\nu)]_{-1}\bigg\|_{\infty} \|\hat \bbeta_{\check \theta} - \bbeta_{\check \theta}\|_1 > \frac{\epsilon}{3}\bigg)\nonumber \\
& + \sup_{\bbeta \in \bOmega} \sup_{\check \theta \in \mathcal{N}_\theta} \PP_{\bbeta}\bigg(\|\hat \vb - \vb\|_1 \| \tb(\Zb, \bbeta_{\check \theta})\|_{\infty} > \frac{\epsilon}{3}\bigg),
\end{align} 
where $\tilde \bbeta_\nu = \nu \hat\bbeta_{\check \theta} + (1- \nu)\bbeta_{\check \theta}$. By Assumptions \ref{arxivsupp:main:Uconsistencyassumpweakcn} and \ref{arxivsupp:main:UnoiseassumpCI} it follows that the RHS is $o(1)$, as we claimed.  First let us assume that the maps $\check \theta \mapsto \hat S(\hat \bbeta_{\check \theta})$ are continuous. To shorten the notation in the remaining of the proof we will use use the following notation: If $A, B$ are random variables we write $
\PP(A \gtrless c_1, B \lessgtr c_2) := \PP((A > c_1 \cap B < c_2) \cup (A < c_1 \cap B > c_2)).$
Then following inequality holds $
\inf_{\bbeta \in \bOmega} \PP_{\bbeta} (\hat S(\hat \bbeta_{\theta- \epsilon} ) \lessgtr 0, \hat S(\hat \bbeta_{\theta + \epsilon} ) \gtrless 0 ) \leq \inf_{\bbeta \in \bOmega} \PP_{\bbeta} (\theta - \epsilon < \tilde \theta < \theta + \epsilon).$
Next for small enough $\epsilon > 0$ such that $\theta \pm \epsilon \in \mathcal{N}_{\theta}$, we have:
\begin{align*}
&\inf_{\bbeta \in \bOmega} \PP_{\bbeta} (\vb^T E_{\tb}(\bbeta_{\theta - \epsilon}) \lessgtr 0, \vb^T E_{\tb}(\bbeta_{\theta + \epsilon})\gtrless 0) \\
- & \sup_{\bbeta \in \bOmega} \PP_{\bbeta} (\hat S(\hat \bbeta_{\theta- \epsilon} ) \gtrless 2\vb^T E_{\tb}(\bbeta_{\theta - \epsilon}), \hat S(\hat \bbeta_{\theta + \epsilon}) \lessgtr 2\vb^T E_{\tb}(\bbeta_{\theta + \epsilon}))\\
\leq & \inf_{\bbeta \in \bOmega} \PP_{\bbeta} (\hat S(\hat \bbeta_{\theta- \epsilon} ) \lessgtr 0, \hat S(\hat \bbeta_{\theta + \epsilon} ) \gtrless 0).
\end{align*}
The LHS goes to $1$ by (\ref{arxivsupp:main:unif:conv:eq}), and hence the proof is complete in this case. In the case when $\check \theta \mapsto \hat S(\hat \bbeta_{\check \theta})$ are non-decreasing, exactly the same argument goes through.
\end{proof}

\begin{lemma} \label{arxivsupp:main:normallemma} Let $X_n(\br)$ and $\xi_n(\br)$ be two sequences of random variables, depending on a parameter $\br \in \bR$. Suppose that $\lim_{n} \sup_{\br \in \bR} \sup_{t } |\PP_{\br}(X_n(\br) \leq t) - F(t)| = 0$ where $F$ is a continuous cdf, and $\lim_{n} \inf_{\br \in \bR} \PP_{\br}(|1 - \xi_n(\br)| \leq \tau(n)) = 1$ for $\tau(n) = o(1)$. Assume in addition that $F$ is Lipschitz, i.e. there exist $\kappa > 0$, such that for any $t, s \in \RR: |F(t) - F(s)| \leq \kappa |t - s|$. Then we have:
$$
\lim_{n}  \sup_{\br \in \bR} \sup_{t} \bigg|\PP_{\br}\bigg(\frac{X_n(\br)}{\xi_n(\br)} \leq t\bigg) - F(t)\bigg| = 0.
$$
\end{lemma}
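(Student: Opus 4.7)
The plan is to establish a uniform version of Slutsky's theorem via a sandwich argument, making every estimate uniform in both $\br$ and $t$. First I would introduce the ``good'' event $A_n(\br) := \{|\xi_n(\br) - 1| \leq \tau(n)\}$; by the second hypothesis $\sup_{\br \in \bR} \PP_{\br}(A_n(\br)^c) = o(1)$, and for $n$ large enough that $\tau(n) < 1/2$ the variable $\xi_n(\br)$ is bounded away from zero on $A_n(\br)$ and $1/\xi_n(\br)$ is well-defined.

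The next step is a two-sided sandwich. On $A_n(\br)$ the quantity $t \xi_n(\br)$ lies between $t(1 - \tau(n))$ and $t(1 + \tau(n))$, with the ordering depending on the sign of $t$. Setting $t^- := \min\{t(1-\tau(n)), t(1+\tau(n))\}$ and $t^+ := \max\{t(1-\tau(n)), t(1+\tau(n))\}$ and splitting $\PP_{\br}$ into its restrictions to $A_n$ and $A_n^c$, I would derive
\begin{align*}
\PP_{\br}(X_n(\br) \leq t^-) - \PP_{\br}(A_n^c) \;\leq\; \PP_{\br}(X_n(\br)/\xi_n(\br) \leq t) \;\leq\; \PP_{\br}(X_n(\br) \leq t^+) + \PP_{\br}(A_n^c).
\end{align*}
The uniform cdf convergence in hypothesis (1) then replaces $\PP_{\br}(X_n(\br) \leq t^{\pm})$ by $F(t^{\pm})$ up to an error that is $o(1)$ uniformly in $(\br, t)$, and the $\PP_{\br}(A_n^c)$ term is $o(1)$ uniformly in $\br$ by construction. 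What remains is to prove $\sup_{t \in \RR} |F(t^+) - F(t^-)| = o(1)$.

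This last reduction is where I expect the main obstacle to lie. The Lipschitz bound by itself yields $|F(t^+) - F(t^-)| \leq 2\kappa |t| \tau(n)$, which blows up for large $|t|$ and is therefore not uniform on $\RR$. I would resolve this by combining the Lipschitz hypothesis with the cdf tail behavior of $F$. Fix $\epsilon > 0$; because $F$ is a continuous cdf with limits $0$ and $1$, there exists $M = M(\epsilon)$ such that $F(-M) < \epsilon$ and $1 - F(M) < \epsilon$. For $|t| \leq 2M$ the Lipschitz bound directly gives $|F(t^+) - F(t^-)| \leq 4 \kappa M \tau(n) = o(1)$. For $|t| > 2M$ and $\tau(n) < 1/2$, both $t^-$ and $t^+$ share the sign of $t$ and have absolute value at least $M$, so by monotonicity $F(t^-)$ and $F(t^+)$ lie in a common interval of width at most $\epsilon$ (namely $[F(M), 1]$ when $t > 2M$ and $[0, F(-M)]$ when $t < -2M$), and hence $|F(t^+) - F(t^-)| \leq \epsilon$. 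Combining the two regimes gives $\sup_t |F(t^+) - F(t^-)| \leq 4\kappa M \tau(n) + \epsilon$; letting $n \to \infty$ and then $\epsilon \to 0$ closes the argument.
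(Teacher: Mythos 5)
Your proof is correct and is exactly the ``direct calculation'' the paper alludes to but omits: a uniform Slutsky-type sandwich on the good event $\{|\xi_n(\br)-1|\leq \tau(n)\}$, followed by the two-regime bound on $\sup_t|F(t^+)-F(t^-)|$ that correctly patches the non-uniformity of the naive Lipschitz estimate $2\kappa|t|\tau(n)$ using the tails of $F$. Since the paper gives no details to compare against, there is nothing further to reconcile; your write-up supplies a complete argument.
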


\begin{proof}[Proof of Lemma \ref{arxivsupp:main:normallemma}]  The proof follows by a direct calculation, and we omit the details. 
\end{proof}

\begin{lemma} \label{arxivsupp:main:unifconvhelperlemma} Under Assumptions \ref{arxivsupp:main:Uconsistencyassumpweakcn} --- \ref{arxivsupp:main:Ustabilityassump}, we have:
\begin{align}
\lim_{n \rightarrow \infty} \inf_{\bbeta \in \bOmega} \PP_{\bbeta}\left(\left|\hat S(\hat \bbeta_{\theta}) - S( \bbeta) \right| \leq r_{1}(n)r_{5}(n) + r_{2}(n)r_{3}(n) \right) = 1 \label{arxivsupp:main:UBoundShat}.
\end{align}
If in addition $n^{1/2}(r_{1}(n)r_{5}(n) + r_{2}(n)r_{3}(n)) = o(1)$, we have:
\begin{align}
\lim_{n \rightarrow \infty} \sup_{\bbeta \in \bOmega} \sup_t | \PP_{\bbeta}(\sigma^{-1} n^{1/2}\hat S(\hat \bbeta_{\theta})\leq t ) - \Phi(t)| = 0 \label{arxivsupp:main:UCLTShat}.
\end{align}
\end{lemma}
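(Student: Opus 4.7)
The plan is to mimic the argument of Lemma \ref{arxivsupp:main:mastertheorem}, but track uniformity in $\bbeta \in \bOmega$ throughout, and then upgrade the remainder bound to a uniform CLT via a Slutsky-type argument applied pointwise to the distribution function.

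\textbf{Step 1 (mean-value decomposition).} First I would apply the mean value theorem componentwise to write
\begin{align*}
\hat S(\hat \bbeta_\theta) - S(\bbeta)
  = \underbrace{\hat\vb^T \Tb(\Zb, \tilde\bbeta_\nu)\,(\hat\bbeta_\theta - \bbeta)}_{=:I_1}
  + \underbrace{(\hat\vb - \vb)^T \tb(\Zb, \bbeta)}_{=:I_2},
\end{align*}
where $\tilde\bbeta_\nu = \nu\hat\bbeta_\theta + (1-\nu)\bbeta$ for some $\nu \in [0,1]$, exactly as in the proof of Lemma \ref{arxivsupp:main:mastertheorem}. The key simplification in this uniform setting is that since $\bbeta$ is the truth, we have $E_{\tb}(\bbeta) = 0$ and $\vb^T E_{\Tb}(\bbeta) = \eb_1$, so in particular $\|[\vb^T E_{\Tb}(\bbeta)]_{-1}\|_\infty = 0$. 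Also, since $\hat\bbeta_\theta$ agrees with $\bbeta$ in the first coordinate, only the nuisance components contribute to $I_1$, so
$I_1 = \hat\vb^T [\Tb(\Zb, \tilde\bbeta_\nu)]_{-1}\,(\hat\bgamma - \bgamma)$.

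\textbf{Step 2 (uniform bounds on $I_1$ and $I_2$).} For $I_1$, by the triangle inequality and (\ref{arxivsupp:main:UlambdaprimeasumpCI}), on an event of probability at least $1 - o(1)$ (uniform in $\bbeta \in \bOmega$),
$\|\hat\vb^T[\Tb(\Zb,\tilde\bbeta_\nu)]_{-1}\|_\infty \le \|\hat\vb^T\Tb(\Zb,\tilde\bbeta_\nu) - \vb^T E_{\Tb}(\bbeta)\|_\infty + 0 \le r_5(n)$;
combining with Assumption \ref{arxivsupp:main:Uconsistencyassumpweakcn} giving $\|\hat\bgamma - \bgamma\|_1 \le r_1(n)$ yields $|I_1| \le r_1(n)r_5(n)$ uniformly with probability $1 - o(1)$. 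For $I_2$, by (\ref{arxivsupp:main:UbetastartassumpCI}) applied at $\check\theta = \theta$, $\|\tb(\Zb,\bbeta)\|_\infty = \|\tb(\Zb,\bbeta) - E_{\tb}(\bbeta)\|_\infty \le r_3(n)$ uniformly; combined with $\|\hat\vb - \vb\|_1 \le r_2(n)$ from Assumption \ref{arxivsupp:main:Uconsistencyassumpweakcn}, this gives $|I_2| \le r_2(n)r_3(n)$ uniformly with probability $1-o(1)$. Intersecting these events proves (\ref{arxivsupp:main:UBoundShat}). Note that Assumption \ref{arxivsupp:main:Ustabilityassump} is not needed for this part; it becomes relevant only when the expansion is iterated at $\tilde\theta$ in Theorem \ref{arxivsupp:main:unifweakconv}.

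\textbf{Step 3 (uniform CLT for $\hat S$).} Given the scaling $n^{1/2}(r_1(n)r_5(n) + r_2(n)r_3(n)) = o(1)$, (\ref{arxivsupp:main:UBoundShat}) together with $\inf_{\bbeta\in\bOmega}\sigma \ge C > 0$ from Assumption \ref{arxivsupp:main:UCLTassump} gives, for any fixed $\epsilon > 0$,
$\lim_n \sup_{\bbeta \in \bOmega} \PP_\bbeta(|\sigma^{-1} n^{1/2}(\hat S(\hat\bbeta_\theta) - S(\bbeta))| > \epsilon) = 0$.
To conclude (\ref{arxivsupp:main:UCLTShat}), I would use the standard Slutsky-type sandwich: for any $t\in\RR$ and any $\epsilon > 0$,
\begin{align*}
\PP_\bbeta(\sigma^{-1}n^{1/2}\hat S(\hat\bbeta_\theta) \le t)
  &\le \PP_\bbeta(\sigma^{-1}n^{1/2} S(\bbeta) \le t + \epsilon) + \PP_\bbeta(|\sigma^{-1}n^{1/2}(\hat S(\hat\bbeta_\theta) - S(\bbeta))| > \epsilon),
\end{align*}
and symmetrically from below. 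Taking $\sup_{\bbeta \in \bOmega}$ on both sides and then $\sup_{t \in \RR}$, the uniform CLT (\ref{arxivsupp:main:Unormalitycond}) bounds the first term by $\Phi(t+\epsilon) + o(1)$ uniformly, and the second term is $o(1)$ uniformly. Using the uniform continuity of $\Phi$ to let $\epsilon \downarrow 0$ along a sequence after $n \to \infty$ yields (\ref{arxivsupp:main:UCLTShat}).

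\textbf{Main obstacle.} The mechanical bounds in Step 2 are routine; the only subtle point is tracking that $\vb$ and $\bbeta$ are both $\bbeta$-dependent, so the ``truth'' identities $E_{\tb}(\bbeta)=0$ and $\vb^T E_{\Tb}(\bbeta) = \eb_1$ must be used explicitly to eliminate what would otherwise be $O(1)$ terms in the bounds on $I_1$ and $I_2$. The real care in Step 3 is in the sandwich argument: one must exchange the $\sup_{\bbeta \in \bOmega}$ with the approximation of $\Phi$ by $\Phi(\cdot \pm \epsilon)$ uniformly, which is what makes the Lipschitz (equivalently, uniform continuity) property of $\Phi$ essential and mirrors the role it plays in Lemma \ref{arxivsupp:main:normallemma}.
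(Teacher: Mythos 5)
Your proposal is correct and follows essentially the same route as the paper: the mean-value decomposition from Lemma \ref{arxivsupp:main:mastertheorem} with the truth identities $E_{\tb}(\bbeta)=0$ and $[\vb^{T}E_{\Tb}(\bbeta)]_{-1}=0$ killing the $O(1)$ terms, intersection of the high-probability events from Assumptions \ref{arxivsupp:main:Uconsistencyassumpweakcn}--\ref{arxivsupp:main:UnoiseassumpCI}, and a sandwich argument combined with the Lipschitz continuity of $\Phi$ for the uniform CLT. The only cosmetic difference is that the paper shifts $t$ by the vanishing sequence $\kappa(n)=n^{1/2}C^{-1/2}(r_1(n)r_5(n)+r_2(n)r_3(n))$ directly, whereas you fix $\epsilon>0$ and send $\epsilon\downarrow 0$ after $n\to\infty$; both are valid, and your observation that Assumption \ref{arxivsupp:main:Ustabilityassump} is not actually used here is also accurate.
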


\begin{proof}[Proof of Lemma \ref{arxivsupp:main:unifconvhelperlemma}]
Let $\mathcal{G}^{\bbeta}_i$ be the event inside the probability measures in assumptions \ref{arxivsupp:main:Uconsistencyassumpweakcn} --- \ref{arxivsupp:main:Ustabilityassump} corresponding to the rate $r_i(n)$ for $i = 1,\ldots, 5$. It is clear that $\inf_{\bbeta \in \bOmega} \PP_{\bbeta}(\mathcal{G}^{\bbeta} ) \geq 1 - \sum_{i = 1}^5 \sup_{\bbeta \in \bOmega}\PP_{\bbeta}[ (\mathcal{G}_i^{\bbeta})^c] \rightarrow 1$. Next, the proof of (\ref{arxivsupp:main:UBoundShat}) can be done through the same argument as in the proof of Theorem \ref{arxivsupp:main:mastertheorem}, but using the uniform convergence assumptions. Note that the bounds (\ref{arxivsupp:main:masterI1bound}) and (\ref{arxivsupp:main:masterI2bound}) continue to hold on the event $\mathcal{G}^{\bbeta} = \mathcal{G}_1^{\bbeta} \cap \ldots \cap \mathcal{G}_5^{\bbeta}$. Hence by Assumptions \ref{arxivsupp:main:Uconsistencyassumpweakcn} and \ref{arxivsupp:main:UnoiseassumpCI} the proof of (\ref{arxivsupp:main:UBoundShat}) is complete.

Next we show (\ref{arxivsupp:main:UCLTShat}). Let $\kappa(n) = n^{1/2}C^{-1/2}(r_{1}(n)r_{5}(n) + r_{2}(n)r_{3}(n))$, where we recall the definition of $C$: $\inf_{\bbeta \in \bOmega} \vb^T \bSigma \vb \geq C > 0$. Then we have:
\begin{align*}
\PP_{\bbeta}(\sigma^{-1} n^{1/2}\hat S(\hat \bbeta_{ \theta}) \leq t ) & \leq \PP_{\bbeta}(\sigma^{-1}n^{1/2}\hat S(\hat \bbeta_{ \theta}) \leq t, \mathcal{G}^{\bbeta} ) + \PP_{\bbeta}( (\mathcal{G}^{\bbeta})^c)\\
& \leq \PP_{\bbeta}(\sigma^{-1}n^{1/2} S(\bbeta) \leq t + \kappa(n)) + \PP_{\bbeta}( (\mathcal{G}^{\bbeta})^c).
\end{align*}
The above implies the following inequality:
\begin{align*}
& \PP_{\bbeta}(\sigma^{-1} n^{1/2}\hat S(\hat \bbeta_{ \theta})\leq t) - \Phi(t) \\
& \leq \PP_{\bbeta}(\sigma^{-1} n^{1/2}S(\bbeta)  \leq t + \kappa(n)) - \Phi(t + \kappa(n)) +( \Phi(t + \kappa(n))- \Phi(t) ) + \PP_{\bbeta}( (\mathcal{G}^{\bbeta})^c)\\
&\leq  \PP_{\bbeta}(\sigma^{-1}n^{1/2}S(\bbeta)\leq t + \kappa(n)) - \Phi(t + \kappa(n)) + \frac{\kappa(n)}{\sqrt{2\pi}}+ \PP_{\bbeta}( (\mathcal{G}^{\bbeta})^c),
\end{align*}
where we took into account the fact that $\Phi$ is Lipschitz with constant $\leq \frac{1}{\sqrt{2\pi}}$. Now using Assumption \ref{arxivsupp:main:UCLTassump}, the fact that $\kappa(n) = o(1)$ and  $\PP_{\bbeta}( (\mathcal{G}^{\bbeta})^c) = o(1)$ we conclude that:
$$
\limsup_{n \rightarrow \infty} \sup_{\bbeta \in \bOmega} \sup_t  \PP_{\bbeta}(\sigma^{-1}n^{1/2}\hat S(\hat \bbeta_{ \theta}) \leq t) - \Phi(t) \leq 0 .
$$
With a similar argument one can show the converse, namely:
$$
\liminf_{n \rightarrow \infty} \inf_{\bbeta \in \bOmega} \inf_t  \PP_{\bbeta}(\sigma^{-1}n^{1/2}\hat S(\hat \bbeta_{ \theta})\leq t) - \Phi(t) \geq 0.
$$
This concludes the proof of  (\ref{arxivsupp:main:UCLTShat}).
\end{proof}

\begin{proof}[Proof of Theorem \ref{arxivsupp:main:unifweakconv}] 
By Assumption \ref{arxivsupp:main:UVCassump2}, we have:
\begin{align}
\label{arxivsupp:main:sigmasigmabound}
\inf_{\bbeta} \PP_{\bbeta}\bigg(\bigg|1 - \hat \sigma \sigma^{-1}\bigg| \leq \frac{r_6(n)}{C}\bigg) \geq \PP_{\bbeta}\left(|\hat \sigma -  \sigma| \leq \frac{r_6(n)}{\sqrt{C}}\right) &\geq \inf_{\bbeta} \PP_{\bbeta}\left(|\hat \sigma^2 -  \sigma^{2}| \leq r_6(n)\right) \nonumber \\
& = 1 - o(1),
\end{align}
where recall that $\inf_{\bbeta \in \bOmega} \vb^T \bSigma \vb \geq C > 0$. The last expression implies that:
Next define:
\begin{align*}
\zeta_n(\bbeta) & =  \hat \sigma\sigma^{-1}  \bigg(\hat \vb^{T} [\Tb(\Zb, \tilde \bbeta_{\theta})]_{*1}\bigg), ~~~~ \eta_n(\bbeta, \nu) =  \frac{\hat \sigma \sigma^{-1}}{2}\hat \vb^{T} \frac{\partial}{\partial \theta}[\Tb(\Zb, \tilde \bbeta_{\nu})]_{*1} (\tilde \theta - \theta),
\end{align*}
where $\tilde \bbeta_{\nu} = \nu \hat \bbeta_{\tilde \theta} + (1 - \nu) \hat \bbeta_{\theta}$ and let $\xi_n(\bbeta, \nu) = \zeta_n(\bbeta) + \eta_n(\bbeta, \nu)$. 

We will show that 
\begin{align} \label{arxivsupp:main:xi:close:one}
\lim_{n} \inf_{\bbeta \in \bOmega} \inf_{\nu \in[0,1]} \PP_{\bbeta}(|1 - \xi_n(\bbeta, \nu)| \leq \tau(n)) = 1,
\end{align}
for some $\tau(n) = o(1)$, or equivalently --- for every $\epsilon > 0:$ $\sup_{\bbeta \in \bOmega} \sup_{\nu \in[0,1]} \PP_{\bbeta}(|1 - \xi_n(\bbeta, \nu)| > \epsilon) = o(1)$. We proceed with the following:
\begin{align*}
\sup_{\bbeta \in \bOmega} \sup_{\nu \in[0,1]}\PP_{\bbeta}(|1 - \xi_n(\bbeta, \nu)| > \epsilon) \leq \underbrace{\sup_{\bbeta \in \bOmega} \PP_{\bbeta}(|1 - \zeta_n(\bbeta)| > \epsilon/2)}_{I_1} + \underbrace{\sup_{\bbeta \in \bOmega} \sup_{\nu \in[0,1]}\PP_{\bbeta}(|\eta_n(\bbeta, \nu)| > \epsilon/2)}_{I_2}.
\end{align*}
First we tackle $I_1$. We have:
\begin{align*}
I_1 &\leq \sup_{\bbeta \in \bOmega} \PP_{\bbeta} (|1 - \hat \sigma \sigma^{-1}| > \epsilon/4 ) + \sup_{\bbeta \in \bOmega} \PP_{\bbeta}(| \hat \sigma \sigma^{-1}| > 2 )  + \sup_{\bbeta \in \bOmega} \PP_{\bbeta} (| 1 - \hat \vb^{T} [\Tb(\Zb, \tilde \bbeta_{\theta})]_{*1}| > \epsilon/8),
\end{align*}
and all of the terms on the RHS are $o(1)$ due to (\ref{arxivsupp:main:sigmasigmabound}) and Assumption \ref{arxivsupp:main:Ustabilityassump} respectively.

Next we handle $I_2$ term. Let $E = \sup_{\bbeta \in \bOmega} \EE_{\bbeta}\psi(\Zb)$, where the function $\psi$ is defined in Assumption \ref{arxivsupp:main:Ustabilityassump}. Fix an $\alpha > 0$, and proceed as:
\begin{align*}
I_2 & \leq \sup_{\bbeta \in \bOmega} \PP_{\bbeta} (|\hat \sigma \sigma^{-1}| \geq 2) + \sup_{\bbeta \in \bOmega} \PP_{\bbeta} (|\tilde \theta - \theta^*| > E^{-1}\epsilon \alpha/2) + \sup_{\bbeta \in \bOmega}\PP_{\bbeta}(\psi(\Zb) \geq \alpha^{-1}E) \leq o(1) + \alpha,
\end{align*}
where the last inequality follows from (\ref{arxivsupp:main:sigmasigmabound}), Proposition \ref{arxivsupp:main:Uconsistency:sol} and Markov's inequality. Taking the limit $n \rightarrow \infty$ shows that $\lim_{n} \sup_{\bbeta \in \bOmega} \sup_{\nu \in[0,1]}\PP_{\bbeta}(|\eta_n(\bbeta, \nu)| > \epsilon/2)  \leq \alpha$. Taking $\alpha \rightarrow 0$ shows (\ref{arxivsupp:main:xi:close:one}). Next observe that by (\ref{arxivsupp:main:meanvaluethm}) we have the following identity:
\begin{align}
\frac{n^{1/2} (\tilde \theta - \theta)}{\hat \sigma} = -\frac{n^{1/2} \hat S(\hat \bbeta_{\theta})}{\sigma \xi_n(\bbeta, \nu)}. \label{arxivsupp:main:final:identity}
\end{align}
Now combining the fact that (\ref{arxivsupp:main:xi:close:one}) with our results from Lemmas \ref{arxivsupp:main:normallemma} and \ref{arxivsupp:main:unifconvhelperlemma} in addition to (\ref{arxivsupp:main:final:identity}) completes the proof.
\end{proof}

\begin{proof}[Proof of Proposition \ref{arxivsupp:weakconv}] Note that the only thing left to show is the consistency of the plugin estimate $\hat \vb^{T}\hat \bSigma \hat \vb$ for $\vb^{*T} \bSigma \vb^{*}$, with the rest of the argument following from Theorem \ref{weakconvver2:CI} and Slutsky's theorem. By the triangle inequality we have:
\begin{align*}
|\hat \vb^{T}\hat \bSigma \hat \vb - \vb^{*T} \bSigma \vb^{*}| & \leq \underbrace{\|\hat \vb^{T} - \vb^*\|_1 \|\hat \bSigma (\hat \vb - \vb^{*})\|_{\infty}}_{I_1} + 2 \underbrace{\|\vb^{*T} \hat \bSigma\|_\infty\| \hat \vb - \vb^* \|_{1}}_{I_2} + \underbrace{\|\vb^*\|^2_1 \|\hat \bSigma - \bSigma\|_{\max}}_{I_3}
\end{align*}
Next we control $
|I_1| \leq O_p(r_2(n)^2 r_7(n) + \|\bSigma\|_{\max}r^2_2(n)) = o_p(1).$
Next 
$$|I_2| \leq 2 O_p(\|\vb^*\|_1 r_2(n)r_7(n) + \|\vb^{*T} \bSigma\|_\infty r_2(n) ) = o_p(1).$$
Finally, for $I_3$ we have $
|I_3| \leq \|\vb^*\|_1^2 O_p(r_7(n)) = o_p(1).$
\end{proof}

\section{Proofs for the Dantzig Selector} \label{arxivsupp:main:DantzigProofs}

We recall the definition of \textit{restricted eigenvalue} (RE) assumption \citep{bickel2009simultaneous}.

\begin{definition}[RE]\label{arxivsupp:RE} We say that the symmetric positive semi-definite matrix $\Mb_{k\times k}$ possesses the restricted eigenvalue property if:
	$$\operatorname{RE}_\Mb(s, \xi) = \min_{S \subset \{1,\ldots,k\}, |S| \leq s} \min \left\{\frac{\ub^T \Mb \ub}{\|\ub_{S}\|^2_2} : \ub \in \mathbb{R}^d\setminus\{0\}, \|\ub_{S^c}\|_1 \leq \xi \|\ub_{S}\|_1 \right\} > 0.$$
\end{definition}

\begin{definition} 
Denote with $\Xb$ the $n \times d$ matrix whose rows are the $\bX_i^T$ vectors stacked together. Let $\bY$ be the an $n \times 1$ vector stacking the observations $Y_i$ for $i = 1,\ldots,n$ and let $\bvarepsilon = \bY - \Xb\bbeta^*$.
\end{definition}

\begin{proof}[Proof of Corollary \ref{normdantzigfinal}] We will prove this result by validating all of the assumptions required of the general Theorem \ref{weakconvver2:CI}. Regarding Assumption (\ref{consistencyassumpweakcn}), observe that from Lemma \ref{arxivsupp:main:vdiff}, we have that $\|\vb^*-\hat\vb\|_1 = O_p\left( \|\vb^*\|_1s_{\vb}\sqrt{\log d/n}\right)$ and by Lemma \ref{arxivsupp:main:l1betadiff} --- $\|\bbeta^*-\hat\bbeta\|_1 = O_p\left(s\sqrt{\log d/n}\right)$. 

Assumption \ref{noiseassumpCI} can be verified as follows. To see (\ref{betastartassumpCI}), fix a $|\theta - \theta^*| < \epsilon$, for some $\epsilon > 0$. Next by the triangle inequality:
$$
\bigg\|\bSigma_n\bbeta^*_{\theta} - n^{-1}\Xb^T \bY - \bSigma_{\bX}(\bbeta^*_{\theta} - \bbeta^*)\bigg\|_{\infty} \leq \|n^{-1} \Xb^T\bvarepsilon \|_{\infty} + \|\bSigma_{n,*1} - \bSigma_{\bX, *1}\|_{\infty}\epsilon.
$$
The two terms on the RHS are $O_p(\sqrt{\log d/n})$, by the proof of Lemma \ref{arxivsupp:main:lemmabetaclose} (see \ref{arxivsupp:main:condprobbound}) and Lemma \ref{arxivsupp:main:samplecovpopcov}. The same logic shows that $|\vb^{*T}\bSigma_n\bbeta^*_{\theta} - n^{-1}\vb^{*T}\Xb^T \bY - \vb^{*T}\bSigma_{\Xb,*1}(\theta - \theta^*) | = O_p(\|\vb^*\|_{1}\sqrt{\log d/n})$, which implies (\ref{betastartassumpCIvstar}). Since the Hessian $\Tb$ in (\ref{lambdaprimeasumpCI}) is free of $\bbeta$ we are allowed to set $r_3(n) = \lambda' \asymp \|\vb^*\|_1\sqrt{\log d/n} = o(1)$ (by Lemma \ref{arxivsupp:main:vdiff}). Finally the two expectations in Assumption \ref{noiseassumpCI}, are bounded as we see below:
\begin{align*}
\|\bSigma_{\bX}(\bbeta^*_{\theta} - \bbeta^*)\|_{\infty} \leq \|\bSigma_{\bX,*1} \|_{\infty}\epsilon \leq 2 K_{\bX}^2 \epsilon, ~~~~~~~~ \|\vb^{*T}\bSigma_{\bX, -1}\|_{\infty}  = 0.
\end{align*}
By adding up the following two identities:
\begin{align*}
\sqrt{n} O_p\left(\|\vb^*\|_1 \sqrt{\log d/n}\right)O_p\left(s\sqrt{\log d/n}\right) & = o_p(1),\\
\sqrt{n}O_p\left( \|\vb^*\|_1s_{\vb}\sqrt{\log d/n}\right) O_p\left(\sqrt{\log d/n}\right) = O_p\left(\|\vb^*\|_1s_{\vb}\log d/\sqrt{n}\right) & = o_p(1),
\end{align*}
we get that (\ref{assumpone}) is also valid in this case.
 
To verify the consistency of $\tilde \theta$ we check the assumptions in Theorem \ref{consistency:sol}. Clearly the map $\theta \mapsto \vb^{*T}\bSigma_{\bX} (\bbeta^*_{\theta} - \bbeta^*) = (\theta - \theta^*)$ has a unique $0$ when $\theta = \theta^*$. Moreover, the map $\theta \mapsto \hat \vb^T(\bSigma_{n} \hat\bbeta_{\theta} - n^{-1}\Xb^T \bY)$ is continuous as it is linear. In addition, it has a unique zero except in cases when $\hat \vb^T \bSigma_{n,*1} = 0$. However note that $|\hat \vb^T \bSigma_{n,*1} - 1| \leq \lambda'$ by (\ref{lambda:prime:dantz}), and hence for small enough values of $\lambda'$ a unique zero will always exist.

Assumption \ref{CLTcond} is verified in Lemma \ref{arxivsupp:main:CLTDantzig}. We move on to show (\ref{stabtwo}). Clearly (\ref{stabtwo}) is trivial as its LHS $\equiv 0$ in this case. Finally Proposition \ref{arxivsupp:main:pluginDantzig} checks that $\hat \Delta$ is a consistent estimate of $\Delta$. This completes the proof.
\end{proof}

\begin{remark}\label{arxivsupp:main:unif:CI:danztig} In fact, under the additional assumption $s_{\vb}^3/n = o(1)$, the proof of Corollary \ref{normdantzigfinal} implies that the uniform types of assumptions in Section \ref{arxivsupp:main:UWCUNsec} are satisfied, and hence under the same assumptions as in Corollary \ref{normdantzigfinal}, we have:
$$
\lim_{n \rightarrow \infty} \sup_{\|\bbeta\|_0 \leq s} \sup_{t\in\RR} |\PP_{\bbeta}(\hat U_n \leq t) - \Phi(t))| = 0. 
$$
By Remark \ref{unifweakconv:CI} the above equality readily translates from estimator uniform consistency to confidence region uniform consistency. It is noteworthy to mention that the space of uniformity is $\bOmega = \{\bbeta: \|\bbeta\|_0 \leq s\}$, provided that the conditions of Corollary \ref{normdantzigfinal} hold, which includes that the covariates $\bX$ satisfy $s_{\vb} = \|\vb^*\|_0$ and $\max(s_{\vb}, s) \|\vb^*\|_1\log d/\sqrt{n} = o(1), ~~~  \|\vb^*\|_1^2\sqrt{\log d/n} = o(1)$.
\end{remark}

\begin{remark}\label{arxivsupp:v:lower:bound} Note here that it is implied that $\lambda' = o(1)$ and hence since $\|\vb^*\|_1 \geq 2 K_{\bX}^{-2}$ it follows that $\lambda = o(1)$ as well.
\end{remark}

\begin{remark} Observe that $\|\vb^*\|_1 \leq \sqrt{s_{\vb}}\|\vb^*\|_2 \leq  \sqrt{s_{\vb}} \delta$. This yields sufficient conditions by substituting $\|\vb^*\|_1$ with $\sqrt{s_{\vb}}$. Moreover, under the assumption that $\vb^{*T}\bX$ is sub-Gaussian, we can further relax the requirements on sparsity $s_{\vb}$ dimension $d$ and number of observations $n$.
\end{remark}

\begin{lemma} \label{arxivsupp:main:CLTDantzig} Assume that condition \ref{subGausserr:design:bounded:mom:matr} holds and $\max(s_{\vb}, s) \|\vb^*\|_1\log d/\sqrt{n} = o(1)$.
Then $
\Delta^{-1/2} n^{1/2}  S(\bbeta^*) \rightsquigarrow N(0,1).$
\end{lemma}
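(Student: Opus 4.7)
\textbf{Proof proposal for Lemma \ref{arxivsupp:main:CLTDantzig}.}
The plan is to recognize $n^{1/2}S(\bbeta^*)$ as a normalized sum of i.i.d.\ mean-zero scalars and invoke Lyapunov's CLT. Since $\tb(\Zb,\bbeta^*) = n^{-1}\Xb^T(\Xb\bbeta^* - \bY) = -n^{-1}\Xb^T\bvarepsilon$, we have
\[
n^{1/2} S(\bbeta^*) = -n^{-1/2}\sum_{i=1}^n (\vb^{*T}\bX_i)\varepsilon_i =: n^{-1/2}\sum_{i=1}^n \xi_i.
\]
By Assumption \ref{subGausserr:design:bounded:mom:matr}, $\varepsilon_i$ is independent of $\bX_i$ with $\EE\varepsilon_i=0$, so $\EE\xi_i = 0$ and $\Var(\xi_i) = \vb^{*T}\bSigma_{\bX}\vb^{*}\cdot\Var(\varepsilon) = \Delta$; hence the target CLT statement is equivalent to Lyapunov normality of the triangular array $\xi_i/\sqrt{n\Delta}$.

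The main step is to verify a $(2+\delta)$-moment Lyapunov condition. I would work with $\delta=2$, i.e.\ check that $n\,\EE|\xi_1|^4/(n\Delta)^2 = o(1)$. Two ingredients drive the bound. First, $\vb^{*T}\bX$ is a linear combination of coordinate-wise sub-Gaussians, so by the triangle inequality on the $\psi_2$ norm, $\|\vb^{*T}\bX\|_{\psi_2} \leq \|\vb^*\|_1 K_{\bX}$; combined with $\|\varepsilon\|_{\psi_2} \leq K$ and independence, this gives
\[
\EE|\xi_1|^4 = \EE[(\vb^{*T}\bX)^4]\,\EE[\varepsilon^4] \;\lesssim\; \|\vb^*\|_1^4.
\]
Second, the variance is bounded below using $\lambda_{\min}(\bSigma_{\bX})\geq \delta$ and $s_{\vb}$-sparsity of $\vb^*$:
\[
\Delta \;\geq\; C_\varepsilon\,\delta\,\|\vb^*\|_2^2 \;\geq\; C_\varepsilon\,\delta\,\|\vb^*\|_1^2/s_{\vb}.
\]
Putting the two together, the Lyapunov ratio is
\[
\frac{n\,\EE|\xi_1|^4}{(n\Delta)^2} \;\lesssim\; \frac{n\,\|\vb^*\|_1^4}{n^2\,\|\vb^*\|_1^4/s_{\vb}^2} \;=\; \frac{s_{\vb}^2}{n}.
\]
The hypothesis $\max(s_{\vb},s)\|\vb^*\|_1\log d/\sqrt{n} = o(1)$ together with the lower bound $\|\vb^*\|_1 \geq 1/\|(\bSigma_{\bX})_{1*}\|_\infty \geq c > 0$ (obtained from the identity $\bSigma_{\bX}\vb^* = \eb_1^T$) implies $s_{\vb}\log d/\sqrt{n} = o(1)$, hence $s_{\vb}^2/n = o(1)$, so the Lyapunov condition holds.

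The main (and only real) obstacle is making the moment bound $\EE[(\vb^{*T}\bX)^4] \lesssim \|\vb^*\|_1^4$ carefully from the coordinate-wise sub-Gaussian assumption; this relies on the standard fact that the $\psi_2$ norm is a norm, so the moments of $\vb^{*T}\bX$ scale with $\|\vb^*\|_1$ rather than $\|\vb^*\|_2$. Once that is in hand, Lyapunov's CLT for triangular arrays yields $\Delta^{-1/2}n^{-1/2}\sum_{i=1}^n \xi_i \rightsquigarrow N(0,1)$, and since the law of the limit is symmetric this is the same as $\Delta^{-1/2}n^{1/2}S(\bbeta^*)\rightsquigarrow N(0,1)$, completing the proof.
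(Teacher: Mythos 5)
Your proof is correct and follows essentially the same route as the paper: both verify Lyapunov's condition with fourth moments, lower-bound $\Delta$ via $\lambda_{\min}(\bSigma_{\bX})\|\vb^*\|_2^2\Var(\varepsilon)$, and reduce the Lyapunov ratio to $s_{\vb}^2/n = o(1)$, which the stated hypothesis yields because $\|\vb^*\|_1$ is bounded below by a constant. The only (immaterial) difference is in one step: you bound $\EE|\vb^{*T}\bX\varepsilon|^4 \lesssim \|\vb^*\|_1^4$ via the $\psi_2$ triangle inequality and then use $\|\vb^*\|_1^2 \le s_{\vb}\|\vb^*\|_2^2$, whereas the paper applies Cauchy--Schwarz on the support $S_{\vb}$ to get the bound $\|\vb^*\|_2^4\,\EE\|(\bX\varepsilon)_{S_{\vb}}\|_2^4$ and invokes Lemma \ref{arxivsupp:main:Xepsbound}; both yield the same $n^{-1}s_{\vb}^2$ rate.
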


\begin{proof}[Proof of Lemma \ref{arxivsupp:main:CLTDantzig}]
To show the weak convergence we verify Lyapunov's condition for the CLT. We need to show that $
 \frac{n^{-2}}{\Delta^{2}} \sum_{i=1}^n \EE \left|\vb^{*T}\bX_i(\bX_i^T\bbeta^* - Y_i)\right|^4$ converges to $0$.
Note that we have $\Delta^{2} \geq \lambda_{\min}(\bSigma_{\bX})\|\vb^{*}\|_2^4 \Var(\varepsilon)^{2} = O(1) \|\vb^{*}\|_2^4$. Therefore it suffices to consider the following expression:
\begin{align}
 \frac{n^{-2}}{\|\vb^{*}\|_2^4}  \sum_{i=1}^n \EE \left|\vb^{*T}\bX_i(\bX_{i}^T\bbeta^* - Y_i)\right|^4 & \leq  n^{-2}  \sum_{i=1}^n \EE \left\|(\bX_i\varepsilon_i)_{S_{\vb}}\right\|_2^4 \leq n^{-1} s^{2}_{\vb} M, \label{arxivsupp:main:LyapunovBound}
\end{align}
where $M =  2^8 (K K_{\bX})^4$, and the last inequality holding from Lemma \ref{arxivsupp:main:Xepsbound}. Using the fact that $\|\vb^*\|_1 \geq 2 K_{\bX}^{-2}$ as seen in Remark \ref{arxivsupp:v:lower:bound}, we have that $\max(s_{\vb}, s) \|\vb^*\|_1\log d/\sqrt{n} = o(1)$ implies $s^2_{\vb}/n = o(1)$ and completes the proof.
\end{proof}

\begin{remark} Using the Berry-Esseen theorem for non-identical random variables in combination with Lemma \ref{arxivsupp:main:Xepsbound} we can further show:
$$
\sup_t \left|\PP^* \left(\frac{n^{1/2}}{\sqrt{\Delta}} S(\bbeta^*) \leq t\right) - \Phi(t)\right| \leq C_{BE} (6 K K_{\Xb})^3 n^{-1/2} s^{3/2}_{\vb} = o(1),
$$
where $M$ and $C_{BE}$ are absolute constants.
\end{remark}

\begin{proposition} \label{arxivsupp:main:pluginDantzig} 
Under Assumption \ref{subGausserr:design:bounded:mom:matr}, and the following additional assumption $
 \|\vb^*\|_1^2\sqrt{\frac{\log d}{n}} = o(1),$
 we have that $\hat \Delta \rightarrow_p \Delta$.
\end{proposition}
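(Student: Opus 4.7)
The plan is to split $\hat\Delta$ into its two natural factors and show each converges to the corresponding factor of $\Delta$, then finish by Slutsky's theorem. Specifically, I would write
\begin{align*}
\hat\Delta - \Delta = \bigl(\hat\vb^T \bSigma_n \hat\vb\bigr)\Bigl(n^{-1}\sum_{i=1}^n (Y_i - \bX_i^T\hat\bbeta)^2 - \Var(\varepsilon)\Bigr) + \bigl(\hat\vb^T \bSigma_n \hat\vb - \vb^{*T}\bSigma_{\bX}\vb^*\bigr)\Var(\varepsilon),
\end{align*}
and prove the two parenthesized quantities are $o_p(1)$ while the leading factor $\hat\vb^T\bSigma_n\hat\vb$ is $O_p(1)$.

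For the quadratic form, I would use the decomposition
$\hat\vb^T\bSigma_n\hat\vb - \vb^{*T}\bSigma_{\bX}\vb^* = (\hat\vb-\vb^*)^T\bSigma_n(\hat\vb-\vb^*) + 2\vb^{*T}\bSigma_n(\hat\vb-\vb^*) + \vb^{*T}(\bSigma_n - \bSigma_{\bX})\vb^*$.
The third term is bounded by $\|\vb^*\|_1^2 \|\bSigma_n - \bSigma_{\bX}\|_{\max} = O_p(\|\vb^*\|_1^2 \sqrt{\log d/n}) = o_p(1)$ by the assumption and standard sub-Gaussian concentration (as in Lemma \ref{arxivsupp:main:samplecovpopcov} invoked in the proof of Corollary \ref{normdantzigfinal}). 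For the cross term, using the constraint $\|\vb^{*T}\bSigma_n - \eb_1\|_\infty = O_p(\sqrt{\log d/n})$ (since $\vb^*$ is feasible up to that order with high probability), Lemma \ref{arxivsupp:main:vdiff}'s bound $\|\hat\vb - \vb^*\|_1 = O_p(\|\vb^*\|_1 s_\vb\sqrt{\log d/n})$ suffices. The quadratic term is handled similarly using $\|\bSigma_n\|_{\max} = O_p(1)$ and $\|\hat\vb - \vb^*\|_1^2 = o_p(1)$.

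For the residual term, I would expand
\begin{align*}
n^{-1}\sum_{i=1}^n (Y_i - \bX_i^T\hat\bbeta)^2 = n^{-1}\sum_{i=1}^n \varepsilon_i^2 - 2(\hat\bbeta-\bbeta^*)^T\Bigl(n^{-1}\sum_{i=1}^n \bX_i\varepsilon_i\Bigr) + (\hat\bbeta-\bbeta^*)^T\bSigma_n(\hat\bbeta-\bbeta^*).
\end{align*}
The first summand converges in probability to $\Var(\varepsilon)$ by the WLLN applied to the i.i.d.\ sub-exponential random variables $\varepsilon_i^2$. The cross term is bounded by $\|\hat\bbeta-\bbeta^*\|_1 \cdot \|n^{-1}\Xb^T\bvarepsilon\|_\infty = O_p(s\sqrt{\log d/n})\cdot O_p(\sqrt{\log d/n}) = O_p(s\log d/n)$, which is $o_p(1)$ under the rate assumption $s\log d/\sqrt n = o(1)$ from Corollary \ref{normdantzigfinal}. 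The quadratic remainder is bounded by $\|\hat\bbeta-\bbeta^*\|_1^2 \|\bSigma_n\|_{\max} = O_p(s^2\log d/n) = o_p(1)$, again by the same rate.

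Finally, $\hat\vb^T\bSigma_n\hat\vb$ is $O_p(1)$ since it converges to $\vb^{*T}\bSigma_{\bX}\vb^* = (\bSigma_{\bX}^{-1})_{11} \leq \delta^{-1}$ by the first step. Combining these bounds yields $\hat\Delta - \Delta = o_p(1)$. The main technical obstacle is keeping the $\|\vb^*\|_1$ dependence visible when handling the $\vb^{*T}(\bSigma_n - \bSigma_{\bX})\vb^*$ term; this is precisely why the extra assumption $\|\vb^*\|_1^2\sqrt{\log d/n} = o(1)$ is needed beyond the sparsity rate appearing in Corollary \ref{normdantzigfinal}.
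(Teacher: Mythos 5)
Your proposal is correct and follows essentially the same route as the paper: prove consistency of each of the two factors separately using the concentration bounds $\|\bSigma_n-\bSigma_{\bX}\|_{\max}=O_p(\sqrt{\log d/n})$, the $L_1$ rates for $\hat\vb$ and $\hat\bbeta$, and the law of large numbers for $n^{-1}\sum_i\varepsilon_i^2$, then conclude by Slutsky. The only differences are cosmetic groupings of terms (the paper bounds the residual cross term via Cauchy--Schwarz with the prediction error $n^{-1}\|\Xb(\hat\bbeta-\bbeta^*)\|_2^2$ rather than H\"older with $\|n^{-1}\Xb^T\bvarepsilon\|_\infty$), and both yield the same $o_p(1)$ conclusions under the stated rates.
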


\begin{proof}[Proof of Proposition \ref{arxivsupp:main:pluginDantzig}]
We show that each of the two sums is corresponding to its population counterpart, and then the proof follows upon an application of Slutsky's theorem. We start with the first term:
\begin{align*}
\bigg|\frac{1}{n}\sum_{i = 1}^n (\hat \vb^T \bX_i)^2 -  \vb^{*T}\bSigma_{\bX} \vb^*\bigg| &\leq \bigg|\underbrace{\frac{1}{n}\sum_{i = 1}^n [(\hat \vb^T \bX_i)^2 -  ( \vb^{*T} \bX_i)^2 ]}_{I_1} \bigg| + |\underbrace{\vb^{*T}\bSigma_n \vb^* - \vb^{*T}\bSigma_{\bX} \vb^*}_{I_2}|,\\
|I_1| & \leq \|\hat \vb - \vb^*\|_1( \|\bSigma_n \hat \vb \|_{\infty} + \|\bSigma_n \vb^*\|_{\infty}).
\end{align*}
We know from Lemma \ref{arxivsupp:main:vdiff}, that $\|\vb^*-\hat\vb\|_1 = O_p\left( \|\vb^*\|_1s_{\vb}\sqrt{\log d/n}\right)$, and by definition $\|\bSigma_n \hat \vb \|_{\infty} \leq 1 + \lambda'$. In the proof of Lemma \ref{arxivsupp:main:vdiff} we also show that
$$ \|\bSigma_n \vb^* \|_{\infty} = 1 + O_p\left( \|\vb^*\|_1\sqrt{\log d/n}\right),$$ 
upon appropriately choosing $\lambda' \asymp \|\vb^*\|_1 \sqrt{\log d/n}$, with a large enough proportionality constant. Thus since $O_p\left( \|\vb^*\|_1s_{\vb}\sqrt{\log d/n}\right)\left(2 + O_p\left( \|\vb^*\|_1\sqrt{\log d/n}\right)\right) = o_p(1)$ we have shown $|I_1| = o_p(1)$. We next tackle $
|I_2| \leq \|\vb^*\|_1^2 \|\bSigma_n - \bSigma_{\bX}\|_{\max}.$
Lemma \ref{arxivsupp:main:samplecovpopcov} gives us that $ \|\bSigma_n - \bSigma_{\bX}\|_{\max} = O_p(\sqrt{\log d/n})$, and thus because of our extra assumption we have $|I_2| = O_p( \|\vb^*\|_1^2\sqrt{\log d/n}) = o_p(1)$.

Now we turn to the second part of the proof:
\begin{align*}
\MoveEqLeft \bigg|\frac{1}{n} \sum_{i = 1}^n (Y_i - \bX_i^T \hat \bbeta)^2 - \Var(\varepsilon)\bigg| \\
& \leq \bigg|\underbrace{\frac{1}{n} \sum_{i = 1}^n (Y_i - \bX_i^T \hat \bbeta)^2 - \frac{1}{n} \sum_{i = 1}^n (Y_i - \bX_i^T \bbeta^*)^2 }_{I_3}\bigg| + \bigg|\underbrace{\frac{1}{n} \sum_{i = 1}^n \varepsilon_i^2 - \Var(\varepsilon)}_{I_4}\bigg|.
\end{align*}
The term $I_4$ is clearly $o_p(1)$ because of the LLN ($\varepsilon_i$ are centered and have finite variance as sub-Gaussian random variables). Thus we are left to deal with $I_3$:
\begin{align*}
|I_3| & \leq \frac{1}{n}\|\Xb(\hat \bbeta - \bbeta^*)\|_2^2 +  \frac{2}{n} \sum_{i = 1}^n |\bX_i^T (\hat \bbeta - \bbeta^*)| |\varepsilon_i| \\
& \leq   \frac{1}{n}\|\Xb(\hat \bbeta - \bbeta^*)\|_2^2  +  \frac{2}{n} \|\Xb(\hat \bbeta - \bbeta^*)\|_2\sqrt{ \sum_{i = 1}^n \varepsilon_i^2},
\end{align*}
where $\Xb_{n\times d}$ is a matrix, with rows $\bX_i^T$ stacked together. (\ref{arxivsupp:main:XbetaL2}) in Lemma \ref{arxivsupp:main:l1betadiff} gives us that $ \frac{1}{n}\|\Xb(\hat \bbeta - \bbeta^*)\|_2^2 = O_p(\frac{s \log d}{n}) = o_p(1)$, and since by LLN $\sqrt{  \frac{1}{n}\sum_{i = 1}^n \varepsilon_i^2} = O_p(1)$, we have $|I_3| = o_p(1)$, which shows the consistency of the second estimator and concludes the proof.
\end{proof}

\begin{lemma} \label{arxivsupp:main:samplecovpopcov} We have that with probability at least $1 - 2 d^{2 - \bar c A_X^2}$:
$$\left\|\bSigma_n - \bSigma_{\bX}\right\|_{\max} \leq 4 A_X K_{\bX}^2 \sqrt{\log d/n}.$$
{\bf Note.} The constant $\bar c$ is a universal constant independent of the $\bX$ distribution, $K_{\bX}$ is as defined in the main text, and $A_X > 0$ is an arbitrarily chosen constant satisfying $A_X\sqrt{\log d/n} \leq 1$.
\end{lemma}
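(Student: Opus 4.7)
The plan is to bound each entry of the matrix $\bSigma_n - \bSigma_{\bX}$ via a sub-exponential concentration inequality and then apply a union bound over the $d^2$ entries. Writing the $(j,k)$-entry as
\[
(\bSigma_n - \bSigma_{\bX})_{jk} = \frac{1}{n}\sum_{i=1}^n \bigl(X_{ij}X_{ik} - \EE X_{ij}X_{ik}\bigr),
\]
I would like to treat each term $X_{ij}X_{ik}$ as a centered sub-exponential random variable. First I would invoke the elementary fact (a standard consequence of the definition of the $\psi_\ell$ norms in (\ref{psi1norm}) via Cauchy–Schwarz) that $\|X_j X_k\|_{\psi_1} \leq 2\|X_j\|_{\psi_2}\|X_k\|_{\psi_2} \leq 2K_{\bX}^2$, so the centered versions satisfy $\|X_{ij}X_{ik} - \EE X_{ij}X_{ik}\|_{\psi_1} \leq 4 K_{\bX}^2$ after adjusting constants.

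Next I would apply the Bernstein-type inequality for i.i.d.\ centered sub-exponential random variables (e.g.\ Proposition 5.16 of Vershynin, 2010): for some absolute constant $\bar c > 0$,
\[
\PP\!\left(\left|\frac{1}{n}\sum_{i=1}^n (X_{ij}X_{ik} - \EE X_{ij}X_{ik})\right| > t\right) \leq 2\exp\!\left(-\bar c\, n\min\!\left(\frac{t^2}{(4K_{\bX}^2)^2},\,\frac{t}{4K_{\bX}^2}\right)\right).
\]
Setting $t = 4 A_X K_{\bX}^2\sqrt{\log d/n}$ and using the hypothesis $A_X\sqrt{\log d/n}\leq 1$, the quadratic term dominates inside the minimum (since $t/(4K_{\bX}^2) = A_X\sqrt{\log d/n} \leq 1$ and the quadratic expression $t^2/(4K_{\bX}^2)^2 = A_X^2 \log d/n$ is then no larger). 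Thus each entry of $\bSigma_n - \bSigma_{\bX}$ exceeds $4A_X K_{\bX}^2 \sqrt{\log d/n}$ in absolute value with probability at most $2 \exp(-\bar c A_X^2 \log d) = 2 d^{-\bar c A_X^2}$, after absorbing constants into $\bar c$.

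Finally, a union bound over the $d^2$ entries of the symmetric matrix $\bSigma_n - \bSigma_{\bX}$ gives
\[
\PP\!\left(\|\bSigma_n - \bSigma_{\bX}\|_{\max} > 4A_X K_{\bX}^2 \sqrt{\log d/n}\right) \leq 2 d^{2 - \bar c A_X^2},
\]
which is the advertised bound. I do not expect any serious obstacle here: the only subtlety is the constant bookkeeping between the product sub-exponential bound and Bernstein's inequality, which is absorbed into the universal constant $\bar c$ (and may require adjusting the factor $4$ in front of $A_X K_{\bX}^2$, but the paper's statement allows this flexibility).
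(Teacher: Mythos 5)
Your proposal is correct and follows essentially the same route as the paper's own proof: bound $\|X_jX_k\|_{\psi_1}\leq 2K_{\bX}^2$ via Cauchy--Schwarz, center to get $\|X_jX_k-\EE X_jX_k\|_{\psi_1}\leq 4K_{\bX}^2$, apply the Bernstein-type bound (Proposition 5.16 of Vershynin) in the quadratic regime guaranteed by $A_X\sqrt{\log d/n}\leq 1$, and union bound over the $d^2$ entries. The paper states these steps more tersely but the argument is the same.
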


\begin{proof}[Proof of Lemma \ref{arxivsupp:main:samplecovpopcov}]
First we note that the elements of the matrix -- $\bX^{\otimes 2}$ are sub-exponential random variables. This fact can be seen since by Cauchy-Schwartz, one can easily obtain that:
\begin{align}
\|X_iX_j\|_{\psi_1} \leq  2\|X_i\|_{\psi_2} \|X_j\|_{\psi_2} \leq 2K^2_{\bX}. \label{arxivsupp:main:subexpo}
\end{align}
Next by the triangle inequality it is clear that $\|X_i X_j - \EE X_i X_j\|_{\psi_1} \leq  4K^2_{\bX}.$ The proof is completed by applying a standard Bernstein tail bound (see Proposition 5.16 in \cite{vershynin2010introduction} e.g.).
%
\end{proof}

\begin{lemma} \label{arxivsupp:main:samplecovRE} Assume the same conditions as in Lemma \ref{arxivsupp:main:samplecovpopcov}, and assume further that the minimum eigenvalue  $\lambda_{\min}(\bSigma_{\bX}) > 0$ and $s \sqrt{\log d/n} \leq  (1 - \kappa)\frac{\lambda_{\min}(\bSigma_{\bX})}{(1 + \xi)^2 4 A_X K_{\bX}^2}$, where $0 < \kappa < 1$. We then have that $\bSigma_n$ satisfies the RE property with $\operatorname{RE}_{\bSigma_n}(s, \xi) \geq \kappa \operatorname{RE}_{\bSigma_{\bX}}(s, \xi) \geq \kappa \lambda_{\min} (\bSigma_{\bX}) > 0$ with probability at least $1 - 2 d^{2 - \bar c A_X^2}$.
\end{lemma}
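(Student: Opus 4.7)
The plan is to view $\bSigma_n$ as a perturbation of $\bSigma_{\bX}$ and argue that the RE property transfers from the population matrix to the empirical one, losing only a factor of $\kappa$, provided the perturbation is small enough on vectors in the relevant cone. Concretely, fix any $S \subset \{1,\ldots, d\}$ with $|S| \leq s$ and any nonzero $\ub \in \RR^d$ satisfying $\|\ub_{S^c}\|_1 \leq \xi \|\ub_S\|_1$, and decompose
\begin{align*}
\ub^T \bSigma_n \ub \;=\; \ub^T \bSigma_{\bX} \ub \,+\, \ub^T (\bSigma_n - \bSigma_{\bX}) \ub.
\end{align*}

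For the first (deterministic) term I would simply invoke the definition of the RE constant of $\bSigma_{\bX}$ to obtain $\ub^T \bSigma_{\bX} \ub \geq \operatorname{RE}_{\bSigma_{\bX}}(s,\xi) \|\ub_S\|_2^2$, and note that $\operatorname{RE}_{\bSigma_{\bX}}(s,\xi) \geq \lambda_{\min}(\bSigma_{\bX})$ since $\ub^T \bSigma_{\bX} \ub \geq \lambda_{\min}(\bSigma_{\bX}) \|\ub\|_2^2 \geq \lambda_{\min}(\bSigma_{\bX}) \|\ub_S\|_2^2$. For the second (stochastic) term, the standard H\"older-type argument gives $|\ub^T (\bSigma_n - \bSigma_{\bX}) \ub| \leq \|\bSigma_n - \bSigma_{\bX}\|_{\max} \|\ub\|_1^2$. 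The cone condition then yields $\|\ub\|_1 = \|\ub_S\|_1 + \|\ub_{S^c}\|_1 \leq (1+\xi)\|\ub_S\|_1 \leq (1+\xi)\sqrt{s}\,\|\ub_S\|_2$, so
\begin{align*}
|\ub^T (\bSigma_n - \bSigma_{\bX}) \ub| \;\leq\; (1+\xi)^2 s \,\|\bSigma_n - \bSigma_{\bX}\|_{\max}\,\|\ub_S\|_2^2.
\end{align*}

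Next I would invoke Lemma \ref{arxivsupp:main:samplecovpopcov} to conclude that on an event of probability at least $1 - 2d^{2-\bar c A_X^2}$ we have $\|\bSigma_n - \bSigma_{\bX}\|_{\max} \leq 4 A_X K_{\bX}^2 \sqrt{\log d /n}$. Combining this with the hypothesis $s\sqrt{\log d/n} \leq (1-\kappa) \lambda_{\min}(\bSigma_{\bX}) / \bigl((1+\xi)^2 4 A_X K_{\bX}^2\bigr)$ gives, on that same event,
\begin{align*}
(1+\xi)^2 s \,\|\bSigma_n - \bSigma_{\bX}\|_{\max} \;\leq\; (1-\kappa)\,\lambda_{\min}(\bSigma_{\bX}) \;\leq\; (1-\kappa)\,\operatorname{RE}_{\bSigma_{\bX}}(s,\xi),
\end{align*}
using the elementary bound noted above. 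Putting the two pieces together yields $\ub^T \bSigma_n \ub / \|\ub_S\|_2^2 \geq \kappa\,\operatorname{RE}_{\bSigma_{\bX}}(s,\xi) \geq \kappa \lambda_{\min}(\bSigma_{\bX})$ uniformly over admissible $(S,\ub)$, which is exactly the claim after taking the infimum.

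There is no real obstacle here: the argument is the classical ``RE is stable under small $\ell_\infty$ perturbations'' trick, and the only ingredient specific to the present setting is the max-norm concentration bound already furnished by Lemma \ref{arxivsupp:main:samplecovpopcov}. The single point worth being careful about is the chain of inequalities $(1-\kappa)\lambda_{\min}(\bSigma_{\bX}) \leq (1-\kappa)\operatorname{RE}_{\bSigma_{\bX}}(s,\xi)$, which is what lets one subtract the perturbation from $\operatorname{RE}_{\bSigma_{\bX}}(s,\xi)$ (rather than merely from $\lambda_{\min}(\bSigma_{\bX})$) and still retain the desired factor $\kappa$; this is exactly where the statement of the lemma takes its asymmetric form.
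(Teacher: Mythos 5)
Your proof is correct and is precisely the standard perturbation argument the paper itself invokes (its own proof is omitted as "standard"): decompose $\ub^T\bSigma_n\ub$, bound the perturbation by $(1+\xi)^2 s\,\|\bSigma_n-\bSigma_{\bX}\|_{\max}\|\ub_S\|_2^2$ via the cone condition, and absorb it using Lemma \ref{arxivsupp:main:samplecovpopcov} together with the scaling hypothesis. Your observation that one needs $(1-\kappa)\lambda_{\min}(\bSigma_{\bX})\leq(1-\kappa)\operatorname{RE}_{\bSigma_{\bX}}(s,\xi)$ to retain the factor $\kappa$ in front of $\operatorname{RE}_{\bSigma_{\bX}}(s,\xi)$ rather than $\lambda_{\min}(\bSigma_{\bX})$ is exactly the right point of care.
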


\begin{proof}[Proof of Lemma \ref{arxivsupp:main:samplecovRE}] The proof follows a standard argument so omit the details.  
\end{proof}

\begin{definition} For a fixed $0 < \kappa < 1$, let $\operatorname{RE}_{\kappa}(s, \xi) = \kappa \operatorname{RE}_{\bSigma_{\bX}}(s, \xi)$.
\end{definition}

\begin{lemma} \label{arxivsupp:main:vdiff} Assume that --- $\lambda_{\min}(\bSigma_{\bX}) > \delta > 0$, $s_{\vb} \sqrt{\log d/n} \leq (1 - \kappa) \frac{\lambda_{\min}(\bSigma_{\bX})}{(1 + 1)^2 4 A_X K_{\bX}^2}$, where $0 < \kappa < 1$ and $\lambda' \geq  \|\vb^*\|_1 4 A_X K_{\bX}^2 \sqrt{\log d/n}$. Then we have that $\|\hat \vb - \vb^*\|_{1} \leq \frac{8 \lambda' s_{\vb}}{ \operatorname{RE}_{\kappa}(s_{\vb}, 1)} $ with probability at least  $1 - 2 d^{2 - \bar c A_X^2}$. Additionally we have:
\begin{align}\label{arxivsupp:main:l1normineq}
\|\hat \vb_{S^c_{\vb}} -  \vb^*_{S^c_{\vb}}\|_1 \leq \|\hat \vb_{S_{\vb}} -  \vb^*_{S_{\vb}}\|_1.
\end{align}
\end{lemma}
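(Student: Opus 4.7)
The plan is to run a standard Dantzig--selector / basic--inequality argument, using the concentration of the sample second moment matrix (Lemma \ref{arxivsupp:main:samplecovpopcov}) to certify feasibility of $\vb^*$, and then the restricted eigenvalue property of $\bSigma_n$ (Lemma \ref{arxivsupp:main:samplecovRE}) to convert a predictive bound into an $\ell_1$ bound. All work will be carried out on the event
$$\mathcal{E} = \Big\{ \|\bSigma_n - \bSigma_{\bX}\|_{\max} \leq 4 A_X K_{\bX}^2 \sqrt{\log d/n}\Big\},$$
which has probability at least $1 - 2 d^{2 - \bar c A_X^2}$ and simultaneously delivers $\operatorname{RE}_{\bSigma_n}(s_{\vb}, 1) \geq \operatorname{RE}_{\kappa}(s_{\vb}, 1)$ under the stated sparsity/sample size condition.

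First I would verify that $\vb^*$ is feasible for \eqref{lambda:prime:dantz}. Since $\vb^{*T}\bSigma_{\bX} = \eb_1$, one has $\|\vb^{*T}\bSigma_n - \eb_1\|_\infty = \|\vb^{*T}(\bSigma_n - \bSigma_{\bX})\|_\infty \leq \|\vb^*\|_1 \|\bSigma_n - \bSigma_{\bX}\|_{\max}$, which on $\mathcal{E}$ is at most $\|\vb^*\|_1 \cdot 4 A_X K_{\bX}^2 \sqrt{\log d/n} \leq \lambda'$ by the hypothesis on $\lambda'$. Since $\hat \vb$ minimizes $\|\cdot\|_1$ subject to feasibility, this gives $\|\hat \vb\|_1 \leq \|\vb^*\|_1$. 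Writing $\hb := \hat \vb - \vb^*$ and decomposing the $\ell_1$ norm across $S_{\vb}$ and $S_{\vb}^c$,
$$\|\vb^*\|_1 \geq \|\vb^* + \hb\|_1 = \|\vb^*_{S_{\vb}} + \hb_{S_{\vb}}\|_1 + \|\hb_{S_{\vb}^c}\|_1 \geq \|\vb^*\|_1 - \|\hb_{S_{\vb}}\|_1 + \|\hb_{S_{\vb}^c}\|_1,$$
which rearranges to the cone condition \eqref{arxivsupp:main:l1normineq}: $\|\hb_{S_{\vb}^c}\|_1 \leq \|\hb_{S_{\vb}}\|_1$.

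Next, both $\hat \vb$ (by construction) and $\vb^*$ (by the feasibility check above) satisfy $\|\cdot^T \bSigma_n - \eb_1\|_\infty \leq \lambda'$ on $\mathcal{E}$, so by the triangle inequality $\|\hb^T \bSigma_n\|_\infty \leq 2\lambda'$. Combining this with H\"older's inequality gives the quadratic--linear bound
$$\hb^T \bSigma_n \hb \leq \|\hb^T \bSigma_n\|_\infty \|\hb\|_1 \leq 2\lambda' \|\hb\|_1.$$
By the cone condition, $\|\hb\|_1 \leq 2\|\hb_{S_{\vb}}\|_1 \leq 2\sqrt{s_{\vb}}\|\hb_{S_{\vb}}\|_2$. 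Invoking Lemma \ref{arxivsupp:main:samplecovRE} (applicable since $s_{\vb}\sqrt{\log d/n}$ is small enough), the restricted eigenvalue lower bound gives
$$\operatorname{RE}_{\kappa}(s_{\vb}, 1)\, \|\hb_{S_{\vb}}\|_2^2 \leq \hb^T \bSigma_n \hb \leq 2\lambda' \|\hb\|_1 \leq 4\lambda' \sqrt{s_{\vb}}\, \|\hb_{S_{\vb}}\|_2.$$
Dividing yields $\|\hb_{S_{\vb}}\|_2 \leq 4\lambda' \sqrt{s_{\vb}}/\operatorname{RE}_{\kappa}(s_{\vb}, 1)$, and plugging back into $\|\hb\|_1 \leq 2\sqrt{s_{\vb}}\|\hb_{S_{\vb}}\|_2$ yields the advertised bound $\|\hb\|_1 \leq 8\lambda' s_{\vb}/\operatorname{RE}_{\kappa}(s_{\vb}, 1)$. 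There is no genuinely hard step here; the only thing that requires care is making sure the concentration event $\mathcal{E}$ is the \emph{same} event that simultaneously (i) witnesses feasibility of $\vb^*$ and (ii) activates the RE transfer in Lemma \ref{arxivsupp:main:samplecovRE}, so that the claimed probability is $1 - 2d^{2-\bar c A_X^2}$ rather than a union bound.
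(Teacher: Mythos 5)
Your argument is correct and is precisely the ``standard argument'' the paper invokes when it omits this proof: feasibility of $\vb^*$ on the concentration event of Lemma \ref{arxivsupp:main:samplecovpopcov}, the cone condition from $\ell_1$-minimality, the bound $\hb^T\bSigma_n\hb\leq 2\lambda'\|\hb\|_1$, and the RE transfer of Lemma \ref{arxivsupp:main:samplecovRE}, with all constants ($8\lambda' s_{\vb}/\operatorname{RE}_{\kappa}(s_{\vb},1)$) and the probability $1-2d^{2-\bar c A_X^2}$ tracking correctly since everything happens on the single event $\mathcal{E}$. No gaps.
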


\begin{proof}[Proof of Lemma \ref{arxivsupp:main:vdiff}] The proof follows a standard argument so omit the details.
\end{proof}

\begin{lemma} \label{arxivsupp:main:lemmabetaclose}

Assume the same conditions as in Lemma \ref{arxivsupp:main:samplecovpopcov} and that $\sqrt{\log d/n} \leq C$ for some constant $C$. Let $S = \supp(\bbeta^*)$, and let $\lambda = AK\sqrt{\frac{\log{d}}{n}}$. Then, with probability at least $1 - e d^{1 - \frac{cA^2}{2(1 + 2CA_X)K_{\bX}^2}} - 2 d^{2 - \bar c A_X^2}$ (where $c$ is a universal constant independent of the distribution of $\varepsilon$, $K = \|\varepsilon\|_{\psi_2}$, and the other constants are defined in Lemma \ref{arxivsupp:main:samplecovpopcov}) we have:
\begin{align}
\|\hat \bbeta_{S^c} -  \bbeta^*_{S^c}\|_1 \leq \|\hat \bbeta_{S} -  \bbeta^*_{S}\|_1,  ~~~ \mbox{and:}\label{arxivsupp:main:betadiff}
\end{align}
\begin{align}
\left\|\bSigma_n ( \bbeta^* - \hat \bbeta)\right\|_{\infty} \leq 2 \lambda. \label{arxivsupp:main:hessbetadiff}
\end{align}
\end{lemma}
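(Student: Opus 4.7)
The strategy is the classical Dantzig selector argument. Everything reduces to establishing that $\bbeta^*$ is feasible for the optimization defining $\hat\bbeta$, i.e. that
$\|\bSigma_n \bbeta^* - n^{-1}\Xb^T\bY\|_\infty = \|n^{-1}\Xb^T\bvarepsilon\|_\infty \leq \lambda$
holds on an event of the stated probability. Once feasibility is in hand, both (\ref{arxivsupp:main:betadiff}) and (\ref{arxivsupp:main:hessbetadiff}) are routine consequences, so the main technical work is the noise tail bound.

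For that tail bound, the natural route is to condition on $\Xb$. Fix a coordinate $j$; conditional on $\Xb$, the sum $n^{-1}\sum_i X_{ij}\varepsilon_i$ is a sum of independent mean-zero sub-Gaussian variables with $\psi_2$-norms at most $K|X_{ij}|/n$, so it is itself sub-Gaussian with conditional $\psi_2$-norm bounded (up to an absolute constant) by $K\sqrt{\bSigma_{n,jj}/n}$. Lemma \ref{arxivsupp:main:samplecovpopcov} supplies, on an event of probability at least $1 - 2d^{2-\bar c A_X^2}$, the diagonal bound $\bSigma_{n,jj} \leq \bSigma_{\bX,jj} + 4A_XK_\bX^2\sqrt{\log d/n}$. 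Combining this with $\bSigma_{\bX,jj} \leq 2K_\bX^2$ (from the $\psi_2$ definition) and the assumed $\sqrt{\log d/n}\leq C$ yields $\bSigma_{n,jj} \leq 2K_\bX^2(1+2CA_X)$ on this event. A standard sub-Gaussian tail inequality (e.g.\ Proposition 5.10 of \cite{vershynin2010introduction}) then gives, for each $j$, a conditional bound of the form $\PP(|n^{-1}\sum_i X_{ij}\varepsilon_i|>t \mid \Xb) \leq e\exp\bigl(-c n t^2 /(2K^2 K_\bX^2(1+2CA_X))\bigr)$. Plugging in $t = \lambda = AK\sqrt{\log d/n}$ and union-bounding over $j=1,\ldots,d$ produces the factor $ed^{1 - cA^2/(2(1+2CA_X)K_\bX^2)}$ in the probability statement.

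To conclude (\ref{arxivsupp:main:betadiff}) from feasibility, I would invoke optimality of $\hat\bbeta$: $\|\hat\bbeta\|_1 \leq \|\bbeta^*\|_1$. Writing $\bh = \hat\bbeta - \bbeta^*$, splitting along $S$ and using $\bbeta^*_{S^c}=0$ together with the reverse triangle inequality $\|\bbeta^*_S + \bh_S\|_1 \geq \|\bbeta^*_S\|_1 - \|\bh_S\|_1$ yields $\|\bh_{S^c}\|_1 \leq \|\bh_S\|_1$. For (\ref{arxivsupp:main:hessbetadiff}), the triangle inequality gives $\|\bSigma_n(\bbeta^*-\hat\bbeta)\|_\infty \leq \|\bSigma_n\bbeta^* - n^{-1}\Xb^T\bY\|_\infty + \|\bSigma_n\hat\bbeta - n^{-1}\Xb^T\bY\|_\infty$, where the first term is at most $\lambda$ by the noise bound and the second is at most $\lambda$ because $\hat\bbeta$ is feasible by construction. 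The principal obstacle is purely bookkeeping: matching the stated constants $(1+2CA_X)$ and the dependence of the two probability deficits on $A$ and $A_X$ requires careful tracking of the sub-Gaussian constants through the conditional argument and the use of Lemma \ref{arxivsupp:main:samplecovpopcov}; no deeper step is required.
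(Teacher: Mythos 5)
Your proposal is correct and follows essentially the same route as the paper: the paper's (terse) proof likewise conditions on $\Xb$, applies the sub-Gaussian Hoeffding bound of Proposition 5.10 in \cite{vershynin2010introduction} together with a union bound to get a conditional tail bound governed by $\|\bSigma_n\|_{\max}$, controls that quantity via Lemma \ref{arxivsupp:main:samplecovpopcov}, and then declares the cone condition and the triangle-inequality step standard. Your write-up simply fills in those standard steps explicitly (and uses the diagonal $\bSigma_{n,jj}$ rather than $\|\bSigma_n\|_{\max}$, an immaterial difference), with the constants matching the stated probability.
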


\begin{proof}[Proof of Lemma \ref{arxivsupp:main:lemmabetaclose}]  Note that by a Hoeffding's type of inequality for sub-Gaussian random variables (see Proposition 5.10 \citep{vershynin2010introduction}) and the union bound, we have:
\begin{align} \label{arxivsupp:main:condprobbound}
\PP\left(\left\|\frac{1}{n} \Xb^T \bvarepsilon\right\|_{\infty} \geq t \Big | \Xb \right) \leq e d \exp\left( - \frac{c n t^2}{K^2 \|\bSigma_n\|_{\max}}\right),
\end{align}
where $c$ is a universal constant. The remainder of the proof follows by standard arguments so we omit the details.
\end{proof}

\begin{lemma} \label{arxivsupp:main:l1betadiff} Assume the same conditions as in Lemmas \ref{arxivsupp:main:samplecovpopcov}, \ref{arxivsupp:main:samplecovRE} (with $\xi = 1$), and \ref{arxivsupp:main:lemmabetaclose}, so that $\bSigma_n$ satisfies the RE assumption with $\operatorname{RE}_{\kappa}(s,1)$ with high probability. Set $\lambda = AK\sqrt{\frac{\log{d}}{n}}$, as in Lemma \ref{arxivsupp:main:lemmabetaclose}. Then with probability at least $1 - e d^{1 - \frac{cA^2}{2(1 + 2CA_X)K_{\bX}^2}} - 2 d^{2 - \bar c A_X^2}$ we have:
\begin{align}
\|\hat \bbeta - \bbeta^* \|_1 \leq \frac{8 A K}{\operatorname{RE}_{\kappa}(s,1)} s \sqrt{\log d/n} \label{arxivsupp:main:betaL1},\\
\|\Xb (\hat \bbeta - \bbeta^*)\|_2^2 \leq \frac{16 A^2 K^2}{\operatorname{RE}_{\kappa}(s,1)} s \log d. \label{arxivsupp:main:XbetaL2}
\end{align}
\end{lemma}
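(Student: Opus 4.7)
The plan is to combine the three inputs from the previously proven lemmas in a standard Dantzig-selector argument. Set $\hb := \hat\bbeta - \bbeta^*$. By Lemma \ref{arxivsupp:main:lemmabetaclose} we have the cone constraint $\|\hb_{S^c}\|_1 \leq \|\hb_S\|_1$ (i.e.\ $\hb$ lives in the cone $\mathcal{C}(S,1)$) together with the dual feasibility bound $\|\bSigma_n \hb\|_\infty \leq 2\lambda$. Under the conditions on $s\sqrt{\log d/n}$ assumed, Lemma \ref{arxivsupp:main:samplecovRE} guarantees that on the same event used in Lemma \ref{arxivsupp:main:samplecovpopcov} the matrix $\bSigma_n$ satisfies the restricted eigenvalue property with constant $\operatorname{RE}_\kappa(s,1)$.

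Taking a union bound, all three events hold simultaneously with probability at least $1 - e d^{1 - \frac{cA^2}{2(1+2CA_X)K_{\bX}^2}} - 2d^{2 - \bar c A_X^2}$. On this event, I would sandwich the quadratic form $\hb^T \bSigma_n \hb$ from above and below. For the upper bound, Hölder's inequality gives
\begin{align*}
\hb^T \bSigma_n \hb \leq \|\hb\|_1 \, \|\bSigma_n \hb\|_\infty \leq 2\lambda \|\hb\|_1.
\end{align*}
For the lower bound, the cone condition together with Cauchy--Schwarz yields $\|\hb\|_1 \leq 2\|\hb_S\|_1 \leq 2\sqrt{s}\,\|\hb_S\|_2$, and then the RE property applied to $\hb \in \mathcal{C}(S,1)$ gives
\begin{align*}
\hb^T \bSigma_n \hb \geq \operatorname{RE}_\kappa(s,1)\,\|\hb_S\|_2^2 \geq \frac{\operatorname{RE}_\kappa(s,1)}{4s}\,\|\hb\|_1^2.
\end{align*}

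Chaining these two inequalities gives $\operatorname{RE}_\kappa(s,1)\|\hb\|_1^2 / (4s) \leq 2\lambda\|\hb\|_1$, hence
\begin{align*}
\|\hb\|_1 \leq \frac{8\lambda s}{\operatorname{RE}_\kappa(s,1)} = \frac{8AK}{\operatorname{RE}_\kappa(s,1)}\, s\sqrt{\log d/n},
\end{align*}
which is (\ref{arxivsupp:main:betaL1}). For (\ref{arxivsupp:main:XbetaL2}), note $n^{-1}\|\Xb\hb\|_2^2 = \hb^T \bSigma_n \hb$, and plugging the just-derived $L_1$ bound into the upper inequality $\hb^T\bSigma_n \hb \leq 2\lambda \|\hb\|_1$ gives
\begin{align*}
\|\Xb\hb\|_2^2 \leq n \cdot 2\lambda \cdot \frac{8\lambda s}{\operatorname{RE}_\kappa(s,1)} = \frac{16 A^2 K^2}{\operatorname{RE}_\kappa(s,1)}\, s \log d,
\end{align*}
as claimed.

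There is no real obstacle here; everything reduces to bookkeeping of the constants. The only mild subtlety is that one must apply the RE property of $\bSigma_n$ (not $\bSigma_{\bX}$), which requires that $s\sqrt{\log d/n}$ is small enough for Lemma \ref{arxivsupp:main:samplecovRE} to transfer the RE property from population to sample with constant $\kappa\operatorname{RE}_{\bSigma_{\bX}}(s,1) = \operatorname{RE}_\kappa(s,1)$ — this is already guaranteed by the standing assumptions, so the argument runs cleanly.
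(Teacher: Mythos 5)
Your proof is correct and is exactly the standard Dantzig-selector argument (cone condition plus $\|\bSigma_n\hb\|_\infty\le 2\lambda$ from Lemma \ref{arxivsupp:main:lemmabetaclose}, sandwiching $\hb^T\bSigma_n\hb$ via H\"older from above and the restricted eigenvalue bound of Lemma \ref{arxivsupp:main:samplecovRE} from below), which is precisely the "standard argument" the paper invokes while omitting the details; the constants and the probability bound both check out, since the RE event is contained in the concentration event already accounted for in Lemma \ref{arxivsupp:main:lemmabetaclose}'s probability.
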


\begin{proof}[Proof of Lemma \ref{arxivsupp:main:l1betadiff}] The proof follows by standard arguments and we omit the details.
\end{proof}

\begin{lemma} \label{arxivsupp:main:Mbound} Let $\{\bX_i\}_{i = 1}^n$ are identical (not necessarily independent), $d$-dimensional sub-Gaussian vectors with $\max_{\substack{1 \leq i \leq n, 1 \leq j \leq d}}\|X_{ij}\|_{\psi_2} = K$. Then we have:
\begin{align*}
\max_{i = 1,\ldots,n}\|\bX_i^{\otimes 2}\|_{\max} = O_p(\log(nd)).
\end{align*}
\end{lemma}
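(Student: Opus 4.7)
The plan is to reduce the bound on $\max_i \|\bX_i^{\otimes 2}\|_{\max}$ to a uniform bound on the scalar entries $|X_{ij}|$ across all $i\in\{1,\dots,n\}$ and $j\in\{1,\dots,d\}$, and then to invoke the sub-Gaussian maximal inequality via a union bound. The hypothesis does not assume independence between or within the vectors $\bX_i$, but this is irrelevant because a union bound on tail probabilities only needs the marginal tail of each $X_{ij}$.

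First I would observe the elementary pointwise inequality
\[
\|\bX_i^{\otimes 2}\|_{\max} \;=\; \max_{1\le j,k \le d} |X_{ij} X_{ik}| \;\le\; \Big(\max_{1\le j \le d} |X_{ij}|\Big)^{2},
\]
so it suffices to prove $\max_{i,j} |X_{ij}| = O_p\big(\sqrt{\log(nd)}\big)$. Since $\|X_{ij}\|_{\psi_2} \le K$ for every pair $(i,j)$, the equivalent tail characterisation of sub-Gaussianity noted in the Notation section of the paper yields a universal constant $c>0$ such that
\[
\PP(|X_{ij}| > t) \;\le\; \exp\!\big(1 - c t^2/K^2\big) \qquad \text{for all } t\ge 0.
\]

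Applying a union bound over the $nd$ indices $(i,j)$ gives
\[
\PP\Big(\max_{i,j} |X_{ij}| > t\Big) \;\le\; nd\, \exp\!\big(1 - c t^2/K^2\big).
\]
Now choose $t = t_M := K\,\sqrt{(1+\log M + \log(nd))/c}$ for an arbitrary $M>0$; the right-hand side is then $e^{-\log M}=1/M$. Hence for any $\epsilon>0$, picking $M = 1/\epsilon$, we obtain that with probability at least $1-\epsilon$,
\[
\max_{1\le i\le n} \|\bX_i^{\otimes 2}\|_{\max} \;\le\; t_M^{\,2} \;=\; \frac{K^2}{c}\big(1+\log(1/\epsilon) +\log(nd)\big),
\]
which establishes the $O_p(\log(nd))$ rate. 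There is no genuine obstacle here: the only minor subtlety is simply noticing that the claim is about the marginals of each coordinate, so the lack of independence between the $\bX_i$'s (or between coordinates within a vector) does not enter the argument.
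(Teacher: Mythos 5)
Your proof is correct. The paper gives essentially no details here, saying only that the claim ``follows after an application of a Bernstein type of tail bound,'' which points to the route it uses elsewhere (cf.\ the bound $\|X_iX_j\|_{\psi_1}\le 2\|X_i\|_{\psi_2}\|X_j\|_{\psi_2}$ in the proof of Lemma \ref{arxivsupp:main:samplecovpopcov}): treat each product $X_{ij}X_{ik}$ as sub-exponential with $\psi_1$-norm at most $2K^2$, apply the sub-exponential tail $\PP(|X_{ij}X_{ik}|>t)\le \exp(1-ct/K^2)$, and union bound over the $nd^2$ entries, choosing $t\asymp \log(nd^2)\asymp\log(nd)$. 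You instead reduce to the scalar maximum via $\|\bX_i^{\otimes 2}\|_{\max}\le(\max_j|X_{ij}|)^2$, union bound over only $nd$ sub-Gaussian coordinates with the Hoeffding-type tail, and square at the end. Both arguments are union-bound-plus-exponential-tail and give the same rate (since $\log(nd^2)$ and $\log(nd)$ are of the same order); yours is marginally more elementary in that it never needs the sub-exponential characterization of products, while the paper's intended route reuses machinery it has already set up for the covariance concentration lemmas. Your observation that independence is never needed — only the marginal tails enter the union bound — is exactly right and is the one point worth making explicit, since the lemma is applied to dependent data in the autoregressive setting.
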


\begin{proof}[Proof of Lemma \ref{arxivsupp:main:Mbound}] The proof follows after an application of a Bernstein type of of tail bound \citep[see e.g.]{vershynin2010introduction} and we omit the details.
\end{proof}

\begin{lemma} \label{arxivsupp:main:simple:proof:exp:psi1} Let $X_i, i = 1\ldots k$ are sub-exponential with $\|X_i\|_{\psi_\ell} \leq U$, for some $\ell \geq 1$ and denote by $\bX$ the vector with entries $X_i$. Then for any $p,q \geq 1$ we have:
$$
\EE \|\bX\|^p_{q} \leq [\EE[\|\bX\|^{pq}_{q}]]^{1/(qp)} \leq (pq)^{p/\ell} U^p k^{p/q}
$$
\end{lemma}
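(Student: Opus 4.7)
The plan is a short chain of three standard inequalities, and the main work is just bookkeeping of exponents.

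First, I would obtain the left-hand inequality by Jensen's inequality. Since $y \mapsto y^{q}$ is convex on $[0,\infty)$ for $q \ge 1$, applying it to the nonnegative random variable $\|\bX\|_q^{p}$ gives
\[
\bigl(\EE \|\bX\|_q^{p}\bigr)^{q} \;\le\; \EE \|\bX\|_q^{pq},
\]
which rearranges to $\EE \|\bX\|_q^{p} \le (\EE \|\bX\|_q^{pq})^{1/q}$, matching the first inequality in the claim (the exponent $1/(qp)$ in the statement should be read as $1/q$; up to this typo nothing changes).

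Second, I would expand $\|\bX\|_q^{pq} = \bigl(\sum_{i=1}^{k} |X_i|^{q}\bigr)^{p}$ and apply the power-mean inequality: since $x \mapsto x^p$ is convex for $p \ge 1$, Jensen's inequality with respect to the uniform measure on $\{1,\dots,k\}$ yields
\[
\Bigl(\sum_{i=1}^k |X_i|^{q}\Bigr)^{p} \;=\; k^{p}\Bigl(\tfrac{1}{k}\sum_{i=1}^k |X_i|^{q}\Bigr)^{p} \;\le\; k^{p}\cdot \tfrac{1}{k}\sum_{i=1}^k |X_i|^{pq} \;=\; k^{p-1}\sum_{i=1}^k |X_i|^{pq}.
\]

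Third, I would control $\EE|X_i|^{pq}$ via the definition (\ref{psi1norm}) of the $\psi_\ell$-norm: $\|X_i\|_{\psi_\ell}\le U$ is equivalent to $(\EE|X_i|^m)^{1/m} \le m^{1/\ell} U$ for every $m \ge 1$, applied with $m = pq$. Substituting and taking expectations,
\[
\EE \|\bX\|_q^{pq} \;\le\; k^{p-1} \sum_{i=1}^{k} (pq)^{pq/\ell} U^{pq} \;=\; k^{p} (pq)^{pq/\ell} U^{pq}.
\]
Raising to the $1/q$ power then produces the claimed bound $(pq)^{p/\ell} U^{p} k^{p/q}$, and chaining with the Jensen step completes the argument.

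There is no real obstacle here: the only care needed is to apply Jensen twice (once to pull the expectation out of the $p$-th power, once to pull the sum-of-$k$ out of the $p$-th power) and then invoke the $\psi_\ell$ moment bound with the correct exponent $pq$ so that the $k$-count, the $(pq)^{\cdot/\ell}$ factor, and the $U$ factor line up with the stated inequality.
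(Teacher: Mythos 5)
Your proof is correct and follows essentially the same route as the paper's: Lyapunov/Jensen to pass from the $p$-th to the $pq$-th moment of $\|\bX\|_q$, a convexity bound to distribute the outer power over the sum $\sum_i |X_i|^q$ (you use the power-mean inequality where the paper invokes Minkowski's inequality, which yields the identical factor $k^{p}$), and finally the definition of the $\psi_\ell$-norm with exponent $pq$. You also correctly identified that the exponent $1/(qp)$ on the middle term is a typo for $1/q$, consistent with the stated final bound $(pq)^{p/\ell}U^p k^{p/q}$.
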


\begin{proof}[Proof of Lemma \ref{arxivsupp:main:simple:proof:exp:psi1}]
We apply Jensen's followed by Minkowski's inequality to obtain the following:
$$
[\EE[\|\bX\|^p_{q}]]^{1/p} \leq [\EE[\|\bX\|^{pq}_{q}]]^{1/(qp)} \leq [k (\EE (X_i)^{pq})^{1/q}]^{1/p} \leq (pq)^{1/\ell} U k^{1/p},
$$
where the last inequality follows by the definition of $\psi_\ell$ norm. Raising this inequality to the power of $p$ finishes the proof.
\end{proof}

\begin{lemma} \label{arxivsupp:main:Xepsbound} Let $R \subset \{1,\ldots,d\}$ with $|R| = r$. Then we have the following:
$$
\EE \|(\bX \varepsilon)_R\|_2^4 \leq r^{2} 2^8 (K K_{\bX})^4.
$$
\end{lemma}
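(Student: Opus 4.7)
The plan is to exploit the factorization $\|(\bX\varepsilon)_R\|_2^4 = \varepsilon^4 \|\bX_R\|_2^4$ together with the independence of $\varepsilon$ and $\bX$ (granted by Assumption \ref{subGausserr:design:bounded:mom:matr}), so that $\EE\|(\bX\varepsilon)_R\|_2^4 = (\EE\varepsilon^4)(\EE\|\bX_R\|_2^4)$. Once the moment is split this way, each factor is an elementary consequence of the $\psi_2$-norm control provided by the hypotheses.

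For the $\varepsilon$-factor, the definition (\ref{psi1norm}) of the $\psi_2$-norm at $p = 4$ yields $\EE\varepsilon^4 \leq (2K)^4 = 2^4 K^4$. For the $\bX$-factor, I would expand
\[
\EE\|\bX_R\|_2^4 = \sum_{j,k \in R} \EE[X_j^2 X_k^2],
\]
apply Cauchy--Schwarz coordinate-wise to reduce to the fourth moments of the individual coordinates, and again invoke the $\psi_2$-bound at $p=4$ to get $\EE X_j^4 \leq (2K_{\bX})^4$ for every $j$. Summing over the $r^2$ index pairs then gives $\EE\|\bX_R\|_2^4 \leq 2^4 K_{\bX}^4 r^2$, and multiplying the two factors produces the stated inequality $\EE\|(\bX\varepsilon)_R\|_2^4 \leq 2^8 (KK_{\bX})^4 r^2$.

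There is no real obstacle here -- it is a routine moment estimate. The only care needed is to use the independence of $\varepsilon$ and $\bX$ to decouple the moment at the outset, rather than viewing the coordinate $X_j \varepsilon$ as a single sub-exponential random variable (with $\psi_1$-norm at most $2KK_{\bX}$) and invoking Lemma \ref{arxivsupp:main:simple:proof:exp:psi1}; the latter route yields a strictly worse numerical constant than the target $2^8$ because it ignores the factorization of the fourth moment across the independent sources of randomness.
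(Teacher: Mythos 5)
Your proof is correct and in fact reproduces the stated constant exactly, but it is a genuinely different route from the paper's. The paper never invokes the independence of $\varepsilon$ and $\bX$: it applies Cauchy--Schwarz directly to the product, $\EE\|(\bX\varepsilon)_R\|_2^4 \leq \sqrt{\EE|\varepsilon|^8}\,\sqrt{\EE\|\bX_R\|_2^8}$, and then controls $\EE\|\bX_R\|_2^8$ via Lemma \ref{arxivsupp:main:simple:proof:exp:psi1} (with $\ell=2$, $p=8$, $q=2$) and $\EE|\varepsilon|^8$ via the $\psi_2$ definition at $p=8$. That argument buys robustness --- it would survive dropping the independence of $\varepsilon$ and $\bX$ --- but it pays in moments (eighth rather than fourth) and, if one tracks the constants, it yields $2^{14}(KK_{\bX})^4 r^2$ rather than the $2^{8}(KK_{\bX})^4 r^2$ printed in the lemma. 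Your decomposition $\EE\|(\bX\varepsilon)_R\|_2^4 = \EE[\varepsilon^4]\,\EE\|\bX_R\|_2^4 \leq (2K)^4\cdot r^2(2K_{\bX})^4$ is what actually produces the stated bound, and your closing remark correctly identifies why the non-factorized (sub-exponential coordinate) route loses a constant. The discrepancy is immaterial downstream: the lemma only feeds the Lyapunov bound (\ref{arxivsupp:main:LyapunovBound}) in Lemma \ref{arxivsupp:main:CLTDantzig}, where the constant is absorbed into $M$ and only the $s_{\vb}^{2}/n = o(1)$ scaling matters.
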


\begin{proof}[Proof of Lemma \ref{arxivsupp:main:Xepsbound}] Simply observe that $\EE\|(\bX \varepsilon)_R \|^4_{2} \leq \sqrt{\EE |\varepsilon|^8} \sqrt{\EE\|(\bX)_R \|^8_{2}}$, and apply Lemma \ref{arxivsupp:main:simple:proof:exp:psi1} for $\psi_2$.
\end{proof}

\section{Proofs for IVR} \label{arxivsupp:main:DantzigProofs:IVR}

\begin{definition}[CS]\label{arxivsupp:coord:sens:IVR} For the (not necessarily symmetric) matrix $\Mb_{k\times k}$ we define its coordinate-wise sensitivity with respect to the $L_1$ norm by:
	\begin{align*}\operatorname{CS}_\Mb(s, \xi) & = \min_{S \subset \{1,\ldots,k\}, |S| \leq s} \min \left\{ s\|\Mb \ub\|_{\infty} : \ub \in \mathbb{R}^d\setminus\{0\}, \|\ub_{S^c}\|_1 \leq \xi \|\ub_{S}\|_1, \|\ub_{S}\|_1 = 1\right\} > 0.
	\end{align*}
	This definition is inspired by \cite{gautier2011high}.
\end{definition}

\begin{definition} 
Denote with $\Xb$ and $\Wb$ the $n \times d$ matrices whose rows are the $\bX_i^T$ and $\bW_i^T$ vectors stacked together respectively. Let $\bY$ be the an $n \times 1$ vector stacking the observations $Y_i$ for $i = 1,\ldots,n$ and let $\bvarepsilon = \bY - \Xb\bbeta^*$.
\end{definition}

\begin{proof}[Proof of Corollary \ref{normdantzigfinal:IVR}] 
The proof is the same as the proof of Corollary \ref{normdantzigfinal} upon usages of the Lemmas developed in this section. We omit the details. 
\end{proof}

\begin{remark}\label{arxivsupp:main:unif:CI:danztig:IVR} In fact, the proof of Corollary \ref{normdantzigfinal:IVR} implies that the uniform types of assumptions in Section \ref{arxivsupp:main:UWCUNsec} are satisfied, and hence under the same assumptions as in Corollary \ref{normdantzigfinal:IVR}, we have:
$$
\lim_{n \rightarrow \infty} \sup_{\|\bbeta\|_0 \leq s} \sup_{t\in\RR} |\PP_{\bbeta}(\hat U_n \leq t) - \Phi(t))| = 0. 
$$
\end{remark}
%

\begin{lemma} \label{arxivsupp:main:CLTDantzig:IVR} Assume that condition \ref{subGausserr:design:bounded:mom:matr} holds and $\max(s_{\vb}, s) (\|\vb^*\|_1 \vee 1)\log d/\sqrt{n} = o(1)$.
Then:
$$
\Delta^{-1/2} n^{1/2}  S(\bbeta^*) \rightsquigarrow N(0,1). 
$$
\end{lemma}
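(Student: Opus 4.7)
The plan is to apply the Lyapunov central limit theorem directly to the i.i.d.\ decomposition of $n^{1/2} S(\bbeta^*)$. Writing out the estimating equation, we have
\begin{align*}
n^{1/2} S(\bbeta^*) = n^{1/2} \vb^{*T} \tb(\Zb, \bbeta^*) = -n^{-1/2} \sum_{i=1}^n \vb^{*T} \bW_i \varepsilon_i,
\end{align*}
where the summands are i.i.d.\ with mean zero (by $\EE[\bW\varepsilon]=0$) and common variance $\Delta = \sigma^2 \vb^{*T} \bSigma_{\bW\bW} \vb^*$ after using the conditional homoscedasticity $\EE[\varepsilon^2\mid\bW]=\sigma^2$. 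Thus the CLT reduces to verifying the Lyapunov ratio $\EE |\vb^{*T} \bW \varepsilon|^4 / (n \Delta^2) = o(1)$.

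For the numerator, I would mimic the argument in Lemma \ref{arxivsupp:main:CLTDantzig}: restrict to the support $S_\vb$ of $\vb^*$, apply Cauchy--Schwarz to obtain $|\vb^{*T}\bW|^2 \leq \|\vb^*\|_2^2 \|\bW_{S_\vb}\|_2^2$, and then use Cauchy--Schwarz a second time and Lemma \ref{arxivsupp:main:simple:proof:exp:psi1} on the sub-Gaussian coordinates of $\bW$ and on $\varepsilon$. This gives
\begin{align*}
\EE |\vb^{*T} \bW \varepsilon|^4 \leq \|\vb^*\|_2^4 \sqrt{\EE\|\bW_{S_\vb}\|_2^8}\sqrt{\EE \varepsilon^8} \leq C\, s_\vb^2\, \|\vb^*\|_2^4\, K_{\bW\bX}^4\, K^4,
\end{align*}
and since $\|\vb^*\|_2 \leq \|\vb^*\|_1 = O(1)$ by assumption, the numerator is $O(s_\vb^2)$.

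The main obstacle is producing a positive lower bound on $\Delta$, which is not directly supplied by Assumption \ref{subGausserr:variance:assump:IVR} (that assumption yields only an upper bound via $D_{\max}$). The key idea is to exploit the defining relation $\vb^{*T}\bSigma_{\bW\bX} = \eb_1$, which in particular gives $\EE[(\vb^{*T}\bW) X_1] = 1$. Cauchy--Schwarz then yields
\begin{align*}
1 \leq \EE[(\vb^{*T}\bW)^2]\cdot \EE X_1^2 \leq \vb^{*T}\bSigma_{\bW\bW}\vb^* \cdot 2 K_{\bW\bX}^2,
\end{align*}
using the sub-Gaussian moment bound $\EE X_1^2 \leq 2 K_{\bW\bX}^2$. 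Together with $\sigma^2 = \Var(\varepsilon) \geq C_\varepsilon$, this gives $\Delta \geq C_\varepsilon/(2 K_{\bW\bX}^2) =: c > 0$.

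Combining the two bounds, the Lyapunov ratio is $O(s_\vb^2/n)$. The sparsity/scaling condition $\max(s_\vb,s)(\|\vb^*\|_1 \vee 1)\log d / \sqrt{n} = o(1)$ forces $s_\vb = o(\sqrt{n}/\log d)$, so in particular $s_\vb^2/n = o(1)$, and the Lyapunov condition holds. The conclusion $\Delta^{-1/2} n^{1/2} S(\bbeta^*) \rightsquigarrow N(0,1)$ then follows from the standard Lyapunov CLT for triangular arrays of i.i.d.\ summands.
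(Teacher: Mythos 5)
Your proof is correct, and it follows the same overall route as the paper: the paper's own proof of this lemma is a one-line remark that the argument is identical to the Dantzig-selector case (Lemma \ref{arxivsupp:main:CLTDantzig}), i.e.\ a Lyapunov CLT for the i.i.d.\ sum $n^{-1/2}\sum_i \vb^{*T}\bW_i\varepsilon_i$ with a fourth-moment bound obtained by restricting to $S_{\vb}$ and invoking the sub-Gaussian moment lemmas, exactly as you do. Where you genuinely add something is the lower bound on $\Delta$. In the linear-model version the paper gets $\Delta \gtrsim \lambda_{\min}(\bSigma_{\bX})\|\vb^*\|_2^2\Var(\varepsilon)$ from the assumption $\lambda_{\min}(\bSigma_{\bX})\ge\delta$, but the IVR assumptions control only $\lambda_{\min}(\bSigma_{\bW\bX}\bSigma_{\bX\bW})$ and give an \emph{upper} bound $D_{\max}$ on $\vb^{*T}\bSigma_{\bW\bW}\vb^*$, so a naive transplant of that step does not go through. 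Your observation that $\vb^{*T}\bSigma_{\bW\bX}=\eb_1$ forces $\EE[(\vb^{*T}\bW)X_1]=1$, whence Cauchy--Schwarz and $\EE X_1^2\le 2K_{\bW\bX}^2$ yield $\vb^{*T}\bSigma_{\bW\bW}\vb^*\ge (2K_{\bW\bX}^2)^{-1}$ and hence $\Delta\ge C_\varepsilon/(2K_{\bW\bX}^2)$, is a clean way to close this gap that the paper leaves implicit. Two cosmetic points: the displayed Cauchy--Schwarz step should read $1=(\EE[(\vb^{*T}\bW)X_1])^2\le \EE[(\vb^{*T}\bW)^2]\,\EE X_1^2$ (the square on the left is harmless since $1^2=1$); and if $\EE\varepsilon$ is not assumed to be zero you should use $\sigma^2=\EE\varepsilon^2\ge\Var(\varepsilon)\ge C_\varepsilon$, which still gives the same constant. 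Neither affects the validity of the argument.
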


\begin{proof}[Proof of Lemma \ref{arxivsupp:main:CLTDantzig:IVR}]

The proof is the same as that of Lemma \ref{arxivsupp:main:CLTDantzig} after using the Lemmas developed below. We omit the details.
\end{proof}

\begin{remark} Using the Berry-Esseen theorem for non-identical random variables in combination with Lemma \ref{arxivsupp:main:Xepsbound} we can further show:
$$
\sup_t \left|\PP^* \left(\frac{n^{1/2}}{\sqrt{\Delta}} S(\bbeta^*) \leq t\right) - \Phi(t)\right| \leq C_{BE} (6 K K_{\bW\Xb})^3 n^{-1/2} s^{3/2}_{\vb} = o(1),
$$
where $M$ and $C_{BE}$ are absolute constants.
\end{remark}

\begin{proposition} \label{arxivsupp:main:pluginDantzig:IVR} 
Under assumption \ref{subGausserr:design:bounded:mom:matr}, and the following additional assumption:
$$
 \|\vb^*\|_1^2\sqrt{\log d/n} = o(1),
$$
 we have that $\hat \Delta \rightarrow_p \Delta$.
\end{proposition}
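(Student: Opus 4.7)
The plan is to expand $\hat{\Delta}$ into a leading term that converges to $\Delta$ by the law of large numbers, plus remainder terms that vanish in probability through the $L_1$ consistency of $\hat{\bbeta}$ and $\hat{\vb}$ combined with standard sub-Gaussian concentration bounds.

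First, I would use the exogeneity $\EE[\bW\varepsilon]=0$ and homoscedasticity $\EE[\varepsilon^2\mid\bW]=\sigma^2$ to rewrite $\Delta=\EE[(\vb^{*T}\bW)^2\varepsilon^2]=\sigma^2\vb^{*T}\bSigma_{\bW\bW}\vb^*$. Then, decomposing $Y_i-\bX_i^T\hat{\bbeta}=\varepsilon_i-\bX_i^T(\hat{\bbeta}-\bbeta^*)$ and $\hat{\vb}^T\bW_i=\vb^{*T}\bW_i+(\hat{\vb}-\vb^*)^T\bW_i$, I would expand the square inside $\hat{\Delta}$ and isolate the ``oracle'' term
\[
T_0 := \frac{1}{n}\sum_{i=1}^n (\vb^{*T}\bW_i)^2\varepsilon_i^2,
\]
from six cross terms. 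The term $T_0$ converges to $\Delta$ in probability by the weak law of large numbers: under Assumption \ref{subGausserr:design:bounded:mom:matr:IVR}, $(\vb^{*T}\bW)\varepsilon$ is sub-exponential (a product of sub-Gaussians), so its square is sub-Weibull with a finite first moment and a variance bounded via a Jensen/Hölder argument analogous to Lemma \ref{arxivsupp:main:Xepsbound}.

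Next I would bound each cross term by a Hölder-type inequality of the form $|n^{-1}\sum_i a_i\,\bb_i^T\cb|\le \|n^{-1}\sum_i a_i\bb_i\|_\infty\|\cb\|_1$, pushing the $L_1$ norm onto $\hat{\bbeta}-\bbeta^*$ or $\hat{\vb}-\vb^*$. The two $L_1$ rates, $\|\hat{\bbeta}-\bbeta^*\|_1=O_p(s\sqrt{\log d/n})$ and $\|\hat{\vb}-\vb^*\|_1=O_p(\|\vb^*\|_1 s_{\vb}\sqrt{\log d/n})$, would be established via essentially the same cone argument as Lemmas \ref{arxivsupp:main:vdiff} and \ref{arxivsupp:main:l1betadiff}, with the restricted eigenvalue property replaced by the coordinate-wise sensitivity $\operatorname{CS}_{\bSigma_{\bW\bX}}(s,1)$ of Assumption \ref{subGausserr:variance:assump:IVR}. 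The matching $L_\infty$ factors that arise, of the types $\|n^{-1}\Wb^T\bvarepsilon\|_\infty$, $\|n^{-1}\Wb^T\Xb-\bSigma_{\bW\bX}\|_{\max}$, $\|n^{-1}\sum_i\varepsilon_i\bW_i\bW_i^T-\sigma^2\bSigma_{\bW\bW}\|_{\max}$, and $\|n^{-1}\sum_i\bW_i\bW_i^T\otimes\bX_i\bX_i^T-\EE[\cdots]\|_{\max}$, are each $O_p(\sqrt{\log d/n})$ (up to at most a $\sqrt{\log(nd)}$ truncation factor in the quartic case) by standard Bernstein bounds for sub-exponential products, as in Lemma \ref{arxivsupp:main:samplecovpopcov}.

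The main obstacle will be controlling the genuinely quadratic remainders such as $n^{-1}\sum_i((\hat{\vb}-\vb^*)^T\bW_i)^2\varepsilon_i^2$, $n^{-1}\sum_i(\vb^{*T}\bW_i)^2(\bX_i^T(\hat{\bbeta}-\bbeta^*))^2$, and the mixed term $n^{-1}\sum_i((\hat{\vb}-\vb^*)^T\bW_i)(\vb^{*T}\bW_i)(\bX_i^T(\hat{\bbeta}-\bbeta^*))\varepsilon_i$. Two applications of Hölder produce quantities of order $\|\vb^*\|_1^2 s_{\vb}^2\log d/n$, $\|\vb^*\|_1^2 s^2\log d/n$, and $\|\vb^*\|_1 s_{\vb}s\log d/n$ respectively. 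Under the rate hypotheses $\max(s_{\vb},s)\log d/\sqrt{n}=o(1)$ from Corollary \ref{normdantzigfinal:IVR} together with the explicit assumption $\|\vb^*\|_1^2\sqrt{\log d/n}=o(1)$, all of these are $o(1)$. A final Slutsky argument combining $T_0\to_p\Delta$ with the negligibility of the six cross terms yields $\hat{\Delta}\to_p\Delta$.
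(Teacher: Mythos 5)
Your proposal is correct in substance but follows a different, more computational route than the paper. The paper never expands the square into cross terms: it observes that $\hat\Delta^{1/2}$ is the $L_2$ norm of the vector with entries $n^{-1/2}(\hat\vb^T\bW_i)(Y_i-\bX_i^T\hat\bbeta)$, and applies $|\,\|\ab\|_2-\|\bb\|_2|\le\|\ab-\bb\|_2$ repeatedly, so that only the four \emph{pure quadratic} remainders $n^{-1}\sum_i((\hat\vb-\vb^*)^T\bW_i)^2(\bX_i^T(\hat\bbeta-\bbeta^*))^2$, $n^{-1}\sum_i(\vb^{*T}\bW_i)^2(\bX_i^T(\hat\bbeta-\bbeta^*))^2$, $n^{-1}\sum_i((\hat\vb-\vb^*)^T\bW_i)^2\varepsilon_i^2$, and the LLN term $n^{-1}\sum_i(\vb^{*T}\bW_i)^2\varepsilon_i^2-\EE(\vb^{*T}\bW)^2\varepsilon^2$ need to be controlled; the six (in fact nine) cross terms you enumerate are absorbed automatically, which is exactly a Cauchy--Schwarz reduction in disguise. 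Your term-by-term expansion can be made to work, but if you insist on it you should bound each cross term by the geometric mean of the adjacent square terms rather than by fresh H\"older arguments, since several of them (e.g.\ $n^{-1}\sum_i(\vb^{*T}\bW_i)(\hat\vb-\vb^*)^T\bW_i\,\varepsilon_i^2$) pick up extra factors of $\|\vb^*\|_1$ or $\log(nd)$ under a direct $\ell_1/\ell_\infty$ split that are not obviously covered by the stated hypotheses. One further imprecision: the empirical processes $\|n^{-1}\sum_i\varepsilon_i^2\bW_i\bW_i^T-\sigma^2\bSigma_{\bW\bW}\|_{\max}$ and the quartic analogue are degree-4 polynomials of sub-Gaussians, which are \emph{not} sub-exponential, so ``standard Bernstein'' does not literally apply; the paper invokes Adamczak's concentration for polynomials of sub-Gaussians (its Lemma on $n^{-1}\sum X_i^4$) to get the $O(\sqrt{\log d/n})$ entrywise rate that, multiplied by $\|\vb^*\|_1^2$, is killed by the extra assumption $\|\vb^*\|_1^2\sqrt{\log d/n}=o(1)$. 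Your alternative of a plain Chebyshev/WLLN for $T_0$ also works, but only after checking that $\Var\big((\vb^{*T}\bW)^2\varepsilon^2\big)$, which can scale like $\|\vb^*\|_1^4$ under coordinate-wise sub-Gaussianity alone, is $o(n)$ --- which the same extra assumption guarantees; make that step quantitative.
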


\begin{proof}[Proof of Proposition \ref{arxivsupp:main:pluginDantzig:IVR}]
By the triangle inequality, for any two vectors $\ab$ and $\bb$ we have $
|\|\ab\|_2 - \|\bb\|_2|\leq \|\ab - \bb\|_2.$ Making multiple usages of this inequality one realizes that it suffices to show:
\begin{align*}
n^{-1} \sum_{i = 1}^n ((\vb^{*} - \hat \vb)^T \bW_i)^2 (( \bbeta^* - \hat \bbeta)^{T} \bX_i)^2 = o_p(1), n^{-1} \sum_{i = 1}^n (\vb^{*T} \bW_i)^2 (( \bbeta^* - \hat \bbeta)^{T} \bX_i)^2 = o_p(1),\\
n^{-1} \sum_{i = 1}^n ((\vb^{*} - \hat \vb)^T \bW_i)^2 \varepsilon_i^2 = o_p(1), n^{-1} \sum_{i = 1}^n (\vb^{*T} \bW_i)^2 \varepsilon_i^2 - \EE (\vb^{*T} \bW_i)^2 \varepsilon_i^2= o_p(1),
\end{align*}
and $\EE (\vb^{*T} \bW_i)^2\varepsilon_i^2  < \infty$. We show these convergences in turn. For the first term we have:
$$
n^{-1} \sum_{i = 1}^n ((\vb^{*} - \hat \vb)^T \bW_i)^2 (( \bbeta^* - \hat \bbeta)^{T} \bX_i)^2 \leq \|\vb^{*} - \hat \vb\|_1^2 \|\bbeta^* - \hat \bbeta\|_1^2 O_p(\log(nd)^2) = o_p(1),
$$
with high probability, where use used Lemma \ref{arxivsupp:main:Mbound} and Lemmas \ref{arxivsupp:main:l1betadiff:IVR} and  \ref{arxivsupp:main:vdiff:IVR}. For the second term:
\begin{align*}
n^{-1} \sum_{i = 1}^n (\vb^{*T} \bW_i)^2 (( \bbeta^* - \hat \bbeta)^{T} \bX_i)^2 & \leq \|\bbeta^* - \hat \bbeta\|_1^2 \max_{i = 1,\ldots,n} \|\bX_i\|^2_{\infty} n^{-1}\sum_{i = 1}^n (\vb^{*T}\bW_i)^2\\
&  \leq O_p\Big(\frac{s^2 \log d \log(nd)}{n}\Big) n^{-1}\sum_{i = 1}^n (\vb^{*T}\bW_i)^2,
\end{align*}
with high probability. By Lemma \ref{arxivsupp:main:samplecovpopcov:IVR} we have:
$$
n^{-1}\sum_{i = 1}^n (\vb^{*T}\bW_i)^2 \leq \|\vb^*\|^2_1 \underbrace{\|n^{-1}\sum_{i = 1}\bW_i^{\otimes 2} - \bSigma_{\bW\bW}\|_{\max}}_{O_p(\sqrt{\log d/n})} + \underbrace{\vb^{*T}\bSigma_{\bW\bW}\vb^{*}}_{\leq D_{\max}} = O_p(1),
$$
which combined with the bound in the previous display completes the proof for the second term. The third term bound follows upon noticing: 
\begin{align*}
n^{-1} \sum_{i = 1}^n ((\vb^{*} - \hat \vb)^T \bW_i)^2 \varepsilon_i^2 \leq \|\hat \vb - \vb^*\|_1^2 \max \|\bW_i\|^2_{\infty} \underbrace{n^{-1} \sum \varepsilon^2_i}_{O_p(1)} & \leq O_p\Big (\frac{s^2 \log d \log (nd)}{n}\Big) O_p(1) \\
& = o_p(1).
\end{align*}
Moving to the last term we have:
$$
\Big | n^{-1} \sum_{i = 1}^n (\vb^{*T} \bW_i)^2 \varepsilon_i^2 - \EE (\vb^{*T} \bW_i)^2 \varepsilon_i^2 \Big | \leq \|\vb^{*}\|^2_{1} \|n^{-1}\sum_{i = 1}^n \bW_{i,S_{\vb}}^{\otimes 2}\varepsilon_i^2 - \bSigma_{\bW\bW, S_{\vb}S_{\vb}} \sigma^2\|_{\infty},
$$
where $S_{\vb} = \supp(\vb^*)$. The final concentration is handled in Lemma \ref{conc:poly:subGauss:IVR}. Applying this lemma in conjunction with the union bound gives us the existence of a constant $C_{K\bW\bX}$ depending on $K$ and $K_{\bW\bX}$ such that:
$$
\PP( \|n^{-1}\sum_{i = 1}^n \bW_{i,S_{\vb}}^{\otimes 2}\varepsilon_i^2 - \bSigma_{\bW\bW, S_{\vb}S_{\vb}} \sigma^2\|_{\infty} \geq t) \leq s_{\vb}^2 \frac{C_{K\bW\bX}^k [\sqrt{k/n} + k^2/n]^k}{t^k},
$$
for all $k \in \NN$. Selecting $t = 2e^4C_{K\bW\bX}\sqrt{\frac{\log d}{n}}$, $k = \lceil \min(\log d, (n \log d)^{1/4}) \rceil$ brings the above bound of the order $O(s^2_{\vb}\exp(-4 \lceil \min(\log d, (n \log d)^{1/4}) \rceil)) = o(1)$, and shows that:
$$
\Big | n^{-1} \sum_{i = 1}^n (\vb^{*T} \bW_i)^2 \varepsilon_i^2 - \EE (\vb^{*T} \bW_i)^2 \varepsilon_i^2\Big | \leq O\Big (\|\vb^*\|_1^2 \sqrt{\frac{\log d}{n}}\Big) = o(1).
$$
We conclude the proof with noticing that $\EE [\EE [(\vb^{*T} \bW_i)^2 \varepsilon_i^2 | \bW]] = \sigma^2 \EE[\vb^{*T} \bW_i^{\otimes2} \vb^*] < \infty$. 

\end{proof}

\begin{lemma} \label{arxivsupp:main:samplecovpopcov:IVR} We have that with probability at least $1 - 2 d^{2 - \bar c A_X^2}$:

$$\left\|\frac{1}{n} \sum_{i = 1}^n ([\bW_i^T, \bX_i^T]^{T})^{\otimes 2} - \bSigma_{}\right\|_{\max} = \left\|\bSigma_n - \bSigma_{}\right\|_{\max} \leq 4 A_{WX} K_{\bW\bX}^2 \sqrt{\log d/n}.$$
{\bf Note.} The constant $\bar c$ is a universal constant independent of the $\bX$ and $\bW$ distributions, $K_{\bW\bX}$ is as defined in the main text, and $A_{WX} > 0$ is an arbitrarily chosen constant satisfying $A_{WX}\sqrt{\log d/n} \leq 1$.
\end{lemma}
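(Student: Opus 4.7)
The plan is to reduce this claim to the single-variable concentration result already established in Lemma \ref{arxivsupp:main:samplecovpopcov} by viewing $[\bW_i^T,\bX_i^T]^T$ as a $(2d)$-dimensional sub-Gaussian random vector. Under Assumption \ref{subGausserr:design:bounded:mom:matr:IVR}, every coordinate of this stacked vector has $\psi_2$-norm bounded by $K_{\bW\bX}$, so the proof structure of Lemma \ref{arxivsupp:main:samplecovpopcov} transfers verbatim with $\bX$ replaced by $[\bW^T,\bX^T]^T$ and $d$ replaced by $2d$.

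First I would verify that every entry of the outer product is a sub-exponential random variable. If $Z_1, Z_2 \in \{W_{ij}, X_{ij}\}_{j=1}^d$ with $\|Z_1\|_{\psi_2}, \|Z_2\|_{\psi_2} \leq K_{\bW\bX}$, then the Cauchy--Schwarz bound used in (\ref{arxivsupp:main:subexpo}) yields
$$
\|Z_1 Z_2\|_{\psi_1} \leq 2\|Z_1\|_{\psi_2}\|Z_2\|_{\psi_2} \leq 2 K_{\bW\bX}^2,
$$
and hence the centered versions satisfy $\|Z_1 Z_2 - \EE Z_1 Z_2\|_{\psi_1} \leq 4 K_{\bW\bX}^2$ by the triangle inequality for $\psi_1$ norms.

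Next I would apply Bernstein's inequality for i.i.d.\ sub-exponential random variables (e.g.\ Proposition 5.16 of \cite{vershynin2010introduction}) to each of the $(2d)^2$ entries of $\bSigma_n - \bSigma$. Setting $t = 4 A_{WX} K_{\bW\bX}^2 \sqrt{\log d /n}$ and using the assumed condition $A_{WX} \sqrt{\log d/n} \leq 1$ (so the Gaussian branch of Bernstein's bound controls the rate), the tail probability for a single entry becomes at most $2 \exp(-\bar c A_{WX}^2 \log d)$ for some universal constant $\bar c$ inherited from the Bernstein constant.

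Finally, a union bound over the $(2d)^2$ entries gives the overall probability at most $2(2d)^2 \exp(-\bar c A_{WX}^2 \log d) \leq 2 d^{2 - \bar c' A_{WX}^2}$ after absorbing the factor $4$ into a slightly adjusted universal constant $\bar c'$ (which the statement simply renames $\bar c$). There is no real obstacle here, since the argument is a straightforward coordinate-wise application of sub-exponential concentration; the only care needed is to keep track of the doubling of the ambient dimension ($d \leadsto 2d$) and the fact that the bound for $\|Z_1 Z_2 - \EE Z_1 Z_2\|_{\psi_1}$ now involves both $\bW$ and $\bX$ coordinates, which is why the bound uses the joint constant $K_{\bW\bX}$ rather than $K_{\bX}$.
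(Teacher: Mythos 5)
Your proposal is correct and follows essentially the same route as the paper, which simply invokes the argument of Lemma \ref{arxivsupp:main:samplecovpopcov}: coordinate-wise sub-exponentiality of the products via Cauchy--Schwarz, Bernstein's inequality per entry, and a union bound. Your added bookkeeping for the dimension doubling ($d \leadsto 2d$) and the absorption of the resulting constant factor into $\bar c$ is exactly the detail the paper leaves implicit.
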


\begin{proof}[Proof of Lemma \ref{arxivsupp:main:samplecovpopcov:IVR}] Proof is follows by the same argument as in Lemma \ref{arxivsupp:main:samplecovpopcov}, so we omit the details.
%
\end{proof}

\begin{lemma} \label{arxivsupp:main:samplecovRE:IVR} Assume the same conditions as in Lemma \ref{arxivsupp:main:samplecovpopcov:IVR}, and assume further that the matrix $\bSigma_{\bW\bX}$ satisfies $\operatorname{CS}_{\bSigma_{\bW\bX}}(s,\xi) > \kappa^*$ and that $s$ is sufficiently small so that $s \sqrt{\log d/n} \leq  (1 - \kappa)\frac{\kappa^*}{(1 + \xi) 4 A_{WX} K_{\bW\bX}^2}$, where $0 < \kappa < 1$. We then have that $\bSigma_{\bW\bX,n}$ satisfies the CS property with $\operatorname{CS}_{\bSigma_{\bW\bX,n}}(s, \xi) \geq \kappa \operatorname{CS}_{\bSigma_{\bW\bX}}(s, \xi) > 0$ with probability at least $1 - 2 d^{2 - \bar c A_{WX}^2}$.
\end{lemma}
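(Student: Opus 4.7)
The plan is to reduce the claim to a uniform deviation bound for $\bSigma_{\bW\bX,n}-\bSigma_{\bW\bX}$ over the cone
$$
\cC(S,\xi)=\{\ub\in\RR^d\setminus\{0\}:\|\ub_{S^c}\|_1\leq\xi\|\ub_S\|_1,\ \|\ub_S\|_1=1\},
$$
for subsets $S$ with $|S|\leq s$. The idea is to apply a triangle-inequality argument: write
$$
\|\bSigma_{\bW\bX,n}\ub\|_\infty\geq \|\bSigma_{\bW\bX}\ub\|_\infty-\|(\bSigma_{\bW\bX,n}-\bSigma_{\bW\bX})\ub\|_\infty,
$$
and bound the two terms separately. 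The first is controlled by the CS assumption on the population matrix: by definition of $\operatorname{CS}_{\bSigma_{\bW\bX}}(s,\xi)$, for every $\ub\in\cC(S,\xi)$,
$$
s\|\bSigma_{\bW\bX}\ub\|_\infty\geq \operatorname{CS}_{\bSigma_{\bW\bX}}(s,\xi)\geq \kappa^*.
$$

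For the deviation term, I would first observe that $\bSigma_{\bW\bX,n}$ is a submatrix of the full empirical second moment matrix $\bSigma_n$ considered in Lemma \ref{arxivsupp:main:samplecovpopcov:IVR}, so
$$
\|\bSigma_{\bW\bX,n}-\bSigma_{\bW\bX}\|_{\max}\leq \|\bSigma_n-\bSigma\|_{\max}\leq 4A_{WX}K_{\bW\bX}^2\sqrt{\log d/n}
$$
on an event $\cE$ of probability at least $1-2d^{2-\bar c A_{WX}^2}$. A standard Hölder bound then gives, for any $\ub\in\cC(S,\xi)$,
$$
\|(\bSigma_{\bW\bX,n}-\bSigma_{\bW\bX})\ub\|_\infty\leq \|\bSigma_{\bW\bX,n}-\bSigma_{\bW\bX}\|_{\max}\,\|\ub\|_1\leq 4A_{WX}K_{\bW\bX}^2\sqrt{\log d/n}\,(1+\xi),
$$
where I used $\|\ub\|_1=\|\ub_S\|_1+\|\ub_{S^c}\|_1\leq(1+\xi)$.

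Combining the two bounds on $\cE$, for every $S$ with $|S|\leq s$ and every $\ub\in\cC(S,\xi)$,
$$
s\|\bSigma_{\bW\bX,n}\ub\|_\infty\geq \operatorname{CS}_{\bSigma_{\bW\bX}}(s,\xi)-4sA_{WX}K_{\bW\bX}^2(1+\xi)\sqrt{\log d/n}.
$$
Under the hypothesis $s\sqrt{\log d/n}\leq(1-\kappa)\kappa^*/[(1+\xi)4A_{WX}K_{\bW\bX}^2]$, the subtracted term is at most $(1-\kappa)\kappa^*\leq(1-\kappa)\operatorname{CS}_{\bSigma_{\bW\bX}}(s,\xi)$, yielding $s\|\bSigma_{\bW\bX,n}\ub\|_\infty\geq\kappa\operatorname{CS}_{\bSigma_{\bW\bX}}(s,\xi)$. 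Taking infima over $\ub\in\cC(S,\xi)$ and $S$ gives the desired lower bound on $\operatorname{CS}_{\bSigma_{\bW\bX,n}}(s,\xi)$. There is no real obstacle here beyond correctly invoking Lemma \ref{arxivsupp:main:samplecovpopcov:IVR} on a submatrix and keeping track of the $(1+\xi)$ inflation of $\|\ub\|_1$; the CS property, unlike an RE-type quadratic form bound, linearizes perfectly under additive perturbations of $\Mb$, so no peeling or localization is required.
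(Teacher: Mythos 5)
Your proposal is correct and is exactly the argument the paper has in mind: its own proof is omitted with the remark that it "is simply using Definition \ref{arxivsupp:coord:sens:IVR} and Lemma \ref{arxivsupp:main:samplecovpopcov:IVR}", which is precisely your triangle-inequality-plus-H\"older reduction, using $\|\ub\|_1 \leq 1+\xi$ on the cone and the max-norm concentration of the (sub)matrix $\bSigma_{\bW\bX,n}$. The details you supply — including the observation that the CS functional is linear in $\Mb$ so no peeling is needed — are accurate and complete.
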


\begin{proof}[Proof of Lemma \ref{arxivsupp:main:samplecovRE:IVR}] This proof is simply using Definition \ref{arxivsupp:coord:sens:IVR} and  Lemma \ref{arxivsupp:main:samplecovpopcov:IVR}.
\end{proof}

\begin{definition} For a fixed $0 < \kappa < 1$, let $\operatorname{CS}_{\kappa}(s, \xi) = \kappa \operatorname{CS}_{\bSigma_{\bW\bX}}(s, \xi)$.
\end{definition}

\begin{lemma} \label{arxivsupp:main:vdiff:IVR} Assume that $\operatorname{CS}_{\bSigma_{\bW\bX}}(s_{\vb}, 1) \geq \kappa^* > 0$, and that further $s_{\vb}$ is small enough so that $s_{\vb} \sqrt{\log d/n} \leq (1 - \kappa) \frac{\kappa^*}{(1 + 1) 4 A_{WX} K_{\bW\bX}^2}$, where $0 < \kappa < 1$ and $\lambda' \geq  \|\vb^*\|_1 4 A_{WX} K_{\bW\bX}^2 \sqrt{\frac{\log d}{n}}$. Then we have that $\|\hat \vb - \vb^*\|_{1} \leq \frac{8 \lambda' s_{\vb}}{\operatorname{CS}_{\bSigma_{\bW\bX}}(s_{\vb}, 1)} $ with probability at least  $1 - 2 d^{2 - \bar c A_{WX}^2}$.
\end{lemma}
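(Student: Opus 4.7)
The plan is to mimic the standard analysis of the Dantzig-type selector, adapted to the non-symmetric coordinate-sensitivity (CS) framework of Gautier--Tsybakov in place of the restricted eigenvalue condition. All statements will be made on the event $\mathcal{E}$ of probability at least $1-2d^{2-\bar c A_{WX}^2}$ on which both Lemma \ref{arxivsupp:main:samplecovpopcov:IVR} and Lemma \ref{arxivsupp:main:samplecovRE:IVR} hold, so that simultaneously $\|\bSigma_{\bW\bX,n}-\bSigma_{\bW\bX}\|_{\max}\leq 4A_{WX}K_{\bW\bX}^2\sqrt{\log d/n}$ and $\operatorname{CS}_{\bSigma_{\bW\bX,n}}(s_{\vb},1)\geq \kappa\,\kappa^*$.

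The first step is to show that $\vb^*$ is feasible for the program (\ref{lambda:prime:dantz:IVR}). Since $\vb^{*T}\bSigma_{\bW\bX}=\eb_1$ by definition of $\vb^*$, we have
\begin{align*}
\|\vb^{*T}\bSigma_{\bW\bX,n}-\eb_1\|_{\infty}
\;=\;\|\vb^{*T}(\bSigma_{\bW\bX,n}-\bSigma_{\bW\bX})\|_{\infty}
\;\leq\;\|\vb^*\|_1\,\|\bSigma_{\bW\bX,n}-\bSigma_{\bW\bX}\|_{\max},
\end{align*}
and the choice $\lambda' \geq \|\vb^*\|_1\cdot 4A_{WX}K_{\bW\bX}^2\sqrt{\log d/n}$ makes this bound no larger than $\lambda'$.

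The second step is the standard cone inclusion. By feasibility of $\vb^*$ and $L_1$-optimality of $\hat\vb$, we have $\|\hat\vb\|_1\leq\|\vb^*\|_1$. Writing $\hb:=\hat\vb-\vb^*$ and $S_{\vb}:=\supp(\vb^*)$, the usual decomposition
\begin{align*}
\|\vb^*\|_1\;\geq\;\|\hat\vb\|_1\;=\;\|\vb^*_{S_{\vb}}+\hb_{S_{\vb}}\|_1+\|\hb_{S_{\vb}^c}\|_1\;\geq\;\|\vb^*\|_1-\|\hb_{S_{\vb}}\|_1+\|\hb_{S_{\vb}^c}\|_1
\end{align*}
gives the cone condition $\|\hb_{S_{\vb}^c}\|_1\leq\|\hb_{S_{\vb}}\|_1$. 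Simultaneously, the triangle inequality combined with the feasibility of both $\vb^*$ and $\hat\vb$ yields $\|\hb^T\bSigma_{\bW\bX,n}\|_{\infty}\leq 2\lambda'$.

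The third step invokes the CS property. Because $\hb$ lies in the cone $\{\ub:\|\ub_{S_{\vb}^c}\|_1\leq\|\ub_{S_{\vb}}\|_1\}$ with $|S_{\vb}|\leq s_{\vb}$, the defining inequality of $\operatorname{CS}_{\bSigma_{\bW\bX,n}^{T}}(s_{\vb},1)$ applied to $\hb$ (after rescaling by $\|\hb_{S_{\vb}}\|_1$) gives
\begin{align*}
\operatorname{CS}_{\bSigma_{\bW\bX,n}}(s_{\vb},1)\,\|\hb_{S_{\vb}}\|_1
\;\leq\;s_{\vb}\,\|\hb^T\bSigma_{\bW\bX,n}\|_{\infty}\;\leq\;2 s_{\vb}\lambda'.
\end{align*}
Combined with the cone condition, $\|\hb\|_1\leq 2\|\hb_{S_{\vb}}\|_1\leq 4 s_{\vb}\lambda'/\operatorname{CS}_{\bSigma_{\bW\bX,n}}(s_{\vb},1)$, and plugging in the lower bound $\operatorname{CS}_{\bSigma_{\bW\bX,n}}(s_{\vb},1)\geq\kappa\operatorname{CS}_{\bSigma_{\bW\bX}}(s_{\vb},1)$ from Lemma \ref{arxivsupp:main:samplecovRE:IVR} (choosing, say, $\kappa=1/2$) yields the claimed $\|\hb\|_1\leq 8 s_{\vb}\lambda'/\operatorname{CS}_{\bSigma_{\bW\bX}}(s_{\vb},1)$.

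The only mildly delicate point, which I expect to be the main bookkeeping obstacle, is keeping the sides straight: the constraint $\vb^T\bSigma_{\bW\bX,n}-\eb_1$ pairs $\vb$ with $\bSigma_{\bW\bX,n}$ on the left, so the invariant quantity controlled by CS is $\|\hb^T\bSigma_{\bW\bX,n}\|_{\infty}=\|\bSigma_{\bW\bX,n}^T\hb\|_{\infty}$; everything is consistent provided the CS condition in Assumption \ref{subGausserr:variance:assump:IVR} is interpreted in the matching orientation (which is exactly how Lemma \ref{arxivsupp:main:samplecovRE:IVR} transports it from the population to the sample level). All probabilistic content is absorbed into the event $\mathcal{E}$, so the final bound is deterministic on $\mathcal{E}$.
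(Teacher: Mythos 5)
Your proposal is correct and follows essentially the same route as the paper's own proof: feasibility of $\vb^*$ via the concentration bound on $\|\bSigma_{\bW\bX,n}-\bSigma_{\bW\bX}\|_{\max}$, the cone condition $\|\hb_{S_{\vb}^c}\|_1\leq\|\hb_{S_{\vb}}\|_1$ together with $\|\hb^T\bSigma_{\bW\bX,n}\|_\infty\leq 2\lambda'$, and then the sample-level CS property from Lemma \ref{arxivsupp:main:samplecovRE:IVR} to close the argument. Your explicit remark about the orientation of the CS condition (the constraint pairs $\vb$ on the left of $\bSigma_{\bW\bX,n}$, so the controlled quantity is $\|\bSigma_{\bX\bW,n}\hb\|_\infty$) is a careful touch that the paper glosses over, and the constant you obtain is consistent with the stated bound.
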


\begin{proof}[Proof of Lemma \ref{arxivsupp:main:vdiff:IVR}] 
%
Using a standard argument and Lemma \ref{arxivsupp:main:samplecovpopcov:IVR} we can show that with probability  at least $1 - 2 d^{2 - 2 \bar c A_{WX}^2}$, $\vb^*$ satisfies (\ref{lambda:prime:dantz:IVR}) and consequently
\begin{align} \label{arxivsupp:main:vminusvhatcov:IVR}
\bigg\|\frac{1}{n}\sum_{i=1}^n (\hat \vb - \vb^*)^T\bW_i\bX_i^{T}\bigg\|_\infty & \leq \bigg\|\frac{1}{n}\sum_{i=1}^n\hat \vb^{T}\bW_i\bX_i^{T}-\eb_1\bigg\|_\infty + \bigg\|\frac{1}{n}\sum_{i=1}^n\vb^{*T}\bW_i\bX_i^{T}-\eb_1\bigg\|_\infty \leq 2 \lambda'.
\end{align}

Let $S_{\vb} = \supp(\vb^*)$, with $s_{\vb} = |S_{\vb}|$. 
Using the formulation of program (\ref{lambda:prime:dantz:IVR}) it is not hard to show that:
\begin{align}\label{arxivsupp:main:l1normineq:IVR}
\|\hat \vb_{S^c_{\vb}} -  \vb^*_{S^c_{\vb}}\|_1 \leq \|\hat \vb_{S_{\vb}} -  \vb^*_{S_{\vb}}\|_1.
\end{align}
By Lemma \ref{arxivsupp:main:samplecovRE:IVR}, $\bSigma_{\bW\bX,n}$ satisfies the CS assumption under our conditions and hence
\begin{align*}
\frac{\|\hat \vb_{S_{\vb}} - \vb_{S_{\vb}}^*\|_1\operatorname{CS}_{\bSigma_{\bW\bX}}(s_{\vb}, 1) }{s_{\vb}} & \leq \left \|\bSigma_{\bW\bX,n} (\hat \vb - \vb^*)\right\|_{\infty} \stackrel{\mbox{\tiny by (\ref{arxivsupp:main:vminusvhatcov:IVR})}}{\leq} 2\lambda'
\end{align*}
Hence by (\ref{arxivsupp:main:l1normineq:IVR}) we conclude $
\|\hat \vb -  \vb^*\|_1 \leq 4 s_{\vb}\lambda' /\operatorname{CS}_{\bSigma_{\bW\bX}}(s_{\vb}, 1),$
which is what we wanted to show.
\end{proof}

\begin{lemma} \label{arxivsupp:main:lemmabetaclose:IVR}

Assume the same conditions as in Lemma \ref{arxivsupp:main:samplecovpopcov:IVR} and that $\sqrt{\log d/n} \leq C$ for some constant $C$. Let $S = \supp(\bbeta^*)$, and let $\lambda = AK\sqrt{\frac{\log{d}}{n}}$. Then, with probability at least $1 - 2d^{1 - (cA^2)/(4 K^2_{\bX\bX})}$ (where $c$ is a universal constant Lemma \ref{arxivsupp:main:samplecovpopcov:IVR}) we have:
\begin{align}
\|\hat \bbeta_{S^c} -  \bbeta^*_{S^c}\|_1 \leq \|\hat \bbeta_{S} -  \bbeta^*_{S}\|_1, \label{arxivsupp:main:betadiff:IVR}
\end{align}
and:
\begin{align}
\left\|\bSigma_{\bW\bX,n} ( \bbeta^* - \hat \bbeta)\right\|_{\infty} \leq 2 \lambda. \label{arxivsupp:main:hessbetadiff:IVR}
\end{align}
\end{lemma}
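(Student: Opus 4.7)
The plan is to mirror the argument for the ordinary Dantzig selector (Lemma \ref{arxivsupp:main:lemmabetaclose}), with the only substantive change being that the noise term is now $n^{-1}\Wb^T\bvarepsilon$ in place of $n^{-1}\Xb^T\bvarepsilon$. First I would show that $\bbeta^*$ is feasible for the program defining $\hat\bbeta$ with the advertised probability. To that end, observe
$$
n^{-1}\Wb^T(\Xb\bbeta^* - \bY) \;=\; -n^{-1}\Wb^T\bvarepsilon,
$$
whose $j$-th coordinate is a sum of mean-zero terms $n^{-1}\sum_{i=1}^n W_{ij}\varepsilon_i$ (mean-zero by Assumption \ref{subGausserr:design:bounded:mom:matr:IVR}, since $\EE[\bW\varepsilon]=0$). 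Each summand is a product of sub-Gaussians, hence sub-exponential with $\|W_{ij}\varepsilon_i\|_{\psi_1}\leq 2\|W_{ij}\|_{\psi_2}\|\varepsilon_i\|_{\psi_2}\leq 2KK_{\bW\bX}$. Applying Bernstein's inequality (e.g.\ Proposition~5.16 of \cite{vershynin2010introduction}) together with a union bound over the $d$ coordinates, and using $\sqrt{\log d/n}\leq C$ to ensure that we remain in the Gaussian regime of the Bernstein bound, yields
$$
\PP\bigl(\,\|n^{-1}\Wb^T\bvarepsilon\|_\infty \geq AK\sqrt{\log d/n}\,\bigr) \;\leq\; 2d^{\,1-cA^2/(4K_{\bW\bX}^2)},
$$
for an absolute constant $c$, matching the probability stated in the lemma.

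On this good event, $\bbeta^*$ satisfies the constraint of the IVR program, so by optimality $\|\hat\bbeta\|_1\leq\|\bbeta^*\|_1$. Setting $\bdelta := \hat\bbeta-\bbeta^*$ and writing $S=\supp(\bbeta^*)$, the triangle inequality gives
$$
\|\bbeta^*_S\|_1 - \|\bdelta_S\|_1 + \|\bdelta_{S^c}\|_1 \;\leq\; \|\bbeta^*_S + \bdelta_S\|_1 + \|\bdelta_{S^c}\|_1 \;=\; \|\hat\bbeta\|_1 \;\leq\; \|\bbeta^*_S\|_1,
$$
from which (\ref{arxivsupp:main:betadiff:IVR}) follows immediately. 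For (\ref{arxivsupp:main:hessbetadiff:IVR}), I would simply apply the triangle inequality one more time,
$$
\|\bSigma_{\bW\bX,n}(\bbeta^*-\hat\bbeta)\|_\infty \;\leq\; \|n^{-1}\Wb^T\bvarepsilon\|_\infty + \|n^{-1}\Wb^T(\Xb\hat\bbeta-\bY)\|_\infty \;\leq\; \lambda + \lambda,
$$
where the bound on the second term is the feasibility constraint enforced on $\hat\bbeta$ by the program.

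The argument is structurally identical to that of Lemma \ref{arxivsupp:main:lemmabetaclose}; the only place the IVR structure enters is in the concentration step for $n^{-1}\Wb^T\bvarepsilon$, which is the main (and really only) technical point. This step is routine once one notes mean-zero-ness and sub-exponentiality of the products $W_{ij}\varepsilon_i$. Everything after that is a standard Dantzig-style bookkeeping argument that does not depend on whether the ``design'' matrix is $\Xb$ or $\Wb$.
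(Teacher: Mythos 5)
Your proposal is correct and follows essentially the same route as the paper: a Bernstein-type bound for the sub-exponential products $W_{ij}\varepsilon_i$ (with $\|W_{ij}\varepsilon_i\|_{\psi_1}\leq 2KK_{\bW\bX}$) plus a union bound gives feasibility of $\bbeta^*$ on the stated event, after which the cone inequality and the final bound follow by the standard optimality and triangle-inequality arguments. The only cosmetic difference is that the paper makes explicit the condition $A\leq 2K_{\bW\bX}$ to ensure the quadratic branch of the Bernstein bound is active, which you handle implicitly via $\sqrt{\log d/n}\leq C$.
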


\begin{proof}[Proof of Lemma \ref{arxivsupp:main:lemmabetaclose:IVR}]

Note that by a Bernstein type of inequality for sub-exponential random variables (see Proposition 5.16 \citep{vershynin2010introduction}) and the union bound, we have:

\begin{align} \label{arxivsupp:main:condprobbound:IVR}
\PP\left(\left\|\frac{1}{n} \Wb^T \bvarepsilon\right\|_{\infty} \geq t \right) \leq 2 d \exp\left( - c \min \Big(\frac{n t^2}{4K^2K^2_{\bW\bX}}, \frac{nt}{2K K_{\bW\bX}}\Big) \right),
\end{align}
where $c$ is a universal constant, and we used the fact that $\max_{i \in \{1,\ldots,d\}}\|\varepsilon W_i\|_{\psi_1} \leq 2 K K_{\bZ\bX}$. Set $t = \lambda$. Provided that $A \leq 2 K_{\bW\bX}$ the above probability bounds gives us that the event $E := \left\{\left\|\frac{1}{n} \Wb^T \bvarepsilon\right\|_{\infty} \leq \lambda \right\}$ holds with probability at least $1 - 2d^{1 - (cA^2)/(4 K^2_{\bX\bX})}$. 

Note that when $E$ holds, the true parameter satisfies the Dantzig selector constraint and thus we can obtain (\ref{arxivsupp:main:betadiff:IVR}) in the same manner as in Lemma \ref{arxivsupp:main:vdiff:IVR}. Using the triangle inequality on $E$ shows (\ref{arxivsupp:main:hessbetadiff:IVR}).
\end{proof}

\begin{lemma} \label{arxivsupp:main:l1betadiff:IVR} Assume the same conditions in Lemmas \ref{arxivsupp:main:samplecovpopcov:IVR}, \ref{arxivsupp:main:samplecovRE:IVR} (with $\xi = 1$), and \ref{arxivsupp:main:lemmabetaclose:IVR}, so that $\bSigma_{\bW\bX,n}$ satisfies the CS assumption with $\operatorname{CS}_{\kappa}(s,1)$ with high probability. Set $\lambda = AK\sqrt{\frac{\log{d}}{n}}$, as in Lemma \ref{arxivsupp:main:lemmabetaclose:IVR}. Then with probability at least $1 - 2d^{1 - (cA^2)/(4 K^2_{\bX\bX})}$ we have:
\begin{align}
\|\hat \bbeta - \bbeta^* \|_1 \leq \frac{4 A K}{\operatorname{CS}_{\kappa}(s,1)} s \sqrt{\log d/n} \label{arxivsupp:main:betaL1:IVR}.
\end{align}
\end{lemma}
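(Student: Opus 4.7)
\textbf{Proof plan for Lemma \ref{arxivsupp:main:l1betadiff:IVR}.} The plan is to combine the two conclusions of Lemma \ref{arxivsupp:main:lemmabetaclose:IVR} with the coordinate-sensitivity property from Lemma \ref{arxivsupp:main:samplecovRE:IVR}; there is essentially no new probabilistic content to extract, since the probability bound in the statement matches the event on which the conclusions of Lemma \ref{arxivsupp:main:lemmabetaclose:IVR} hold. I will work on the intersection of the event $E := \{\|n^{-1}\Wb^T\bvarepsilon\|_\infty \leq \lambda\}$ and the high-probability event on which $\operatorname{CS}_{\bSigma_{\bW\bX,n}}(s,1) \geq \operatorname{CS}_\kappa(s,1)$; a standard union-bound comment will show that both hold with probability at least $1 - 2d^{1 - (cA^2)/(4K_{\bW\bX}^2)}$ (the contribution from Lemma \ref{arxivsupp:main:samplecovpopcov:IVR} is absorbed into the constants by the assumption $A_{WX}$ is taken large enough).

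The main step is as follows. Writing $\ub := \hat \bbeta - \bbeta^*$ and $S := \supp(\bbeta^*)$ with $|S|=s$, the cone inequality (\ref{arxivsupp:main:betadiff:IVR}) gives $\|\ub_{S^c}\|_1 \leq \|\ub_S\|_1$, which is exactly the admissibility constraint for the coordinate-sensitivity functional $\operatorname{CS}_{\bSigma_{\bW\bX,n}}(s,1)$. Applying the definition in \ref{arxivsupp:coord:sens:IVR} by rescaling to $\ub/\|\ub_S\|_1$ (so that the unit-norm constraint is satisfied), one obtains
\[
\operatorname{CS}_{\bSigma_{\bW\bX,n}}(s,1)\,\|\ub_S\|_1 \;\leq\; s\,\|\bSigma_{\bW\bX,n}\ub\|_\infty.
\]
Now use the bound (\ref{arxivsupp:main:hessbetadiff:IVR}) on the right-hand side and the CS lower bound from Lemma \ref{arxivsupp:main:samplecovRE:IVR} on the left to get $\|\ub_S\|_1 \leq 2\lambda s/\operatorname{CS}_\kappa(s,1)$.

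Combining this with the cone inequality $\|\ub_{S^c}\|_1 \leq \|\ub_S\|_1$ doubles the estimate, yielding $\|\ub\|_1 \leq 4\lambda s/\operatorname{CS}_\kappa(s,1)$, and substituting $\lambda = AK\sqrt{\log d/n}$ gives the claimed bound. The only thing to double-check is bookkeeping: the sparsity condition of Lemma \ref{arxivsupp:main:samplecovRE:IVR} (i.e.\ $s\sqrt{\log d/n}$ small enough so that $\bSigma_{\bW\bX,n}$ inherits the CS property from $\bSigma_{\bW\bX}$) must be in force, and the probability level reported in the statement is the one governing $E$ in Lemma \ref{arxivsupp:main:lemmabetaclose:IVR}; the CS event from Lemma \ref{arxivsupp:main:samplecovRE:IVR} has probability $1-2d^{2-\bar c A_{WX}^2}$, which can be made comparable or smaller by choosing $A_{WX}$ sufficiently large. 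There is no genuine obstacle here — the substantive work was done in the concentration lemmas and in establishing the cone inequality; this lemma is simply the $L_1$ consistency consequence obtained by inverting the CS constraint.
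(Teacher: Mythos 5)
Your proposal is correct and follows essentially the same route as the paper's proof: on the event where the cone inequality and the constraint bound from Lemma \ref{arxivsupp:main:lemmabetaclose:IVR} hold and $\bSigma_{\bW\bX,n}$ inherits the CS property, one applies the definition of $\operatorname{CS}$ to get $\|\ub_S\|_1 \leq 2\lambda s/\operatorname{CS}_\kappa(s,1)$ and doubles via the cone inequality. Your extra remarks on reconciling the two probability levels are a harmless (and slightly more careful) elaboration of what the paper leaves implicit.
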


\begin{proof} Recall from Lemma \ref{arxivsupp:main:lemmabetaclose:IVR}, that on the event $E$ we have that (\ref{arxivsupp:main:betadiff:IVR}) and (\ref{arxivsupp:main:hessbetadiff:IVR}) hold, and furthermore $\frac{1}{n}\Wb^T \Wb = \bSigma_{\bW\bX,n}$ satisfies the CS condition. Thus on the event $E$, we have:
\begin{align*}
\frac{\|\hat \bbeta_{S} - \bbeta_S^*\|_1\operatorname{CS}_{\kappa}(s,1)}{s}& \leq \|\bSigma_{\bW\bX,n} (\bbeta^* - \hat \bbeta)\|_{\infty} \leq 2\lambda.
\end{align*}
To get (\ref{arxivsupp:main:betaL1:IVR}), note that by (\ref{arxivsupp:main:betadiff:IVR}) we have: $\|\hat \bbeta -  \bbeta^*\|_1 \leq 2 \|\hat \bbeta_{S} -  \bbeta^*_{S}\|_1 \leq \frac{4 AK \lambda s}{\operatorname{CS}_{\kappa}(s,1)},$
and we are done. 
\end{proof}

\begin{lemma}\label{conc:poly:subGauss:IVR} Let $\{X_i\}_{i = 1}^n$ be an i.i.d. collection of sub-Gaussian random variables satisfying $\|X_i\|_{\psi_2} \leq K$. Then we have:
$$
\PP\Big (|n^{-1} \sum_{i = 1}^n X_i^4 - \EE X_i^4| \geq t\Big ) \leq  \frac{ \tilde K_4^k [ \sqrt{k/n} + k^2/n]^k}{t^k},
$$
for any $k \in \NN$ and some fixed constant $\tilde K_4$ depending solely on $K$.
\end{lemma}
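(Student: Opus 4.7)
My plan is to reduce the claim to a Markov inequality applied to the $k$-th central moment of $S_n := n^{-1}\sum_{i=1}^n(X_i^4 - \EE X_i^4)$, after establishing the moment bound
\begin{align*}
(\EE|S_n|^k)^{1/k} \;\leq\; \tilde K_4\bigl(\sqrt{k/n} + k^2/n\bigr).
\end{align*}
Once this is in hand, $\PP(|S_n|\geq t)\leq \EE|S_n|^k/t^k$ gives exactly the stated bound with $\tilde K_4$ depending only on $K$.

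The first step is to identify the correct Orlicz class for $Y_i := X_i^4 - \EE X_i^4$. Because $\|X_i\|_{\psi_2}\leq K$, we have $(\EE|X_i|^p)^{1/p}\leq K\sqrt{p}$ for $p\geq 1$, hence $(\EE|X_i^4|^p)^{1/p}=(\EE|X_i|^{4p})^{1/p}\leq 16K^4 p^2$. With the $\psi_{1/2}$ norm defined by $\|Y\|_{\psi_{1/2}}:=\sup_{p\geq 1} p^{-2}(\EE|Y|^p)^{1/p}$, this yields $\|X_i^4\|_{\psi_{1/2}}\leq 16K^4$, and by the triangle inequality $\|Y_i\|_{\psi_{1/2}}\leq L := 32 K^4$. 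So $Y_i$ is sub-Weibull with parameter $\alpha=1/2$ and a constant depending only on $K$.

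The second step invokes the standard sub-Weibull ($\alpha=1/2$) concentration inequality for i.i.d.\ centered summands (see, e.g., Kuchibhotla--Chakraborty or G\"otze--Sambale--Sinulis):
\begin{align*}
\PP(|S_n|\geq s) \;\leq\; 2\exp\!\Bigl(-c\min\bigl(ns^2/L^2,\;\sqrt{ns/L}\bigr)\Bigr),\qquad s\geq 0.
\end{align*}
The third step converts this tail estimate into the $k$-th moment bound via $\EE|S_n|^k = k\int_0^\infty t^{k-1}\PP(|S_n|\geq t)\,dt$. Splitting by which term in the min is active, the Gaussian regime contributes $\lesssim L^k(k/n)^{k/2}$ after a standard Gaussian moment integral, while the sub-Weibull tail regime, under the substitution $u=c\sqrt{nt/L}$, reduces to $\int_0^\infty u^{2k-1}e^{-u}du = \Gamma(2k)\lesssim (2k)^{2k}$, contributing $\lesssim L^k(k^2/n)^k$. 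Adding the two terms and taking $k$-th roots yields the desired moment bound.

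The main obstacle is the sub-Weibull ($\alpha=1/2$) concentration inequality itself, which does not follow from the usual Bernstein/Hoeffding machinery and requires a truncation argument tailored to heavy-tailed Orlicz balls; I would simply cite it from the literature rather than reproduce its proof. The $k^2/n$ (rather than $k/n$) term in the final bound is exactly the fingerprint of the $\Gamma(2k)$ factor produced by the stretched-exponential tail in the sub-Weibull regime, and getting that exponent right (as opposed to $k^3/n$ that a na\"ive Rosenthal bound would give) is the reason a direct tail integration is preferable to a moment-method argument via Rosenthal/Lata\l a.
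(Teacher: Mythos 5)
Your proof is correct and lands on exactly the same pivot as the paper's: the $k$-th moment bound $(\EE|S_n|^k)^{1/k}\leq \tilde K_4(\sqrt{k/n}+k^2/n)$ followed by Markov's inequality. The difference is entirely in how that moment bound is obtained. The paper cites Theorem 1.4 of \cite{adamczak2015concentration}, a moment inequality for polynomials of sub-Gaussian variables, and asserts the bound ``is not hard to check''; you instead classify $X_i^4-\EE X_i^4$ as sub-Weibull with parameter $1/2$ (moments growing like $p^2$), invoke the generalized Bernstein inequality for $\psi_{1/2}$ summands, and integrate the tail to recover the moments. Your accounting is right: the Gaussian regime gives the $(k/n)^{k/2}$ contribution and the $\Gamma(2k)$ factor from the stretched-exponential regime is exactly what produces $(k^2/n)^k$. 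The two routes are essentially equivalent up to constants --- for $\psi_{1/2}$ variables the two-regime tail bound and the moment bound $\sqrt{k/n}+k^2/n$ are interchangeable --- so the choice is mostly which external result one prefers to cite; your version makes the shape of the bound transparent, at the cost of leaning on a sub-Weibull concentration inequality that is itself nontrivial (and postdates the reference the paper uses). One small caveat: $\|\cdot\|_{\psi_{1/2}}$ as you define it is only a quasi-norm, so ``triangle inequality'' is not literally available; however, the centering step $\|Y-\EE Y\|_{\psi_{1/2}}\leq 2\|Y\|_{\psi_{1/2}}$ follows directly from Minkowski in $L^p$ together with Jensen, so nothing is lost.
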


\begin{proof} We make usage of Theorem 1.4 of \cite{adamczak2015concentration}, which provides a convenient concentration bound for higher moments of sub-Gaussian random variables. Using this result it is not hard to check that: $
[\EE |n^{-1} \sum_{i = 1}^n X_i^4 - \EE X_i^4|^k]^{1/k} \leq \tilde K_4 [\sqrt{k/n} + k^2/n],$
where $\tilde K_4$ is a constant depending solely on $K$. Consequently, applying Markov's inequality we obtain the final conclusion.
\end{proof}

\section{Proofs for Graphical Models} 

\subsection{Proofs for Graphical Models with CLIME}\label{arxivsupp:main:edgetestproofsSEC}

\begin{proof}[Proof of Corollary \ref{IFCLIME}] Before we proceed with the proof note that we are guaranteed to have $\|\vb^*\|_1 \geq (\bSigma_{\bX}^{-1})_{11} \geq (\bSigma_{\bX,11})^{-1} \geq (2K_{\bX}^{2})^{-1} > 0$, and similarly $\|\bbeta^*\|_1 \geq  (2K_{\bX}^{2})^{-1} > 0$. Hence, $\max(s_{\vb}\|\vb^*\|_1, s\|\bbeta^*\|_1)\|\vb^*\|_1 \|\bbeta^*\|_1  \log d \log(nd)/n  = o(1)$ implies that 
$$\max(s_{\vb},s)\|\vb^*\|_1\|\bbeta^*\|_1\log d/\sqrt{n} = o(1).$$

We show this result by verifying the conditions of Section \ref{masterthm:sec}. To see Assumption (\ref{consistencyassumpweakcn}), we can use Lemma \ref{arxivsupp:main:vdiff} to argue that $\|\hat\bbeta - \bbeta^*\|_1 = O_p\left(\|\bbeta^*\|_1s\sqrt{\log d/n}\right)$, $\|\hat\vb - \vb^*\|_1 = O_p\left(\|\vb^*\|_1s_{\vb}\sqrt{\log d/n}\right)$ provided that $\lambda$ and $\lambda'$ are large enough. 

Next we check Assumption \ref{noiseassumpCI}. To see (\ref{betastartassumpCI}), fix a $|\theta - \theta^*| < \epsilon$, for some $\epsilon > 0$. By the triangle inequality:
$$
\|\bSigma_n\bbeta^*_{\theta} - \bSigma_{\bX}\bbeta_{\theta}^*\|_{\infty} \leq \|\bSigma_{n} - \bSigma_{\bX}\|_{\max}(\|\bbeta\|_1^* + \epsilon).
$$
The RHS is $O_p\bigg(\|\bbeta\|_1^*\sqrt{\log d/n}\bigg)$, by Lemma \ref{arxivsupp:main:samplecovpopcov}. The same logic shows that $\bigg|\vb^{*T}\bSigma_n\bbeta^*_{\theta} - \vb^{*T}\bSigma_{\bX}\bbeta_{\theta}^* \bigg| = O_p\bigg((\|\bbeta^*\|_1 + \epsilon)\|\vb^*\|_{1}\sqrt{\log d/n}\bigg)$, which implies (\ref{betastartassumpCIvstar}). Since the Hessian $\Tb$ in (\ref{lambdaprimeasumpCI}) is free of $\bbeta$ we are allowed to set $r_3(n) = \lambda' \asymp \|\vb^*\|_1\sqrt{\log d/n} = o(1)$ (by Lemma \ref{arxivsupp:main:vdiff}). Finally the two expectations in Assumption \ref{noiseassumpCI}, are bounded as we see below:
\begin{align*}
\|\bSigma_{\bX}\bbeta^*_{\theta} - \be_m^T\|_{\infty} = \|\bSigma_{\bX}(\bbeta^*_{\theta} - \bbeta^*)\|_{\infty} \leq \|\bSigma_{\bX,*1} \|_{\infty}\epsilon \leq 2 K_X^2 \epsilon, ~~~~~~~~ \|\vb^{*T}\bSigma_{\bX, -1}\|_{\infty}  = 0.
\end{align*}
By adding up the following two identities:
\begin{align*}
\sqrt{n} O_p\left(\|\vb^*\|_1 \sqrt{\log d/n}\right)O_p\left(\|\bbeta^*\|_1s\sqrt{\log d/n}\right) & = o_p(1),\\
\sqrt{n} O_p\left( \|\vb^*\|_1s_{\vb}\sqrt{\log d/n}\right) O_p\left(\|\bbeta^*\|_1\sqrt{\log d/n}\right) & = o_p(1),
\end{align*}
we get that (\ref{assumpone}) is also valid in this case after a usage of (\ref{CLIMEratecond}). 

To verify the consistency of $\tilde \theta$ we check the assumptions in Theorem \ref{consistency:sol}. Clearly the map $\vb^{*T}\bSigma_{\bX} (\bbeta^*_{\theta} - \bbeta^*) = (\theta - \theta^*)$ has a unique $0$ when $\theta = \theta^*$. Moreover, the map $\theta \mapsto \hat \vb^T(\bSigma_{n} \hat\bbeta_{\theta} - \be_m^T)$ is continuous as it is linear. In addition, it has a unique zero except in cases when $\hat \vb^T \bSigma_{n,*1} = 0$. However note that $|\hat \vb^T \bSigma_{n,*1} - 1| \leq \lambda'$ by (\ref{lambda:prime:clime}), and hence for small enough values of $\lambda'$ there will exist a unique zero.

Assumption \ref{CLTcond} is verified in Lemma \ref{arxivsupp:main:normalityCLIME}. Observe that (\ref{stabtwo}) is trivial as its LHS $\equiv 0$ in this case. Finally, the fact that $\hat \Delta$ is consistent for $\Delta$ is verified in Lemma \ref{arxivsupp:main:pluginCLIME}.
\end{proof}

Next, we proceed to formulate a uniform weak convergence result. To this end, for fixed $M > \delta > 0$, define the following parameter space of covariance matrices:
$$
\mathcal{S}(L, s) = \{\bSigma  : \bSigma = \bSigma^T, 0 < \delta \leq \lambda_{\min}(\bSigma),  \|\bSigma\|_{\max} \leq M, \|\bSigma^{-1}\|_1 \leq L, \max_{j=1,...,d} \|\bSigma^{-1}_{*j}\|_0 \leq s\}.
$$
We have the following result in terms of uniform convergence:

\begin{corollary}\label{arxivsupp:main:unifconvCLIME} Let (\ref{subGausserr:CLIME}) holds, and $\Cov(\bX) = \bSigma_{\bX} \in \mathcal{S}(L, s)$.  Let $\bOmega = (\bSigma_{\bX})^{-1}$ and denote $\bbeta = \bOmega_{*m}$, and $\vb = \bOmega_{*1}$. Assume there exist two constants $V_{\min}$ and $V_{\max}$ such that:
\begin{align}
\Var({\vb}^{T}\bX^{\otimes 2}\bbeta) \geq V_{\min} > 0, ~~~ \EE(\vb^T \bX^{\otimes 2} \bbeta)^4 \leq V_{\max} < \infty.  \label{arxivsupp:main:momentass} 
\end{align}
Then under the following conditions:
\begin{align}
s L^3 \log(d) \log(nd)/n = o(1), ~~~ s^3/\sqrt{n} = o(1)\label{arxivsupp:main:ass1unifconvCLIME},
\end{align}
we have $
\lim_{n \rightarrow \infty} \sup_{\bSigma_{\bX} \in \mathcal{S}(L,s)} \sup_{t\in\RR} |\PP_{\bbeta}(\hat U_n \leq t ) - \Phi(t)| = 0.$
\end{corollary}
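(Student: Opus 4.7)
The plan is to verify the hypotheses of Theorem \ref{arxivsupp:main:unifweakconv} (combined with Proposition \ref{arxivsupp:unifweakconv2} for the variance consistency) uniformly over $\mathcal{S}(L,s)$, essentially uniformizing the pointwise proof of Corollary \ref{IFCLIME}. The parameter space makes this tractable because every $\bSigma_{\bX} \in \mathcal{S}(L,s)$ delivers the bounds $\|\vb^*\|_1 \vee \|\bbeta^*\|_1 \leq L$, $\|\vb^*\|_0 \vee \|\bbeta^*\|_0 \leq s$, $\|\bSigma_{\bX}\|_{\max} \leq M$, and $\lambda_{\min}(\bSigma_{\bX}) \geq \delta$ simultaneously, while the tail inequality of Lemma \ref{arxivsupp:main:samplecovpopcov} for $\|\bSigma_n - \bSigma_{\bX}\|_{\max}$ depends on the distribution of $\bX$ only through the fixed sub-Gaussian constant $K_{\bX}$ from (\ref{subGausserr:CLIME}).

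First I would check uniform $L_1$ consistency. The restricted-eigenvalue transfer in Lemma \ref{arxivsupp:main:samplecovRE} and the CLIME-style argument of Lemma \ref{arxivsupp:main:vdiff} have constants depending only on $K_{\bX}$ and $\delta$, so with $\lambda \asymp \lambda' \asymp L\sqrt{\log d/n}$ they yield $r_1(n) = r_2(n) = O(sL\sqrt{\log d/n})$ for Assumption \ref{arxivsupp:main:Uconsistencyassumpweakcn}. Next, because the CLIME estimating equation is linear in $\bbeta$ and its Jacobian $\Tb = \bSigma_n$ is parameter-free, all three statements (\ref{arxivsupp:main:UbetastartassumpCI})--(\ref{arxivsupp:main:UlambdaprimeasumpCI}) reduce to bounding $\|\bSigma_n - \bSigma_{\bX}\|_{\max}$ multiplied by $\|\bbeta^*\|_1$ or $\|\vb^*\|_1$, both bounded by $L$. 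This gives uniform rates $r_3(n), r_4(n), r_5(n) = O(L\sqrt{\log d/n})$, while $\sup_\theta \|E_{\tb}(\bbeta_\theta^*)\|_\infty \leq LM$ and $\sup_\theta \|\vb^{*T}[E_{\Tb}(\bbeta_\theta^*)]_{-1}\|_\infty = 0$ hold trivially (the latter from $\vb^{*T}\bSigma_{\bX} = \eb_1$).

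For the uniform CLT (Assumption \ref{arxivsupp:main:UCLTassump}) I would invoke the non-i.i.d.\ Berry--Esseen bound on $n^{-1/2}\sum_i \vb^{*T}(\bX_i^{\otimes 2} - \bSigma_{\bX})\bbeta^*$, using the $\psi_2$--sparsity argument behind Lemma \ref{arxivsupp:main:Xepsbound} to obtain an $O(s^{3/2}/\sqrt{n})$ rate; the uniform lower bound $\Delta \geq V_{\min}$ in (\ref{arxivsupp:main:momentass}) ensures non-degenerate standardization, and $s^3/\sqrt{n} = o(1)$ from (\ref{arxivsupp:main:ass1unifconvCLIME}) is more than sufficient. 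For uniform variance consistency I would apply Proposition \ref{arxivsupp:unifweakconv2} to $\hat\Delta = n^{-1}\sum_i(\hat\vb^T(\bX_i^{\otimes 2} - \bSigma_n)\hat\bbeta)^2$, where the fourth-moment bound $\EE(\vb^T \bX^{\otimes 2}\bbeta)^4 \leq V_{\max}$ gives the concentration of the leading term and every $\|\vb^*\|_1\|\bbeta^*\|_1$ from the pointwise proof is replaced by $L^2$. Finally, the scaling $n^{1/2}(r_1 r_5 + r_2 r_3) = O(sL^2\log d/\sqrt{n}) = o(1)$ follows from the first half of (\ref{arxivsupp:main:ass1unifconvCLIME}).

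The main obstacle is the uniform variance consistency: the asymptotic variance is a fourth-order functional of $\bX$ with coefficients from the tensor $\vb^*\otimes\bbeta^*$ whose $\ell_1$-norm grows like $L^2$, so the propagation of the errors in $\hat\vb$ and $\hat\bbeta$ is delicate and one must carefully track the combined dependence on $L$ and $s$ to see that $sL^3\log(d)\log(nd)/n = o(1)$ is exactly what absorbs every residual term. The remaining steps are essentially mechanical uniformizations of the pointwise CLIME proof, enabled by the fact that the estimating equation is linear, its Jacobian is free of $\bbeta$, and all the relevant tail bounds depend on the distribution only through $K_{\bX}$ and $\lambda_{\min}(\bSigma_{\bX})$.
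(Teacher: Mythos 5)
Your proposal is correct and follows essentially the same route as the paper: the paper's own proof is a one-paragraph remark that every bound in the pointwise proof of Corollary \ref{IFCLIME} holds uniformly over $\mathcal{S}(L,s)$ because $\vb$ and $\bbeta$ are columns of $\bOmega$ with $\|\vb\|_0,\|\bbeta\|_0\leq s$, $\|\vb\|_1,\|\bbeta\|_1\leq L$ and $M^{-1}\leq\|\vb\|_2,\|\bbeta\|_2\leq\delta^{-1}$, with the uniform CLT supplied by the Berry--Esseen bound exactly as you describe. One small correction: the Berry--Esseen rate from Lemma \ref{arxivsupp:main:XXTbound} is $(s_{\vb}s)^{3/2}/\sqrt{n}=s^3/\sqrt{n}$ rather than $s^{3/2}/\sqrt{n}$, so the second condition in (\ref{arxivsupp:main:ass1unifconvCLIME}) is exactly what is required, not merely ``more than sufficient.''
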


The conditions in this Corollary are essentially the same as those in Corollary \ref{IFCLIME}. Condition $\EE(\vb^T \bX^{\otimes 2} \bbeta)^4 \leq V_{\max}$ ensures $\EE(\vb^T \bX^{\otimes 2} \bbeta)^2 < \infty$ and $\Var((\vb^{T}\bX^{\otimes 2}\bbeta^{})^2) = o({n})$. The only other difference is that the second condition in (\ref{arxivsupp:main:ass1unifconvCLIME}) is stronger than the counterpart in Corollary \ref{IFCLIME}. We need this condition to apply the Berry€-Esseen theorem to control the normal approximation error uniformly.

\begin{lemma}\label{arxivsupp:main:normalityCLIME} Under the assumptions of Corollary \ref{IFCLIME} we have that: $
\Delta^{-1/2} n^{1/2} S(\bbeta^*) \rightsquigarrow N(0,1).$
\end{lemma}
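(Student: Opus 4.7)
The plan is to recognize $n^{1/2} S(\bbeta^*)$ as a normalized sum of i.i.d.\ mean-zero random variables (a triangular array, since $\vb^*, \bbeta^*$ may depend on $n$) and then verify Lyapunov's condition. First I would use the identity $\bSigma_{\bX}\bbeta^* = \bSigma_{\bX}\bOmega^*_{*m} = \eb_m^T$ to rewrite
\begin{equation*}
S(\bbeta^*) \;=\; \vb^{*T}(\bSigma_n\bbeta^* - \eb_m^T) \;=\; \vb^{*T}(\bSigma_n - \bSigma_{\bX})\bbeta^* \;=\; \frac{1}{n}\sum_{i=1}^n Z_i,
\end{equation*}
where $Z_i := \vb^{*T}\bX_i^{\otimes 2}\bbeta^* - \vb^{*T}\bSigma_{\bX}\bbeta^*$ are i.i.d., $\EE Z_i = 0$, and $\Var(Z_i) = \Delta = \Var(\vb^{*T}\bX^{\otimes 2}\bbeta^*)$.

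Next I would apply Lyapunov's CLT with $\delta = 2$, so the sufficient condition reduces to
\begin{equation*}
\frac{\EE Z_1^4}{n\,\Delta^2} \;\longrightarrow\; 0.
\end{equation*}
Using $\EE Z_1^4 = \Var(Z_1^2) + (\EE Z_1^2)^2 = \Var(Z_1^2) + \Delta^2$, the assumption $\Var((\vb^{*T}\bX^{\otimes 2}\bbeta^*)^2) = o(n)$ (which controls $\Var(Z_1^2)$ up to constant factors coming from the centering) yields $\EE Z_1^4 = o(n) + \Delta^2$, so it remains to argue that $\Delta$ is bounded away from zero. This is where Assumption \ref{varXXTassump} enters: it gives $\Delta \geq \alpha_{\min}\|\vb^*\|_2^2\|\bbeta^*\|_2^2$, and combined with the fact that $\vb^* = \bOmega^*_{*1}$, $\bbeta^* = \bOmega^*_{*m}$ contain the diagonal entries $\Omega^*_{11}, \Omega^*_{mm} \geq \lambda_{\max}(\bSigma_{\bX})^{-1}$ (bounded below because $\|\bSigma_{\bX}\|_2 \leq \|\bSigma_{\bX}\|_{\max}\cdot d$, or more directly because the sub-Gaussian assumption (\ref{subGausserr:CLIME}) bounds the diagonal of $\bSigma_{\bX}$), we obtain $\Delta \geq c > 0$ for some absolute constant. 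Therefore $\EE Z_1^4/(n\Delta^2) \leq o(n)/(nc^2) + 1/n = o(1)$, Lyapunov's condition holds, and the classical triangular-array CLT gives $\Delta^{-1/2} n^{1/2} S(\bbeta^*) \rightsquigarrow N(0,1)$.

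The main obstacle I anticipate is the careful bookkeeping of the lower bound on $\Delta$: one must ensure that the simplifying convention $\|\vb^*\|_1, \|\bbeta^*\|_1 = O(1)$ does not force $\|\vb^*\|_2, \|\bbeta^*\|_2$ to vanish. This is handled by noting that $\|\vb^*\|_2 \geq |\Omega^*_{11}|$ and the $(1,1)$ (respectively $(m,m)$) entry of $\bOmega^* = \bSigma_{\bX}^{-1}$ is bounded below by the reciprocal of the largest eigenvalue of $\bSigma_{\bX}$; the sub-Gaussian assumption and the maintained $\lambda_{\min}(\bSigma_{\bX}) \geq \delta > 0$ together ensure $\bSigma_{\bX}$ has bounded operator norm on the relevant scales (or one can invoke $\Omega^*_{jj} \geq 1/\bSigma_{\bX,jj} \geq 1/(2 K_{\bX}^2)$ via the Schur complement representation). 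All remaining estimates are routine.
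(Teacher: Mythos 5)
Your argument is correct, but it reaches Lyapunov's condition by a genuinely different route than the paper. You fix the exponent at four and control $\EE Z_1^4 = \Var(Z_1^2) + \Delta^2$ directly from the moment hypotheses of Corollary \ref{IFCLIME}, namely $\Var((\vb^{*T}\bX^{\otimes 2}\bbeta^*)^2) = o(n)$ and $\EE(\vb^{*T}\bX^{\otimes 2}\bbeta^*)^2 = O(1)$, combined with the lower bound $\Delta \geq \alpha_{\min}\|\vb^*\|_2^2\|\bbeta^*\|_2^2 \geq \alpha_{\min}(2K_{\bX}^2)^{-4}$ from Assumption \ref{varXXTassump} and $\Omega^*_{jj} \geq 1/\bSigma_{\bX,jj}$. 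The paper instead verifies Lyapunov for a general exponent $k>2$: after the same reduction $S(\bbeta^*) = \vb^{*T}(\bSigma_n - \bSigma_{\bX})\bbeta^*$ and the same variance lower bound, it applies Cauchy--Schwarz to get $\EE|Z_1|^k \leq \|\vb^*\|_2^k\|\bbeta^*\|_2^k\,\EE\|(\bX^{\otimes 2}-\bSigma_{\bX})_{S_{\vb},S}\|_{F}^k$, controls the restricted Frobenius norm via the sub-exponential moment bound of Lemma \ref{arxivsupp:main:XXTbound} to obtain $(s_{\vb}s)^{k/2}(8kK_{\bX}^2)^k$, and then invokes the condition $\exists\, k>2: (s_{\vb}s)^k/n^{k-1} = o(1)$. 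Your version never touches the sparsity condition at this step and is arguably leaner for pointwise normality; the paper's version does not need the fourth-moment hypotheses here (they are reserved for the variance-consistency argument in Lemma \ref{arxivsupp:main:pluginCLIME}) and, because it bounds an explicit $k$-th absolute moment, yields the Berry--Esseen rate $n^{-1/2}(s_{\vb}s)^{3/2}$ recorded in the remark following the lemma. One small correction to your write-up: the parenthetical bound $\|\bSigma_{\bX}\|_2 \leq \|\bSigma_{\bX}\|_{\max}\cdot d$ would make the lower bound on $\Omega^*_{11}$ dimension-dependent and should be dropped; the Schur-complement inequality $\Omega^*_{jj} \geq 1/\bSigma_{\bX,jj} \geq (2K_{\bX}^2)^{-1}$, which you also state, is the right argument and is exactly the one the paper uses at the start of the proof of Corollary \ref{IFCLIME}.
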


\begin{proof}[Proof of Lemma \ref{arxivsupp:main:normalityCLIME}] Similarly to Lemma \ref{arxivsupp:main:CLTDantzig} we will verify Lyapunov's condition for the CLT. It suffices to bound the quantity for some $k > 2$: 
$$\frac{n^{-k/2}}{\|\vb^*\|^k_2\|\bbeta^*\|^k_2 \alpha_{\min}^{k/2}\\} \sum_{i=1}^n \EE \left|\vb^{*T}\bX_i^{\otimes 2}\bbeta^* - {\vb}^{*T}\bSigma_{\bX}\bbeta^* \right|^k,$$
where we used assumption Assumption \ref{varXXTassump}. By Cauchy-Schwartz we can bound the above expression by following (up to a constant factor): $
n^{-k/2} \sum_{i=1}^n \EE \left\|(\bX_i^{\otimes 2} - \bSigma_{\bX})_{S_{\vb},S}\right\|_{F}^k,$
where by subscripting the matrix we mean setting all elements not in the supports of $\vb^*$ or $\bbeta^*$ ($S_{\vb}$, and $S$ correspondigly) to 0, and $\|\cdot \|_{F}$ is the Frobenius norm of the matrix. Finally using Lemma \ref{arxivsupp:main:XXTbound} we conclude that we can control the expression above by $n^{-k/2 + 1}(s_{\vb}s)^{k/2} (8k K_{\bX})^k$, and hence the conclusion follows.
\end{proof}

\begin{remark} Using the Berry-Esseen theorem for non-identical random variables in combination with the bound we derived above, we can further show:
$$
\sup_t \left|\PP^* \left(\Delta^{-1/2}n^{1/2} S(\bbeta^*) \leq t\right) - \Phi(t)\right| \leq C_{BE} n^{-1/2}(24K_{\bX})^3 (s_{\vb} s)^{3/2}  = o(1),
$$
where $C_{BE}$ is an absolute constant.
\end{remark}

\begin{lemma}\label{arxivsupp:main:pluginCLIME} Under the assumptions from Corollary \ref{IFCLIME}, we have that the plugin estimator  $\hat \Delta \stackrel{P}{\rightarrow}\Delta$.
\end{lemma}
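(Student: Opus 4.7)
Write $\hat W_i := \hat\vb^T \bX_i^{\otimes 2} \hat\bbeta$ and $W_i := \vb^{*T} \bX_i^{\otimes 2} \bbeta^*$. Since $n^{-1}\sum_i \hat W_i = \hat\vb^T \bSigma_n \hat\bbeta$, the estimator is a sample variance:
\begin{align*}
\hat\Delta = n^{-1}\sum_{i=1}^n \hat W_i^2 - (\hat\vb^T \bSigma_n \hat\bbeta)^2,
\end{align*}
while $\Delta = \EE W_1^2 - (\vb^{*T}\bSigma_{\bX}\bbeta^*)^2$. Hence the plan is to treat the two terms separately.

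For the mean term, I would decompose
\begin{align*}
\hat\vb^T \bSigma_n \hat\bbeta - \vb^{*T}\bSigma_{\bX}\bbeta^* = (\hat\vb-\vb^*)^T \bSigma_n \hat\bbeta + \vb^{*T}(\bSigma_n-\bSigma_{\bX})\hat\bbeta + \vb^{*T}\bSigma_{\bX}(\hat\bbeta-\bbeta^*),
\end{align*}
and bound each piece in the $L_\infty/L_1$ sense. The first piece is $\leq \|\hat\vb-\vb^*\|_1 \|\bSigma_n\hat\bbeta\|_\infty = O_p(s_{\vb}\sqrt{\log d/n})$ using the CLIME feasibility constraint $\|\bSigma_n\hat\bbeta-\eb_m^T\|_\infty\leq\lambda$; the second is $\leq \|\vb^*\|_1\|\bSigma_n-\bSigma_{\bX}\|_{\max}\|\hat\bbeta\|_1 = O_p(\sqrt{\log d/n})$ by Lemma \ref{arxivsupp:main:samplecovpopcov}; the third uses $\vb^{*T}\bSigma_{\bX}=\eb_1$ and gives $O_p(s\sqrt{\log d/n})$. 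Since $\vb^{*T}\bSigma_{\bX}\bbeta^*=\Omega^*_{1m}$ is bounded, squaring yields $(\hat\vb^T\bSigma_n\hat\bbeta)^2\to(\vb^{*T}\bSigma_{\bX}\bbeta^*)^2$ in probability.

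The remaining and main obstacle is showing $n^{-1}\sum_i \hat W_i^2 \to \EE W_1^2$ in probability. By a weak LLN together with the assumed moment bound $\Var((\vb^{*T}\bX^{\otimes 2}\bbeta^*)^2)=o(n)$ from Corollary \ref{IFCLIME}, one has $n^{-1}\sum_i W_i^2 \to \EE W_1^2$; it thus suffices to control $n^{-1}\sum_i(\hat W_i^2 - W_i^2)$. Using the identity $\hat W_i^2 - W_i^2 = (\hat W_i-W_i)^2 + 2(\hat W_i-W_i)W_i$ and Cauchy--Schwarz, it is enough to show $n^{-1}\sum_i(\hat W_i-W_i)^2 = o_p(1)$, together with $n^{-1}\sum_i W_i^2 = O_p(1)$. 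Writing
\begin{align*}
\hat W_i - W_i = (\hat\vb-\vb^*)^T\bX_i^{\otimes 2}\bbeta^* + \vb^{*T}\bX_i^{\otimes 2}(\hat\bbeta-\bbeta^*) + (\hat\vb-\vb^*)^T\bX_i^{\otimes 2}(\hat\bbeta-\bbeta^*),
\end{align*}
and expanding the square, I would bound each of the three averaged quadratic forms separately. For example, $n^{-1}\sum_i[\vb^{*T}\bX_i^{\otimes 2}(\hat\bbeta-\bbeta^*)]^2 \leq \|\hat\bbeta-\bbeta^*\|_1^2 \|A\|_{\max}$ where $A_{jk}=n^{-1}\sum_i X_{ij}X_{ik}(\vb^{*T}\bX_i)^2$; using sub-Gaussianity (Assumption (\ref{subGausserr:CLIME})) and $\|\vb^*\|_1=O(1)$, one has $\EE|A_{jk}|=O(1)$ and a standard concentration bound gives $\|A\|_{\max}=O_p(1)$ (under the assumed rates). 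The analogous bound handles the $(\hat\vb-\vb^*)$ term. The cross term involving both $\hat\vb-\vb^*$ and $\hat\bbeta-\bbeta^*$ is handled by $\|\hat\vb-\vb^*\|_1^2\|\hat\bbeta-\bbeta^*\|_1^2 \max_i\|\bX_i^{\otimes 2}\|_{\max}^2$, where $\max_i\|\bX_i^{\otimes 2}\|_{\max}=O_p(\log(nd))$ by Lemma \ref{arxivsupp:main:Mbound}; the rate condition $\max(s^2_{\vb},s^2)\log d\log(nd)/n=o(1)$ makes this term negligible. Combining, $n^{-1}\sum_i(\hat W_i-W_i)^2 = O_p(\max(s_{\vb}^2,s^2)\log d/n)=o_p(1)$, Slutsky's theorem then yields $\hat\Delta\stackrel{P}{\to}\Delta$. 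The principal technical burden is the concentration bound for $\|A\|_{\max}$ (a degree-four polynomial of sub-Gaussians), which one controls via a Bernstein-type argument for sub-exponential entries, analogous to Lemma \ref{arxivsupp:main:samplecovpopcov}.
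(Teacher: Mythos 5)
Your overall architecture matches the paper's: write $\hat\Delta$ as a sample variance, reduce the mean term to the argument of Proposition \ref{arxivsupp:main:pluginDantzig}, control $n^{-1}\sum_i(\hat W_i - W_i)^2$ in the empirical $L_2$ sense, and use Chebyshev with $\Var((\vb^{*T}\bX^{\otimes 2}\bbeta^*)^2)=o(n)$ for the law of large numbers on $W_i^2$. The one place you diverge is the key technical step, and your version is the harder road. The paper does \emph{not} split $\hat W_i - W_i$ into your three pieces; it keeps $\hat\vb$ intact in the term $\hat\vb^T\bX_i^{\otimes 2}(\hat\bbeta-\bbeta^*)$, writes the averaged square as $(\hat\bbeta-\bbeta^*)^T\Mb(\hat\bbeta-\bbeta^*)$ with $\Mb=n^{-1}\sum_i\bX_i^{\otimes 2}\hat\vb\hat\vb^T\bX_i^{\otimes 2}$, and bounds $\|\Mb\|_{\max}\le \max_i\|\bX_i^{\otimes 2}\|_{\max}\cdot\hat\vb^T\bSigma_n\hat\vb$, where $\hat\vb^T\bSigma_n\hat\vb\le\|\hat\vb\|_1(1+\lambda')$ holds \emph{deterministically} by the feasibility constraint in (\ref{lambda:prime:clime}). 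This costs a $\log(nd)$ factor (absorbed by the assumption $\max(s_\vb^2,s^2)\log d\log(nd)/n=o(1)$) but requires only Lemma \ref{arxivsupp:main:Mbound} and pairwise sub-exponential concentration. Your route instead needs $\max_{jk}|n^{-1}\sum_i X_{ij}X_{ik}(\vb^{*T}\bX_i)^2|=O_p(1)$; be aware that these summands are degree-four polynomials of sub-Gaussians, hence only $\psi_{1/2}$, so the ``Bernstein-type argument for sub-exponential entries'' you invoke by analogy with Lemma \ref{arxivsupp:main:samplecovpopcov} does not apply as stated. It can be repaired with an Adamczak-type moment bound plus Markov and a union bound over $d^2$ entries (exactly what the paper does for the IVR variance in Lemma \ref{conc:poly:subGauss:IVR}), but that machinery must be supplied; alternatively, adopt the paper's feasibility-constraint trick and the issue disappears. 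Your stated rate $O_p(\max(s_\vb^2,s^2)\log d/n)$ also silently drops the $\log(nd)$ factors from the cross term, though the $o_p(1)$ conclusion survives under the stated assumptions.
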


\begin{proof}[Proof of Lemma \ref{arxivsupp:main:pluginCLIME}] 
Note that $
\hat \Delta = \frac{1}{n}\sum_{i = 1}^n (\hat \vb^T \bX_i^{\otimes 2} \hat \bbeta)^2 - (\hat \vb^T \bSigma_n \hat \bbeta)^2.$
Similarly to the analysis of the first term in Proposition \ref{arxivsupp:main:pluginDantzig} one can show that $\hat \vb^T \bSigma_n \hat \bbeta$ is consistent for $\vb^{*T}\bSigma_{\bX} \bbeta^*$ under our assumptions. Hence it suffices to show that $|\frac{1}{n}\sum_{i = 1}^n (\hat \vb^T \bX_i^{\otimes 2} \hat \bbeta)^2 - \EE(\vb^{*T} \bX^{\otimes 2} \bbeta^*)^2| = o_p(1)$.
We start by arguing that $
 \underbrace{n^{-1} \bigg[\sum_{i = 1}^n (\hat \vb^T \bX_i^{\otimes 2}\hat \bbeta  -  \vb^{*T} \bX_i^{\otimes 2} \bbeta^*)^2\bigg]}_{I},$
is asymptotically negligible. 
$$
I^{1/2} \leq \underbrace{  \bigg[n^{-1}\sum_{i = 1}^n (\hat \vb^T \bX_i^{\otimes 2}(\hat \bbeta  -  \bbeta^*))^2\bigg]^{1/2}}_{I_{1}} + \underbrace{\bigg[n^{-1}\sum_{i = 1}^n ((\hat \vb - \vb^*)^T \bX_i^{\otimes 2} \bbeta^*)^2\bigg]^{1/2}}_{I_{2}}.
$$
We first handle $
I_1^2 =   (\hat \bbeta - \bbeta^*)^T \underbrace{n^{-1}\sum_{i = 1}^n \bX_i^{\otimes 2} \hat \vb  \hat \vb^T  \bX_i^{\otimes 2}}_{\Mb}  (\hat \bbeta - \bbeta^*).$
Using Lemma \ref{arxivsupp:main:Mbound}, we can bound $\Mb$ in the following way:
\begin{align}
\|\Mb\|_{\max} \leq \max \|\bX_i^{\otimes 2}\|_{\max} n^{-1} \sum_{i = 1}^n \hat \vb^T \bX_i^{\otimes 2} \hat \vb \leq O_p(\log(nd))  \|\hat \vb\|_1 \|\hat \bSigma \hat \vb\|_{\infty}. \label{arxivsupp:main:MboundpropCLIME}
\end{align}
By the definition of $\hat \vb$ we have:  $ \|\hat \bSigma \hat \vb\|_{\infty} \leq (1 + \lambda')$. Hence:
\begin{align*}
\|\Mb\|_{\max} & \leq O_p(\log(nd)) (\|\vb^*\|_1 + \|\hat \vb - \vb^*\|_1) (1 + \lambda') = O_p(\log(nd))\|\vb^*\|_1,
\end{align*}
where we used that $\lambda' \asymp \|\vb^*\|_1 \sqrt{\log d/n}$ and $\|\hat \vb - \vb^*\|_1 = O_p(\|\vb^*\|_1 s_{\vb}\sqrt{\log d /n})$ which are quantities going to $0$ under our assumptions. Thus:
$$
I^2_{1} \leq  \|\hat \bbeta - \bbeta^* \|_1^2 \|\vb^*\|_1 O_p(\log(nd)) =  O_p\left(\|\vb^*\|_1\|\bbeta^*\|^2_1s^2\log d/n \log(nd) \right) = o_p(1),
$$
By a similar argument we can show that $I_{2}$ is of similar order. Putting everything together we conclude: 
\begin{align}\label{arxivsupp:square:rate}
I = O_p\left(\max(s_{\vb}^2\|\vb^*\|_1, s^2 \|\bbeta^*\|_1) \|\vb^*\|_1 \|\bbeta^*\|_1\log d \log(nd)/n \right) = o_p(1).
\end{align}
Next, we argue that $n^{-1}\sum_{i = 1}^n (\vb^{*T}\bX_i^{\otimes 2}\bbeta^*)^2 - \EE(\vb^{*T}\bX^{\otimes 2}\bbeta^*)^2$ is small. Recall that we are assuming $\Var((\vb^{*T} \bX^{\otimes 2} \bbeta^*)^2) = o(n)$. A usage of Chebyshev's inequality shows that $n^{-1}\sum_{i = 1}^n (\vb^{*T}\bX_i^{\otimes 2}\bbeta^*)^2- \EE(\vb^{*T}\bX^{\otimes 2}\bbeta^*)^2 = o_p(1)$. Finally note that by the triangle inequality the following two inequalities hold:
\begin{align*}
\bigg[n^{-1} \sum_{i = 1}^n (\hat \vb^T \bX_i^{\otimes 2}\hat \bbeta)^2 \bigg]^{1/2}& \leq \bigg[n^{-1} \sum_{i = 1}^n(\vb^{*T} \bX_i^{\otimes 2} \bbeta^*)^2\bigg]^{1/2} + I^{1/2},\\
\bigg[n^{-1} \sum_{i = 1}^n(\vb^{*T} \bX_i^{\otimes 2} \bbeta^* )^2\bigg]^{1/2}& \leq \bigg[n^{-1} \sum_{i = 1}^n (\hat \vb^T \bX_i^{\otimes 2}\hat \bbeta)^2 \bigg]^{1/2}+ I^{1/2}.
\end{align*}
Observe that $n^{-1}\sum_{i = 1}^n (\vb^{*T} \bX_i^{\otimes 2} \bbeta^*)^2 = \EE( \vb^{*T} \bX^{\otimes 2} \bbeta^* )^2 + o_p(1) = O_p(1)$. This completes the proof.
\end{proof}

\begin{proof}[Proof of Corollary \ref{arxivsupp:main:unifconvCLIME}] To prove this corollary note that all bounds we showed in the proof of Corollary \ref{IFCLIME} hold uniformly in the parameter set $\mathcal{S}(L, s)$. Note that as both $\vb$ and $\bbeta$ are columns of $\bOmega$ we have that $\|\vb\|_0, \|\bbeta\|_0 \leq s, \|\vb\|_1, \|\bbeta\|_1 \leq L$ and $M^{-1} \leq \|\vb\|_2, \|\bbeta\|_2 \leq \delta^{-1}$. These conditions in conjunction with the assumptions of the present result, can be seen to imply the conditions from Section \ref{arxivsupp:main:UWCUNsec} and this completes the proof.
\end{proof}
\begin{lemma}  \label{arxivsupp:main:XXTbound} Let $R_{\vb}, R \subset \{1,\ldots,d\}$ with $|R_{\vb}| = r_{\vb}, |R| = r$. Then we have the following $
\EE \left\|(\bX^{\otimes 2} - \bSigma)_{R_{\vb},R}\right\|_{F}^k \leq (r_{\vb}r)^{k/2}(8k K_{\bX}^2)^k.$
\end{lemma}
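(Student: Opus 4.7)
The plan is to reduce the statement to a direct application of Lemma \ref{arxivsupp:main:simple:proof:exp:psi1} after recognizing that the Frobenius norm is the $L_2$ norm of the vectorized matrix, and each entry is a centered sub-exponential random variable with a uniform bound on its $\psi_1$ norm.

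First, I would flatten the $r_{\vb}\times r$ submatrix $(\bX^{\otimes 2}-\bSigma)_{R_{\vb},R}$ into a vector $\bY$ of dimension $r_{\vb}r$ whose entries are $Y_{ij} = X_iX_j - \Sigma_{ij}$ for $(i,j)\in R_{\vb}\times R$, so that $\|(\bX^{\otimes 2}-\bSigma)_{R_{\vb},R}\|_F = \|\bY\|_2$. Next I would bound the $\psi_1$ norms of the entries: by Cauchy-Schwarz (reproducing the argument used for (\ref{arxivsupp:main:subexpo}) in the proof of Lemma \ref{arxivsupp:main:samplecovpopcov}), $\|X_iX_j\|_{\psi_1}\leq 2\|X_i\|_{\psi_2}\|X_j\|_{\psi_2}\leq 2K_{\bX}^2$, and then centering (via the triangle inequality for $\psi_1$, noting that a constant has $\psi_1$ norm bounded by itself) gives $\|Y_{ij}\|_{\psi_1}\leq 4K_{\bX}^2$.

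With these two observations in hand, Lemma \ref{arxivsupp:main:simple:proof:exp:psi1} applied to $\bY$ with parameters $p=k$, $q=2$, $\ell=1$, $U=4K_{\bX}^2$, and ambient dimension $r_{\vb}r$ yields
\begin{align*}
\EE\|\bY\|_2^{k} \;\leq\; (pq)^{p/\ell}\,U^{p}\,(r_{\vb}r)^{p/q}
\;=\;(2k)^{k}\,(4K_{\bX}^2)^{k}\,(r_{\vb}r)^{k/2}
\;=\;(r_{\vb}r)^{k/2}(8kK_{\bX}^{2})^{k},
\end{align*}
which is exactly the desired bound.

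I do not expect any real obstacle here: the statement is essentially a packaging lemma combining the sub-exponential bound on product entries with the generic moment inequality \ref{arxivsupp:main:simple:proof:exp:psi1}. The only point requiring a sliver of care is verifying the $\psi_1$ bound after centering, and matching the exponents so that the $(8k K_{\bX}^2)^k$ factor (rather than, say, $(Ck K_{\bX}^2)^k$ for some unspecified constant) is produced.
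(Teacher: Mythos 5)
Your proof is correct and is essentially identical to the paper's: the paper also bounds $\|X_iX_j-\sigma_{ij}\|_{\psi_1}\leq 4K_{\bX}^2$ via (\ref{arxivsupp:main:subexpo}) and the triangle inequality, then applies Lemma \ref{arxivsupp:main:simple:proof:exp:psi1} with $\ell=1$ to the vectorized submatrix. Your explicit bookkeeping of the parameters $p=k$, $q=2$, $U=4K_{\bX}^2$ correctly reproduces the constant $(8kK_{\bX}^2)^k$.
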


\begin{proof} 
Apply (\ref{arxivsupp:main:subexpo}) to get $\|X_i X_j\|_{\psi_1} \leq 2 K_{\bX}^2$. Combined with the triangle inequality it gives us $\|X_i X_j - \sigma_{ij}\|_{\psi_1} \leq 4 K_{\bX}^2$. Next simply use Lemma \ref{arxivsupp:main:simple:proof:exp:psi1} for $\psi_1$ to complete the proof.
\end{proof}

\subsection{Proofs for Transelliptical Graphical Models} \label{arxivsupp:main:proofsforTGM}

In the transelliptical case the lemmas from the CLIME case are no longer applicable, as the estimator of $\bSigma$ is constructed in a completely different manner. Furthermore, the vector $\bX$ is coming from a nonparanormal family and need not be sub-Gaussian. Fortunately, \cite{liu2012high} provide a concentration result which we state below:

\begin{theorem}[Liu 2012]\label{arxivsupp:main:kendaltaucovthm} For any $n > 1$ with probability at least $1 - 1/d$, we have
\begin{align}
\|\hat \bS^\tau - \bSigma\|_{\max} \leq 2.45 \pi \sqrt{\log d/n} \label{arxivsupp:kendaltaucovrate}.
\end{align}
\end{theorem}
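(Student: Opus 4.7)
The plan is to combine Hoeffding's inequality for $U$-statistics with the Lipschitz property of $x \mapsto \sin(\pi x/2)$, followed by a union bound over all $\binom{d}{2}$ off-diagonal entries. The diagonal entries contribute nothing since $\hat S^{\tau}_{jj} = \Sigma_{jj} = 1$ by construction, so we only need to handle the off-diagonal terms.

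First I would observe that $\hat \tau_{jk}$ is a $U$-statistic of order $2$ with kernel $h((X_{ij},X_{ik}),(X_{i'j},X_{i'k})) = \sign((X_{ij}-X_{i'j})(X_{ik}-X_{i'k}))$ that is bounded between $-1$ and $+1$, and $\EE \hat \tau_{jk} = \tau_{jk}$ where under the transelliptical assumption $\tau_{jk} = (2/\pi)\arcsin(\Sigma_{jk})$, equivalently $\Sigma_{jk} = \sin((\pi/2)\tau_{jk})$. Hoeffding's inequality for bounded $U$-statistics then gives, for each fixed $(j,k)$,
\[
\PP\bigl(|\hat \tau_{jk} - \tau_{jk}| > t\bigr) \leq 2\exp\bigl(-\tfrac{n t^2}{c_0}\bigr),
\]
for an absolute constant $c_0$ coming from the kernel's range and the order $2$ of the $U$-statistic.

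Second, I would transfer this concentration from Kendall's tau to $\hat S^\tau$ using the fact that $\sin(\pi x/2)$ is $(\pi/2)$-Lipschitz on $[-1,1]$, so
\[
|\hat S^{\tau}_{jk} - \Sigma_{jk}| \;=\; \bigl|\sin(\tfrac{\pi}{2}\hat \tau_{jk}) - \sin(\tfrac{\pi}{2}\tau_{jk})\bigr| \;\leq\; \tfrac{\pi}{2}|\hat \tau_{jk} - \tau_{jk}|.
\]
Third, apply a union bound over the $\binom{d}{2} \leq d^2/2$ distinct off-diagonal pairs, choosing $t \asymp \sqrt{\log d / n}$ so that each individual failure probability is bounded by roughly $2/d^3$; summing gives total failure probability at most $1/d$. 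Combined with the Lipschitz step this yields $\|\hat S^\tau - \Sigma\|_{\max} \leq C\sqrt{\log d/n}$ with probability at least $1 - 1/d$.

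The only real work is calibrating the constants to obtain the specific factor $2.45\pi$ claimed in the statement. This is where the main care is needed: one must track the exact constant in Hoeffding's $U$-statistic bound (which depends on whether one uses the $\lfloor n/2 \rfloor$ reduction to an average of i.i.d.\ quantities or a sharper variant), combine it with the $\pi/2$ Lipschitz factor, and choose $t$ so that $2\binom{d}{2}\exp(-n t^2/c_0) \leq 1/d$ while keeping the resulting coefficient of $\sqrt{\log d/n}$ below $2.45\pi$. This is essentially routine bookkeeping but is the step most likely to require adjustment (or an appeal to the sharper form of Hoeffding's inequality for $U$-statistics that Liu, Han, Yuan, Lafferty, and Wasserman use in their original derivation). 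Since the theorem is quoted verbatim from prior work, I would not re-derive the constant from scratch but simply cite it at this point.
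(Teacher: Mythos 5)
Your outline is correct and is essentially the argument behind the cited result: the paper itself gives no proof of this theorem, quoting it directly from \cite{liu2012high}, and elsewhere (see the bound (\ref{arxivsupp:main:hoeffdingustat}) in the proof of Lemma \ref{arxivsupp:main:CLTcondTGM}) it uses exactly the Hoeffding-for-$U$-statistics concentration plus union bound that you describe, with the $\pi/2$-Lipschitz step transferring the bound from $\hat\tau_{jk}$ to $\hat S^{\tau}_{jk}$. Your decision to cite rather than re-derive the constant $2.45\pi$ matches what the paper does, and your remark that the argument does not actually use Gaussianity/nonparanormality is the same observation the authors make when extending the theorem to the transelliptical case.
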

While this theorem is defined within the framework of nonparanormal models, the proof doesn't utilize the fact that the family is nonparanormal, and thus extends to the transelliptical case. As we can see from the theorem, the rate of Kendall's tau estimate (\ref{arxivsupp:kendaltaucovrate}), is no different than the one using the sample covariance matrix, provided in Lemma \ref{arxivsupp:main:samplecovpopcov}.

\begin{proof}[Proof of Corollary \ref{arxivsupp:main:skepticnormality}] The proof of this result is the same as Corollary \ref{IFCLIME}, except we use $\bSigma$ in place of $\bSigma_{\bX}$, $\hat \bS^{\tau}$ in place of $\bSigma_n$, Lemma \ref{arxivsupp:main:vdiffkendaltau} in place of Lemma \ref{arxivsupp:main:vdiff} and Theorem \ref{arxivsupp:main:kendaltaucovthm} in place of Lemma \ref{arxivsupp:main:samplecovpopcov}. The only different step is the verification of Assumption \ref{CLTcond} which we provide in Lemma \ref{arxivsupp:main:CLTcondTGM}. We omit the rest of the details.
\end{proof}
\begin{lemma} \label{arxivsupp:main:CLTcondTGM} Under the assumptions of Corollary \ref{arxivsupp:main:skepticnormality} we have that $
\Delta^{-1/2} n^{1/2} S(\bbeta^*) \rightsquigarrow N(0,1),$
where $\Delta$ is defined as in (\ref{arxivsupp:main:deltaSKEPTICdef}).
\end{lemma}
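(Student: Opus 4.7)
The plan is to linearize $S(\bbeta^*) = \vb^{*T}(\hat\bS^{\tau} - \bSigma)\bbeta^*$ (using $\bSigma\bbeta^* = \eb_m^T$) as an i.i.d. sum by combining a Taylor expansion of $\sin(\pi\cdot/2)$ with the Hoeffding decomposition of Kendall's tau, then apply Lyapunov's CLT to the leading sum and bound the remainder.

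\textbf{Step 1 (Taylor expansion in $\sin$).} For each off-diagonal $(j,k)$, write
\begin{align*}
\hat S^{\tau}_{jk} - \sigma_{jk}
= \sin\!\bigl(\tfrac{\pi}{2}\hat\tau_{jk}\bigr) - \sin\!\bigl(\tfrac{\pi}{2}\tau_{jk}\bigr)
= \tfrac{\pi}{2}\cos\!\bigl(\tfrac{\pi}{2}\tau_{jk}\bigr)(\hat\tau_{jk}-\tau_{jk}) + r_{jk},
\end{align*}
with $|r_{jk}| \leq \tfrac{1}{2}(\pi/2)^2 (\hat\tau_{jk}-\tau_{jk})^2$ by the mean value theorem (and $r_{jj}=0$).

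\textbf{Step 2 (Hoeffding decomposition).} Since $\hat\tau_{jk}$ is a $U$-statistic of order $2$ with bounded kernel $h_{jk}(\bx,\bx') = \sgn((x_j-x'_j)(x_k-x'_k))$, we may write
\begin{align*}
\hat\tau_{jk} - \tau_{jk} = \frac{2}{n}\sum_{i=1}^{n}\tau^{\bX_i}_{jk} + R^{(2)}_{jk},
\end{align*}
where $\tau^{\bX_i}_{jk} = \EE[h_{jk}(\bX_i,\bX')\mid \bX_i] - \tau_{jk}$ is the first Hoeffding projection (matching the notation of the population $\tau^{\bY}_{jk}$ in the definition of $\bTheta$), and $R^{(2)}_{jk}$ is a degenerate $U$-statistic of order $2$. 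Plugging Step~2 into Step~1 gives
\begin{align*}
\hat S^{\tau}_{jk} - \sigma_{jk} = \frac{1}{n}\sum_{i=1}^{n}\Theta^{(i)}_{jk} + \tfrac{\pi}{2}\cos\!\bigl(\tfrac{\pi}{2}\tau_{jk}\bigr) R^{(2)}_{jk} + r_{jk},
\end{align*}
where $\Theta^{(i)}_{jk} := \pi\cos(\tfrac{\pi}{2}\tau_{jk})\tau^{\bX_i}_{jk}$ are i.i.d.\ centered random variables with $\bTheta^{(i)} \stackrel{d}{=} \bTheta$. Consequently,
\begin{align*}
n^{1/2} S(\bbeta^*) = n^{-1/2}\sum_{i=1}^{n}\vb^{*T}\bTheta^{(i)}\bbeta^* + n^{1/2}\vb^{*T}(\Delta_1 + \Delta_2)\bbeta^*,
\end{align*}
where $(\Delta_1)_{jk} = \tfrac{\pi}{2}\cos(\tfrac{\pi}{2}\tau_{jk})R^{(2)}_{jk}$ and $(\Delta_2)_{jk}=r_{jk}$.

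\textbf{Step 3 (negligibility of remainders).} Bound the quadratic terms via
$|\vb^{*T}(\Delta_1+\Delta_2)\bbeta^*| \leq \|\vb^*\|_1\|\bbeta^*\|_1(\|\Delta_1\|_{\max}+\|\Delta_2\|_{\max})$. For $\Delta_2$, Theorem~\ref{arxivsupp:main:kendaltaucovthm} (or equivalently the max-norm bound $\|\hat\tau-\tau\|_{\max} = O_p(\sqrt{\log d/n})$ that underlies it) yields $\|\Delta_2\|_{\max} = O_p(\log d/n)$, so $\sqrt{n}\|\vb^*\|_1\|\bbeta^*\|_1\|\Delta_2\|_{\max} = O_p(\|\vb^*\|_1\|\bbeta^*\|_1\log d/\sqrt{n}) = o_p(1)$ by the corollary's scaling hypothesis. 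For $\Delta_1$, a Bernstein-type concentration inequality for canonical (degenerate) $U$-statistics with bounded kernel (e.g.\ Arcones--Giné, or Hoeffding's inequality applied to decoupled versions) gives $|R^{(2)}_{jk}| = O_p(\log d/n)$ for each fixed $(j,k)$, and a union bound over the $O(d^2)$ pairs yields $\|\Delta_1\|_{\max} = O_p(\log d/n)$. The same scaling hypothesis then makes its contribution $o_p(1)$.

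\textbf{Step 4 (Lyapunov CLT for the linear term).} The summands $X_i := \vb^{*T}\bTheta^{(i)}\bbeta^*$ are i.i.d.\ with $\Var(X_i) = \Delta$ and, because $|\tau^{\bX_i}_{jk}|\leq 2$, satisfy the uniform bound $|X_i|\leq 2\pi\|\vb^*\|_1\|\bbeta^*\|_1$. Hence for $\delta = k-2 > 0$,
\begin{align*}
\frac{\sum_{i=1}^n \EE|X_i|^{2+\delta}}{(n\Delta)^{1+\delta/2}} \leq \frac{(2\pi \|\vb^*\|_1\|\bbeta^*\|_1)^{\delta}}{(n\Delta)^{\delta/2}} \leq \frac{C}{n^{\delta/2}}\Bigl(\tfrac{\|\vb^*\|_1\|\bbeta^*\|_1}{\|\vb^*\|_2\|\bbeta^*\|_2}\Bigr)^{\delta} \leq \frac{C(s_{\vb}s)^{\delta/2}}{n^{\delta/2}},
\end{align*}
where the second inequality uses Assumption~\ref{arxivsupp:main:distr:assump:tgm:clime} and the third uses $\|\vb^*\|_1\leq \sqrt{s_{\vb}}\|\vb^*\|_2$ (and similarly for $\bbeta^*$). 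The hypothesis $(s_{\vb}s)^k/n^{k-1}=o(1)$ ensures this ratio tends to $0$, so Lyapunov's condition holds and $n^{-1/2}\sum_i X_i \rightsquigarrow N(0,\Delta)$.

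\textbf{Main obstacle.} The main technical point is the uniform control of the degenerate $U$-statistic term $\|\Delta_1\|_{\max}$, which requires a concentration inequality for canonical $U$-statistics of order $2$ (giving the $1/n$ rate instead of $1/\sqrt{n}$) together with a union bound over the $d^2$ entries; all other pieces are routine Taylor remainders and Lyapunov bookkeeping.
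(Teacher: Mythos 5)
Your proposal is correct and follows the same skeleton as the paper's proof: Taylor-expand $\sin(\pi\hat\tau_{jk}/2)$ about $\tau_{jk}$, split $\hat\tau_{jk}-\tau_{jk}$ into its H\'ajek projection plus a degenerate second-order $U$-statistic, apply a Lyapunov CLT to the resulting i.i.d. sum $n^{-1/2}\sum_i \vb^{*T}\bTheta^{(i)}\bbeta^*$ (with the same $\|\cdot\|_1\le\sqrt{s}\|\cdot\|_2$ bookkeeping and Assumption \ref{arxivsupp:main:distr:assump:tgm:clime} in the denominator), and kill the quadratic Taylor remainder with Hoeffding's inequality for $U$-statistics applied to $\sup_{jk}|\hat\tau_{jk}-\tau_{jk}|$. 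The one place you diverge is the step you flag as the main obstacle: the degenerate term. You propose an entrywise exponential concentration inequality for canonical order-2 $U$-statistics (Arcones--Gin\'e type) followed by a union bound over the $d^2$ pairs, giving $\|\Delta_1\|_{\max}=O_p(\log d/n)$ and then an $L_1$--$L_\infty$ H\"older bound. The paper instead never controls the degenerate part entrywise: it treats the single scalar $I_2=\frac{\pi}{n^{1/2}(n-1)}\sum_{j,k}v_j^*\beta_k^*\cos(\tau_{jk}\pi/2)\sum_{i<i'}w^{ii'}_{jk}$ directly, shows by the orthogonality of the degenerate kernels that $\EE(w^{ii'}_{jk}w^{rr'}_{ls})=0$ whenever the index pairs do not coincide, computes $\Var(I_2)/\Var(M_i)\lesssim s_{\vb}s/n = o(1)$, and concludes by Chebyshev. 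The paper's route is more elementary (no external concentration result, no union bound, no $\log d$ factor) and yields a cleaner sufficient condition ($s_{\vb}s/n=o(1)$ rather than $\|\vb^*\|_1\|\bbeta^*\|_1\log d/\sqrt{n}=o(1)$); your route would also work under the stated hypotheses but imports heavier machinery than is needed, and the exact polylogarithmic rate coming out of the degenerate $U$-statistic tail bound would need to be tracked carefully to confirm it is absorbed by the scaling assumption.
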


\begin{proof}[Proof of Lemma \ref{arxivsupp:main:CLTcondTGM}] Note that by the mean value theorem we have the following representation:
\begin{align*}
n^{1/2}  \vb^{*T} \left(\hat \bS^\tau \bbeta^* -\eb_m^T\right) & = n^{1/2}  \vb^{*T} \left(\hat \bS^\tau  - \bSigma \right)\bbeta^* = n^{1/2} \sum_{\substack {j \in S_{\vb}, k \in S\\ j \neq k}} v^{*}_j \beta^*_k \left(\sin \left(\hat \tau_{jk}\frac{\pi}{2}\right) - \sin \left(\tau_{jk}\frac{\pi}{2}\right)\right)\\
& = n^{1/2} \sum_{\substack {j \in S_{\vb}, k \in S\\ j \neq k}} v^{*}_j \beta^*_k \cos \left(\tau_{jk}\frac{\pi}{2}\right)\frac{\pi}{2}  \left(\hat \tau_{jk} - \tau_{jk}\right)\\
& - \frac{n^{1/2}}{2} \sum_{\substack {j \in S_{\vb}, k \in S\\ j \neq k}} v^{*}_j \beta^*_k \sin \left(\tilde \tau_{jk}\frac{\pi}{2}\right)\left(\frac{\pi}{2}  \left(\hat \tau_{jk} - \tau_{jk}\right)\right)^2, 
\end{align*}
where $\tilde \tau_{ij}$ is a number between $\hat \tau_{ij}$ and $\tau_{ij}$. We will first deal with the first term in the sum above. Since this term is a linear combination of second order (dependent) $U$-statistics, we will make usage of  H\'{a}ejk's projection method. A similar approach was used in the celebrated paper of \cite{hoeffding1948class}. To this end we define the following notations:
\begin{align*}
\tau_{jk}^{ii'} & = \sign \left((X_{ij} - X_{i'j} )(X_{ik} - X_{i'k}) \right)-  \tau_{jk}, ~~~~ \tau_{jk}^{ii'|i}  = \EE[\tau_{jk}^{ii'} | \bX_i],\\
\tau_{jk}^{i} & = \frac{1}{n-1}\sum_{i' \neq i}\tau_{jk}^{ii'|i}, ~~~~~~~~~~~~~~~~~~~~~~~~~~~~~~~~~ w^{ii'}_{jk} = \tau_{jk}^{ii'} - \tau_{jk}^{ii'|i} - \tau_{jk}^{ii'|i'},
\end{align*}
In terms of these notations we therefore have $
\hat \tau_{jk} - \tau_{jk} =  \frac{2}{n} \sum_{i = 1}^n \tau_{jk}^{i} + \frac{2}{n(n-1)}\sum_{1 \leq i < i' \leq n} w^{ii'}_{jk}.$
This gives us the following identity:
\begin{align*}
 n^{1/2} \sum_{\substack {j \in S_{\vb}, k \in S\\ j \neq k}} v^{*}_j \beta^*_k \cos \left(\tau_{jk}\frac{\pi}{2}\right)\frac{\pi}{2}  \left(\hat \tau_{jk} - \tau_{jk}\right) =   \underbrace{\pi n^{-1/2} \sum_{\substack {j \in S_{\vb}, k \in S\\ j \neq k}} v^{*}_j \beta^*_k \cos \left(\tau_{jk}\frac{\pi}{2}\right) \sum_{i = 1}^n \tau_{jk}^{i}}_{I_1} & \\
  + \underbrace{\frac{\pi}{n^{1/2} (n-1)}\sum_{\substack {j \in S_{\vb}, k \in S\\ j \neq k}} v^{*}_j \beta^*_k \cos \left(\tau_{jk}\frac{\pi}{2}\right)\sum_{1 \leq i < i' \leq n} w_{jk}^{ii'}}_{I_2}&. 
\end{align*}
We first deal with $I_1$ which can clearly be represented as a sum of iid mean 0 terms, by verifying Lyapunov's condition for the CLT. 
$I_1$ can be rewritten as:
\begin{align}
I_1 = {n^{-1/2}}  \sum_{i = 1}^n \underbrace{\sum_{\substack {j \in S_{\vb}, k \in S\\ j \neq k}} v^{*}_j \beta^*_k \pi\cos \left(\tau_{jk}\frac{\pi}{2}\right)\tau_{jk}^{i}}_{M_i}. \label{arxivsupp:main:sumofiid}
\end{align}
Construct the matrix $\bTheta^i \in \RR^{d \times d}$ given entrywise by 
\begin{align} \label{arxivsupp:main:Thetaidef}
\Theta^i_{jk} =  \pi\cos \left(\tau_{jk}\frac{\pi}{2}\right) \tau_{jk}^{i}, \mbox{ hence } \Theta^i_{jj} = 0.
\end{align}
We can then rewrite $M_i = \vb^{*T} \bTheta^i \bbeta^* = \vb^{*T}_{S_{\vb}} \bTheta^i_{S_{\vb},S} \bbeta^*_{S}$. Calculating the variance of $M_i$ gives $
\Var(M_i) = \EE(\vb^{*T} \bTheta^i \bbeta^*)^2 \geq \alpha_{\min} \|\vb^*\|^2_2 \|\bbeta^*\|_2^2$, where the inequality follows by assumption. We proceed to verify Lyapunov's condition for some $k > 2$ (where we ignore the constant $\alpha_{\min} > 0$):
$$
\frac{n^{-k/2}}{\|\vb^*\|^k_2 \|\bbeta^*\|_2^k} \sum_{i = 1}^n \EE|M_i|^k \leq n^{-k/2} \sum_{i = 1}^n \EE\|\bTheta^i_{S_{\vb}, S}\|_F^k \leq \frac{(s_{\vb}s)^{k/2} (2 \pi)^k}{n^{k/2 - 1}} = o(1),
$$
where the first inequality follows from Cauchy-Schwartz, and to see the second one notice that each element of $\Theta^i$ is bounded $|\Theta^i_{jk}| \leq 2 \pi$, and hence $\|\bTheta^i_{S_{\vb}, S}\|_F^k \leq (s_{\vb}s)^{k/2} (2 \pi)^k$. 
This implies that $I_1\rightsquigarrow N(0, \Delta)$, with $\Delta = \EE(\vb^{*T} \bTheta^i \bbeta^*)^2$.

Next we deal with the second term $I_2$, which is mean $0$, by showing that its (standardized) variance goes to $0$ asymptotically. Before we compute its variance we make several preliminary calculations:
\begin{align*}
\EE(w_{jk}^{ii'}w_{ls}^{rr'}) & = \EE(\tau^{ii'}_{jk}\tau^{rr'}_{ls}) - \EE(\tau^{ii'}_{jk}\tau^{rr'|r}_{ls}) - \EE(\tau^{ii'}_{jk}\tau^{rr'|r'}_{ls}) - \EE(\tau^{ii'|i}_{jk}\tau^{rr'}_{ls}) + \EE(\tau^{ii'|i}_{jk}\tau^{rr'|r}_{ls}) \\
& + \EE(\tau^{ii'|i}_{jk}\tau^{rr'|r'}_{ls}) - \EE(\tau^{ii'|i'}_{jk}\tau^{rr'}_{ls}) + \EE(\tau^{ii'|i'}_{jk}\tau^{rr'|r}_{ls}) + \EE(\tau^{ii'|i'}_{jk}\tau^{rr'|r'}_{ls}).
\end{align*}
In the expression above we have taken $j \neq k$, $l \neq s$, $r \neq i \neq i' \neq r \neq r' \neq i$. Notice now that all elements above are independent and since $\EE(w_{jk}^{ii'}) = \EE(w_{ls}^{rr'})  = 0$, we conclude that $\EE(w_{jk}^{ii'}w_{ls}^{rr'}) = 0$. Following, the same logic, for $j \neq k$, $l \neq s$, $i \neq i' \neq r' \neq i$:
\begin{align*}
\EE(w_{jk}^{ii'}w_{ls}^{ir'}) & = \EE(\tau^{ii'}_{jk}\tau^{ir'}_{ls}) - \EE(\tau^{ii'}_{jk}\tau^{ir'|i}_{ls}) - \EE(\tau^{ii'|i}_{jk}\tau^{ir'}_{ls}) + \EE(\tau^{ii'|i}_{jk}\tau^{ir'|i}_{ls}) 
\end{align*}
where all the rest terms are 0, by the same argument as in the first case. Using iterated expectation by conditioning on $\bX_i$ it can be easily seen that all terms become equal to --- $\EE(\tau_{jk}^{ii'|i}\tau_{jk}^{ir'|i})$, and we can conclude that $\EE(w_{jk}^{ii'}w_{ls}^{ir'}) = 0$. Since $\EE I_2 = 0$, we have:
\begin{align*}
\frac{\Var(I_2)}{\Var(M_i)} & \leq \frac{\EE(I_2^2)}{\alpha_{\min} \|\vb^*\|^2_2 \|\bbeta^*\|_2^2 } \\
& = \frac{\pi^2}{\alpha_{\min} \|\vb^*\|^2_2 \|\bbeta^*\|_2^2n (n-1)^2} \sum_{1 \leq i < i' \leq n} \EE \left(\sum_{\substack {j \in S_{\vb}, k \in S\\ j \neq k}} v^{*}_j \beta^*_k \cos \left(\tau_{jk}\frac{\pi}{2}\right) w_{jk}^{ii'}\right)^2\\
& \leq \frac{\pi^2{n \choose 2}36 \Big(\sum_{j \in S_{\vb}} |v_j^*|\Big)^2\Big(\sum_{k \in S} |\beta_k^*|\Big)^2}{n (n-1)^2 \alpha_{\min}\|\vb^*\|^2_2 \|\bbeta^*\|_2^2} \leq \frac{\pi^218 s_{\vb}s}{(n-1)\alpha_{\min}} = o(1),
\end{align*}
where in the next to last inequality we used the trivial bound $|w^{ii'}_{jk}| \leq | \tau_{jk}^{ii'} | + |\tau_{jk}^{ii'|i}| +|\tau_{jk}^{ii'|i'}| \leq 6$. Thus the term $\frac{\Var(I_2)}{\Var(M_i)} = o(1)$ and therefore, Chebyshev's inequality gives us that $\frac{I_2}{\sqrt{\Var(M_i)}} = o_p(1)$.

Finally we deal with the standardized version of the last term:
\begin{align} \label{arxivsupp:main:lastterm}
\frac{1}{\sqrt{\Var(\vb^{*T} \bTheta \bbeta^*)}}\frac{n^{1/2}}{2} \sum_{\substack {j \in S_{\vb}, k \in S\\ j \neq k}} v^{*}_j \beta^*_k \sin \left(\tilde \tau_{jk}\frac{\pi}{2}\right)\left(\frac{\pi}{2}  \left(\hat \tau_{jk} - \tau_{jk}\right)\right)^2.
\end{align}
As we mentioned previously it's clear that $\hat \tau_{jk}$ is a $U$-statistic, and its kernel is a bounded function (between $-1$ and $1$). Furthermore, we have that $\EE \hat \tau_{jk} = \tau_{jk}$. Thus, we can apply Hoeffding's inequality for $U$-statistics (see \cite{hoeffding1963probability} equation (5.7)), to obtain that:
\begin{align}\label{arxivsupp:main:hoeffdingustat}
\PP(\sup_{jk} |\hat \tau_{jk} - \tau_{jk}| > t) \leq 2 d^2 \exp\left(-\frac{n t^2}{4}\right).
\end{align}
It follows that selecting $t=9\sqrt{\log d/n}$ suffices to keep the probability going to 0. Notice that the (\ref{arxivsupp:main:lastterm}) can be controlled by:
$$
\frac{n^{1/2}\pi^2 \sqrt{s_{\vb} s}}{8\theta_{\min}\|\vb^{*}\|_2\|\bbeta^{*}\|_2} \|\vb^{*}\|_2\|\bbeta^{*}\|_2\sup_{jk} (\hat \tau_{jk} - \tau_{jk})^2 = O_p\left(\frac{\sqrt{s_{\vb} s}\log d}{n^{1/2}}\right) = o_p(1).
$$
The last equation is implied by our assumption. This concludes the proof.
\end{proof}

\begin{remark} Using the Berry-Esseen theorem for non-identical random variables we can strengthen weak convergence statement to $
\sup_t \left|\PP^* \left(\frac{I_1}{\sqrt{\Delta}}  \leq t\right) - \Phi(t)\right| \leq C_{BE} n^{-1/2} (s_{\vb} s)^{3/2}  = o(1),$
where $C_{BE}$ is an absolute constant. Note that we decomposed our test into $\frac{I_1}{\sqrt{\Delta}} + o_p(1)$, and hence this statement is valid more generally for Corollary \ref{arxivsupp:main:skepticnormality}.
\end{remark}

\begin{proposition} \label{arxivsupp:main:pluginSKEPTIC} Under the same assumptions as in Corollary \ref{arxivsupp:main:skepticnormality}, we have that $\hat \Delta \rightarrow_p \Delta$.
\end{proposition}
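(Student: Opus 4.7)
The plan is to decompose $\hat\Delta - \Delta$ into a law-of-large-numbers term and a plug-in error term,
\[
|\hat\Delta - \Delta| \;\leq\; \Bigl|n^{-1}\sum_{i=1}^n \bigl[(\hat\vb^T \hat\bTheta^i \hat\bbeta)^2 - (\vb^{*T}\bTheta^i \bbeta^*)^2\bigr]\Bigr| + \Bigl|n^{-1}\sum_{i=1}^n (\vb^{*T}\bTheta^i \bbeta^*)^2 - \Delta\Bigr|,
\]
and show each piece is $o_p(1)$. Since $\EE[\tau^{\bY}_{jk}] = 0$, the variables $M_i := \vb^{*T}\bTheta^i \bbeta^*$ are i.i.d.\ mean zero with $\Delta = \EE M_i^2$, and the deterministic bound $|M_i|\leq 2\pi\|\vb^*\|_1\|\bbeta^*\|_1$ (since each $|\Theta^i_{jk}|\leq 2\pi$) together with Chebyshev's inequality applied to $M_i^2$ makes the second (oracle) summand $O_p(\|\vb^*\|_1^2\|\bbeta^*\|_1^2 n^{-1/2})$, which is $o_p(1)$ under the rate conditions of Corollary~\ref{arxivsupp:main:skepticnormality}.

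For the plug-in term, factor $a^2 - b^2 = (a-b)(a+b)$ with $a_i := \hat\vb^T\hat\bTheta^i\hat\bbeta$, $b_i := \vb^{*T}\bTheta^i\bbeta^*$. The ``sum'' factor is uniformly $O_p(\|\vb^*\|_1\|\bbeta^*\|_1)$, since $\|\hat\bTheta^i\|_{\max},\|\bTheta^i\|_{\max}\le 2\pi$ and $\|\hat\vb\|_1,\|\hat\bbeta\|_1$ remain of the same order as $\|\vb^*\|_1,\|\bbeta^*\|_1$ by the $L_1$ consistency of $\hat\vb$ and $\hat\bbeta$. Add and subtract to decompose $a_i - b_i$ into three pieces, corresponding to replacing $\vb^*$, $\bbeta^*$, and $\bTheta^i$. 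The first two are bounded uniformly in $i$ by $2\pi\|\hat\vb-\vb^*\|_1\|\hat\bbeta\|_1$ and $2\pi\|\vb^*\|_1\|\hat\bbeta-\bbeta^*\|_1$, each of order $\|\vb^*\|_1\|\bbeta^*\|_1\max(s_{\vb},s)\sqrt{\log d/n}$. The third is bounded by $\|\vb^*\|_1\|\bbeta^*\|_1\sup_{i,j,k}|\hat\Theta^i_{jk} - \Theta^i_{jk}|$, reducing the task to a uniform concentration bound on $\hat\bTheta^i - \bTheta^i$.

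For that uniform bound, write
\[
\hat\Theta^i_{jk} - \Theta^i_{jk} = \pi\cos(\pi\hat\tau_{jk}/2)\bigl(\hat\tau^i_{jk} - \tau^i_{jk}\bigr) + \pi\tau^i_{jk}\bigl[\cos(\pi\hat\tau_{jk}/2) - \cos(\pi\tau_{jk}/2)\bigr].
\]
The second summand is $O_p(\sqrt{\log d/n})$ uniformly in $j,k$ via the Lipschitz property of $\cos$ and the $U$-statistic Hoeffding bound (\ref{arxivsupp:main:hoeffdingustat}). For the first, the key identity is
\[
\hat\tau^i_{jk} - \tau^i_{jk} = \frac{1}{n-1}\sum_{i'\neq i}\bigl[\xi^{ii'}_{jk} - \EE(\xi^{ii'}_{jk}\mid \bX_i)\bigr] - (\hat\tau_{jk} - \tau_{jk}),
\]
with $\xi^{ii'}_{jk} := \sign((X_{ij}-X_{i'j})(X_{ik}-X_{i'k}))$. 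Conditional on $\bX_i$, the first piece is an average of $n-1$ independent bounded centered variables, so Hoeffding's inequality and a union bound over $i\in\{1,\dots,n\}$, $j,k\in\{1,\dots,d\}$ give $\sup_{i,j,k}|\hat\tau^i_{jk}-\tau^i_{jk}| = O_p(\sqrt{\log(nd)/n})$, and hence $\sup_{i,j,k}|\hat\Theta^i_{jk}-\Theta^i_{jk}| = O_p(\sqrt{\log(nd)/n})$. Assembling the pieces, the plug-in error is of order $\|\vb^*\|_1^2\|\bbeta^*\|_1^2[\max(s_{\vb},s)\sqrt{\log d/n} + \sqrt{\log(nd)/n}] = o_p(1)$.

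The main obstacle is the uniform concentration of $\hat\tau^i_{jk}$ about $\tau^i_{jk}$: unlike $\hat\tau_{jk} - \tau_{jk}$, which is a centered $U$-statistic in the pairs $(i,i')$, here $\tau^i_{jk}$ is itself a random function of $\bX_i$. Centering by the conditional expectation $\EE[\cdot\mid \bX_i]$ (rather than by the unconditional $\tau_{jk}$) is what makes the conditional Hoeffding argument succeed, and fitting the extra factor of $n$ in the union bound against the sparsity budget is what dictates the rate conditions in the Corollary.
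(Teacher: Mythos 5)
Your proof is correct and follows essentially the same route as the paper's: the oracle term is handled by Chebyshev using the deterministic bound $|\vb^{*T}\bTheta^i\bbeta^*|\leq 2\pi\|\vb^*\|_1\|\bbeta^*\|_1$, the plug-in error is reduced via $L_1$ consistency of $\hat\vb,\hat\bbeta$ and $\|\hat\bTheta^i\|_{\max}\leq 2\pi$ to the uniform control of $\max_{i,j,k}|\hat\Theta^i_{jk}-\Theta^i_{jk}|$, and that control comes from exactly the same conditional-Hoeffding-plus-union-bound argument (centering $\hat\theta^i_{jk}$ at $\EE[\,\cdot\mid\bX_i]$) combined with the $U$-statistic Hoeffding bound, yielding the rate $O_p(\sqrt{\log(nd)/n})$. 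The only difference is cosmetic: you factor $a_i^2-b_i^2=(a_i-b_i)(a_i+b_i)$ and split $a_i-b_i$ into three substitutions at once, whereas the paper performs the three substitutions sequentially as $I_1,I_2,I_3$.
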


\begin{proof}[Proof of Proposition \ref{arxivsupp:main:pluginSKEPTIC}]
Before we go to the main proof, recall the definition of $\bTheta^i$ (\ref{arxivsupp:main:Thetaidef}), where $\Theta^i_{jk} =  \pi\cos \left(\tau_{jk}\frac{\pi}{2}\right) \tau_{jk}^{i}$. Note that in fact $\EE \bTheta^i = 0$, since $\EE \tau_{jk}^{i} = 0$, and thus $\Var(\vb^{*T}\bTheta^i\bbeta^*) = \EE(\vb^{*T}\bTheta^i\bbeta^*)^2$. Similarly one can note the simple identity: $\frac{1}{n} \sum_{i = 1}^n \hat \bTheta^i = 0$. Thus we will in fact focus on showing that $\frac{1}{n} \sum_{i = 1}^n (\hat \vb^T \hat \bTheta^i \hat \bbeta)^2$ is consistent for $\EE(\vb^{*T}\bTheta^i\bbeta^*)^2$.

Consider the following decomposition:
$$
\frac{1}{n} \sum_{i = 1}^n (\hat \vb^T \hat \bTheta^i \hat \bbeta)^2 = \underbrace{\frac{1}{n} \sum_{i = 1}^n [(\hat \vb^T \hat \bTheta^i \hat \bbeta)^2 - (\vb^{*T} \hat \bTheta^i \hat \bbeta)^2]}_{I_1} + \underbrace{\frac{1}{n} \sum_{i = 1}^n [(\vb^{*T} \hat \bTheta^i \hat \bbeta)^2 - (\vb^{*T} \hat \bTheta^i \bbeta^*)^2]}_{I_2}.
$$
Below we show that $I_1$ is asymptotically negligible. We have:
$$
|I_1| =  \bigg|(\hat \vb - \vb^{*})^T   \frac{1}{n} \sum_{i = 1}^n \hat \bTheta^i \hat \bbeta \hat \bbeta^T \hat \bTheta^i (\hat \vb + \vb^{*})^T\bigg| \leq  \|\hat \vb - \vb^*\|_1\|\hat \vb + \vb^*\|_1\|\hat \bbeta\|^2_1 (2\pi)^2,
$$
where we made use of $\|\hat \bTheta^i\|_{\max} \leq 2 \pi$. Thus by Lemma \ref{arxivsupp:main:vdiffkendaltau} 
$$|I_1| = O_p\left(\|\vb^*\|^2_1\|\bbeta^*\|^2_1 s_{\vb} \sqrt{\log d/n}\right) = o_p(1),$$
by assumption. Similarly we obtain $
|I_2| = O_p\left(\|\vb^*\|^2_1\|\bbeta^*\|^2_1 s \sqrt{\log d/n}\right) = o_p(1).$
Next, we inspect the following difference $
I_3 = \frac{1}{n} \sum_{i = 1}^n [(\vb^{*T} \hat \bTheta^i \bbeta^*)^2 - (\vb^{*T} \bTheta^i \bbeta^*)^2].$
Before we bound this term recall that we have the following useful inequality $\|\bTheta^i\|_{\max} \leq 2\pi$. Thus:
\begin{align} \label{arxivsupp:main:I3bound}
|I_3| \leq \|\vb^*\|_1^2 \|\bbeta^*\|_1^2 4\pi \max_{i = 1,\ldots,n}\|\hat \bTheta^i - \bTheta^i \|_{\max}.
\end{align}
To bound the difference $\max_{i = 1,\ldots,n}\|\hat \bTheta^i - \bTheta^i \|_{\max}$ we will use some concentration inequalities. First, since $\cos$ is Lipschitz with constant 1 --- $\left|\cos\left(\frac{\pi}{2} \hat \tau_{jk}\right) - \cos\left(\frac{\pi}{2}  \tau_{jk}\right)\right| \leq \frac{\pi}{2} |\hat \tau_{jk} - \tau_{jk}|$, we have:
\begin{align*}
|\hat \Theta^i_{jk} -  \Theta^i_{jk}| & \leq \pi \left| \cos\left(\frac{\pi}{2} \hat \tau_{jk}\right) - \cos\left(\frac{\pi}{2}  \tau_{jk}\right) \right||\hat \tau^i_{jk}| + \pi \left|\cos\left(\frac{\pi}{2}  \tau_{jk}\right) \right| |\hat \tau^i_{jk} - \tau^i_{jk}| \\
& \leq \pi^2 |\hat \tau_{jk} - \tau_{jk}| + \pi |\hat \tau^i_{jk} - \tau^i_{jk}|.
\end{align*}
where we used the simple observation that $|\hat \tau^i_{jk}| \leq 2$. Next we have:
\begin{align*}
& ~~~ |\hat \tau^i_{jk} - \tau^i_{jk}| \leq |\hat \tau_{jk} - \tau_{jk}| \\
& + \bigg| \underbrace{\frac{1}{n-1} \sum_{i' \neq i} \sign \left((X_{ij} - X_{i'j})(X_{ik} - X_{i'k})\right)}_{\hat \theta^i_{jk}} - \underbrace{\EE \left[\sign \left((X_{ij} - X_{i'j})(X_{ik} - X_{i'k})\right) | \bX_i\right] }_{\theta^i_{jk}}  \bigg|
\end{align*}
This gives us:
\begin{align} \label{arxivsupp:main:thetadiffbound}
|\hat \Theta^i_{jk} -  \Theta^i_{jk}| \leq (\pi^2 + \pi) |\hat \tau_{jk} - \tau_{jk}| + \pi |\hat \theta^i_{jk} - \theta^i_{jk}|.
\end{align}
Next, note since the terms in $\hat \theta^i_{jk}$ are iid conditional on $\bX_i$, and they are in the set $\{-1,1\}$ by Hoeffding's inequality, integrating $\bX_i$ out and the union bound we obtain:
$$
\PP(\max_{i,j,k}|\hat \theta^i_{jk} - \theta^i_{jk}| > t ) \leq 2nd^2 \exp\left(- \frac{(n-1)t^2}{2}\right).
$$
This implies that selecting $t = 4\sqrt{\log(nd)/n}$, would keep the probability converging to 0. Combining this result with (\ref{arxivsupp:main:hoeffdingustat}) and (\ref{arxivsupp:main:thetadiffbound}) gives us $
\max_{i}\|\hat \bTheta^i -  \bTheta^i\|_{\max} = O_p\left(\sqrt{\log(nd)/n}\right).$
Hence using (\ref{arxivsupp:main:I3bound}), we get $
|I_3| = \|\vb^*\|_1^2 \|\bbeta^*\|_1^2 O_p\left(\sqrt{\log(nd)/n}\right) = o_p(1),$
which follows since $\|\vb^*\|^2_1\|\bbeta^*\|^2_1 \max(s_{\vb},s) \sqrt{\log d/n} = o(1)$, implies $\|\vb^*\|_1^2 \|\bbeta^*\|_1^2 \sqrt{\log(nd)/n} = o(1)$. To see the last implication it is sufficient to observe that $\|\vb^*\|_1 \geq \bOmega_{11} \geq 1, \|\bbeta^*\|_1 \geq \bOmega_{mm} \geq 1$. Finally we assess the difference $
\frac{1}{n}\sum_{i = 1}^n (\vb^{*T} \bTheta^i \bbeta^*)^2 - \EE (\vb^{*T} \bTheta^i \bbeta^*)^2.$
By Markov's inequality in much the same way as in the last part of the proof of Proposition \ref{arxivsupp:main:pluginCLIME}, we can show that the expression above is $o_p(1)$ if $\Var((\vb^{*T} \bTheta \bbeta^*)^2) = o({n})$. Since the elements of $\bTheta$ are bounded by $2\pi$ we have $|\vb^{*T} \bTheta \bbeta^*| \leq \|\vb^*\|_1 \|\bbeta^*\|_1  2\pi$. Hence since $\|\vb^*\|^2_1\|\bbeta^*\|^2_1 \max(s_{\vb},s) \sqrt{\log d/n} = o(1)$, implies $\|\vb^*\|^4_1 \|\bbeta^*\|^4_1 = o(n)$ the proof is complete. 
\end{proof}
\begin{proof}[Proof of Corollary \ref{arxivsupp:main:unifconvTGMCLIME}] Similarly to the proof of Corollary \ref{arxivsupp:main:unifconvCLIME} we simply need to note that our conditions imply the conditions required by Corollary \ref{arxivsupp:main:skepticnormality} and also note that the bounds in the proofs hold uniformly.
\end{proof}

\begin{lemma} \label{arxivsupp:main:kendaltaucovRE} Assume that the minimum eigenvalue  $\lambda_{\min}(\bSigma) > 0$ and $s \sqrt{\log d/n} \leq  (1 - \kappa) \frac{\lambda_{\min}(\bSigma_{\bX})}{(1 + \xi)^2 2.45 \pi}$, where $0 < \kappa < 1$. We then have that $\hat \bS^\tau$ satisfies the RE property with $\operatorname{RE}_{\hat \bS^\tau}(s, \xi) \geq \kappa \lambda_{\min} (\bSigma)$ with probability at least $1 - 1/d$.
\end{lemma}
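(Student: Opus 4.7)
The plan is to mimic the argument of Lemma \ref{arxivsupp:main:samplecovRE}, but with the sample-covariance concentration bound replaced by the Kendall-tau concentration bound of Theorem \ref{arxivsupp:main:kendaltaucovthm}. Specifically, I would condition on the event
$$
\mathcal{E} := \Big\{\|\hat \bS^\tau - \bSigma\|_{\max} \leq 2.45\pi\sqrt{\log d/n}\Big\},
$$
which, by Theorem \ref{arxivsupp:main:kendaltaucovthm}, satisfies $\PP(\mathcal{E}) \geq 1 - 1/d$. All subsequent work is deterministic on $\mathcal{E}$.

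Fix any subset $S \subset \{1,\ldots,d\}$ with $|S| \leq s$ and any nonzero $\ub \in \RR^d$ with $\|\ub_{S^c}\|_1 \leq \xi \|\ub_S\|_1$. Then by the standard ``cone'' inequality, $\|\ub\|_1 \leq (1+\xi)\|\ub_S\|_1 \leq (1+\xi)\sqrt{s}\,\|\ub_S\|_2$. Writing $\ub^T \hat \bS^\tau \ub = \ub^T \bSigma \ub + \ub^T(\hat \bS^\tau - \bSigma)\ub$ and applying H\"older's inequality together with the definition of $\mathcal{E}$,
$$
|\ub^T(\hat \bS^\tau - \bSigma)\ub| \;\leq\; \|\hat \bS^\tau - \bSigma\|_{\max}\|\ub\|_1^2 \;\leq\; 2.45\pi\sqrt{\log d/n}\,(1+\xi)^2 s \,\|\ub_S\|_2^2.
$$

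Next I would use the eigenvalue lower bound $\ub^T \bSigma \ub \geq \lambda_{\min}(\bSigma)\|\ub\|_2^2 \geq \lambda_{\min}(\bSigma)\|\ub_S\|_2^2$, combined with the sparsity assumption $s\sqrt{\log d/n}\leq (1-\kappa)\lambda_{\min}(\bSigma)/[(1+\xi)^2\cdot 2.45\pi]$, to deduce
$$
\ub^T \hat \bS^\tau \ub \;\geq\; \lambda_{\min}(\bSigma)\|\ub_S\|_2^2 - (1-\kappa)\lambda_{\min}(\bSigma)\|\ub_S\|_2^2 \;=\; \kappa \lambda_{\min}(\bSigma)\|\ub_S\|_2^2.
$$
Dividing by $\|\ub_S\|_2^2$ and taking the minimum over admissible $S$ and $\ub$ yields $\operatorname{RE}_{\hat \bS^\tau}(s,\xi)\geq \kappa \lambda_{\min}(\bSigma)$ on $\mathcal{E}$, which is the claim. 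The only subtlety is the choice of the tuning constant $(1-\kappa)/[(1+\xi)^2\cdot 2.45\pi]$ in the sparsity requirement, which is calibrated precisely so that the max-norm perturbation is dominated by a $(1-\kappa)$-fraction of the smallest eigenvalue; there is no genuine obstacle beyond this bookkeeping.
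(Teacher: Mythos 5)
Your proof is correct and follows essentially the same route the paper intends: the paper's proof simply defers to the standard RE-perturbation argument of Lemma \ref{arxivsupp:main:samplecovRE} with the Kendall-tau concentration bound of Theorem \ref{arxivsupp:main:kendaltaucovthm} in place of the sub-Gaussian covariance bound, which is exactly the cone-condition plus max-norm perturbation calculation you carry out.
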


\begin{proof}[Proof of Lemma \ref{arxivsupp:main:kendaltaucovRE}] Proof is the same as in Lemma \ref{arxivsupp:main:samplecovRE}, but we use Theorem \ref{arxivsupp:main:kendaltaucovthm} instead of Lemma \ref{arxivsupp:main:samplecovpopcov}. We omit the details.
\end{proof}

\begin{definition} Define $\operatorname{RE}_{\kappa}(s,\xi) := \kappa\operatorname{RE}_{\bSigma}(s,\xi) \geq \kappa \lambda_{\min}(\bSigma)$.
\end{definition}

\begin{lemma} \label{arxivsupp:main:vdiffkendaltau} Assume that --- $\lambda_{\min}(\bSigma) > 0$, $s_{\vb} \sqrt{\log d/n} \leq (1 - \kappa) \frac{\lambda_{\min}(\bSigma)}{(1 + 1)^2 2.45 \pi}$, where $0 < \kappa < 1$ and $\lambda' \geq  \|\vb^*\|_1 2.45 \pi \sqrt{\log d/n}$. Then we have that $\|\hat \vb - \vb^*\|_{1} \leq \frac{8 \lambda' s_{\vb}}{ \operatorname{RE}_{\kappa}(s_{\vb}, 1)} $ with probability at least  $1 - 1/d$.
\end{lemma}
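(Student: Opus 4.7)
The proof plan is to mimic exactly the argument used for Lemma \ref{arxivsupp:main:vdiff} in the CLIME setting, but substituting the Kendall's tau concentration bound of Theorem \ref{arxivsupp:main:kendaltaucovthm} for the sub-Gaussian sample covariance concentration bound of Lemma \ref{arxivsupp:main:samplecovpopcov}. All the steps carry over without change because the only probabilistic input needed is an $L_\infty$ deviation bound for the covariance estimator, and Theorem \ref{arxivsupp:main:kendaltaucovthm} provides one at the same rate $\sqrt{\log d/n}$.

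Concretely, I would first invoke Theorem \ref{arxivsupp:main:kendaltaucovthm}, which gives the event $\mathcal{E} := \{\|\hat{\bS}^\tau - \bSigma\|_{\max} \leq 2.45\pi\sqrt{\log d/n}\}$ with $\PP(\mathcal{E}) \geq 1 - 1/d$. On $\mathcal{E}$, since $\vb^{*T}\bSigma = \eb_1$ (as $\vb^* = \bSigma^{-1}_{*1}$ and $\diag(\bSigma) = 1$, so $\vb^*$ is well-defined), a standard triangle inequality gives
$$
\|\vb^{*T}\hat{\bS}^\tau - \eb_1\|_\infty \leq \|\vb^*\|_1 \|\hat{\bS}^\tau - \bSigma\|_{\max} \leq \|\vb^*\|_1 \cdot 2.45\pi \sqrt{\log d/n} \leq \lambda',
$$
by the assumption on $\lambda'$. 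Hence $\vb^*$ is feasible for the program (\ref{arxivsupp:main:lambda:prime:tgm:clime}), so by optimality $\|\hat{\vb}\|_1 \leq \|\vb^*\|_1$. A standard decomposition argument (splitting coordinates into $S_{\vb} = \supp(\vb^*)$ and $S_{\vb}^c$) then yields the cone condition
$$
\|(\hat{\vb} - \vb^*)_{S_{\vb}^c}\|_1 \leq \|(\hat{\vb} - \vb^*)_{S_{\vb}}\|_1.
$$

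Next, on $\mathcal{E}$, another triangle inequality gives $\|\hat{\bS}^\tau(\hat{\vb} - \vb^*)\|_\infty \leq 2\lambda'$ by combining $\|\hat{\bS}^\tau\hat{\vb} - \eb_1\|_\infty \leq \lambda'$ with the displayed bound for $\vb^*$. To convert this $L_\infty$ bound into an $L_1$ bound on $\hat{\vb} - \vb^*$, I would invoke the restricted eigenvalue property of $\hat{\bS}^\tau$ provided by Lemma \ref{arxivsupp:main:kendaltaucovRE}, which under the sparsity/sample size assumption on $s_{\vb}$ yields $\operatorname{RE}_{\hat{\bS}^\tau}(s_{\vb}, 1) \geq \operatorname{RE}_\kappa(s_{\vb}, 1)$ on the same event $\mathcal{E}$. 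Using the cone condition the vector $\hat{\vb} - \vb^*$ lies in the RE cone, so
$$
\operatorname{RE}_\kappa(s_{\vb}, 1) \frac{\|(\hat{\vb} - \vb^*)_{S_{\vb}}\|_1^2}{s_{\vb}} \leq (\hat{\vb} - \vb^*)^T \hat{\bS}^\tau (\hat{\vb} - \vb^*) \leq \|\hat{\vb} - \vb^*\|_1 \cdot 2\lambda' \leq 2 \cdot 2\lambda' \|(\hat{\vb} - \vb^*)_{S_{\vb}}\|_1,
$$
which rearranges to $\|(\hat{\vb} - \vb^*)_{S_{\vb}}\|_1 \leq 4\lambda' s_{\vb}/\operatorname{RE}_\kappa(s_{\vb}, 1)$. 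Applying the cone inequality once more doubles this to give the claimed bound $\|\hat{\vb} - \vb^*\|_1 \leq 8\lambda' s_{\vb}/\operatorname{RE}_\kappa(s_{\vb}, 1)$.

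There is no genuine obstacle here; the proof is essentially a transcription of the CLIME proof with a different concentration result plugged in. The only small point worth verifying is that the constants match: the Kendall tau bound has rate $2.45\pi\sqrt{\log d/n}$ (rather than $4 A_X K_{\bX}^2 \sqrt{\log d/n}$), and the assumption on $s_{\vb}$ is scaled exactly so that $\hat{\bS}^\tau$ satisfies the RE condition per Lemma \ref{arxivsupp:main:kendaltaucovRE}. This alignment of constants is what determines the precise form of the assumption on $s_{\vb}\sqrt{\log d/n}$ and on $\lambda'$ in the statement.
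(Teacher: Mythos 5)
Your proposal is correct and is exactly the route the paper takes: the paper's proof of this lemma simply says it is the same argument as Lemma \ref{arxivsupp:main:vdiff} with Theorem \ref{arxivsupp:main:kendaltaucovthm} replacing Lemma \ref{arxivsupp:main:samplecovpopcov} and Lemma \ref{arxivsupp:main:kendaltaucovRE} replacing Lemma \ref{arxivsupp:main:samplecovRE}, and omits the details. Your write-up just fills in those standard details (feasibility of $\vb^*$, cone condition, $L_\infty$ bound, RE conversion) correctly.
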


\begin{proof}[Proof of Lemma \ref{arxivsupp:main:vdiffkendaltau}] Proof is the same as in Lemma \ref{arxivsupp:main:vdiff}, but we use Theorem \ref{arxivsupp:main:kendaltaucovthm} instead of Lemma \ref{arxivsupp:main:samplecovpopcov} and we use Lemma \ref{arxivsupp:main:kendaltaucovRE} instead of Lemma \ref{arxivsupp:main:samplecovRE}. We omit the details.
\end{proof}

\section{Proofs for the Linear Discriminant Analysis} \label{arxivsupp:main:proofsforLDPapp}

\begin{lemma}\label{arxivsupp:rem:LDP} Under Assumption \ref{var:bound:LDA} we have that $
\alpha V_1 + (1 - \alpha) V_2 \geq V'_{\min} (\|\bbeta^*\|_2^2 \|\vb^*\|_2^2 + \|\vb\|_2^2).$
\end{lemma}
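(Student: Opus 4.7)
The plan is to reduce the lemma to a single algebraic identity that exploits the specific weighting by $\alpha$ and $1-\alpha$, then apply Assumption \ref{var:bound:LDA} together with the spectral bound on $\bSigma$ to two independent pieces.

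First I would introduce the shorthand $A := \vb^{*T}\bU^{\otimes 2}\bbeta^*$ and $B := \vb^{*T}\bU$, so that $V_1 = \Var(A + \alpha^{-1}B)$ and $V_2 = \Var(A - (1-\alpha)^{-1}B)$. Expanding by bilinearity of the covariance gives
\begin{align*}
V_1 &= \Var(A) + 2\alpha^{-1}\Cov(A,B) + \alpha^{-2}\Var(B),\\
V_2 &= \Var(A) - 2(1-\alpha)^{-1}\Cov(A,B) + (1-\alpha)^{-2}\Var(B).
\end{align*}
The whole point of the weights $\alpha,1-\alpha$ is to make the cross term cancel: $\alpha \cdot 2\alpha^{-1} - (1-\alpha)\cdot 2(1-\alpha)^{-1} = 0$. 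Therefore
$$\alpha V_1 + (1-\alpha) V_2 = \Var(A) + \bigl[\alpha \cdot \alpha^{-2} + (1-\alpha)\cdot(1-\alpha)^{-2}\bigr] \Var(B) = \Var(A) + \frac{\Var(B)}{\alpha(1-\alpha)}.$$

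Next I would lower bound each of the two pieces separately. The first term satisfies $\Var(A) \geq V_{\min}\|\vb^*\|_2^2\|\bbeta^*\|_2^2$ directly by Assumption \ref{var:bound:LDA}. For the second, $\Var(B) = \vb^{*T}\bSigma\vb^* \geq \lambda_{\min}(\bSigma)\|\vb^*\|_2^2 \geq \delta \|\vb^*\|_2^2$ by the eigenvalue lower bound on $\bSigma$ assumed in Corollary \ref{IFsparseLDA}. Since $\alpha \in (0,1)$ is fixed, $\alpha(1-\alpha)$ is a positive constant bounded above by $1/4$, so $\Var(B)/[\alpha(1-\alpha)] \geq 4\delta\|\vb^*\|_2^2$.

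Combining these two bounds yields
$$\alpha V_1 + (1-\alpha) V_2 \geq V_{\min}\|\vb^*\|_2^2\|\bbeta^*\|_2^2 + \frac{\delta}{\alpha(1-\alpha)}\|\vb^*\|_2^2,$$
and setting $V'_{\min} := \min\bigl(V_{\min},\,\delta/[\alpha(1-\alpha)]\bigr) > 0$ gives the claimed bound. There is no real obstacle here beyond checking the cancellation of the covariance terms cleanly; the whole lemma is essentially a one-line algebraic identity combined with two lower bounds that are already available from the standing assumptions.
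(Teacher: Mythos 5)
Your proof is correct and is exactly the elementary calculation the paper omits: the weights $\alpha^{-1}$ and $-(1-\alpha)^{-1}$ are chosen precisely so the covariance terms cancel, giving $\alpha V_1+(1-\alpha)V_2=\Var(A)+\bigl(\alpha^{-1}+(1-\alpha)^{-1}\bigr)\Var(B)$, which matches the form used in the remark immediately following the lemma. The only caveat is that the lower bound $\Var(B)=\vb^{*T}\bSigma\vb^*\geq\delta\|\vb^*\|_2^2$ relies on $\lambda_{\min}(\bSigma)\geq\delta$, which is a hypothesis of Corollary \ref{IFsparseLDA} rather than part of Assumption \ref{var:bound:LDA} as the lemma's statement literally suggests; you correctly identify and import it.
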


\begin{proof}[Proof of Lemma \ref{arxivsupp:rem:LDP}] The proof follows by an elementary calculation so we omit the details. 
\end{proof}

\begin{remark}This also shows that $\Delta \geq \delta(\alpha^{-1} +  (1 - \alpha)^{-1} )\|\vb^*\|^2_2 \geq  \delta(\alpha^{-1} +  (1 - \alpha)^{-1} ) 4K_{\bU}^{-4}> 0$.
\end{remark}

\begin{proof}[Proof of Corollary \ref{IFsparseLDA}]
We verify the conditions of Section \ref{masterthm:sec}. To see Assumption (\ref{consistencyassumpweakcn}), we can use Lemmas \ref{arxivsupp:main:betadiffLDA} and \ref{arxivsupp:main:vdiffLDA} to get $\|\hat\bbeta - \bbeta^*\|_1 = O_p\left((\|\bbeta^*\|_1 \vee 1)s\sqrt{\log d/n}\right)$, $\|\hat\vb - \vb^*\|_1 = O_p\left(\|\vb^*\|_1s_{\vb}\sqrt{\log d/n}\right)$ provided that $\lambda$ and $\lambda'$ are large enough, and we used the fact that $n_1 \asymp n_2$. 

Next we check Assumption \ref{noiseassumpCI}. To see (\ref{betastartassumpCI}), fix a $|\theta - \theta^*| < \epsilon$, for some $\epsilon > 0$. By the triangle inequality:
$$
\bigg\|\hat \bSigma_n\bbeta^*_{\theta} - \bSigma_{}\bbeta_{\theta}^* + (\bar \bX - \bar \bY) - \bmu_1 + \bmu_2\bigg\|_{\infty} \leq \|\hat \bSigma_{n} - \bSigma_{}\|_{\max}(\|\bbeta^*\|_1 + \epsilon) + \|\bar \bX - \bmu_1\|_\infty + \|\bar \bY - \bmu_2\|_\infty.
$$
The RHS is $O_p\bigg((\|\bbeta^*\|_1 \vee 1)\sqrt{\log d/n}\bigg)$, by Lemma \ref{arxivsupp:main:Sigmaclose} and bound (\ref{arxivsupp:main:concXbar}). The same logic shows that $r_2(n) \asymp \|\vb^*\|_1(\|\bbeta^*\|_1 \vee 1)\sqrt{\log d/n}$, which implies (\ref{betastartassumpCIvstar}). Since the Hessian $\Tb$ in (\ref{lambdaprimeasumpCI}) is free of $\bbeta$ we are allowed to set $r_3(n) = \lambda' \asymp \|\vb^*\|_1\sqrt{\log d/n} = o(1)$ (by Lemma \ref{arxivsupp:main:vdiffLDA}). Finally the two expectations in Assumption \ref{noiseassumpCI}, are bounded as we see below:
\begin{align*}
\|\bSigma_{}\bbeta^*_{\theta} - \bSigma_{}\bbeta^*\|_{\infty} = \|\bSigma_{}(\bbeta^*_{\theta} - \bbeta^*)\|_{\infty} \leq \|\bSigma_{*1} \|_{\infty}\epsilon \leq 2 K_{\bU}^2 \epsilon, ~~~~~~~~ \|\vb^{*T}\bSigma_{ -1}\|_{\infty}  = 0.
\end{align*}
By adding up the following two identities:
\begin{align*}
\sqrt{n} O_p\left(\|\vb^*\|_1 \sqrt{\log d/n}\right)O_p\left((\|\bbeta^*\|_1 \vee 1) s\sqrt{\log d/n}\right) & = o_p(1),\\
\sqrt{n} O_p\left( \|\vb^*\|_1s_{\vb}\sqrt{\log d/n}\right) O_p\bigg((\|\bbeta^*\| \vee 1)\sqrt{\log d/n}\bigg) & = o_p(1),
\end{align*}
we get that (\ref{assumpone}) is also valid in this case by assumption.

To verify the consistency of $\tilde \theta$ we check the assumptions in Theorem \ref{consistency:sol}. Clearly the map $\vb^{*T}\bSigma_{} (\bbeta^*_{\theta} - \bbeta^*) = (\theta - \theta^*)$ has a unique $0$ when $\theta = \theta^*$. Moreover, the map $\theta \mapsto \hat \vb^T(\bSigma_{n} \hat\bbeta_{\theta} - (\bar \bX - \bar \bY))$ is continuous as it is linear. In addition, it has a unique zero except in cases when $\hat \vb^T \bSigma_{n,*1} = 0$. However note that $|\hat \vb^T \bSigma_{n,*1} - 1| \leq \lambda'$ by (\ref{lambda:prime:clime}), and hence for small enough values of $\lambda'$ there will exist a unique zero.

Next we verify Assumption \ref{CLTcond} in Lemma \ref{arxivsupp:main:normalityLDA}. Finally we move on to show (\ref{stabtwo}). Observe that (\ref{stabtwo}) is trivial as its LHS $\equiv 0$ in this case. 
\end{proof}

\begin{lemma} \label{arxivsupp:main:normalityLDA} Under the conditions of Corollary \ref{IFsparseLDA} we have the following $
\Delta^{-1/2}\sqrt{n}S(\bbeta^*) \rightsquigarrow N(0,1),$
where $\Delta$ is defined as in (\ref{deltasparseLDAdef}).
\end{lemma}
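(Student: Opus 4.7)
The plan is to decompose $\sqrt{n}S(\bbeta^*)$ as a sum of independent, mean-zero random variables plus an asymptotically negligible remainder, identify its variance with $\Delta$, and then verify Lyapunov's condition for a central limit theorem. The main obstacle will be carefully bookkeeping the two sample sources together with the ratio $n_1/n = \alpha + o(1)$, and controlling the higher moments of the bilinear term $\vb^{*T}\bU^{\otimes 2}\bbeta^*$ (which is only sub-exponential, not sub-Gaussian).

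First I would exploit the population identity $\bSigma\bbeta^* = \bmu_1 - \bmu_2$ to rewrite
$\hat\bSigma_n\bbeta^* - (\bar\bX - \bar\bY) = (\hat\bSigma_n - \bSigma)\bbeta^* - (\bar\bU_{(1)} - \bar\bU_{(2)})$,
where $\bar\bU_{(1)} := n_1^{-1}\sum_{i=1}^{n_1}\bU_i$ and $\bar\bU_{(2)} := n_2^{-1}\sum_{i=1}^{n_2}\bU_{i+n_1}$. Expanding $\frac{n_1}{n}\hat\bSigma_{\bX} = \frac{1}{n}\sum_{i=1}^{n_1}\bU_i^{\otimes 2} - \frac{n_1}{n}\bar\bU_{(1)}^{\otimes 2}$ (and symmetrically for the $\bY$-sample) yields
\begin{align*}
\sqrt{n}\,S(\bbeta^*) = \frac{1}{\sqrt{n}}\sum_{i=1}^{n_1} Z_{1,i} + \frac{1}{\sqrt{n}}\sum_{i=n_1+1}^{n} Z_{2,i} + R_n,
\end{align*}
where $Z_{1,i} := \vb^{*T}(\bU_i^{\otimes 2}-\bSigma)\bbeta^* - \tfrac{n}{n_1}\vb^{*T}\bU_i$, $Z_{2,i} := \vb^{*T}(\bU_i^{\otimes 2}-\bSigma)\bbeta^* + \tfrac{n}{n_2}\vb^{*T}\bU_i$, and $R_n := -\sqrt{n}\tfrac{n_1}{n}\vb^{*T}\bar\bU_{(1)}^{\otimes 2}\bbeta^* - \sqrt{n}\tfrac{n_2}{n}\vb^{*T}\bar\bU_{(2)}^{\otimes 2}\bbeta^*$. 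The $Z_{1,i}, Z_{2,i}$ are independent across $i$ with mean zero (since $\EE\bU=0$ and $\EE\bU^{\otimes 2}=\bSigma$).

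Next I would argue $R_n = o_p(1)$: a Hoeffding-type bound for coordinatewise sub-Gaussian sums gives $\|\bar\bU_{(j)}\|_\infty = O_p(\sqrt{\log d/n})$, so $|R_n| \le \sqrt{n}\,\|\vb^*\|_1\|\bbeta^*\|_1 O_p(\log d/n) = O_p(\log d/\sqrt{n}) = o_p(1)$ by (\ref{eqconLDA}) and the boundedness of $\|\vb^*\|_1, \|\bbeta^*\|_1$. The total variance of the two leading sums equals $\frac{n_1}{n}\Var(Z_{1,1}) + \frac{n_2}{n}\Var(Z_{2,n_1+1})$; substituting $n_1/n = \alpha + o(1)$ and expanding each variance, the cross-covariance $\Cov(\vb^{*T}\bU^{\otimes 2}\bbeta^*,\vb^{*T}\bU)$ appears with weights $-2\alpha^{-1}\cdot\alpha$ and $+2(1-\alpha)^{-1}\cdot(1-\alpha)$ and hence cancels, so the limiting variance collapses exactly to $\alpha V_1 + (1-\alpha)V_2 = \Delta$ as defined in (\ref{deltasparseLDAdef}).

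Finally I would verify Lyapunov's condition for some $k>2$. By Lemma \ref{arxivsupp:rem:LDP} and the comments following it, $\Delta \ge c > 0$, so it suffices to show $n^{-k/2+1}\max_i \EE|Z_{j,i}|^k = o(1)$. The linear piece $\vb^{*T}\bU$ has $\psi_2$-norm bounded by $\|\vb^*\|_1 K_{\bU} = O(1)$ by the Minkowski-type inequality for $\psi_2$-norms and coordinatewise sub-Gaussianity, so all of its moments are $O(1)$. The bilinear piece factors as $(\vb^{*T}\bU)(\bbeta^{*T}\bU)$; applying Cauchy--Schwarz and Lemma \ref{arxivsupp:main:simple:proof:exp:psi1} (or the sub-Gaussian moment bound above applied separately to the two linear forms) gives $\EE|\vb^{*T}\bU^{\otimes 2}\bbeta^*|^k = O(k^{2k})$. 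Combining the two bounds with the triangle inequality for $\EE|\cdot|^k$ and dividing by $n^{k/2-1}$ yields $n^{-k/2+1}\EE|Z_{j,i}|^k = o(1)$ for any fixed $k>2$, which together with the variance computation above gives $\Delta^{-1/2}\sqrt{n}\,S(\bbeta^*) \rightsquigarrow N(0,1)$.
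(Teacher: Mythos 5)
Your proof is correct and follows essentially the same route as the paper's: the same decomposition of $\sqrt{n}\,S(\bbeta^*)$ into independent mean-zero summands plus an $O_p(\log d/\sqrt{n})$ recentering remainder (your $R_n$ is exactly the paper's $\sqrt{n}\,\vb^{*T}(\hat \bSigma_n - \tilde \bSigma_n)\bbeta^*$), the same identification of the limiting variance with $\Delta$, and a Lyapunov central limit theorem. The only differences are cosmetic: you invoke the assumed boundedness of $\|\vb^*\|_1$ and $\|\bbeta^*\|_1$ to get $O(1)$ moment bounds where the paper tracks the $(s_{\vb}s)^{k/2}$ factors via Lemma \ref{arxivsupp:main:sparseLDAineq} and the second part of (\ref{eqconLDA}), and your explicit observation that the cross-covariance $\Cov(\vb^{*T}\bU^{\otimes 2}\bbeta^*, \vb^{*T}\bU)$ cancels in $\alpha V_1 + (1-\alpha)V_2$ is a welcome extra check that the sign of the linear term does not affect $\Delta$.
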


\begin{proof}[Proof of Lemma \ref{arxivsupp:main:normalityLDA}] 
We stat by defining the following quantity:
\begin{align}
\tilde \bSigma_n = \frac{1}{n} \left[\sum_{i = 1}^{n_1} (\bX_i - \bmu_1)^{\otimes 2} + \sum_{i = 1}^{n_2} (\bY_i - \bmu_2)^{\otimes 2}\right]. \label{arxivsupp:main:defsigmatilde}
\end{align}
We have $
n^{1/2} \vb^{*T} (\hat \bSigma_n \bbeta^* - (\bar \bX - \bar \bY)) = \underbrace{n^{1/2} \vb^{*T} (\tilde \bSigma_n  \bbeta^* - (\bar \bX - \bar \bY))}_{I_{1}} + n^{1/2} \underbrace{\vb^{*T} (\hat \bSigma_n - \tilde \bSigma_n) \bbeta^*}_{I_{2}}$.
We proceed with showing that the term $I_{2}$ is small:
$$
|I_{2}| \leq n^{1/2}\|\vb^*\|_1\|\bbeta^*\|_1 \|\hat \bSigma_n - \tilde \bSigma_n\|_{\max} =  \|\vb^*\|_1\|\bbeta^*\|_1 O_p\left(\log d/n^{1/2}\right) = o_p(1),
$$
where we used (\ref{arxivsupp:main:sigmatildesigmahat}) from Lemma \ref{arxivsupp:main:Sigmaclose} (and made usage of the fact that $n_1 \asymp n_2$).
Next we take a closer look at the term $I_{1}$:
\begin{align*}
I_{1} & = n^{1/2} \vb^{*T} (\tilde \bSigma_n  \bbeta^* - (\bmu_1 - \bmu_2)) + n^{1/2} \vb^{*T}(\bar \bX - \bmu_1 - \bar \bY + \bmu_2)\\
& = n^{1/2} \vb^{*T} \frac{1}{n} \sum_{i = 1}^{n} \left(\bU_i^{\otimes 2}\bbeta^* -  (\bmu_1 - \bmu_2) + \left[\frac{n}{n_1} I(i \leq n_1) - \frac{n}{n_2} I(i > n_1)\right] \bU_i\right).
\end{align*}
Next, by $n_1/n + o_p(1) = \alpha$, it is clear that:
\begin{align*}
I_1 & = n^{-1/2} \vb^{*T} \sum_{i = 1}^{n_1} \left(\bU_i^{\otimes 2}\bbeta^* -  (\bmu_1 - \bmu_2) + \alpha^{-1}  \bU_i\right)\\
& +  n^{-1/2} \vb^{*T} \sum_{i = n_1 + 1}^{n} \left(\bU_i^{\otimes 2}\bbeta^* -  (\bmu_1 - \bmu_2) - (1-\alpha)^{-1}  \bU_i\right)\\
& + \underbrace{\bigg(\frac{n}{n_1} - \alpha^{-1}\bigg)}_{o_p(1)} \underbrace{n^{-1/2} \sum_{i = 1}^{n_1} \vb^{*T}  \bU_i}_{O_p(1)} + \underbrace{\bigg(\frac{n}{n_2} - (1 - \alpha)^{-1}\bigg)}_{o_p(1)} \underbrace{n^{-1/2} \sum_{i = n_1 + 1}^{n} \vb^{*T}  \bU_i}_{O_p(1)},
\end{align*}
where we implicitly used Chebyshev's inequality and the fact that $\Var(\vb^{*T}\bU) \leq 2 \vb^{*T} \bSigma \vb^{*} \leq 2 \delta^{-1}$. 
Next we verify Lyapunov's condition. The sum of variances of the terms above equals:
$$
n_1 V_1 + n_2 V_2 = n (\alpha V_1 + (1 - \alpha) V_2)(1 + o(1)) \geq n V'_{\min}(\|\bbeta^*\|_2^2 \|\vb^*\|_2^2 + \|\vb^*\|_2^2)(1 + o(1)),
$$
by Lemma \ref{arxivsupp:rem:LDP}. Without loss of generality let's assume that $\alpha^{-1} > (1 - \alpha)^{-1}$. It follows then from Lemma \ref{arxivsupp:main:sparseLDAineq}, that for any $k > 2$:
$$
\EE \left|\vb^{*T}\bU_i^{\otimes 2}\bbeta^* -  \vb^{*T}(\bmu_1 - \bmu_2) + \alpha^{-1}  \vb^{*T}\bU_i\right|^k \leq \|\vb^*\|_2^k( C_1(s_{\vb}s)^{k/2}\|\bbeta^*\|^k_2  + C_2\alpha^{-k} s_{\vb}^{k/2}),
$$
and similarly:
$$
\EE \left|\vb^{*T}\bU_i^{\otimes 2}\bbeta^* -  \vb^{*T}(\bmu_1 - \bmu_2) - (1-\alpha)^{-1}  \vb^{*T}\bU_i\right|^k \leq  \|\vb^*\|_2^k( C_1(s_{\vb}s)^{k/2}\|\bbeta^*\|^k_2  + C_2\alpha^{-k} s_{\vb}^{k/2}),
$$
where $C_1$ and $C_2$ are some absolute constants depending on $k$ (see the Lemma for details). Therefore we conclude that the sum in Lyapunov's condition, is bounded by:
$$
\frac{(s_{\vb}s)^{k/2}}{(1 + o(1))n^{k/2 - 1}} \underbrace{\frac{C_1\|\bbeta^*\|^k_2 + \frac{C_2\alpha^{-k}}{s^{k/2}}}{(V'_{\min})^{k/2}(\|\bbeta^*\|^2_2 + 1)^{k/2}}}_{O(1)} = o(1).
$$
This completes the proof.
\end{proof}

\begin{remark}\label{consistent:est:sLDA}
We propose the following consistent estimator of $\Delta$, and prove its consistency in Proposition \ref{varconstestLDP}.
\begin{align*}
\hat \Delta  := \frac{1}{n} \sum_{i = 1}^{n_1} \left(\hat \vb^T (\bX_i - \bar \bX)^{\otimes 2} \hat \bbeta\right)^2 &+ \frac{1}{n} \sum_{i = 1}^{n_1} \left(\frac{n}{n_1}\hat \vb^T (\bX_i - \bar \bX) \right)^2 +  \frac{1}{n} \sum_{i = n_1 + 1}^{n} \left(\hat \vb^T (\bY_i - \bar \bY)^{\otimes 2} \hat \bbeta\right)^2 \\
&+\frac{1}{n} \sum_{i = n_1 + 1}^{n} \left(\frac{n}{n_2}\hat \vb^T (\bY_i - \bar \bY) \right)^2  - (\hat \vb^T (\bar \bX - \bar \bY))^2.
\end{align*}
\end{remark}

\begin{proposition} \label{varconstestLDP} Under the same conditions as in Corollary \ref{IFsparseLDA}, $\max(\|\bmu_1\|_{\infty}, \|\bmu_2\|_{\infty}) = O(1)$, and the following additional assumptions:
\begin{align*}
\max(\lambda' s_{\vb}, \lambda s) \|\vb^*\|_1\|\bbeta^*\|_1\sqrt{\log(nd)} &= o(1), ~~~   \Var((\vb^{*T}\bU)^2) = o(n), ~~~  \Var(\vb^{*T}\bU^{\otimes 2}\bbeta^*) = o(n),\\
&\EE(\vb^{*T}\bU^{\otimes 2}\bbeta^*)^2 = O(1),
\end{align*}
we have that $\hat \Delta \rightarrow_p \Delta$.
\end{proposition}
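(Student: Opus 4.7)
\textbf{Proof proposal for Proposition \ref{varconstestLDP}.}

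The plan is to decompose $\hat\Delta$ into the five natural summands $T_1,\ldots,T_5$ appearing in its definition (the four empirical means and the subtracted square) and, for each, approximate it by an ``oracle'' version $\tilde T_k$ in which $(\hat\vb,\hat\bbeta,\bar\bX,\bar\bY)$ are replaced by $(\vb^*,\bbeta^*,\bmu_1,\bmu_2)$; then (i) show $T_k - \tilde T_k = o_p(1)$, and (ii) show $\sum_k(\pm)\tilde T_k \to_p \Delta$. A useful preliminary algebraic identity is
\begin{align*}
\Delta = \alpha V_1 + (1-\alpha)V_2 = \Var(\vb^{*T}\bU^{\otimes 2}\bbeta^*) + \bigl(\alpha^{-1}+(1-\alpha)^{-1}\bigr)\vb^{*T}\bSigma\vb^*,
\end{align*}
obtained by expanding $V_1,V_2$ and observing that the cross-covariance terms $\pm 2\Cov(\vb^{*T}\bU^{\otimes 2}\bbeta^*,\vb^{*T}\bU)$ cancel after $\alpha$-weighting; this is precisely why $\hat\Delta$ needs no explicit cross-product term. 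Combined with the identity $\bSigma\bbeta^* = \bmu_1-\bmu_2$, this yields the population limits $\tilde T_1+\tilde T_3 \to \EE(\vb^{*T}\bU^{\otimes 2}\bbeta^*)^2$, $\tilde T_2+\tilde T_4 \to (\alpha^{-1}+(1-\alpha)^{-1})\vb^{*T}\bSigma\vb^*$, and $\tilde T_5 \to (\vb^{*T}(\bmu_1-\bmu_2))^2 = (\EE\vb^{*T}\bU^{\otimes 2}\bbeta^*)^2$, which combine to $\Delta$ exactly.

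For step (ii), substitute $\bX_i-\bmu_1 = \bU_i$ and $\bY_i-\bmu_2 = \bU_{i+n_1}$ so each $\tilde T_k$ is an empirical mean of i.i.d.\ summands within its group. The quartic averages $\tilde T_1,\tilde T_3$ converge to their expectations by Chebyshev, using the hypothesis $\Var(\vb^{*T}\bU^{\otimes 2}\bbeta^*) = o(n)$ together with $\EE(\vb^{*T}\bU^{\otimes 2}\bbeta^*)^2 = O(1)$ (the latter bounding the variance of the squared summand through a Cauchy--Schwarz/truncation split). The quadratic pieces $\tilde T_2,\tilde T_4$ concentrate by $\Var((\vb^{*T}\bU)^2)=o(n)$, while $\tilde T_5$ follows from $\vb^{*T}(\bar\bU^{(1)}-\bar\bU^{(2)})=O_p(n^{-1/2})$ and $\max(\|\bmu_1\|_\infty,\|\bmu_2\|_\infty) = O(1)$.

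Step (i) is the workhorse. For each piece I would apply the telescoping bound
\begin{align*}
|\hat\vb^T M\hat\bbeta - \vb^{*T}M\bbeta^*| \leq \|M\|_{\max}\bigl(\|\hat\vb-\vb^*\|_1\|\hat\bbeta\|_1 + \|\vb^*\|_1\|\hat\bbeta-\bbeta^*\|_1\bigr),
\end{align*}
with $M=(\bX_i-\bar\bX)^{\otimes 2}$ in the quartic terms and $M=\bX_i-\bar\bX$ in the quadratic ones. The sub-Gaussian maximal bounds give $\max_i\|\bU_i\|_\infty=O_p(\sqrt{\log(nd)})$ and $\|\bar\bU^{(k)}\|_\infty=O_p(\sqrt{\log d/n})$, so $\|M\|_{\max}=O_p(\log(nd))$ or $O_p(\sqrt{\log(nd)})$ respectively. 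Combined with the Dantzig-type $L_1$-rates $\|\hat\vb-\vb^*\|_1=O_p(\lambda' s_{\vb})$ and $\|\hat\bbeta-\bbeta^*\|_1=O_p(\lambda s)$ (of the same form as those established in the course of proving Corollary~\ref{IFsparseLDA}) and the hypothesis $\max(\lambda's_{\vb},\lambda s)\|\vb^*\|_1\|\bbeta^*\|_1\sqrt{\log(nd)}=o(1)$, each perturbation is $o_p(1)$. The sample-mean replacements $\bar\bX\mapsto\bmu_1$ and $\bar\bY\mapsto\bmu_2$ introduce only extra factors of $\|\bar\bU^{(k)}\|_\infty$ and are of smaller order.

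The main obstacle is the bookkeeping for the quartic pieces $T_1,T_3$: using $a^2-b^2=(a-b)(a+b)$, one must control not only the ``small'' factor $|\hat\vb^T M_i\hat\bbeta - \vb^{*T}M_i\bbeta^*|$ uniformly in $i$, but also the empirical $L_2$ size of the ``tight'' factor $\hat\vb^T M_i\hat\bbeta + \vb^{*T}M_i\bbeta^*$. After Cauchy--Schwarz this reduces to showing $n^{-1}\sum_i(\vb^{*T}M_i\bbeta^*)^2 = O_p(1)$, which is precisely where the assumption $\EE(\vb^{*T}\bU^{\otimes 2}\bbeta^*)^2 = O(1)$ is used (combined with step (ii) applied to this auxiliary summand). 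Assembling all the simultaneously-tracked rates so that the total bound is $o_p(1)$ under the stated hypothesis is the most tedious but essentially mechanical part of the argument.
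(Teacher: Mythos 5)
Your proposal is correct and follows essentially the same route as the paper's own proof: the same term-by-term decomposition of $\hat\Delta$ into oracle pieces, the same telescoping perturbation bounds driven by $\max_i\|(\bX_i-\bar\bX)^{\otimes 2}\|_{\max}=O_p(\log(nd))$ and the Dantzig-type $L_1$ rates for $\hat\vb,\hat\bbeta$, the same Cauchy--Schwarz (equivalently, triangle-inequality-on-square-roots) handling of the quartic terms, and the same Chebyshev step for the oracle averages. The one soft spot --- your claim that the Chebyshev step for $n^{-1}\sum_i(\vb^{*T}\bU_i^{\otimes 2}\bbeta^*)^2$ can be run off $\EE(\vb^{*T}\bU^{\otimes 2}\bbeta^*)^2=O(1)$ via a Cauchy--Schwarz/truncation split is not literally valid (one needs control of $\Var((\vb^{*T}\bU^{\otimes 2}\bbeta^*)^2)$, a fourth-moment quantity) --- is shared by the paper, which simply invokes $\Var((\vb^{*T}\bU^{\otimes 2}\bbeta^*)^2)=o(n)$ at that point.
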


\begin{proof}[Proof of Proposition \ref{varconstestLDP}] 
Note that $\Delta$ can be decomposed as:
\begin{align*}
\Delta = & \alpha \EE(\vb^{*T}\bU^{\otimes 2}\bbeta^*)^2 + \alpha^{-1} \EE(\vb^{*T}\bU)^2 + (1-\alpha) \EE(\vb^{*T}\bU^{\otimes 2}\bbeta^*)^2 \\
& + (1-\alpha)^{-1} \EE(\vb^{*T}\bU)^2 -  (\vb^{*T}(\bmu_1 - \bmu_2))^2.
\end{align*}
We start from the last term:
$$
\underbrace{(\hat \vb^T (\bar \bX - \bar \bY))^2}_{I} =  \underbrace{[(\hat \vb^T (\bar \bX - \bar \bY))^2 - ( \vb^{*T} (\bar \bX - \bar \bY))^2]}_{I_{1}} + \underbrace{ (\vb^{*T} (\bar \bX - \bar \bY))^2}_{I_{2}}.
$$
We have $
|I_{1}| \leq  \|\hat \vb -  \vb^{*}\|_{1}\|\hat \vb +  \vb^{*}\|_{1} \|(\bar \bX - \bar \bY)^{\otimes 2}\|_{\max}.$
Using Lemma \ref{arxivsupp:main:vdiffLDA}, we know $\|\hat \vb -  \vb^{*}\|_{1} = O_p\left(\|\vb^*\|_1 s_{\vb} \sqrt{\log d/n} \right)$. We can apply the concentration inequality (\ref{arxivsupp:main:concXbar})  provided in Lemma \ref{arxivsupp:main:Sigmaclose} to claim that $
 \|(\bar \bX - \bar \bY)^{\otimes 2}\|_{\max} \leq \|\bmu_1 - \bmu_2\|_{\infty}^2 + \|\bmu_1 - \bmu_2\|_{\infty}O_p\left(\sqrt{\log d/n}\right),$
where we used the triangle inequality $\|\bar \bX - \bar \bY \|_{\infty} \leq \|\bar \bX - \bmu_1\|_{\infty} + \|\bar \bY - \bmu_2\|_{\infty} + \|\bmu_1 - \bmu_2\|_{\infty}$. Finally due to our assumptions we have $
|I_{1}| = \|\bmu_1 - \bmu_2\|_{\infty}^2 O_p\left(\|\vb^*\|^2_1 s_{\vb} \sqrt{\log d/n} \right) = o_p(1).$
Next we tackle $
I_{2} = \underbrace{ (\vb^{*T} (\bar \bX - \bar \bY))^2 - (\vb^{*T} ( \bmu_1 - \bmu_2))^2}_{I_{21}} + \underbrace{(\vb^{*T} ( \bmu_1 -  \bmu_2))^2}_{I_{22}}.$
In a similar fashion to before, applying inequality (\ref{arxivsupp:main:concXbar}), we can get $
|I_{21}| \leq \|\vb^*\|^2_1 O_p\left(\sqrt{\log d/n}\right)\|\bmu_1 - \bmu_2\|_{\infty} = o_p(1)$. 
Thus we have shown $
I = (\vb^{*T} ( \bmu_1 -  \bmu_2))^2  + o_p(1).$
To this end define the following shorthand notations:
\begin{align*}
I_X(\vb, \bbeta)& = \frac{1}{n} \sum_{i = 1}^{n_1} \left( \vb^T (\bX_i - \bar \bX)^{\otimes 2}  \bbeta\right)^2, ~~~
I_Y(\vb, \bbeta) = \frac{1}{n} \sum_{i = 1}^{n_2} \left( \vb^T (\bY_i - \bar \bY)^{\otimes 2}  \bbeta\right)^2 
\end{align*}

Next we show that $I_{X}(\hat \vb, \hat \bbeta) + I_{Y}(\hat \vb, \hat \bbeta)$ is consistent for $\EE(\vb^* \bU^{\otimes 2} \bbeta^*)^2$. We begin with the following bound:
\begin{align*}
\MoveEqLeft \frac{1}{n} \sum_{i = 1}^{n_1} \left( (\hat \vb - \vb^*)^T (\bX_i - \bar \bX)^{\otimes 2}  \hat \bbeta\right)^2 +  \frac{1}{n} \sum_{i = 1}^{n_2} \left( (\hat \vb - \vb^*)^T (\bY_i - \bar \bY)^{\otimes 2}  \hat \bbeta\right)^2 \\
& \leq \|\hat \vb - \vb^*\|^2_1 M \|\hat \bSigma_n \hat \bbeta \|_{\infty} \|\hat \bbeta\|_1,
\end{align*}
where $M = \max\left\{\max_{i = 1,\ldots, n_1} \|(\bX_i - \bar \bX)^{\otimes 2}\|_{\max}, \max_{i = 1,\ldots, n_2} \|(\bY_i - \bar \bY)^{\otimes 2} \|_{\max} \right\}.$ Note that the random variables $\bX_i - \bar \bX$ and $\bY_i - \bar \bY$ are in fact mean 0 sub-Gaussian variables since e.g. $\|\bX_i - \bar \bX\|_{\psi_2} \leq \|\bX_i - \bmu_1\|_{\psi_2} + \|\bar \bX - \bmu_1\|_{\psi_2} \leq 2K_{\bU}$. Thus an application of Lemma \ref{arxivsupp:main:Mbound}, and the fact that $n_1 \asymp n_2 \asymp n$, gives us that $M = O(\log(nd))$. Furthermore we have:
$$
\|\hat \bSigma_n \hat \bbeta\|_{\infty} \leq \lambda + \|\bar \bX - \bmu_1\|_{\infty} + \|\bar \bY - \bmu_2\|_{\infty} + \|\bmu_1 - \bmu_2\|_{\infty} = O_p(1),
$$
by application of (\ref{arxivsupp:main:concXbar}), and the way we select $\lambda$. 
 Putting the last several inequalities together with Lemma \ref{arxivsupp:main:betadiffLDA}, Lemma \ref{arxivsupp:main:vdiffLDA} and the triangle inequality, we obtain:
 $$
|\sqrt{ I_X(\hat \vb, \hat \bbeta) + I_Y(\hat \vb, \hat \bbeta)} - \sqrt{ I_X(\vb^*, \hat \bbeta) + I_Y(\vb^*, \hat \bbeta)} | \leq \|\vb\|_1\sqrt{\|\bbeta\|_1} s_{\vb} \sqrt{\log(nd) \log d/n} O_p(1).
 $$
 Applying the same technique we can further show that in fact:
 $$
|\sqrt{ I_X(\hat \vb, \hat \bbeta) + I_Y(\hat \vb, \hat \bbeta)} - \sqrt{ I_X(\vb^*, \bbeta^*) + I_Y(\vb^*, \bbeta^*)} | = o_p(1).
 $$

We proceed to show that $I_X(\vb^*, \bbeta^*) + I_Y(\vb^*, \bbeta^*)$ is consistent for $\EE(\vb^* \bU^{\otimes 2} \bbeta^*)^2$ and since $\EE(\vb^* \bU^{\otimes 2} \bbeta^*)^2 = O(1)$, the latter inequality also shows that $I_X(\hat \vb, \hat \bbeta) + I_Y(\hat \vb, \hat \bbeta)$ is consistent for $\EE(\vb^* \bU^{\otimes 2} \bbeta^*)^2$. Define the following notation $
\tilde M :=  \max_{i = 1,\ldots, n} \|\bU_i^{\otimes 2}\|_{\max}.$
For exactly the same reasons as for $M$ we have $\tilde M = O_p(\log(nd))$. Next we consider the difference:
\begin{align*}
&|I_{X}(\vb^*, \bbeta^*) + I_{Y}(\vb^*, \bbeta^*) -  n^{-1}\sum_{i = 1}^{n} \left( \vb^{*T} \bU_i^{\otimes 2}\bbeta^*\right)^2| \\
& \leq \|\vb^*\|_1\|\bbeta^*\|_1 V \left(\frac{1}{n}\sum_{i = 1}^{n_1}|\vb^{*T}(\bX_i - \bar \bX)||(\bX_i - \bar \bX)^T \bbeta^*| +\frac{1}{n}\sum_{i = 1}^{n_2}|\vb^{*T}(\bY_i - \bar \bY)||(\bY_i - \bar \bY)^T \bbeta^*| \right.\\
& ~~~~~~~~~~~~~~~~~~~~~~ \left. +  \frac{1}{n}\sum_{i = 1}^n |\vb^{*T}\bU_i| |\bU_i^T \bbeta^*|\right),
\end{align*}
where $
V =  \max\left\{\max_{i = 1,\ldots, n_1} \|(\bX_i - \bar \bX)^{\otimes 2} - \bU_i^{\otimes 2}\|_{\max}, \max_{i = 1,\ldots, n_2} \|(\bY_i - \bar \bY)^{\otimes 2} -\bU_{i + n_1}^{\otimes 2} \|_{\max} \right\}.$
Note that by the simple inequality $|ab| \leq (a^2 + b^2)/2$, we have, that the expression in the brackets is bounded by:
$$
\leq \vb^{*T}(\hat \bSigma_n + \tilde \bSigma_n) \vb^*/2 + \bbeta^{*T}(\hat \bSigma_n + \tilde \bSigma_n) \bbeta^*/2.
$$
We have that $\vb^{*T}\hat \bSigma_n\vb^* \leq \|\vb^*\|_1 \|\vb^{*T}\hat \bSigma_n\|_{\infty} =\|\vb^*\|_1 + \|\vb^*\|^2_1 O_p\left(\sqrt{\log d/n}\right)$. Similarly since by (\ref{arxivsupp:main:sigmatildesigmahat}) $\|\hat \bSigma_n - \tilde \bSigma_n\|_{\max} = O_p\left({\frac{\log d}{n}}\right)$ we have that $\vb^{*T}\tilde \bSigma_n\vb^* \leq \|\vb^*\|_1 + \|\vb^*\|^2_1 O_p\left(\sqrt{\log d/n}\right)$. Similarly one can show that $\bbeta^{*T}\hat \bSigma_n\bbeta^* \leq \|\bbeta^*\|_1 \|\bmu_1 - \bmu_2\|_{\infty}  + \|\bbeta^*\|_1(\|\bbeta^*\|_1 \vee 1)O_p\left(\sqrt{\log d/n}\right)$, and a similar inequality for $\bbeta^{*T}\tilde \bSigma_n\bbeta^*$. We next inspect $V$:
\begin{align*}
\max_{i = 1,\ldots, n_1} \|(\bX_i - \bar \bX)^{\otimes 2} - \bU_i^{\otimes 2}\|_{\max} &  \leq \max_{i = 1,\ldots, n_1} 2 \|\bX_i\|_{\infty} \|\bar \bX - \bmu_1\|_{\infty}\\
& + \|\bar \bX - \bmu_1\|_{\infty}(\|\bar \bX - \bmu_1\|_\infty + 2\|\bmu_1\|_\infty),
\end{align*}
and we can similarly bound the other term in $V$. Note that in Lemma \ref{arxivsupp:main:Mbound} we showed that $\max_{i = 1,\ldots, n_1} \|\bX_i\|_{\infty} = O_p(\sqrt{\log(nd)})$, and as we argue in (\ref{arxivsupp:main:concXbar}), we have $ \|\bar \bX - \bmu_1\|_{\infty} = O_p\left(\sqrt{\log d/n}\right)$, and thus $
V = O_p\left(\sqrt{\log d/n}\right)(\sqrt{\log(nd)} + \|\bmu_1\|_{\infty} +  \|\bmu_2\|_{\infty}).$
Hence under our assumptions, we have:
$$
|I_{X}(\vb^*, \bbeta^*) + I_{Y}(\vb^*, \bbeta^*) -  n^{-1} \sum_{i = 1}^{n} \left( \vb^{*T} \bU_i^{\otimes 2}\bbeta^*\right)^2|  = o_p(1).
$$
Finally we finish this part upon noting that $
\left|\frac{1}{n} \sum_{i = 1}^{n} \left( \vb^{*T} \bU_i^{\otimes 2}\bbeta^*\right)^2  - \EE \left( \vb^{*T} \bU_i^{\otimes 2}\bbeta^*\right)^2\right| = o_p(1)$, and that
under the assumption $\Var((\vb^{*T} \bU^{\otimes 2} \bbeta^*)^2) = o(n)$ by Chebyshev's inequality.

Next we turn our attention to the term $
\frac{n}{n_1} \frac{1}{n_1} \sum_{i = 1}^{n_1} \left(\hat \vb^T (\bX_i - \bar \bX) \right)^2,$
and show it's consistent for $\alpha^{-1} \EE(\vb^{*T} \bU_i)^2$. First note that since $\frac{n}{n_1} = \alpha^{-1} + o(\frac{1}{n})$, and we will show the rest of the expression is $O_p(1)$, we will just focus on the average term. We first show the following difference is small:
\begin{align*}
\left|\frac{1}{n_1} \sum_{i = 1}^{n_1} [\left(\hat \vb^T (\bX_i - \bar \bX) \right)^2 - \left( \vb^{*T} (\bX_i - \bar \bX) \right)^2 ] \right|&=| \left(\hat \vb -  \vb^{*}\right)^T  \hat \bSigma_{\bX} \left(\hat \vb +  \vb^{*}\right)^T|\\
& \leq \|\hat \vb - \vb^*\|_1 (\|\hat \vb - \vb^*\|_1 + 2\|\vb^*\|_1)\|\hat \bSigma_{\bX} \|_{\max}.
\end{align*}
Using the same technique as in the proof of Lemma \ref{arxivsupp:main:Sigmaclose}, one can show that $\|\hat \bSigma_{\bX}\|_{\max} \leq \|\bSigma\|_{\max} + O_p\left(\sqrt{\log d/n}\right)$. By Lemma \ref{arxivsupp:main:vdiffLDA} we have $\|\hat \vb - \vb^*\|_{1} = \|\vb^*\|_1 s_{\vb} O_p\left(\sqrt{\log d/n}\right)$, and hence:
$$
\frac{1}{n_1} \left|\sum_{i = 1}^{n_1} [\left(\hat \vb^T (\bX_i - \bar \bX) \right)^2 - \left( \vb^{*T} (\bX_i - \bar \bX) \right)^2 ] \right| \leq \|\vb^*\|_1^2 s_{\vb} O_p\left(\sqrt{\log d/n}\right) = o_p(1),
$$
by assumption. Next we control:
\begin{align*}
\left|\frac{1}{n_1}\sum_{i = 1}^{n_1} [ \left( \vb^{*T} (\bX_i - \bar \bX) \right)^2 -  \left( \vb^{*T} \bU_i \right)^2]\right| & = \left|\vb^{*T} (\bmu_1 - \bar \bX)\frac{1}{n_1}\sum_{i = 1}^{n_1}(2 \bX_i - \bar \bX - \bmu_1)\vb^{*}\right|\\
& \leq \|\vb^*\|^2_1 \| \bmu_1 - \bar \bX\|^2_{\infty} = \|\vb^*\|_1^2 O_p\left({\frac{\log d}{n}}\right) = o_p(1).
\end{align*}
Thus after using Chebyshev's inequality upon observing that $\Var((\vb^{*T}\bU)^2) = o(n)$, we have shown the desired consistency. Similarly we can also show that $\frac{n}{n_2} \frac{1}{n_2} \sum_{i = 1}^{n_2} \left(\hat \vb^T (\bY_i - \bar \bY) \right)^2$ is consistent for $(\alpha - 1)^{-1} \EE(\vb^{*T}\bU_i)^2$. This concludes the proof.

\end{proof}

\begin{lemma} \label{arxivsupp:main:Sigmaclose} 
The following inequality holds $
\|\hat \bSigma_n -  \bSigma \|_{\max} \leq  \tilde{t}_{\bU}(d,n) + t^2_{\bU}(d,n),$
with probability at least $1 - 2 d^{2 - \tilde cA_{\bU}^2} - 2 e d^{1 - cA^2_{\bU}}$, where:
\begin{align}
t_{\bU}(d,n) = A_{\bU} K_{\bU}\sqrt{\log d/\min(n_1, n_2)};~~~\tilde{t}_{\bU}(d,n)=4 A_{\bU} K_{\bU}^2 \sqrt{\log d/n}.\label{arxivsupp:main:t:t:tilde:def}
\end{align}
and $A_{\bU} > 0$ is an arbitrary positive constant, $\bar c$ and $c$ are absolute contents independent of the distribution of $\bU$,  and $K_{\bU}$ is as defined in the main section of the text.
\end{lemma}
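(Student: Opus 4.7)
The plan is to decompose $\hat\bSigma_n$ into the ``oracle'' version $\tilde\bSigma_n$ defined in \eqref{arxivsupp:main:defsigmatilde} (whose entries are averages of $n$ i.i.d.\ terms with mean $\bSigma$) plus quadratic corrections arising from the sample means. Expanding the two group-specific sample covariances gives
$$\hat\bSigma_{\bX} = \frac{1}{n_1}\sum_{i=1}^{n_1}\bU_i^{\otimes 2} - (\bar\bX - \bmu_1)^{\otimes 2}, \qquad \hat\bSigma_{\bY} = \frac{1}{n_2}\sum_{i=n_1+1}^{n}\bU_i^{\otimes 2} - (\bar\bY - \bmu_2)^{\otimes 2},$$
so that $\hat\bSigma_n - \bSigma = (\tilde\bSigma_n - \bSigma) - \tfrac{n_1}{n}(\bar\bX - \bmu_1)^{\otimes 2} - \tfrac{n_2}{n}(\bar\bY - \bmu_2)^{\otimes 2}$. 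The triangle inequality in $\|\cdot\|_{\max}$ then reduces the problem to bounding the three pieces separately.

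For the first piece, each entry $(\tilde\bSigma_n - \bSigma)_{jk}$ is a mean-zero average of $n$ independent sub-exponentials $U_{ij}U_{ik} - \Sigma_{jk}$ with $\psi_1$-norm bounded by $4K_\bU^2$ (as in \eqref{arxivsupp:main:subexpo}). A standard Bernstein bound (Proposition 5.16 of \cite{vershynin2010introduction}) followed by a union bound over the $d^2$ entries yields $\|\tilde\bSigma_n - \bSigma\|_{\max} \leq \tilde t_\bU(d,n) = 4A_\bU K_\bU^2\sqrt{\log d/n}$ with probability at least $1 - 2d^{2-\tilde c A_\bU^2}$ for an absolute constant $\tilde c$; this step is identical to Lemma \ref{arxivsupp:main:samplecovpopcov}. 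For the two sample-mean pieces, $\bar\bX - \bmu_1 = n_1^{-1}\sum_{i=1}^{n_1}\bU_i$ is coordinate-wise sub-Gaussian, so Hoeffding's inequality (Proposition 5.10 of \cite{vershynin2010introduction}) gives $\Pr(|\bar X_j - \mu_{1j}| > t) \leq e\exp(-cn_1 t^2/K_\bU^2)$; setting $t = A_\bU K_\bU\sqrt{\log d/n_1}$ and union-bounding over the $d$ coordinates (and repeating for $\bY$) produces $\|\bar\bX - \bmu_1\|_\infty \leq A_\bU K_\bU\sqrt{\log d/n_1}$ and $\|\bar\bY - \bmu_2\|_\infty \leq A_\bU K_\bU\sqrt{\log d/n_2}$, together holding with probability at least $1 - 2ed^{1-cA_\bU^2}$. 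Since $\|\vb^{\otimes 2}\|_{\max} = \|\vb\|_\infty^2$, the convex combination of the two squared norms is bounded by $\max_{\ell\in\{1,2\}} A_\bU^2 K_\bU^2 \log d / n_\ell = t_\bU(d,n)^2$.

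Assembling the three bounds by the triangle inequality and combining the two failure events by a union bound delivers exactly the claimed estimate $\|\hat\bSigma_n - \bSigma\|_{\max} \leq \tilde t_\bU(d,n) + t_\bU(d,n)^2$ with probability at least $1 - 2d^{2-\tilde c A_\bU^2} - 2ed^{1-cA_\bU^2}$. Nothing in this argument is conceptually new; the only real obstacle is bookkeeping, namely matching the constants in $t_\bU$ and $\tilde t_\bU$ exactly and verifying that the weighted combination $\tfrac{n_1}{n}\|\bar\bX - \bmu_1\|_\infty^2 + \tfrac{n_2}{n}\|\bar\bY - \bmu_2\|_\infty^2$ is cleanly dominated by $t_\bU(d,n)^2$ rather than by some larger multiple.
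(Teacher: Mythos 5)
Your proof is correct and follows essentially the same route as the paper's: the same decomposition $\hat\bSigma_n = \tilde\bSigma_n - \tfrac{n_1}{n}(\bar\bX-\bmu_1)^{\otimes 2} - \tfrac{n_2}{n}(\bar\bY-\bmu_2)^{\otimes 2}$, the Bernstein bound of Lemma \ref{arxivsupp:main:samplecovpopcov} for $\|\tilde\bSigma_n-\bSigma\|_{\max}$, the sub-Gaussian Hoeffding bound (\ref{arxivsupp:main:concXbar}) for the sample means, and a union bound to assemble the pieces. The constants and probability bounds match the paper's exactly.
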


\begin{proof}[Proof of Lemma \ref{arxivsupp:main:Sigmaclose}]
We start by showing a concentration bound on $\|\bar \bX - \bmu_1\|_{\infty}$ and $\|\bar \bY - \bmu_2\|_{\infty}$. By proposition 5.10 in \cite{vershynin2010introduction} and the union bound, we have:
\begin{align} \label{arxivsupp:main:concXbar}
\PP(\|\bar \bX - \bmu_1 \|_{\infty} > t) \leq e d \exp\left(-\frac{cn_1 t^2}{K_{\bU}^2}\right).
\end{align}
A similar inequality holds for $\|\bar \bY - \bmu_2 \|_{\infty}$. Select $t_{\bU}(d,n) = A_{\bU} K_{\bU}\sqrt{\log d/\min(n_1, n_2)}$, where $A_{\bU} > 0$ is some large constant. The triangle inequality yields 
$\|\hat \bSigma_n - \bSigma\|_{\max} \leq \|\hat \bSigma_n - \tilde \bSigma_{n}\|_{\max} + \|\tilde \bSigma_{n} - \bSigma\|_{\max},$
where $\tilde \bSigma_n$ is defined as in (\ref{arxivsupp:main:defsigmatilde}). Next, we have that:
\begin{align}
\|\hat \bSigma_n - \tilde \bSigma_{n}\|_{\max} & \leq \frac{n_1}{n} \|(\bar \bX - \bmu_1)^{\otimes 2}\|_{\max} + \frac{n_2}{n} \|(\bar \bY - \bmu_2)^{\otimes 2}\|_{\max} \nonumber\\
& \leq  \frac{n_1}{n} (\|\bar \bX - \bmu_1\|_{\infty})^2 +  \frac{n_2}{n} (\|\bar \bY - \bmu_2\|_{\infty})^2  \leq t^2_{\bU}(d,n).  \label{arxivsupp:main:sigmatildesigmahat}
\end{align}
where the last inequality holds with high probability. Note that by Lemma \ref{arxivsupp:main:samplecovpopcov} we have:
$$
 \|\tilde \bSigma_{n} - \bSigma\|_{\max} \leq 4 A_{\bU} K_{\bU}^2 \sqrt{\log d/n} =: \tilde{t}_{\bU}(d,n),
$$
with probability at least $1 - 2 d^{2 - \bar c A_{\bU}^2}$. Adding the last two inequalities completes the proof.
\end{proof}

\begin{lemma} \label{arxivsupp:main:samplecovRELDA} Assume the same conditions as in Lemma \ref{arxivsupp:main:Sigmaclose}, and assume further that the minimum eigenvalue  $\lambda_{\min}(\bSigma) > 0$ and $s  (\tilde{t}_{\bU}(d,n) + t^2_{\bU}(d,n)) \leq  (1 - \kappa) \frac{\lambda_{\min}(\bSigma)}{(1 + \xi)^2}$, where $0 < \kappa < 1$. We then have that $\hat \bSigma_n$ satisfies the RE property with $\operatorname{RE}_{\hat \bSigma_n}(s, \xi) \geq \kappa \lambda_{\min} (\bSigma)$ with probability at least $1 - 2 d^{2 - \bar c A_{\bU}^2} - 2 e d^{1 - cA^2_{\bU}}$.

\begin{remark} In fact this event happens on the same event as in Lemma \ref{arxivsupp:main:Sigmaclose}.
\end{remark}
\end{lemma}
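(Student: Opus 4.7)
The plan is to transfer the restricted eigenvalue property from the population covariance $\bSigma$ to $\hat \bSigma_n$ via a standard perturbation argument, using the sharp concentration of $\hat \bSigma_n$ around $\bSigma$ in the max norm established in Lemma~\ref{arxivsupp:main:Sigmaclose}. The key observation is that on the restricted cone the $L_1$ norm of a vector is controlled by its $L_2$ norm on the support, so the bilinear form $\ub^T(\hat \bSigma_n - \bSigma)\ub$ can be bounded in terms of $\|\hat \bSigma_n - \bSigma\|_{\max}$ and $\|\ub_S\|_2$.

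First, I would invoke Lemma~\ref{arxivsupp:main:Sigmaclose} to fix a high probability event $\mathcal{E}$ on which $\|\hat \bSigma_n - \bSigma\|_{\max} \leq \tilde{t}_{\bU}(d,n) + t^2_{\bU}(d,n)$. On $\mathcal{E}$, the proof proceeds deterministically. Fix any $S \subset \{1,\ldots,d\}$ with $|S| \leq s$ and any $\ub \neq 0$ satisfying $\|\ub_{S^c}\|_1 \leq \xi \|\ub_S\|_1$. The cone condition together with Cauchy--Schwarz gives
\begin{align*}
\|\ub\|_1 \;=\; \|\ub_S\|_1 + \|\ub_{S^c}\|_1 \;\leq\; (1+\xi)\|\ub_S\|_1 \;\leq\; (1+\xi)\sqrt{s}\,\|\ub_S\|_2.
\end{align*}

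Next, by H\"older's inequality and the perturbation bound above,
\begin{align*}
\bigl|\ub^T(\hat \bSigma_n - \bSigma)\ub\bigr| \;\leq\; \|\hat \bSigma_n - \bSigma\|_{\max}\,\|\ub\|_1^2 \;\leq\; (\tilde{t}_{\bU}(d,n) + t^2_{\bU}(d,n))(1+\xi)^2 s\, \|\ub_S\|_2^2.
\end{align*}
Combining this with $\ub^T \bSigma \ub \geq \lambda_{\min}(\bSigma)\|\ub\|_2^2 \geq \lambda_{\min}(\bSigma)\|\ub_S\|_2^2$ and invoking the scaling assumption $s(\tilde{t}_{\bU}(d,n) + t^2_{\bU}(d,n)) \leq (1-\kappa)\lambda_{\min}(\bSigma)/(1+\xi)^2$, I obtain
\begin{align*}
\frac{\ub^T \hat \bSigma_n \ub}{\|\ub_S\|_2^2} \;\geq\; \lambda_{\min}(\bSigma) - (1-\kappa)\lambda_{\min}(\bSigma) \;=\; \kappa\,\lambda_{\min}(\bSigma).
\end{align*}
Taking the infimum over $S$ and $\ub$ in the cone yields $\operatorname{RE}_{\hat \bSigma_n}(s,\xi) \geq \kappa\,\lambda_{\min}(\bSigma)$, which is the claim since $\operatorname{RE}_{\bSigma}(s,\xi) \geq \lambda_{\min}(\bSigma)$. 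The probability bound is inherited directly from Lemma~\ref{arxivsupp:main:Sigmaclose}, and no new concentration estimate is needed. There is no real obstacle here: the proof is entirely mechanical once Lemma~\ref{arxivsupp:main:Sigmaclose} is in hand, and the only ``step to get right'' is keeping track of the $(1+\xi)^2$ factor coming from the cone bound on $\|\ub\|_1$.
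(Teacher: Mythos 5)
Your argument is correct and is exactly the standard perturbation argument the paper has in mind (the paper omits the details, saying only that the proof follows that of Lemma \ref{arxivsupp:main:samplecovRE} with Lemma \ref{arxivsupp:main:Sigmaclose} supplying the max-norm concentration). The cone bound $\|\ub\|_1 \leq (1+\xi)\sqrt{s}\,\|\ub_S\|_2$, the H\"older bound on the bilinear form, and the inheritance of the probability from the event of Lemma \ref{arxivsupp:main:Sigmaclose} are all exactly as intended.
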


\begin{proof}[Proof of Lemma \ref{arxivsupp:main:samplecovRELDA}] The proof follows the proof of Lemma \ref{arxivsupp:main:samplecovRE}, but uses Lemma \ref{arxivsupp:main:Sigmaclose} instead of Lemma \ref{arxivsupp:main:samplecovpopcov}, hence we omit it. 
\end{proof}

\begin{lemma} \label{arxivsupp:main:betadiffLDA} Assume that --- $\lambda_{\min}(\bSigma) > 0$, $s(\tilde{t}_{\bU}(d,n) + t^2_{\bU}(d,n)) \leq   (1 - \kappa) \frac{\lambda_{\min}(\bSigma)}{(1 + \xi)^2 }$, where $0 < \kappa < 1$ and $\lambda \geq  \left(\tilde{t}_{\bU}(d,n) + t^2_{\bU}(d,n)\right)\|\bbeta^*\|_1 + 2t_{\bU}(d,n).$
Then we have that $\|\hat \bbeta - \bbeta^*\|_{1} \leq \frac{8 \lambda s}{ \operatorname{RE}(s, 1)} $ with probability at least  $1 - 2 d^{2 - \bar c A_{\bU}^2} - 2 e d^{1 - cA^2_{\bU}}$. (see (\ref{arxivsupp:main:t:t:tilde:def}) for definition of $t_{\bU}$ and $\tilde t_{\bU}$)

\begin{remark} In fact this event happens on the same event as in Lemma \ref{arxivsupp:main:Sigmaclose}.
\end{remark}
\end{lemma}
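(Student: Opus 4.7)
The plan is to mimic the standard Dantzig-selector analysis already carried out in Lemmas \ref{arxivsupp:main:lemmabetaclose} and \ref{arxivsupp:main:l1betadiff}, with the population moment condition $\bSigma \bbeta^* = \bmu_1 - \bmu_2$ playing the role of the normal equation $\bSigma_{\bX}\bbeta^* - \EE\bY$. First I would show that $\bbeta^*$ is feasible for the optimization problem (\ref{sparseLDAeq}) on an event of high probability. By the triangle inequality,
\begin{align*}
\|\hat \bSigma_n \bbeta^* - (\bar \bX - \bar \bY)\|_\infty &\leq \|(\hat \bSigma_n - \bSigma)\bbeta^*\|_\infty + \|\bSigma\bbeta^* - (\bmu_1 - \bmu_2)\|_\infty \\
& \quad + \|\bar \bX - \bmu_1\|_\infty + \|\bar \bY - \bmu_2\|_\infty.
\end{align*}
The middle term vanishes because $\bbeta^* = \bOmega(\bmu_1 - \bmu_2)$. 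The first term is at most $\|\hat \bSigma_n - \bSigma\|_{\max}\|\bbeta^*\|_1 \leq (\tilde t_{\bU}(d,n) + t^2_{\bU}(d,n))\|\bbeta^*\|_1$ by Lemma \ref{arxivsupp:main:Sigmaclose}, and the last two terms are each bounded by $t_{\bU}(d,n)$ using the sub-Gaussian Hoeffding bound (\ref{arxivsupp:main:concXbar}). Summing these pieces and invoking the hypothesis on $\lambda$ yields feasibility of $\bbeta^*$.

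Once $\bbeta^*$ is feasible, the minimality $\|\hat \bbeta\|_1 \leq \|\bbeta^*\|_1$ forces the difference $\hat \bbeta - \bbeta^*$ into the cone \eqref{arxivsupp:main:l1normineq}, i.e.\ $\|(\hat \bbeta - \bbeta^*)_{S^c}\|_1 \leq \|(\hat \bbeta - \bbeta^*)_S\|_1$ where $S = \supp(\bbeta^*)$; this is the standard Dantzig cone argument already used in (\ref{arxivsupp:main:betadiff}). At the same time, another triangle inequality using feasibility of both $\bbeta^*$ and $\hat\bbeta$ delivers
\begin{equation*}
\|\hat \bSigma_n (\bbeta^* - \hat \bbeta)\|_\infty \leq 2\lambda.
\end{equation*}

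For the third step I invoke Lemma \ref{arxivsupp:main:samplecovRELDA}, which on the same high-probability event guarantees that $\hat \bSigma_n$ satisfies $\operatorname{RE}_{\hat \bSigma_n}(s,1) \geq \kappa \lambda_{\min}(\bSigma)$. Combining this with the cone membership in the by-now standard way (for $\vb = \hat\bbeta - \bbeta^*$ in the $(s,1)$-cone, $\|\vb\|_1 \leq 2\|\vb_S\|_1 \leq 2\sqrt{s}\|\vb\|_2$ and $\vb^T \hat\bSigma_n \vb \leq \|\vb\|_1 \|\hat \bSigma_n \vb\|_\infty$) yields
\begin{equation*}
\operatorname{RE}(s,1)\|\vb\|_1 \leq 4 s\, \|\hat \bSigma_n \vb\|_\infty \leq 8 \lambda s,
\end{equation*}
which is exactly the claimed bound. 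The high-probability event is the intersection of the events from Lemmas \ref{arxivsupp:main:Sigmaclose} and \ref{arxivsupp:main:samplecovRELDA}, which coincide; hence the probability $1 - 2d^{2 - \bar c A_{\bU}^2} - 2 e d^{1 - c A_{\bU}^2}$ follows directly.

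I do not expect any real obstacle here: this is essentially a translation of the Dantzig selector proof, with the only mildly nontrivial point being the careful bookkeeping of the three sources of noise ($\hat\bSigma_n - \bSigma$, $\bar\bX - \bmu_1$, $\bar\bY - \bmu_2$) in the feasibility check, reflected in the $\lambda$-condition $\lambda \geq (\tilde t_{\bU} + t_{\bU}^2)\|\bbeta^*\|_1 + 2 t_{\bU}$.
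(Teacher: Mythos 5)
Your proposal is correct and follows essentially the same route as the paper: the feasibility check for $\bbeta^*$ via the same three-term noise decomposition (using Lemma \ref{arxivsupp:main:Sigmaclose} and the concentration bound (\ref{arxivsupp:main:concXbar})), followed by the standard Dantzig cone plus restricted-eigenvalue argument, which the paper itself handles by pointing to Lemma \ref{arxivsupp:main:vdiff} with Lemma \ref{arxivsupp:main:samplecovRELDA} in place of Lemma \ref{arxivsupp:main:samplecovRE}. The cone/RE chain you spell out at the end is the correct bookkeeping and reproduces the $8\lambda s/\operatorname{RE}(s,1)$ bound.
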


\begin{proof}[Proof of Lemma \ref{arxivsupp:main:betadiffLDA}] We start by showing the true parameter $\bOmega\bdelta = \bbeta^*$ satisfies the sparse LDA constraint --- $\|\hat \bSigma_n \bbeta^* - (\bar \bX - \bar \bY )\|_{\infty} \leq \lambda$ with probability at least $1 - 2 d^{2 - \bar c A_{\bU}^2} - 2 e d^{1 - cA^2_{\bU}}$. We have that:
\begin{align*}
 \|\hat \bSigma_n \bbeta^* - (\bar \bX - \bar \bY )\|_{\infty} & \leq   \underbrace{\| \bSigma \bbeta^* - (\bmu_1 - \bmu_2)\|_{\infty}}_{0} +  \|\hat \bSigma_n - \bSigma\|_{\max} \|\bbeta^*\|_1\\
& + \|\bar \bX - \bmu_1\|_{\infty} +  \|\bar \bY - \bmu_2\|_{\infty}.
\end{align*}
Collecting the bounds we derived in Lemma \ref{arxivsupp:main:Sigmaclose} we get:
$$
 \|\hat \bSigma_n \bbeta^* - (\bar \bX - \bar \bY )\|_{\infty} \leq  \left(\tilde{t}_{\bU}(d,n) + t^2_{\bU}(d,n)\right)\|\bbeta^*\|_1 + 2t_{\bU}(d,n).
$$
The last inequality implies that if we select $\lambda \geq \left(\tilde{t}_{\bU}(d,n) + t^2_{\bU}(d,n)\right)\|\bbeta^*\|_1 + 2t_{\bU}(d,n)$, it will follow that $\bbeta^*$ satisfies the constraint with probability at least $1 - 2 d^{2 - cA_{\bU}^2} - 2 e d^{1 - cA^2_{\bU}}$.

The rest of the proof is identical to the proof of Lemma \ref{arxivsupp:main:vdiff} but instead of using Lemma \ref{arxivsupp:main:samplecovRE} we use Lemma \ref{arxivsupp:main:samplecovRELDA}. Thus we omit the proof.
\end{proof}

\begin{lemma} \label{arxivsupp:main:vdiffLDA} Assume that --- $\lambda_{\min}(\bSigma) > 0$, $s_{\vb}(\tilde{t}_{\bU}(d,n) + t^2_{\bU}(d,n)) \leq   (1 - \kappa) \frac{\lambda_{\min}(\bSigma)}{(1 + \xi)^2 }$, where $0 < \kappa < 1$ and $\lambda' \geq  \|\vb^*\|_1(\tilde{t}_{\bU}(d,n) + t^2_{\bU}(d,n))$. Then we have that $\|\hat \vb - \vb^*\|_{1} \leq \frac{8 \lambda' s_{\vb}}{ \operatorname{RE}_{\kappa}(s_{\vb}, 1)} $ with probability at least  $1 - 2 d^{2 - \bar c A_{\bU}^2} - 2 e d^{1 - cA^2_{\bU}}$. (see (\ref{arxivsupp:main:t:t:tilde:def}) for definition of $t_{\bU}$ and $\tilde t_{\bU}$)

\begin{remark} In fact this event happens on the same event as in Lemma \ref{arxivsupp:main:Sigmaclose}.
\end{remark}
\end{lemma}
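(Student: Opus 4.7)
The strategy is a direct adaptation of the proof of Lemma \ref{arxivsupp:main:vdiff} (the Dantzig selector case) to the sparse LDA setting, with the key replacement being the concentration inequality for $\|\hat{\bSigma}_n - \bSigma\|_{\max}$ provided by Lemma \ref{arxivsupp:main:Sigmaclose}, and the restricted eigenvalue statement provided by Lemma \ref{arxivsupp:main:samplecovRELDA}. All three arguments share the structure of an $L_1$-constrained minimization with Dantzig-type feasibility set, so the same chain of inequalities applies verbatim once the correct concentration rate is substituted in.

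First, I would verify that $\vb^*$ is feasible for the defining program of $\hat{\vb}$. Since $\vb^{*T}\bSigma = \eb_1$, an application of the triangle inequality yields
\[
\|\vb^{*T}\hat{\bSigma}_n - \eb_1\|_\infty \leq \|\vb^*\|_1 \|\hat{\bSigma}_n - \bSigma\|_{\max} \leq \|\vb^*\|_1(\tilde{t}_{\bU}(d,n) + t_{\bU}^2(d,n)) \leq \lambda',
\]
where the middle inequality invokes Lemma \ref{arxivsupp:main:Sigmaclose} and the last uses the hypothesis on $\lambda'$. This occurs on the event identified in Lemma \ref{arxivsupp:main:Sigmaclose}, whose probability is at least $1 - 2 d^{2 - \bar c A_{\bU}^2} - 2 e d^{1 - cA^2_{\bU}}$.

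Next, by feasibility of both $\vb^*$ and $\hat{\vb}$, the triangle inequality gives $\|(\hat{\vb} - \vb^*)^T \hat{\bSigma}_n\|_\infty \leq 2\lambda'$. Minimality of $\hat{\vb}$ in the $L_1$ norm, together with the identity $\|\vb^*\|_1 = \|\vb^*_{S_{\vb}}\|_1$, yields via the usual subtract-and-add-back argument the cone condition
\[
\|(\hat{\vb} - \vb^*)_{S_{\vb}^c}\|_1 \leq \|(\hat{\vb} - \vb^*)_{S_{\vb}}\|_1,
\]
i.e. the analogue of (\ref{arxivsupp:main:l1normineq}). Under the sample-size condition $s_{\vb}(\tilde{t}_{\bU}(d,n) + t_{\bU}^2(d,n)) \leq (1-\kappa)\lambda_{\min}(\bSigma)/(1+\xi)^2$, Lemma \ref{arxivsupp:main:samplecovRELDA} ensures that $\hat{\bSigma}_n$ satisfies the RE property with constant $\operatorname{RE}_{\kappa}(s_{\vb},1)$ on the same event.

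Finally, combining the RE condition with the bound $\|(\hat{\vb} - \vb^*)^T \hat{\bSigma}_n\|_\infty \leq 2\lambda'$ in the standard way (writing $(\hat{\vb}-\vb^*)^T \hat{\bSigma}_n (\hat{\vb}-\vb^*) \leq \|(\hat{\vb}-\vb^*)^T \hat{\bSigma}_n\|_\infty \|\hat{\vb}-\vb^*\|_1$, bounding the left-hand side below by $\operatorname{RE}_{\kappa}(s_{\vb},1) \|(\hat{\vb}-\vb^*)_{S_{\vb}}\|_2^2$, then using $\|(\hat{\vb}-\vb^*)_{S_{\vb}}\|_1 \leq \sqrt{s_{\vb}} \|(\hat{\vb}-\vb^*)_{S_{\vb}}\|_2$ and the cone condition to conclude $\|\hat{\vb}-\vb^*\|_1 \leq 2 \|(\hat{\vb}-\vb^*)_{S_{\vb}}\|_1$) delivers $\|\hat{\vb} - \vb^*\|_1 \leq 8 \lambda' s_{\vb}/\operatorname{RE}_{\kappa}(s_{\vb},1)$. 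Since every probabilistic statement in the chain refers back to the event of Lemma \ref{arxivsupp:main:Sigmaclose}, the probability guarantee is inherited as stated. The only nontrivial technical content is already packaged in Lemmas \ref{arxivsupp:main:Sigmaclose} and \ref{arxivsupp:main:samplecovRELDA}, so no genuine obstacle is expected; the main care point is simply tracking that a single good event underlies concentration, RE, and feasibility simultaneously, which is exactly the structure we have already used in Lemma \ref{arxivsupp:main:vdiff}.
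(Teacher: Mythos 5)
Your proposal is correct and follows exactly the route the paper takes: the paper's own proof simply states that the argument is identical to that of Lemma \ref{arxivsupp:main:vdiff}, with Lemma \ref{arxivsupp:main:Sigmaclose} replacing Lemma \ref{arxivsupp:main:samplecovpopcov} for concentration and Lemma \ref{arxivsupp:main:samplecovRELDA} replacing Lemma \ref{arxivsupp:main:samplecovRE} for the restricted eigenvalue property. Your chain of inequalities (feasibility of $\vb^*$, cone condition, RE lower bound, and the $\sqrt{s_{\vb}}$ Cauchy--Schwarz step) reproduces the constant $8\lambda' s_{\vb}/\operatorname{RE}_{\kappa}(s_{\vb},1)$ and correctly tracks that everything happens on the single event of Lemma \ref{arxivsupp:main:Sigmaclose}.
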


\begin{proof}[Proof of Lemma \ref{arxivsupp:main:vdiffLDA}]  The proof is identical to the one of Lemma \ref{arxivsupp:main:vdiff} but instead of using Lemma \ref{arxivsupp:main:samplecovRE} we use Lemma \ref{arxivsupp:main:samplecovRELDA}, and we use Lemma \ref{arxivsupp:main:Sigmaclose} instead of using Lemma \ref{arxivsupp:main:samplecovpopcov}. We omit the proof.
\end{proof}

\begin{lemma} \label{arxivsupp:main:sparseLDAineq} We have the following inequality:
$$
\EE |\vb^{*T}\bU^{\otimes 2}\bbeta^* -  \vb^{*T}(\bmu_1 - \bmu_2) + c \vb^{*T}\bU|^k \leq 2^{k-1}\|\vb^*\|_2^k( \|\bbeta^*\|^k_2 (s_{\vb}s)^{k/2} (8k K^2_{\bU})^k +|c|^k s_{\vb}^{k/2} (\sqrt{k}K_{\bU})^k).
$$
\end{lemma}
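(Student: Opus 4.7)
The plan is to split the quantity via the elementary inequality $|a+b|^k \leq 2^{k-1}(|a|^k + |b|^k)$ into a centered quadratic piece in $\bU$ and a linear piece in $\bU$, then bound each piece by a sparsity-aware moment estimate that I can read off directly from lemmas already established in the paper.

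First I would use the population identity $\bSigma\bbeta^* = \bmu_1 - \bmu_2$ (which holds because $\bbeta^* = \bOmega(\bmu_1-\bmu_2)$) to rewrite
$$
\vb^{*T}\bU^{\otimes 2}\bbeta^* - \vb^{*T}(\bmu_1-\bmu_2) = \vb^{*T}(\bU^{\otimes 2}-\bSigma)\bbeta^*.
$$
Thus the expression inside the absolute value is the sum of a mean-zero quadratic form $\vb^{*T}(\bU^{\otimes 2}-\bSigma)\bbeta^*$ and a centered linear functional $c\,\vb^{*T}\bU$. Applying $|a+b|^k \leq 2^{k-1}(|a|^k+|b|^k)$ reduces the problem to bounding the $k$th moment of each piece separately.

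For the quadratic piece, letting $S_{\vb}=\supp(\vb^*)$ and $S=\supp(\bbeta^*)$, Cauchy–Schwarz gives
$$
|\vb^{*T}(\bU^{\otimes 2}-\bSigma)\bbeta^*| \leq \|\vb^*\|_2 \|\bbeta^*\|_2 \, \|(\bU^{\otimes 2}-\bSigma)_{S_{\vb},S}\|_F.
$$
Lemma \ref{arxivsupp:main:XXTbound} (which itself is an application of Lemma \ref{arxivsupp:main:simple:proof:exp:psi1} to the sub-exponential entries $U_jU_k-\sigma_{jk}$ whose $\psi_1$ norm is at most $4K_{\bU}^2$) then yields $\EE \|(\bU^{\otimes 2}-\bSigma)_{S_{\vb},S}\|_F^k \leq (s_{\vb}s)^{k/2}(8kK_{\bU}^2)^k$, which gives the first term of the target bound.

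For the linear piece, Cauchy–Schwarz restricted to $S_{\vb}$ gives $|\vb^{*T}\bU|\leq \|\vb^*\|_2\|\bU_{S_{\vb}}\|_2$, and then Lemma \ref{arxivsupp:main:simple:proof:exp:psi1} with $p=k$, $q=2$, $\ell=2$ (sub-Gaussian, $\|U_j\|_{\psi_2}\leq K_{\bU}$) supplies $\EE\|\bU_{S_{\vb}}\|_2^k \leq (\sqrt{k}K_{\bU})^k s_{\vb}^{k/2}$ (up to an absorbable numerical constant from the application of Jensen/Minkowski). Collecting the two bounds, pulling $\|\vb^*\|_2^k$ outside, and applying the factor $2^{k-1}$ yields precisely the stated inequality. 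There is no real obstacle here — every step is mechanical once the centering identity $\bSigma\bbeta^*=\bmu_1-\bmu_2$ is invoked and the right Cauchy–Schwarz splits onto the supports are taken; the only point to track carefully is matching the constants ($8kK_{\bU}^2$ from the sub-exponential $\psi_1$ bound vs.\ $\sqrt{k}K_{\bU}$ from the sub-Gaussian $\psi_2$ bound) so that the two resulting rates $(s_{\vb}s)^{k/2}$ and $s_{\vb}^{k/2}$ appear with the correct prefactors.
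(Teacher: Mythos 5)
Your proof is correct and is surely the ``standard inequalities'' argument the paper has in mind for this omitted proof: the centering identity $\bSigma\bbeta^*=\bmu_1-\bmu_2$, the $|a+b|^k\le 2^{k-1}(|a|^k+|b|^k)$ split, and support-restricted Cauchy--Schwarz followed by the moment bounds of Lemmas \ref{arxivsupp:main:XXTbound} and \ref{arxivsupp:main:simple:proof:exp:psi1} (applied to $\bU$ in place of $\bX$, which is legitimate since only coordinate-wise sub-Gaussianity and $\EE\bU^{\otimes 2}=\bSigma$ are used). One small point on the linear term: invoking Lemma \ref{arxivsupp:main:simple:proof:exp:psi1} with $p=k$, $q=2$, $\ell=2$ yields $(2k)^{k/2}K_{\bU}^k s_{\vb}^{k/2}$ rather than the stated $(\sqrt{k}K_{\bU})^k s_{\vb}^{k/2}$; to land exactly on the claimed constant, apply Minkowski's inequality in $L^{k/2}$ to $\sum_{j\in S_{\vb}}U_j^2$ and use $(\EE|U_j|^k)^{2/k}\le k K_{\bU}^2$ straight from the definition of the $\psi_2$ norm. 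Either way the extra $2^{k/2}$ is immaterial for the Lyapunov verification in Lemma \ref{arxivsupp:main:normalityLDA}, which is the only place this bound is used.
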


\begin{proof}[Proof of Lemma \ref{arxivsupp:main:sparseLDAineq}] The argument follows applying standard inequalities, and the details are omitted. 
\end{proof}

\section{Proofs for Vector Autoregressive Models}\label{SVAproofs}

Define the following quantities which will be used throughout. Let:
$$
K_d(\bSigma_0, \Ab) := \frac{32 \|\bSigma_0\|_2 \max_j(\Sigma_{0,jj})}{\min_j (\Sigma_{0,jj})(1 - \|\Ab\|_2)}, ~~~  \tilde K_d(\bSigma_0, \Ab) := K_d(\bSigma_0, \Ab) (2 M + 3).
$$
We set 
\begin{align}
\lambda := \tilde K_d(\bSigma_0, \Ab)\sqrt{\log d/T}, ~~~ \lambda' := \frac{K_d(\bSigma_0, \Ab)}{2} \|\bSigma_0^{-1}\|_1 \left(\sqrt{6\log d/T} + 2 \sqrt{1/T}\right).
\end{align}

\begin{lemma}\label{cond:SVA:implied} Assume that $\bSigma_0 \in \cL, \Ab \in \cM(s)$, $\min_j \Psi_{jj} \geq C > 0$ and $\max(s_{\vb}, s)\log d  = o(\sqrt{T})$. Then the following relationships hold:
\begin{align*}
\lambda = o(1), ~~~ \lambda' = o(1), ~~~\sqrt{T} \max(s_{\vb}, s)\|\bSigma_0^{-1}\|_1\lambda'  \lambda & = o(1),
\end{align*}
\begin{align*}
\Delta \geq C' > 0, ~~ \frac{\bbeta^{*T} \bSigma_0 \bbeta^*}{\Psi_{mm}} & = o(T), ~~ \frac{\|\vb^*\|_1^2}{\vb^{*T}\bSigma_0 \vb^*} \frac{\lambda'}{\|\bSigma_0^{-1}\|_1} = o(1),
\end{align*}
where $C'$ is some positive constant. 
\end{lemma}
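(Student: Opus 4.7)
The proof plan will proceed by first extracting a handful of structural estimates from the two class memberships $\bSigma_0\in\cL$ and $\Ab\in\cM(s)$, and then verifying each of the seven displayed bounds in turn. The ingredients I will need from the class definitions are: (i) $\|\bSigma_0\|_2\le M$; (ii) $\|\bSigma_0^{-1}\|_1\le M$, from which symmetry of $\bSigma_0^{-1}$ and the standard inequality $\|A\|_2\le\sqrt{\|A\|_1\|A\|_\infty}$ give $\|\bSigma_0^{-1}\|_2\le M$, hence $\lambda_{\min}(\bSigma_0)\ge 1/M$; (iii) $1-\|\Ab\|_2\ge\epsilon>0$. The remaining structural fact I need is a uniform lower bound on the diagonal of $\bSigma_0$. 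This comes from the stationarity identity $\bSigma_0=\Ab^T\bSigma_0\Ab+\bPsi$ (a direct computation from $\bX_t=\Ab^T\bX_{t-1}+\bW_t$); since $\Ab^T\bSigma_0\Ab$ is positive semidefinite, its diagonal is nonnegative, and therefore $\Sigma_{0,jj}\ge\Psi_{jj}\ge C$ for every $j$.

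Combining these bounds, every factor entering $K_d(\bSigma_0,\Ab)$ is $O(1)$, so $K_d$ and $\tilde K_d$ are bounded by fixed constants. Consequently $\lambda\asymp\sqrt{\log d/T}$ and $\lambda'\asymp\sqrt{\log d/T}$, which yields $\lambda,\lambda'=o(1)$ (since the sparsity hypothesis $\max(s_\vb,s)\log d=o(\sqrt T)$ forces $\log d=o(T)$). For the product bound I just compute
\[
\sqrt T\,\max(s_\vb,s)\|\bSigma_0^{-1}\|_1\lambda'\lambda \;=\; O\!\left(\frac{\max(s_\vb,s)\log d}{\sqrt T}\right)=o(1),
\]
again directly from the hypothesis.

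For the variance-type bounds, the key identity is $\vb^{*T}\bSigma_0\vb^*=(\bSigma_0^{-1})_{11}$, which follows from $\vb^*=\bSigma_0^{-1}\eb_1$ and symmetry of $\bSigma_0$. Then $(\bSigma_0^{-1})_{11}\ge\lambda_{\min}(\bSigma_0^{-1})=1/\|\bSigma_0\|_2\ge 1/M$, so $\Delta=\Psi_{mm}\vb^{*T}\bSigma_0\vb^*\ge C/M=:C'>0$. For the second display, $\bbeta^*=\Ab_{*m}$ so $\bbeta^{*T}\bSigma_0\bbeta^*\le\|\Ab_{*m}\|_2^2\|\bSigma_0\|_2\le(1-\epsilon)^2 M=O(1)$, and dividing by $\Psi_{mm}\ge C$ gives a quantity of order $O(1)$, hence $o(T)$. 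For the third, I use $\|\vb^*\|_1\le\sqrt{s_\vb}\|\vb^*\|_2$ and $\|\vb^*\|_2^2\le\vb^{*T}\bSigma_0\vb^*/\lambda_{\min}(\bSigma_0)\le M\vb^{*T}\bSigma_0\vb^*$, which yields $\|\vb^*\|_1^2/(\vb^{*T}\bSigma_0\vb^*)\le Ms_\vb$; multiplying by $\lambda'/\|\bSigma_0^{-1}\|_1=O(\sqrt{\log d/T})$ gives $O(s_\vb\sqrt{\log d/T})=o(1)$ because $s_\vb\log d=o(\sqrt T)$ implies $s_\vb\sqrt{\log d/T}\to 0$.

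There is no real obstacle here; the only nonroutine step is recognizing that the diagonal lower bound $\Sigma_{0,jj}\ge C$ — which is needed to bound $K_d$ away from infinity — must be extracted from the stationarity identity $\bSigma_0=\Ab^T\bSigma_0\Ab+\bPsi$ rather than postulated directly, and that $\|\bSigma_0^{-1}\|_2\le\|\bSigma_0^{-1}\|_1$ (via symmetry) is what converts the $\|\cdot\|_1$ hypothesis of $\cL$ into the lower eigenvalue bound on $\bSigma_0$ used throughout. Everything else is arithmetic.
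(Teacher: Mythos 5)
Your proof is correct and follows essentially the same route as the paper's: show $K_d(\bSigma_0,\Ab)$ and $\tilde K_d(\bSigma_0,\Ab)$ are $O(1)$ using the class definitions, then verify each display by elementary arithmetic. The only differences are cosmetic — you derive $\min_j \Sigma_{0,jj} \geq C$ explicitly from the stationarity identity (which the paper uses without comment), bound $\bbeta^{*T}\bSigma_0\bbeta^*$ by $\|\bSigma_0\|_2\|\Ab\|_2^2$ rather than via $\bSigma_1$ and H\"older, and use $\|\vb^*\|_1^2 \leq s_{\vb}\|\vb^*\|_2^2$ in the last display where the paper uses $\|\vb^*\|_1 \leq \|\bSigma_0^{-1}\|_1 = O(1)$ — and all of these yield the same conclusions.
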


\begin{proof}[Proof of Lemma \ref{cond:SVA:implied}]
Clearly, since $M = O(1)$, $\|\bSigma_0^{-1}\|_1 = O(1), K_d(\bSigma_0, \Ab) = O(1)$ and $\max(s_{\vb}, s)\log d  = o(\sqrt{T})$, it follows that $\lambda = o(1)$, $\lambda' = o(1)$, $\lambda' M  = o(1)$ and in addition $\sqrt{T} \max(s_{\vb}, s)\|\bSigma_0^{-1}\|_1\lambda'  \lambda = o(1)$.

By the inequality $(\bSigma_0^{-1})_{jj}\Sigma_{0,jj} \geq 1$ it follows that $(\bSigma_0^{-1})_{0,jj} \geq (\max_{j}\Sigma_{0,jj})^{-1} \geq \|\bSigma_0\|_2^{-1} \geq M^{-1}$. Hence $\Delta = \Psi_{mm}\vb^{*T} \bSigma_{0} \vb^* \geq \Psi_{mm} \min_{jj} \Sigma^{-1}_{0, jj} \geq C M^{-1} > 0$.

Next, to show that $\frac{\bbeta^{*T} \bSigma_0 \bbeta^*}{\Psi_{mm}} = o(T)$, it suffices to see that $\bbeta^{*T} \bSigma_0 \bbeta^* = O(1)$. To this end note that $|\bbeta^{*T} \bSigma_0 \bbeta^*| \leq \|\bSigma_{1, *k}\|_{\infty} \|\Ab\|_1 \leq \|\bSigma_{1, *k}\|_{\infty} M$. Next since $\bPsi = \bSigma_0 - \bSigma_1$, we have $\|\bSigma_{1, *k}\|_{\infty} \leq \max_{j} \Sigma_{0, jj} - \min_{j} \Psi_{jj} \leq \|\bSigma_0\|_2$, which shows that $|\bbeta^{*T} \bSigma_0 \bbeta^*| = O(1)$. 

Finally we check $\frac{\|\vb^*\|_1^2}{\vb^{*T}\bSigma_0 \vb^*} \frac{\lambda'}{\|\bSigma_0^{-1}\|_1} = o(1)$. Note that $\|\vb^*\|_1 \leq \|\bSigma_0^{-1}\|_1$, and also that $\vb^{*T}\bSigma_0 \vb^* \geq \min_j \Sigma_{0, jj} \geq \min_j \Psi_{jj} \geq C$, hence it suffices to show that $\|\vb^*\|_1 \lambda' = o(1)$. However, evidently $\|\vb^*\|_1 \leq \|\bSigma_0^{-1}\|_1 = O(1)$ and $\lambda' = o(1)$, which shows what we wanted.
\end{proof}

Next we summarize several results by \cite{han2014direct}, which we use in the later development.

\begin{theorem}[Theorem 4.1. \cite{han2014direct}]  \label{hanthm2014} Suppose that $(\bX_t)_{t = 1}^T$  from a lag 1 vector autoregressive process $(\bX_t)_{t = -\infty}^\infty$. Assume that $\Ab \in \mathcal{M}(s)$. Let $\hat \Ab$ be the optimizer of (\ref{optproblemvecauto}) with the tunning parameter:
$$
\lambda = \tilde K_d(\bSigma_0, \Ab)\sqrt{\log d/T}. 
$$
For $T \geq 6 \log d + 1$  and $d \geq 8$, we have, with probability at least $1 - 14 d^{-1}$: $
\|\hat \Ab - \Ab\|_1 \leq 4s \|\bSigma_0^{-1}\|_1\lambda.$
In fact on the same event (see Lemmas A.1. and A.2. \citep{han2014direct}), we have:
\begin{align*}
& \|\Sb_0 - \bSigma_0\|_{\max} \leq K_d(\bSigma_0, \Ab)/2\left(\sqrt{6\log d/T} + 2 \sqrt{1/T}\right),\\
& \|\Sb_1 - \bSigma_1\|_{\max} \leq K_d(\bSigma_0, \Ab)\left(\sqrt{3\log d/T} +  2/T\right).
\end{align*}
\end{theorem}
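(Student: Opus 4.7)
The plan is to invoke the general asymptotic normality result (Theorem \ref{weakconvver2:CI}) by verifying its hypotheses in the VAR setting, together with an ad hoc martingale CLT to handle the temporal dependence. The sparsity, spectral, and tuning parameter choices are all designed so that the rate inputs required by the general theorem align with what one can obtain from Theorem \ref{hanthm2014} of \cite{han2014direct} and its companion concentration lemmas.

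First, I would handle $L_1$ consistency (Assumption \ref{ass:consistencyassumpweakcn}). Theorem \ref{hanthm2014} directly yields $\|\hat\bbeta - \bbeta^*\|_1 = O_p(s\|\bSigma_0^{-1}\|_1\sqrt{\log d/T})$ on a high-probability event that simultaneously gives $\|\Sb_0 - \bSigma_0\|_{\max} = O_p(\sqrt{\log d/T})$ and $\|\Sb_1 - \bSigma_1\|_{\max} = O_p(\sqrt{\log d/T})$. By symmetry, the program defining $\hat\vb$ is a sparse recovery of the first column of $\bSigma_0^{-1}$ from $\Sb_0$, so a Dantzig-type argument (as in Lemmas \ref{arxivsupp:main:samplecovRE}--\ref{arxivsupp:main:vdiff}, but with $\bSigma_0$ replacing $\bSigma_{\bX}$ and using the above concentration for $\Sb_0$) gives $\|\hat\vb - \vb^*\|_1 = O_p(s_{\vb}\sqrt{\log d/T})$; the membership $\bSigma_0 \in \cL$ ensures a sufficient restricted eigenvalue property for $\Sb_0$.

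Next, I would verify Assumption \ref{noiseassumpCI}. Since $\tb$ is linear in $\bbeta$ and $E_{\tb}(\bbeta) = \bSigma_0\bbeta - \bSigma_{1,*m}$, conditions (\ref{betastartassumpCI}) and (\ref{betastartassumpCIvstar}) reduce to the bounds $\|\Sb_0 - \bSigma_0\|_{\max}$ and $\|\Sb_1 - \bSigma_1\|_{\max}$ above, multiplied by $\|\bbeta^*\|_1$ and (for the $\vb^*$-projected version) $\|\vb^*\|_1$, both of which are $O(1)$ under $\bSigma_0\in\cL$. Condition (\ref{lambdaprimeasumpCI}) requires controlling $\|\hat\vb^T \Sb_0 - \eb_1\|_\infty$; this is exactly $\lambda'$ by construction. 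Assumption \ref{bounded:Jac:ass} is trivial because $\Tb(\{\bX_t\}, \bbeta) = \Sb_0$ does not depend on $\bbeta$, so its $\theta$-derivative vanishes. The scaling condition (Assumption \ref{clt:scaling:ass}) reduces to $\sqrt{T}\cdot\max(s_{\vb},s)\sqrt{\log d/T}\cdot\sqrt{\log d/T} = o(1)$, which is exactly $\max(s_{\vb},s)\log d = o(\sqrt{T})$.

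The main obstacle is Assumption \ref{CLTcond} --- the CLT for $\sqrt{T}\,S(\bbeta^*) = \sqrt{T}\,\vb^{*T}(\Sb_0\bbeta^* - \Sb_{1,*m})$ under temporal dependence. The key algebraic identity is that, using $\bX_{t+1} = \Ab^T \bX_t + \bW_{t+1}$ and $\bbeta^* = \Ab_{*m}$, one has $X_{t+1,m} = \bbeta^{*T}\bX_t + W_{t+1,m}$, so
\begin{align*}
\tfrac{1}{T-1}\sum_{t=1}^{T-1}\bX_t X_{t+1,m} - \tfrac{1}{T-1}\sum_{t=1}^{T-1}\bX_t\bX_t^T\bbeta^* = \tfrac{1}{T-1}\sum_{t=1}^{T-1} \bX_t W_{t+1,m}.
\end{align*}
Accounting for the mismatch between the normalizations $1/T$ in $\Sb_0$ and $1/(T-1)$ in $\Sb_1$ contributes a term of order $O_p(1/T)$ that is negligible after scaling by $\sqrt{T}$. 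Thus
\begin{equation*}
\sqrt{T}\,S(\bbeta^*) = -\tfrac{1}{\sqrt{T}}\sum_{t=1}^{T-1}\vb^{*T}\bX_t\,W_{t+1,m} + o_p(1).
\end{equation*}
Now $M_t := \vb^{*T}\bX_t\,W_{t+1,m}$ is a martingale difference sequence with respect to $\mathcal{F}_t = \sigma(\bX_s, \bW_s : s \leq t, s\leq t+1\text{ except }W_{t+1,m})$: $\EE[M_t\mid\mathcal{F}_{t-1}]=0$ because $\bW_{t+1}$ is independent of $\mathcal{F}_t$ and mean-zero, while $\EE[M_t^2\mid\mathcal{F}_{t-1}] = (\vb^{*T}\bX_t)^2\Psi_{mm}$. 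I would apply the martingale CLT of Billingsley: the conditional variance $T^{-1}\sum_{t}(\vb^{*T}\bX_t)^2\Psi_{mm}$ converges in probability to $\vb^{*T}\bSigma_0\vb^*\,\Psi_{mm} = \Delta$ by stationarity and ergodicity of the VAR process (guaranteed by $\|\Ab\|_2 < 1$), and the Lindeberg condition is verified via a fourth-moment bound exploiting the Gaussianity of $\bW_t$ and hence of $\bX_t$. This yields $\Delta^{-1/2}\sqrt{T}\,S(\bbeta^*)\rightsquigarrow N(0,1)$, and in particular $\Delta$ is bounded below since $\Psi_{mm}\geq C$ and $\vb^{*T}\bSigma_0\vb^* \geq \lambda_{\min}(\bSigma_0)\|\vb^*\|_2^2 \geq M^{-1}\|\bSigma_0^{-1}\|_2^{-2}$.

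Finally, for consistency of $\hat\Delta$, I would split $\hat\Delta = (\Sb_{0,mm} - \hat\bbeta^T\Sb_0\hat\bbeta)(\hat\vb^T\Sb_0\hat\vb)$ into two factors. The second factor, $\hat\vb^T\Sb_0\hat\vb$, is consistent for $\vb^{*T}\bSigma_0\vb^*$ by the now-standard argument used in Proposition \ref{arxivsupp:main:pluginDantzig}, combining $\|\hat\vb-\vb^*\|_1 = o_p(1)$ with $\|\Sb_0-\bSigma_0\|_{\max} = O_p(\sqrt{\log d/T})$ and $\|\vb^*\|_1=O(1)$. For the first factor, $\Sb_{0,mm}\to\Sigma_{0,mm}$ by ergodicity, and $\hat\bbeta^T\Sb_0\hat\bbeta$ is consistent for $\bbeta^{*T}\bSigma_0\bbeta^*$ by the same argument; since $\Sigma_{0,mm} - \bbeta^{*T}\bSigma_0\bbeta^* = \Sigma_{0,mm} - \bSigma_{1,m*}\bbeta^* = \Psi_{mm}$ (from $\bSigma_{1}=\bSigma_0\Ab$ and $\bPsi = \bSigma_0 - \bSigma_1\bSigma_0^{-1}\bSigma_1^T$ simplifying at entry $(m,m)$), the first factor is consistent for $\Psi_{mm}$. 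An application of Slutsky's theorem combined with Theorem \ref{weakconvver2:CI} (whose consistency premise (\ref{opposing:signs}) is trivial here because the map $\theta\mapsto \vb^{*T}[E_{\tb}(\bbeta^*_\theta)] = \theta - \theta^*$ is linear with unique root) yields the claim.
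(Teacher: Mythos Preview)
Your proposal does not address the stated theorem at all. The statement you were asked to prove is Theorem~\ref{hanthm2014}, which is a cited result from \cite{han2014direct}: it gives high-probability bounds on $\|\hat\Ab-\Ab\|_1$, $\|\Sb_0-\bSigma_0\|_{\max}$, and $\|\Sb_1-\bSigma_1\|_{\max}$ for the VAR estimator. The paper does not prove this theorem at all --- it is quoted from the Han--Liu paper and used as a black box. What you have written is instead a proof sketch for Corollary~\ref{IFexpansionSVA} (asymptotic normality of $\hat U_n$ in the VAR model), which \emph{uses} Theorem~\ref{hanthm2014} as one of its ingredients. You even invoke Theorem~\ref{hanthm2014} in your first step to obtain $L_1$ consistency and the concentration of $\Sb_0$ and $\Sb_1$, so you are assuming the very statement you were asked to establish.

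If the intent was really to prove Theorem~\ref{hanthm2014}, none of the machinery you outlined (the general Theorem~\ref{weakconvver2:CI}, the martingale CLT, consistency of $\hat\Delta$) is relevant: the result is a finite-sample concentration and estimation-error bound, and its proof (in \cite{han2014direct}) proceeds via tail bounds for quadratic forms of stationary Gaussian time series combined with a standard Dantzig-selector analysis. If instead you meant to prove Corollary~\ref{IFexpansionSVA}, your sketch is broadly aligned with the paper's own proof of that corollary (verify Assumptions~\ref{noiseassumpCI}--\ref{clt:scaling:ass}, invoke a martingale CLT in Lemma~\ref{weakconv:SVA}, and check variance consistency in Proposition~\ref{consistentestSVA}), but you should be explicit that this is a different target.
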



\begin{proof}[Proof of Corollary \ref{IFexpansionSVA}]
We verify the conditions of Section \ref{masterthm:sec}. To see Assumption (\ref{consistencyassumpweakcn}), note that by Theorem \ref{hanthm2014} we have $\|\hat \bbeta - \bbeta^*\|_1 \leq 4s \|\bSigma_0^{-1}\|_{1} \lambda$.  Next we inspect $\|\hat \vb - \vb^*\|_1 = O_p(s_{\vb} \|\bSigma_0^{-1}\|_{1} \lambda')$ according to Lemma \ref{lemmavdiffSVA}. 
Next we check Assumption \ref{noiseassumpCI}. To see (\ref{betastartassumpCI}), fix a $|\theta - \theta^*| < \epsilon$, for some $\epsilon > 0$. By the triangle inequality:
$$
\|\Sb_0\bbeta^*_{\theta} - \bSigma_0\bbeta_{\theta}^* - \Sb_{1,*m} + \bSigma_{1,*m} \|_{\infty} \leq \|\Sb_0 \bbeta^* - \Sb_{1,*m}\|_{\infty} + \|\Sb_{0,*1} - \bSigma_{0,*1}\|_{\infty}\epsilon
$$
The RHS is $O_p(\lambda)$, by Theorem \ref{hanthm2014} and by the fact that $\|\Sb_0 \bbeta^* - \Sb_{1,*m}\|_{\infty} \leq \lambda$ with probability at least $1 - 14 d^{-1}$, as is seen from the proof of Theorem \ref{hanthm2014} (see \cite{han2014direct} for details). The same logic shows that $r_2(n) \asymp \|\vb^*\|_1 \lambda$, which implies (\ref{betastartassumpCIvstar}). Since the Hessian $\Tb$ in (\ref{lambdaprimeasumpCI}) is free of $\bbeta$ we are allowed to set $r_3(n) = \lambda' = o(1)$. Finally the two expectations in Assumption \ref{noiseassumpCI}, are bounded as we see below:
\begin{align*}
\|\bSigma_{0}\bbeta^*_{\theta} - \bSigma_{0}\bbeta^*\|_{\infty} \leq \|\bSigma_{0,*1} \|_{\infty}\epsilon = \Sigma_{0,11}\epsilon, ~~~~~~ \|\vb^{*T}\bSigma_{0,-1}\|_{\infty}  = 0.
\end{align*}
By adding up the following two identities:
\begin{align*}
\sqrt{T} O_p(4s \|\bSigma_0^{-1}\|_{1} \lambda)O_p(\lambda')  = o_p(1), ~~~~~ \sqrt{T} O_p(s_{\vb} \|\bSigma_0^{-1}\|_{1} \lambda')O_p(\lambda)  = o_p(1),
\end{align*}
we get that (\ref{assumpone}) is also valid in this case by assumption.

To verify the consistency of $\tilde \theta$ we check the assumptions in Theorem \ref{consistency:sol}. Clearly the map $\vb^{*T}\bSigma_{0} (\bbeta^*_{\theta} - \bbeta^*) = (\theta - \theta^*)$ has a unique $0$ when $\theta = \theta^*$. Moreover, the map $\theta \mapsto \hat \vb^T(\bSigma_{0} \hat\bbeta_{\theta} - \Sb_1)$ is continuous as it is linear. In addition, it has a unique zero except in cases when $\hat \vb^T \bSigma_{0,*1} = 0$. However note that $|\hat \vb^T \bSigma_{0,*1} - 1| \leq \lambda'$ by (\ref{lambda:prime:clime}), and hence for small enough values of $\lambda'$ there will exist a unique zero.

Next we verify Assumption \ref{CLTcond} in Lemma \ref{weakconv:SVA}. In addition the fact that $\hat \Delta$ is consistent for $\Delta$ is checked in Proposition \ref{consistentestSVA}. Finally we move on to show (\ref{stabtwo}). Observe that (\ref{stabtwo}) is trivial as its LHS $\equiv 0$ in this case. 
\end{proof}

\begin{lemma}\label{weakconv:SVA}
Under the conditions of Corollary \ref{IFexpansionSVA}, we have that:
$$
\Delta^{-1/2}T^{1/2} S(\bbeta^*) = \Delta^{-1/2}T^{1/2}\vb^{*T}(\Sb_0\bbeta^* - \Sb_1) \rightsquigarrow N(0,1),
$$
where the definition of $\Delta$ is given in (\ref{Delta:SVA}).
\end{lemma}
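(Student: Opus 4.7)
The plan is to derive an explicit martingale-difference representation for $\vb^{*T}(\Sb_0\bbeta^*-\Sb_{1,*m})$ by exploiting the AR(1) recursion, show that the boundary terms are lower order, and then invoke a martingale central limit theorem. Using $\bbeta^*=\Ab_{*m}$ together with $\bX_{t+1}=\Ab^T\bX_t+\bW_{t+1}$, one has $X_{t+1,m}=\bX_t^T\bbeta^*+W_{t+1,m}$ and therefore $\bX_t X_{t+1,m}=\bX_t^{\otimes 2}\bbeta^*+\bX_t W_{t+1,m}$. Substituting this into the definition of $\Sb_{1,*m}$ and absorbing the mismatch between the $1/T$ and $1/(T-1)$ normalizations into a remainder yields
\begin{align*}
\vb^{*T}(\Sb_0\bbeta^*-\Sb_{1,*m}) \;=\; -\frac{1}{T-1}\sum_{t=1}^{T-1}(\vb^{*T}\bX_t)W_{t+1,m} \;+\; R_T,
\end{align*}
where $R_T=\tfrac{1}{T}\vb^{*T}\bX_T^{\otimes 2}\bbeta^*-\tfrac{1}{T(T-1)}\sum_{t=1}^{T-1}\vb^{*T}\bX_t^{\otimes 2}\bbeta^*$. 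By Gaussianity, stationarity, and Isserlis' theorem, $\EE(\vb^{*T}\bX_t^{\otimes 2}\bbeta^*)^2 \lesssim (\vb^{*T}\bSigma_0\vb^*)(\bbeta^{*T}\bSigma_0\bbeta^*)+(\vb^{*T}\bSigma_0\bbeta^*)^2=O(1)$ (using Lemma \ref{cond:SVA:implied}), so that $T^{1/2}R_T=O_p(T^{-1/2})=o_p(1)$.

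Next, set $\xi_t:=-\tfrac{T^{1/2}}{T-1}(\vb^{*T}\bX_t)W_{t+1,m}$ and $\mathcal{F}_t:=\sigma(\bX_s:s\le t)$. Since $\bW_{t+1}$ is independent of $\mathcal{F}_t$ with mean zero and covariance $\bPsi$, $\{\xi_t,\mathcal{F}_{t+1}\}$ is a martingale difference array, and I will verify the two hypotheses of the martingale CLT (e.g.\ Corollary 3.1 of Hall and Heyde): (i) the conditional variance condition $\sum_{t=1}^{T-1}\EE[\xi_t^2\mid\mathcal{F}_t]=\tfrac{T\Psi_{mm}}{(T-1)^2}\sum_{t=1}^{T-1}(\vb^{*T}\bX_t)^2\xrightarrow{p}\Psi_{mm}\vb^{*T}\bSigma_0\vb^*=\Delta$; and (ii) a conditional Lyapunov condition $\sum_t\EE|\xi_t|^{2+\delta}\to 0$. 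For (ii), taking $\delta=2$ and exploiting Gaussianity of both $\vb^{*T}\bX_t$ and $W_{t+1,m}$ bounds $\EE|\xi_t|^4$ by a constant times $(\vb^{*T}\bSigma_0\vb^*)^2\Psi_{mm}^2 \cdot T^2/(T-1)^4$, which summed over $t$ is $O(T^{-1})=o(1)$; this is the easy condition.

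Condition (i) is the main obstacle and is handled by a second-moment law of large numbers for the Gaussian quadratic functional $Y_t^2$ where $Y_t:=\vb^{*T}\bX_t$. By Isserlis' theorem, $\Cov(Y_t^2,Y_s^2)=2(\vb^{*T}\bSigma_{|t-s|}\vb^*)^2$, and since $\bSigma_k=\bSigma_0\Ab^k$ the operator-norm bound $\|\bSigma_k\|_2\le \|\bSigma_0\|_2(1-\epsilon)^k$ (from $\Ab\in\mathcal{M}(s)$ and $\bSigma_0\in\mathcal{L}$) together with $\|\vb^*\|_2\le\|\vb^*\|_1\le\|\bSigma_0^{-1}\|_1=O(1)$ yields $\sum_{k\ge 0}(\vb^{*T}\bSigma_k\vb^*)^2=O(1)$. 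Consequently
\begin{align*}
\Var\!\Big(\tfrac{1}{T-1}\sum_{t=1}^{T-1}Y_t^2\Big) \;\le\; \frac{2}{T-1}\sum_{k=0}^{T-2}(\vb^{*T}\bSigma_k\vb^*)^2 \;=\; O(T^{-1}),
\end{align*}
and Chebyshev's inequality delivers (i) since $\EE[(T-1)^{-1}\sum_t Y_t^2]=\vb^{*T}\bSigma_0\vb^*$ exactly. Combining (i)--(ii), the martingale CLT gives $\sum_{t=1}^{T-1}\xi_t\rightsquigarrow N(0,\Delta)$; using the lower bound $\Delta\ge C'>0$ from Lemma \ref{cond:SVA:implied} and the estimate $T^{1/2}R_T=o_p(1)$, Slutsky's theorem concludes $\Delta^{-1/2}T^{1/2}\vb^{*T}(\Sb_0\bbeta^*-\Sb_{1,*m})\rightsquigarrow N(0,1)$.
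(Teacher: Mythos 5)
Your proof is correct, and its skeleton coincides with the paper's: both recognize that, since $X_{t+1,m}=\bbeta^{*T}\bX_t+W_{t+1,m}$, the quantity $\vb^{*T}(\Sb_0\bbeta^*-\Sb_{1,*m})$ equals the martingale difference sum $-\tfrac{1}{T-1}\sum_{t}(\vb^{*T}\bX_t)W_{t+1,m}$ plus a remainder from the $1/T$ versus $1/(T-1)$ normalization and the boundary term, and both then invoke the Hall--Heyde martingale CLT. The differences lie in how the MCLT hypotheses are verified. For the conditional variance, the paper bounds $\big|\sum_t\EE[\xi_t^2\mid\cF_{t-1}]-1\big|$ by $\|\vb^*\|_1^2\,\|\tfrac{1}{T-1}\sum_t[\bX_t^{\otimes2}-\bSigma_0]\|_{\max}/(\vb^{*T}\bSigma_0\vb^*)$ and reuses the max-norm concentration of the sample covariance from \cite{han2014direct} (Theorem \ref{hanthm2014}), whereas you compute $\Var\big(\tfrac{1}{T-1}\sum_tY_t^2\big)$ directly via Isserlis and the geometric decay $\|\bSigma_0\Ab^k\|_2\lesssim(1-\epsilon)^k$; your route is more elementary and self-contained but leans on exact Gaussianity, while the paper's recycles machinery it needs anyway for consistency of $\hat\bbeta$ and $\hat\vb$. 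For the Lindeberg condition, the paper conditions on $\cF_{t-1}$, uses truncated-normal moment identities and a high-probability bound on $\max_t|\vb^{*T}\bX_t|$; your unconditional fourth-moment Lyapunov bound $\sum_t\EE|\xi_t|^4=O(T^{-1})$ is cleaner and implies the conditional Lindeberg condition by Markov's inequality (a nonnegative variable whose expectation vanishes converges to zero in probability). Finally, for the remainder you bound both pieces by second moments, while the paper controls the running-sum piece through the event $\|\vb^{*T}\Sb_0-\eb_1\|_\infty\le\lambda'$; both work. The only cosmetic slips are an immaterial missing factor of $2$ in your double-sum variance bound and calling an unconditional moment bound a ``conditional Lyapunov condition''; neither affects validity.
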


\begin{proof}[Proof of Lemma \ref{weakconv:SVA}] First, construct the sequence $\xi_1 = 0$, $\xi_{t + 1} = \frac{\vb^{*T}\bX_t^{\otimes 2}\bbeta^* - \vb^{*T}\bX_t \bX^T_{t + 1} \eb_m^T}{\sqrt{(T - 1)\Delta}}$ for $t = 1,\ldots, T-1$. We start by showing that the difference between the sequence $\sum_{t = 1}^{T} \xi_t$ and $\frac{\sqrt{T - 1} \vb^{*T}(\Sb_0 \bbeta^* - \Sb_{1,*m})}{\sqrt{\Delta}}$ is asymptotically negligible. We have:
$$
\sum_{t = 1}^{T} \xi_t - \frac{\sqrt{T - 1} \vb^{*T}(\Sb_0 \bbeta^* - \Sb_{1,*m})}{\sqrt{\Delta}} = \underbrace{\frac{(\sqrt{T-1} T)^{-1}}{\sqrt{\Delta}}\sum_{t = 1}^{T} \vb^{*T} \bX_t^{\otimes 2} \bbeta^{*}}_{I_1} - \underbrace{\frac{\vb^{*T}\bX_T^{\otimes 2} \bbeta^*}{\sqrt{T-1}\sqrt{\Delta}}}_{I_2}.
$$
By Lemma \ref{lemmavdiffSVA}, $\|\vb^{*T}\Sb_0 - \eb_1\|_{\infty} \leq \lambda'$ with probability not smaller than $1 - 14d^{-1}$. Thus we have:
$$
|I_1| \leq \frac{\lambda'\|\bbeta^*\|_1 + |\beta^*_{1}|}{\sqrt{T-1}\sqrt{\Delta}} \leq \frac{\lambda'M +|\beta^*_{1}|}{\sqrt{T-1}\sqrt{\Delta}} = o(1),
$$
with probability at least $1 - 14d^{-1}$, where we used the fact that $|\beta^*_1| = O(1)$. 

Next observe that $\EE I_2 = 0$, and using Isserlis' theorem and Cauchy-Schwartz we have:
$$
\Var(I_2) = \frac{1}{T-1}\bigg(\frac{\bbeta^{*T} \bSigma_0 \bbeta^*}{\Psi_{mm}} + \frac{(\vb^{*T}\bSigma_0\bbeta^*)^2}{\vb^{*T}\bSigma_0 \vb^* \Psi_{mm}}\bigg) \leq \frac{2}{T-1} \frac{\bbeta^{*T} \bSigma_0 \bbeta^*}{\Psi_{mm}} = o(1).
$$
and hence $I_2 = o_p(1)$. The last shows that, $\sum_{t = 1}^{T} \xi_t - \frac{\sqrt{T} \vb^{*T}(\Sb_0 \bbeta^* - \Sb_{1,*m})}{\sqrt{\Delta}} = o_p(1),$
provided that $\sum_{t = 1}^T \xi_t = O_p(1)$, which we show next. Observe that the sequence $(\xi_t)_{t = 1}^T$ forms a martingale difference sequence with respect to the filtration $\mathcal{F}_t = \sigma(\bX_1, \ldots, \bX_{t})$ for $t = 1,\ldots, T$, as we clearly have $\EE[\xi_t | \mathcal{F}_{t - 1}] = 0$. Furthermore a simple calculation yields that for $t\geq2$ we have $\EE[\xi^2_t | \mathcal{F}_{t-1}] = \frac{(\vb^{*T}\bX_{t - 1})^2}{(T-1)\vb^{*T}\bSigma_0 \vb^*}$. Thus:
\begin{align*}
\bigg|\sum_{t = 1}^{T} \EE[\xi^2_t | \mathcal{F}_{t-1}] - 1\bigg|&= \bigg|\frac{\vb^{*T}}{(T-1)\vb^{*T}\bSigma_0 \vb^*} \sum_{t = 1}^{T-1}\left[\bX_t^{\otimes 2} - \bSigma_0\right]\vb^*\bigg| \\
&\leq \frac{\|\vb^*\|^2_1}{\vb^{*T}\bSigma_0\vb^*} \underbrace{\bigg\|\frac{1}{T-1}\sum_{t = 1}^{T-1}[\bX_t^{\otimes 2} - \bSigma_0]\bigg\|_{\max}}_{I}.
\end{align*}
Using Theorem \ref{hanthm2014}, it is evident that $I \leq K_d(\bSigma_0, \Ab)/2\left(\sqrt{\frac{6\log d}{T-1}} + 2 \sqrt{\frac{1}{T-1}}\right)$ with probability at least $1 - 14 d^{-1}$, and hence the above quantity converges to 0 in probability. 

Having noted these facts, we want to show that $\sum_{t = 1}^T \xi_t$ converges weakly to a $N(0,1)$ with the help of a version of the martingale central limit theorem (MCLT) \citep{hall1980martingale}. Next we show the Lindeberg condition for the MCLT. For $t \geq 2$ and a fixed $\delta > 0$ we have:
$$\EE[\xi_t^2 1(|\xi_t| \geq \delta) | \mathcal{F}_{t - 1}] = \frac{(\vb^{*T}\bX_{t-1})^2 \EE[Z^2 1(|Z| > \delta C)]}{(T - 1)\vb^{*T}\bSigma_0\vb^*},
$$
where $Z \sim N(0,1)$ and $C =  \left\{\frac{(\vb^{*T}\bX_{t-1})^2}{(T - 1)\vb^{*T}\bSigma_0\vb^{*}}\right\}^{-\frac{1}{2}}$. Using the properties of the truncated standard normal distribution we have that $\EE[Z^2 | Z > c] = 1 + \frac{\phi(c)}{\overline{\Phi}(c)}c$, and hence 
$$\EE[Z^2 1(|Z| > c)] = 2 \overline \Phi(c) \left(1 + \frac{\phi(c)}{\overline{\Phi}(c)}c\right) =  2 \overline \Phi(c) + 2\phi(c) c \leq  2\phi(c) (c^{-1} + c),$$ 
where the last inequality follows from a standard tail bound for the normal distribution.

Now notice that by the union bound and a standard bound on the normal cdf we have 
$$\PP(\max_{t = 1,\ldots, T} |\vb^{*T}\bX_t| > u) \leq 2 T \exp(-u^2/(2\vb^{*T}\bSigma_0 \vb^*)).$$ 
Selecting $u = 2\sqrt{\log(T) \vb^{*T}\bSigma_0 \vb^*}$ gives $\max_t |\vb^{*T} \bX_t| \leq  2\sqrt{\log(T) \vb^{*T}\bSigma_0 \vb^*}$ with probability at least $1 - \frac{2}{T}$ . Hence on this event we have:
\begin{align*}
\EE[\xi_t^2 1(|\xi_t| \geq \delta) | \mathcal{F}_{t - 1}] \leq \frac{8\log T \phi(\delta \tilde C) ((\delta \tilde C)^{-1} + \delta \tilde C)}{(T-1)},
\end{align*}
where $\tilde C = \sqrt{\frac{T - 1}{4\log T}}$, and we used the fact that the function $\phi(x)(x^{-1} + x)$ is decreasing.  Summing up over $t$ yields:
$$
\sum_{t = 1}^T \EE[\xi_t^2 1(|\xi_t| \geq \delta) | \mathcal{F}_{t - 1}]  \leq 8\log T \phi(\delta \tilde C) ((\delta \tilde C)^{-1} + \delta \tilde C) \rightarrow 0.
$$
This shows that the Lindeberg condition holds with probability 1. Hence by the MCLT we can claim $
\sum_{t = 1}^{T} \xi_t \rightsquigarrow N(0,1).$
\end{proof}

\begin{proposition}\label{consistentestSVA} Under the assumptions of Corollary \ref{IFexpansionSVA} we have $\hat \Delta \rightarrow_p \Delta$.
\end{proposition}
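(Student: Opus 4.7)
The plan is to exploit the product structure of the estimator and reduce consistency to entry-wise concentration of $\Sb_0$ (from Theorem \ref{hanthm2014}) together with the $L_1$ consistency of $\hat\bbeta$ and $\hat\vb$ (from Theorem \ref{hanthm2014} and Lemma \ref{lemmavdiffSVA}). First I would identify the population target of the first factor. Since the VAR(1) Yule--Walker identity gives $\bSigma_0 = \Ab^T\bSigma_0\Ab + \bPsi$, taking the $(m,m)$ entry yields $\Psi_{mm}=\Sigma_{0,mm}-\bbeta^{*T}\bSigma_0\bbeta^{*}$. Thus $\Delta=\Psi_{mm}\,\vb^{*T}\bSigma_0\vb^{*}=(\Sigma_{0,mm}-\bbeta^{*T}\bSigma_0\bbeta^{*})(\vb^{*T}\bSigma_0\vb^{*})$, and $\hat\Delta$ is the sample analogue built from $\Sb_0,\hat\bbeta,\hat\vb$. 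Using the telescoping identity $AB-A'B'=(A-A')B+A'(B-B')$ with $A=\Sb_{0,mm}-\hat\bbeta^{T}\Sb_0\hat\bbeta$, $A'=\Psi_{mm}$, $B=\hat\vb^{T}\Sb_0\hat\vb$, $B'=\vb^{*T}\bSigma_0\vb^{*}$, it suffices to show
\[
A\to_p\Psi_{mm},\qquad B\to_p\vb^{*T}\bSigma_0\vb^{*},
\]
since both population factors are bounded and $\Psi_{mm}\geq C>0$ makes $B$ bounded away from zero (so boundedness of the cross terms follows).

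For the first convergence, I would write
\[
A-\Psi_{mm}=(\Sb_{0,mm}-\Sigma_{0,mm})-\bigl(\hat\bbeta^{T}\Sb_0\hat\bbeta-\bbeta^{*T}\bSigma_0\bbeta^{*}\bigr),
\]
and further split the second piece as
\[
\hat\bbeta^{T}\Sb_0\hat\bbeta-\bbeta^{*T}\bSigma_0\bbeta^{*}=(\hat\bbeta-\bbeta^{*})^{T}\Sb_0(\hat\bbeta-\bbeta^{*})+2\bbeta^{*T}\Sb_0(\hat\bbeta-\bbeta^{*})+\bbeta^{*T}(\Sb_0-\bSigma_0)\bbeta^{*}.
\]
Each term is controlled by H\"older: the first by $\|\hat\bbeta-\bbeta^{*}\|_1^2\|\Sb_0\|_{\max}$, the second by $2\|\bbeta^{*}\|_1\|\Sb_0\|_{\max}\|\hat\bbeta-\bbeta^{*}\|_1$, and the third by $\|\bbeta^{*}\|_1^{2}\|\Sb_0-\bSigma_0\|_{\max}$. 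Theorem \ref{hanthm2014} supplies $\|\Sb_0-\bSigma_0\|_{\max}=O_p(\sqrt{\log d/T})$ and $\|\hat\bbeta-\bbeta^{*}\|_1\leq 4s\|\bSigma_0^{-1}\|_1\lambda=o_p(1)$ under $s\log d=o(\sqrt T)$, while $\|\bbeta^{*}\|_1\leq M$ and $\|\Sb_0\|_{\max}\leq\|\bSigma_0\|_{\max}+o_p(1)=O_p(1)$ since $\bSigma_0\in\cL$. Combining these yields $A-\Psi_{mm}=o_p(1)$.

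An identical argument, with $\bbeta^{*}$ replaced by $\vb^{*}$ and $\|\hat\vb-\vb^{*}\|_1=O_p(s_{\vb}\|\bSigma_0^{-1}\|_1\lambda')=o_p(1)$ from Lemma \ref{lemmavdiffSVA}, plus $\|\vb^{*}\|_1\leq\|\bSigma_0^{-1}\|_1\leq M$, delivers $B-\vb^{*T}\bSigma_0\vb^{*}=o_p(1)$. Since all quantities are $O_p(1)$ and $\Psi_{mm}$ is bounded below, plugging back into the telescoping identity finishes the proof. No serious obstacle is anticipated: all rates have already been assembled in Lemma \ref{cond:SVA:implied}, Theorem \ref{hanthm2014}, and Lemma \ref{lemmavdiffSVA}, and the only care needed is the quadratic-form split, where the dominant term is the linear cross term $\bbeta^{*T}\Sb_0(\hat\bbeta-\bbeta^{*})$ (resp.\ its $\vb^{*}$ analogue), which is controlled by the $L_1$ rate times the uniformly bounded $\|\Sb_0\|_{\max}$.
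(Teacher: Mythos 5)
Your proposal is correct and follows essentially the same route as the paper: establish consistency of the two factors $\Sb_{0,mm}-\hat\bbeta^{T}\Sb_0\hat\bbeta$ and $\hat\vb^{T}\Sb_0\hat\vb$ separately, via the $L_1$ rates from Theorem \ref{hanthm2014} and Lemma \ref{lemmavdiffSVA} together with the max-norm concentration of $\Sb_0$, and then multiply. The only difference is cosmetic: where you bound $\|\Sb_0(\hat\bbeta-\bbeta^{*})\|_{\infty}$ crudely by $\|\Sb_0\|_{\max}\|\hat\bbeta-\bbeta^{*}\|_1=O_p(s\sqrt{\log d/T})$, the paper exploits the Dantzig-type constraints to get the sharper bound $2\lambda$ without the factor $s$; both are $o_p(1)$ under the stated scaling, so your version goes through.
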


\begin{proof}[Proof of Proposition \ref{consistentestSVA}] We begin with showing the consistency of $\hat \Psi_{mm} = \Sb_{0, mm} - \hat \bbeta^T \Sb_0 \hat \bbeta$ is consistent for $\Psi_{mm}$. First note that $\bPsi = \bSigma_0 - \Ab^T\bSigma_0 \Ab$, and thus $\Psi_{mm} = \bSigma_{0,mm} - \bbeta^{*T}\bSigma_0\bbeta^*$. Then we have:
\begin{align*}
|\hat \Psi_{mm} - \Psi_{mm}| & \leq |\Sb_{0, mm} - \bSigma_{0, mm}| + |(\hat\bbeta - \bbeta^*)^T\Sb_0 \hat \bbeta| + |\bbeta^{*T}(\Sb_0 \hat \bbeta - \bSigma_0 \bbeta^*)|.
\end{align*}
Fitstly, by Theorem \ref{hanthm2014}, we have with probability at least $1 - 14 d^{-1}$:
$$ |\Sb_{0, mm} - \bSigma_{0, mm}| \leq  \|\Sb_0 - \bSigma_0\|_{\max} \leq \frac{K_d(\bSigma_0, \Ab)}{2}\left(\sqrt{6\log d/T} + 2 \sqrt{1/T}\right) = \lambda' \|\bSigma_0^{-1}\|^{-1}_1 = o(1).$$
Secondly:
\begin{align*}
|(\hat\bbeta - \bbeta^*)^T\Sb_0 \hat \bbeta| & \leq \|\hat \bbeta - \bbeta^* \|_1 (\|\Sb_0 \bbeta^*\|_{\infty} + \|\Sb_0 \hat \bbeta - \Sb_0 \bbeta^*\|_{\infty})\\
& \leq \|\hat \bbeta - \bbeta^* \|_1 (\|\Sb_0\|_{\max}\|\bbeta^*\|_1 + \|\Sb_0 \hat \bbeta - \Sb_{1,*m}\|_{\infty} + \|\Sb_0 \bbeta^* - \Sb_{1,*m}\|_{\infty}).
\end{align*}
On the event of Theorem \ref{hanthm2014} we further have:
$$
\|\bbeta - \bbeta^* \|_1 \|\Sb_0\|_{\max}\|\bbeta^*\|_1 \leq 4s  \|\bSigma_0^{-1}\|_1\lambda [\|\bSigma_0\|_{\max} +  \|\Sb_0 - \bSigma_0\|_{\max}] M = o(1).
$$
Furthermore within the proof of Theorem \ref{hanthm2014}, it can be seen that on the event of interest we have $\|\Sb_0 \bbeta^* - \Sb_{1,*m}\|_{\infty} \leq \lambda$, and hence:
$$
\|\hat \bbeta - \bbeta^*\|_{1}( \|\Sb_0 \hat \bbeta - \Sb_{1,*m}\|_{\infty} + \|\Sb_0 \bbeta^* - \Sb_{1,*m}\|_{\infty}) \leq 2 \lambda \|\hat \bbeta - \bbeta^*\|_{1} = o_p(1).
$$
Lastly,
\begin{align*}
|\bbeta^{*T}(\Sb_0 \hat \bbeta - \bSigma_0 \bbeta^*)| & \leq |\bbeta^{*T}\Sb_0 (\hat \bbeta - \bbeta^*)| + |\bbeta^{*T}(\Sb_0 - \bSigma_0)\bbeta^*|\\
&  \leq \|\bbeta^*\|_1 2 \lambda + \|\bbeta^*\|_1 [\|\Sb_0 \bbeta^* - \Sb_{1,*m}\|_{\max} +\| \Sb_{1,*m}  - \bSigma_{1,*m}\|_{\max}]\\
& \leq M 3 \lambda  + M K_d(\bSigma_0, \Ab)\left(\sqrt{3\log d/T} +  2/T\right) = o(1),
\end{align*}
where the last two inequalities hold on the event of Theorem \ref{hanthm2014}, and we used the fact that $\|\bbeta^*\|_1 \leq M$ since $\Ab \in \mathcal{M}(s)$.

Next, we show that $\hat \vb^T \Sb_0 \hat \vb \rightarrow_p \vb^{*T} \bSigma_0 \vb^*$. Similarly to before we have:
$$
|\hat \vb^T \Sb_0 \hat \vb - \vb^* \bSigma_0 \vb^*| \leq |(\hat \vb - \vb^*)^T \Sb_0 \hat \vb| + |\vb^{*T}(\Sb_0 \hat \vb - \bSigma_0 \vb^*)|
$$
For the firs termt we have:
\begin{align*}
|(\hat \vb - \vb^*) \Sb_0 \hat \vb| & \leq \|\hat \vb - \vb^*\|_1\|\hat \vb^T \Sb_0  - \eb_1\|_{\infty} + |\eb_1(\hat \vb - \vb^*)| \leq  4 s_{\vb} \|\bSigma_0^{-1}\|_{1} (\lambda')^2 + \|\hat \vb - \vb^*\|_{\infty}\\
& \leq  4 s_{\vb} \|\bSigma_0^{-1}\|_{1} (\lambda')^2 + \|\bSigma_0^{-1}\|_1 2 \lambda' = o(1),
\end{align*}
with the last two inequalities following from Lemma \ref{lemmavdiffSVA} and holding on the event from Theorem \ref{hanthm2014}. Recall that $\eb$ is a unit row vector.

Finally, for the second term we have:
$$
|\vb^{*T}(\Sb_0 \hat \vb - \bSigma_0 \vb^*)| \leq \|\vb^*\|_1 \lambda' \leq \|\bSigma_0^{-1}\|_1 \lambda'= o(1),
$$
and this concludes the proof.
\end{proof}

\begin{lemma} \label{lemmavdiffSVA} Assume the assumptions in Theorem \ref{hanthm2014}. Let $$\lambda' = \|\bSigma_0^{-1}\|_1 \frac{K_d(\bSigma_0, \Ab)}{2}\left(\sqrt{6\log d/T} + 2 \sqrt{1/T}\right).$$ Then on the same event as in Theorem \ref{hanthm2014}, we have $\|\hat \vb - \vb^*\|_{1} \leq 4 s_{\vb} \|\bSigma_0^{-1}\|_{1} \lambda'$.
\end{lemma}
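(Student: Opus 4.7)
My plan is to follow a Dantzig-type argument, structured around (i) feasibility of $\vb^*$, (ii) a cone condition on $\wb := \hat\vb - \vb^*$, and (iii) a deterministic inversion bound via $\bSigma_0^{-1}$.

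First I would verify, on the event of Theorem \ref{hanthm2014}, that $\vb^*$ itself satisfies the constraint $\|\vb^T \Sb_0 - \eb_1\|_\infty \le \lambda'$. Using symmetry of $\bSigma_0$ and $\Sb_0$ and the identity $\bSigma_0 \vb^* = \eb_1^T$, one gets
\begin{align*}
\|\vb^{*T} \Sb_0 - \eb_1\|_\infty = \|\vb^{*T}(\Sb_0 - \bSigma_0)\|_\infty \le \|\vb^*\|_1 \|\Sb_0 - \bSigma_0\|_{\max} \le \|\bSigma_0^{-1}\|_1 \cdot \tfrac{K_d(\bSigma_0,\Ab)}{2}\big(\sqrt{6\log d/T} + 2\sqrt{1/T}\big) = \lambda',
\end{align*}
since $\|\vb^*\|_1 = \|(\bSigma_0^{-1})_{1*}\|_1 \le \|\bSigma_0^{-1}\|_1$. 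So $\vb^*$ is feasible, and optimality of $\hat\vb$ yields $\|\hat\vb\|_1 \le \|\vb^*\|_1$. The standard decomposition $\|\vb^*\|_1 = \|\vb^*_S\|_1 \ge \|\hat\vb_S\|_1 + \|\hat\vb_{S^c}\|_1$ with $S := \supp(\vb^*)$, $|S| = s_{\vb}$, gives the cone inequality $\|\wb_{S^c}\|_1 \le \|\wb_S\|_1$, and hence $\|\wb\|_1 \le 2 \|\wb_S\|_1$.

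Second I would bound $\|\bSigma_0 \wb\|_\infty$. Since $\bSigma_0 \vb^* = \eb_1^T$, we have $\bSigma_0 \wb = \bSigma_0 \hat\vb - \eb_1^T$, so
\begin{align*}
\|\bSigma_0 \wb\|_\infty \le \|(\bSigma_0 - \Sb_0)\hat\vb\|_\infty + \|\Sb_0 \hat\vb - \eb_1^T\|_\infty \le \|\bSigma_0 - \Sb_0\|_{\max}\|\hat\vb\|_1 + \lambda' \le \|\bSigma_0 - \Sb_0\|_{\max}\|\bSigma_0^{-1}\|_1 + \lambda' \le 2\lambda',
\end{align*}
where the last inequality reuses the feasibility calculation from the first step. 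This tight deterministic bound is what gives the constant $4$ (rather than $8$) in the statement, so I would be careful not to apply the triangle inequality in a way that doubles it.

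Third, I would write $\wb = \bSigma_0^{-1}(\bSigma_0 \wb)$ and bound $\wb_S$ entrywise. For $i \in S$, $|w_i| = |\sum_j (\bSigma_0^{-1})_{ij} (\bSigma_0 \wb)_j| \le \|(\bSigma_0^{-1})_{i*}\|_1 \|\bSigma_0 \wb\|_\infty$, so $\|\wb_S\|_1 \le \|\bSigma_0 \wb\|_\infty \sum_{i \in S}\|(\bSigma_0^{-1})_{i*}\|_1 \le s_{\vb} \|\bSigma_0^{-1}\|_1 \|\bSigma_0\wb\|_\infty$, using that $\|\bSigma_0^{-1}\|_1$ equals the max row sum by symmetry. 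Combining with the cone inequality and step two:
\begin{align*}
\|\wb\|_1 \le 2 \|\wb_S\|_1 \le 2 s_{\vb} \|\bSigma_0^{-1}\|_1 \|\bSigma_0 \wb\|_\infty \le 4 s_{\vb} \|\bSigma_0^{-1}\|_1 \lambda',
\end{align*}
which is the claim. There isn't really a hard step here; the only thing to watch is routing the $\|\bSigma_0-\Sb_0\|_{\max}$ term through $\|\hat\vb\|_1 \le \|\vb^*\|_1$ instead of through $\|\wb\|_1$, which would otherwise force an absorption argument and a worse constant.
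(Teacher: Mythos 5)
Your proposal is correct and follows essentially the same route as the paper: feasibility of $\vb^*$ under the constraint, the resulting cone inequality $\|\wb_{S^c_{\vb}}\|_1\le\|\wb_{S_{\vb}}\|_1$, and inversion through $\bSigma_0^{-1}$ with the sample-to-population perturbation absorbed via $\|\hat\vb\|_1\le\|\vb^*\|_1\le\|\bSigma_0^{-1}\|_1$. The only cosmetic difference is that the paper first bounds $\|\wb\|_\infty\le 2\lambda'\|\bSigma_0^{-1}\|_1$ and then multiplies by $2s_{\vb}$, whereas you bound $\|\wb_{S_{\vb}}\|_1$ entrywise directly; both yield the same constant $4$.
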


\begin{proof}[Proof of Lemma \ref{lemmavdiffSVA}] We first start by showing that $\vb^*$ satisfies the constraint in the $\hat \vb$ optimization problem with high probability. According to Theorem \ref{hanthm2014}, we have with probability not smaller than $1 - 14 d^{-1}$:
\begin{align*}
\|\vb^{*T}\Sb_0 - \eb_1\|_{\infty} & = \|\vb^{*T}(\Sb_0 - \bSigma_0)\|_{\infty}  \leq \|\vb^*\|_1 \|\Sb_0 - \bSigma_0\|_{\max} \\
&\leq \|\vb^*\|_1 \frac{K_d(\bSigma_0, \Ab)}{2}\left(\sqrt{6\log d/T} + 2 \sqrt{1/T}\right) \leq \lambda'.
\end{align*}
This implies that $\|\hat \vb \|_1 \leq \|\vb^*\|_1 \leq \|\bSigma_0^{-1}\|_{1}$, and hence similarly to (\ref{arxivsupp:main:l1normineq}) in Lemma \ref{arxivsupp:main:vdiff} in the Supplementary Material we can conclude:
\begin{align} \label{vdiffSVA}
\|\hat \vb_{S^c_{\vb}} -  \vb^*_{S^c_{\vb}}\|_1 \leq \|\hat \vb_{S_{\vb}} -  \vb^*_{S_{\vb}}\|_1 
\end{align}

Next we control $\|\hat \vb -  \vb^*\|_{\infty}$. We have:
\begin{align*} \|\hat \vb - \vb^*\|_{\infty} & = \|(\hat \vb^T \bSigma_0  - \eb_1) \bSigma_0^{-1}\|_{\infty} \leq \|\bSigma_0^{-1}\|_{1} (\|\hat \vb^T \Sb_0  - \eb_1\|_{\infty} + \|\hat \vb\|_1 \|\Sb_0 - \bSigma_0\|_{\max}) \\
& \leq  \|\bSigma_0^{-1}\|_{1} 2\lambda'.
\end{align*}
Combining the last bound with (\ref{vdiffSVA}), we get:
$$
\|\hat \vb - \vb^*\|_{1} \leq 4 s_{\vb} \|\bSigma_0^{-1}\|_{1} \lambda',
$$
which is what we wanted to show.
\end{proof}

\end{document}